\providecommand{\MRR}{\relax\ifhmode\unskip\space\fi MR }
\providecommand{\href}[2]{#2}

\documentclass{amsart}
\usepackage[utf8]{inputenc}

\usepackage{geometry}
\geometry{verbose,letterpaper,tmargin=3cm,bmargin=3cm,lmargin=2.5cm,rmargin=2.5cm}

\usepackage{epsdice}

\usepackage{enumitem}

\usepackage{color}
   
\usepackage{stmaryrd}
\usepackage[normalem]{ulem}

\usepackage[OT2,T1]{fontenc}

\usepackage[final]{microtype}

\usepackage{tikz}
\usepackage{tikz-cd}
\usepackage{xypic}
\usepackage{relsize}
\usepackage[bbgreekl]{mathbbol}
\usepackage{amsfonts}
\DeclareSymbolFontAlphabet{\mathbb}{AMSb} 
\DeclareMathAlphabet{\mathpzc}{OT1}{pzc}{m}{it}
\DeclareSymbolFontAlphabet{\mathbbl}{bbold}
\newcommand{\Prism}{{\mathlarger{\mathbbl{\Delta}}}}
\usepackage{color}
\usepackage{hyperref}
\hypersetup{colorlinks,linkcolor={red},citecolor={blue},urlcolor={red}} 

\usepackage[capitalise]{cleveref}

\usepackage{mathtools}
\usepackage{amsmath}
\usepackage{amssymb}
\usepackage[all,cmtip]{xy}
\usepackage{mathrsfs}

\usepackage{amsthm,amssymb}
\numberwithin{equation}{section}

\theoremstyle{plain}
\newtheorem{theorem}[equation]{Theorem}
\newtheorem{conjecture}[equation]{Conjecture}
\newtheorem{prop}[equation]{Proposition}
\newtheorem{lemma}[equation]{Lemma}

\newtheorem{cor}[equation]{Corollary}

\theoremstyle{definition}
\newtheorem{defi}[equation]{Definition}

\newtheorem{example}[equation]{Example}
\newtheorem{notation}[equation]{Notation}

\newtheorem{remark}[equation]{Remark}

\DeclareMathOperator{\Spf}{Spf}

\newcommand{\an}{{\mathrm{an}}}

\newcommand{\dR}{{\mathrm{dR}}}
\newcommand{\fkt}{\mathfrak{t}}
\newcommand{\mat}{\text{Mat}}
\newcommand{\bdr}{{\mathbb{B}_{\mathrm{dR}}^{+}}}
\newcommand{\rb}{\mathrm{Rep}_{G_K}(B_{\dR}^+)}

\newcommand{\db}{\mathrm{Rep}_{\hat{G}}(B_{\dR,L}^+)}

\DeclareMathOperator{\Spec}{Spec}
\DeclareMathOperator{\Tot}{Tot}

\renewcommand{\phi}{\varphi}
\renewcommand{\epsilon}{\varepsilon}

\DeclareMathOperator{\Vect}{Vect}

\newcommand{\Mod}{\mathrm{Mod}}

\newcommand{\GL}{{\mathrm{GL}}}



\DeclareMathOperator{\Rep}{Rep}





\newcommand{\crys}{{\mathrm{crys}}}
\newcommand{\perf}{{\mathrm{perf}}}
\newcommand{\fp}{{\mathrm{fp}}}

\DeclareMathOperator{\Gal}{Gal}


\DeclareMathOperator{\Hom}{Hom}

\DeclareMathOperator{\Eq}{Eq}

\DeclareMathOperator{\Id}{Id}

\newcommand{\Shv}{{\mathcal S\mathrm{hv}}}



\newcommand{\mr}{\mathrm}

\newcommand{\Ker}{\mr{Ker}}

\newcommand{\Ainf}{{A_{\mathrm{inf}}}}

\usepackage{tikz}

\newcommand{\LHS}{\mathrm{LHS}}
\newcommand{\RHS}{\mathrm{RHS}}

\title{De Rham prismatic crystals over $\mathcal{O}_K$}

\author{Zeyu Liu}

\address{Department of Mathematics,University of California, San Diego, 9500 Gilman Dr. La Jolla, CA 92093}

\email{zeliu@ucsd.edu}

\begin{document}

\maketitle

\begin{abstract}
We study de Rham prismatic crystals on $(\mathcal{O}_K)_{\Prism}$. We show that a de Rham crystal is controlled by a sequence of matrices $\{A_{m,1}\}_{m \geq 0}$ with $A_{0,1}$ "nilpotent". Using this, we prove that the natural functor from the category of de Rham crystals over $(\mathcal{O}_K)_{\Prism}$ to the category of nearly de Rham representations is fully faithful. The key ingredient is a Sen style decompletion theorem for $B_{\dR}^+$-representations of $G_K$.
\end{abstract}

\tableofcontents

\section{Introduction}
Once constructed by Bhatt and Scholze in \cite{BS19}, prismatic cohomology theory plays a central role in the study of $p$-adic cohomologies as it can naturally serve as a bridge connecting various classical $p$-adic cohomology theories via specialization along different directions. Motivated by such "universal" property, it is natural to expect that various coefficients in the prismatic site of a $p$-adic formal scheme could recover some coefficients in other $p$-adic cohomology theories. Recently, there are fruitful results in this direction. Let us give a quick review here.

Let $X$ be a $p$-adic formal scheme. Consider the absolute prismatic site $X_{\Prism}$. Recall that prismatic crystals with various coefficients can be defined in a universal way:

For $*\in \{\mathcal{O}_{\Prism},\overline{\mathcal{O}}_{\Prism},\overline{\mathcal{O}}_{\Prism}[\frac{1}{p}], (\mathcal{O}_{\Prism}[\frac{1}{p}])^{\wedge}_{\mathcal{I}} \}$,
\begin{defi}
An abelian sheaf $\mathcal{F}$ on $X_{\Prism}$ is called a $(*)$-crystal if for any $(A,I)\in X_{\Prism}$, $\mathcal{F}(A,I)$ is a finite projective $*(A)$-module such that for any morphism $(A,I)\rightarrow (B,J)$, there is a canonical isomorphism
\[\mathcal{F}(A,I)\otimes_{*(A)}*(B)\cong \mathcal{F}(B,J).\]
Moreover, the category of such $\mathcal{F}$ is denoted as $\Vect(X_{\Prism},*)$.
\end{defi}
In this way, we get
\begin{itemize}
    \item Prismatic crystal if $*=\mathcal{O}_{\Prism}$.
    \item Hodge-Tate (prismatic) crystal if $*=\overline{\mathcal{O}}_{\Prism}$.
    \item Rational Hodge-Tate (prismatic) crystal if $*=\overline{\mathcal{O}}_{\Prism}[\frac{1}{p}]$.
    \item De Rham (prismatic) crystal if $*=(\mathcal{O}_{\Prism}[\frac{1}{p}])^{\wedge}_{\mathcal{I}}$.
\end{itemize}
Moreover, if we add the Frobenius structure, then we will get prismatic $F$-crystals:

For $*\in \{\mathcal{O}_{\Prism}, (\mathcal{O}_{\Prism}[\frac{1}{\mathcal{I}}])^{\wedge}_{p} \}$,
\begin{defi}
A prismatic $F$-crystal in $*$-modules is a pair $(\mathcal{E},\varphi_{\mathcal{E}})$ such that
\begin{itemize}
    \item $\mathcal{E}$ is a $(*)$-crystal;
    \item $\varphi_{\mathcal{E}}$ is an identification 
    \[\varphi_{\mathcal{E}}: (\varphi^{*}\mathcal{E})[1/\mathcal{I}]\cong \mathcal{E}[1/\mathcal{I}].\]
\end{itemize}
The category of such $\mathcal{F}$ is denoted as $\Vect^{\varphi}(X_{\Prism},*)$.
\end{defi}
In this way, we get prismatic $F$-crystals if $*=\mathcal{O}_{\Prism}$ and Laurent prismatic $F$-crystals if taking $*=(\mathcal{O}_{\Prism}[\frac{1}{\mathcal{I}}])^{\wedge}_{p}$.

Heuristically those $F$-crystals were first studied as they naturally correspond to some nice integral $p$-adic Galois representations over $\mathbb{Q}_p$ of interest by the work of Bhatt and Scholze (Frobenius is a very strong condition to guarantee that we end up with something defined over $\mathbb{Q}_p$ or a finite extension $K/\mathbb{Q}_p$). On the other hand, if we consider those prismatic crystals without Frobenius, typically we could only get something defined over $\mathbb{C}_p$, or equivalently, over some large highly ramified fields (eg. $K_{p^{\infty}}$ and $K_{\infty}$) by Faltings almost purity theorem and decompletion theory.

To concretely relate various prismatic crystals to $p$-adic Galois representations, the story starts from the following key theorem due to Bhatt and Scholze, which was also proven by Du and Liu independently:
\begin{theorem}[\cite{BS21},\cite{DL21}]
\[\xymatrix{\Vect^{\varphi}(X_{\Prism},\mathcal{O}_{\Prism})\ar[d]^{\cong}\ar[r]& \Vect^{\varphi}(X_{\Prism},(\mathcal{O}_{\Prism}[\frac{1}{\mathcal{I}}])^{\wedge}_{p})\ar[d]_{T}^{\cong}
\\\Rep_{\mathbb{Z}_p}^{\crys}(G_K)\ar[r]&\Rep_{\mathbb{Z}_p}(G_K)}\]
\end{theorem}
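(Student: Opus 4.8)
Here $X=\Spf\cO_K$ with $K/\mathbb{Q}_p$ finite; write $k$ for the residue field, $\pi$ for a uniformizer with Eisenstein polynomial $E(u)$, $(\mathfrak{S},(E))$ with $\mathfrak{S}=W(k)[[u]]$ for the Breuil--Kisin prism in $(\cO_K)_\Prism$, and $\Ainf=\Ainf(\cO_{\mathbb{C}_p})$ for the perfect prism attached to $\cO_{\mathbb{C}_p}$, which carries a continuous semilinear $G_K$-action. The bottom arrow of the diagram is the inclusion and the top arrow is ``invert $\mathcal{I}$, then $p$-complete'', so the left vertical functor is forced to be the composite $\Vect^{\varphi}((\cO_K)_\Prism,\cO_\Prism)\to\Vect^{\varphi}((\cO_K)_\Prism,(\cO_\Prism[1/\mathcal{I}])^{\wedge}_p)\xrightarrow{\,T\,}\Rep_{\mathbb{Z}_p}(G_K)$, and the square commutes tautologically. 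The real content therefore splits as: (i) $T$ is an equivalence; (ii) that composite is fully faithful with essential image $\Rep^{\crys}_{\mathbb{Z}_p}(G_K)$. I would organise the proof around these two points.

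For (i), the soft half: the perfect prisms form a basis of $(\cO_K)_\Prism$, so a Laurent prismatic $F$-crystal is the same datum as its value on $\Ainf$, namely a finite projective module $M$ over $\Ainf[1/\mathcal{I}]^{\wedge}_p\cong W(\mathbb{C}_p^\flat)$ with an isomorphism $\varphi^*M\cong M$ and a continuous semilinear $G_K$-action satisfying the cocycle condition for $\cO_{\mathbb{C}_p}$ over $\cO_K$. Since $\mathbb{C}_p^\flat$ is algebraically closed and $W(\mathbb{C}_p^\flat)^{\varphi=1}=\mathbb{Z}_p$, classical étale $\varphi$-module theory identifies $(M,\varphi)$ with the finite free $\mathbb{Z}_p$-module $M^{\varphi=1}$ via $N\mapsto N\otimes_{\mathbb{Z}_p}W(\mathbb{C}_p^\flat)$; carrying the $G_K$-action along yields exactly $\Rep_{\mathbb{Z}_p}(G_K)$, and faithfully flat descent promotes this to the equivalence $T$.

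For (ii), the core: I would go through Breuil--Kisin modules. Evaluating an $F$-crystal $\mathcal{E}$ on $(\mathfrak{S},(E))$ gives $\mathfrak{M}:=\mathcal{E}(\mathfrak{S})$, finite free over $\mathfrak{S}$ with $\varphi^*\mathfrak{M}[1/E]\xrightarrow{\sim}\mathfrak{M}[1/E]$; as $\mathfrak{S}$ is a two-dimensional regular local ring, $\varphi^*\mathfrak{M}\to\mathfrak{M}$ is injective with finitely generated cokernel supported on $V(E)$, hence killed by some $E^h$, so $\mathfrak{M}$ has finite $E$-height. The crystal condition equips $\mathfrak{M}$ in addition with a stratification along the \v{C}ech nerve of $\mathfrak{S}$ in $(\cO_K)_\Prism$. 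The plan is then: (a) compute that nerve, bringing in the cyclotomic and Kummer towers, to reinterpret the stratification as a $(\varphi,\hat G)$-module structure on $\mathfrak{M}$ in the sense of T.~Liu, with $\hat G=\Gal(\hat L/K)$ and $L=K_{p^\infty}K_\infty$; (b) apply Kisin's construction to attach a $G_\infty$-stable $\mathbb{Z}_p$-lattice $T(\mathfrak{M})$, realized inside a $B_{\dR}^{+}$-period, and use the $\hat G$-datum to extend the action to $G_K$; (c) show the resulting $G_K$-representation is crystalline --- equivalently that the monodromy operator of the $(\varphi,\hat G)$-module vanishes, which is forced by the stratification being defined over the \emph{full} absolute prismatic site and not merely over the Kummer-tower prisms --- and conversely realize every crystalline $\mathbb{Z}_p$-lattice by building $\mathfrak{M}$ from Kisin's classification and supplying the $\hat G$-action from $D_{\crys}$. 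Full faithfulness is then routine: $\LHom(\mathcal{E}_1,\mathcal{E}_2)$ is again a prismatic $F$-crystal, so it suffices to check $\RG((\cO_K)_\Prism,\mathcal{E})^{\varphi=1}\cong T(\mathcal{E})^{G_K}$ for every $F$-crystal $\mathcal{E}$, which one does via the \v{C}ech complex against $\mathfrak{S}$ together with the étale comparison.

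The hard part will be step (c): controlling the essential image, i.e. proving crystallinity --- a priori one extracts only a de Rham, or at best semistable, representation --- and conversely realizing all crystalline lattices. Both directions hinge on understanding the self-coproduct $\mathfrak{S}^{(1)}$ in $(\cO_K)_\Prism$, equivalently the period rings $\Ainf$, $A_{\crys}$ and $B_{\dR}^{+}$ with their Frobenius and $G_K$-structure, precisely enough to see that the prismatic stratification remembers the crystalline (vanishing-monodromy) structure. A cleaner, more cohomological route to both crystallinity and full faithfulness is to compare the prismatic cohomology of $\mathcal{E}$ with its crystalline and étale cohomology via the Beilinson fibre square; either way the delicate input is the fine structure of period rings, and the rest is descent-theoretic bookkeeping.
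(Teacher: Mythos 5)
This theorem is not proved in the paper: it is quoted from \cite{BS21} and \cite{DL21}, and the paper only records, in the remark that follows it, the strategies of the various published proofs (Wu's étale realization for $T$, Bhatt--Scholze's descent via the Beilinson fibre sequence, Du--Liu's Kummer-tower/$(\varphi,\hat G)$-module argument). Your sketch is consistent with those strategies: part (i) is essentially Wu's argument (evaluate at $\Ainf$, use Kedlaya--Liu's equivalence between étale $\varphi$-modules over $W(\mathbb{C}_p^\flat)$ and $\Rep_{\mathbb{Z}_p}(G_K)$, and transfer the crystal condition into a $G_K$-action), and part (ii) is essentially Du--Liu's route, with the Beilinson fibre square mentioned as the Bhatt--Scholze alternative. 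Two cautions. First, ``the perfect prisms form a basis of $(\cO_K)_{\Prism}$'' is not literally true as a statement about the site; what you need, and what requires proof, is that restriction induces an equivalence $\Vect^{\varphi}(X_{\Prism},(\mathcal{O}_{\Prism}[\frac{1}{\mathcal{I}}])^{\wedge}_{p})\simeq\Vect^{\varphi}(X_{\Prism}^{\perf},(\mathcal{O}_{\Prism}[\frac{1}{\mathcal{I}}])^{\wedge}_{p})$, i.e. that Laurent $F$-crystals are insensitive to perfection; this is a genuine theorem of Bhatt--Scholze/Wu, not a formal consequence of the topology. Second, you correctly isolate step (c) --- crystallinity of the image and essential surjectivity onto crystalline lattices --- as the hard part, but as written it remains a plan rather than an argument: the claim that the monodromy vanishes ``because the stratification is defined over the full absolute site'' is the heuristic, and making it precise (bounding the descent datum, or computing $\varphi$-invariants in the relevant period rings via the Beilinson fibre sequence) is exactly where the cited papers spend their effort. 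Also note that the paper allows $K$ to be any complete discretely valued field with perfect residue field, not just a finite extension of $\mathbb{Q}_p$.
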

Here $K$ is a complete discretely valued field with ring of integers $\mathcal{O}_K$ of mixed characteristic with perfect residue field $k$ and $X=\Spf \mathcal{O}_K$. $G_K$ is the absolute Galois group of $K$. $\Rep_{\mathbb{Z}_p}(G_K)$ is defined to be the category of finite free $\mathbb{Z}_p$-representations of $G_K$, while $\Rep_{\mathbb{Z}_p}^{\crys}(G_K)$ is the full subcategory consisting of crystalline $\mathbb{Z}_p$-representations of $G_K$.

From now on we fix a uniformizer $\pi$ of $\mathcal{O}_K$ with Eisenstein polynomial $E(u)\in W(k)[[u]]$. $(\mathfrak{S},E)=(W(k)[[u]], E(u))$ is the Breuil-Kisin prism.
\begin{remark}
\begin{itemize}
    \item $T$ which is called \'etale realization functor, was first proven to be an equivalence independently by Zhiyou Wu. Concretely, it is given by taking $\varphi$-invariants of the evaluation of $\mathcal{E}$ at $\Ainf$ (By work of Kedlaya and Liu, the category of \'etale $\varphi$-modules over $W(\mathbb{C}_p^\flat)$ is equivalent to $\Rep_{\mathbb{Z}_p}(G_K)$). Moreover, the crystal condition can be transferred to give a $G_K$-action (key observation of Wu). 
    
    Also, by evaluating $\mathcal{E}$ at some other prisms $\mathbf{A}_{K}^+$ (which could be viewed as certain "deperfection" of $\Ainf$), Wu gives a new proof of Fontaine's classical result that $\Rep_{\mathbb{Z}_p}(G_K)$ is equivalent to $(\varphi,\Gamma)$-modules over $\mathbf{A}_{K}$. We refer the readers to \cite{Wu21} for details.
    \item In Bhatt-Scholze's work, they show that the image of $T$ when restricted to  $\Vect^{\varphi}(X_{\Prism},\mathcal{O}_{\Prism})$ naturally enjoys crystalline property first. The genuine difficulty is to prove essential surjectivity. To do that, given a crystalline $\mathbb{Z}_p$-representation, Bhatt and Scholze first construct a crystal over some larger rings and then show the descent data is bounded to actually descend to a prismatic crystal, which relies heavily on using Beilinson fiber sequence to calculate Frobenius invariant elements in certain large rings.
    \item On the other hand, Du and Liu prove the equivalence in a different method. Their study is based on the Kummer tower. In particular, they prove an \'etale realization in this setting and then give a descent data via bounding the matrix on which Frobenius acts via, relying on the calculation of crystalline condition.
    \item Later all of the results are generalized to the relative case. Min-Wang prove that $T$ is an equivalence in the relative setting in \cite{MW21a}. Du-Liu-Moon-Shimizu and Guo-Reinecke prove the following result independently:
    \end{itemize}
\end{remark}
\begin{theorem}[\cite{GR22},\cite{DLMS22}]
    For $X$ a smooth formal scheme over $\mathcal{O}_K$
\[\Vect^{\an,\varphi}(X_{\Prism},\mathcal{O}_{\Prism})\stackrel{\cong}{\longrightarrow} \Rep_{\mathbb{Z}_p}^{\crys}(X_{\eta}).\]
\end{theorem}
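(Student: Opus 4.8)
The plan is to combine an \'etale realization functor with a relative Kisin-module descent, working Zariski-locally on $X$. First I would reduce to $X=\Spf R$ with $R$ a $p$-complete \emph{small} smooth $\mathcal{O}_K$-algebra, \'etale over $\mathcal{O}_K\langle t_1^{\pm1},\dots,t_d^{\pm1}\rangle$; fix a relative Breuil--Kisin prism $(\mathfrak{S}_R,E)\in X_{\Prism}$ and a perfectoid cover prism $(\Ainf(R_\infty),\ker\theta)$ attached to the pro-\'etale $G$-cover $R_\infty/R$ obtained by adjoining $p$-power roots of $\pi$ and of the $t_i$; and note that these two prisms together with their \v{C}ech nerves suffice for all descent arguments below.

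Next I would build the functor. Restriction along $\mathcal{O}_{\Prism}\to(\mathcal{O}_{\Prism}[\tfrac1{\mathcal{I}}])^{\wedge}_p$ carries an analytic prismatic $F$-crystal to a Laurent $F$-crystal, and the relative form of the functor $T$ (Min--Wang, after Bhatt--Scholze) then produces a $\mathbb{Z}_p$-local system $\mathbb{L}$ on $X_\eta$; explicitly $\mathbb{L}$ is extracted from the evaluation at $\Ainf(R_\infty)$ by inverting $\mu$, $p$-completing, and taking $\varphi=1$, remembering the $G$-action. The first substantive point is that $\mathbb{L}$ is crystalline. For this I would evaluate the crystal at the crystalline-type prisms --- the $p$-completed divided-power envelopes of $(\mathfrak{S}_R,E)$ and of $\Ainf(R_\infty)$ --- to obtain a $\varphi$-module over a Brinon-style $\mathcal{O}\mathbf{A}_{\mathrm{cris}}(R)$-ring with a horizontal connection, and then apply the crystalline--de Rham comparison to conclude that $\mathbb{L}\otimes\mathbf{B}_{\mathrm{cris}}$ is \'etale-locally trivial; alternatively one tests crystallinity fibrewise at the classical points of $X_\eta$ using the arithmetic case recalled above together with a purity/limit argument.

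For full faithfulness, since both sides embed into $\mathbb{Z}_p$-local systems on $X_\eta$, it suffices to match $\Hom$-groups. I would identify $\Hom(\mathcal{E}_1,\mathcal{E}_2)$ with the degree-$0$ part of $\RG(X_{\Prism},\mathcal{E}_1^{\vee}\otimes\mathcal{E}_2)^{\varphi=1}$ and $\Hom(\mathbb{L}_1,\mathbb{L}_2)$ with the degree-$0$ part of $\RG(X_{\pe},\mathbb{L}_1^{\vee}\otimes\mathbb{L}_2)$, and then compare the two through the \'etale comparison theorem for analytic prismatic cohomology (Bhatt--Scholze, Guo) after inverting the relevant elements; since $\mathbb{L}$ is already known to be crystalline one can instead route this comparison through the $\mathbf{A}_{\mathrm{cris}}$-computation.

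Essential surjectivity is where the real work lies. Given a crystalline $\mathbb{L}$ on $X_\eta$, one gets for free a crystal on the sub-site spanned by $\Ainf(R_\infty)$ and its \v{C}ech nerve: the vector bundle on the analytic locus $\Spec\Ainf(R_\infty)\setminus V(p,\ker\theta)$ determined by $\mathbb{L}$ (the usual $\mathbb{B}$-construction of Kedlaya--Liu and Bhatt--Scholze), the crystal datum being $G$-equivariance. Crystallinity enters through the connection: over the crystalline prism one obtains a $\varphi$-module with horizontal connection over an $\mathcal{O}\mathbf{A}_{\mathrm{cris}}(R)[\tfrac1p]$-ring --- Brinon's $D_{\mathrm{cris}}$ data --- yielding a descent datum along $\mathfrak{S}_R\to\Ainf(R_\infty)$. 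The crux is to prove this descent datum effective on the analytic locus, i.e.\ that the bundle descends to a vector bundle on $\Spec\mathfrak{S}_R\setminus V(p,E)$ compatible with $\varphi$, equivalently to a relative Kisin module; I would establish this by a Sen-style decompletion and boundedness argument controlling the $\Gamma$-action (over the cover, the $G$-action), in the spirit of Kisin's original $\mathcal{O}_K$-case and Du--Liu's Kummer-tower method, using that passing to the analytic locus removes the obstruction. Gluing the local descents over $X$ then produces the analytic prismatic $F$-crystal mapping to $\mathbb{L}$. I expect this effectivity/boundedness step to be the main obstacle: the rest is period-ring bookkeeping and formal descent, whereas controlling the descent datum needs the full force of relative crystalline theory and delicate estimates in $\mathfrak{S}_R$ and $\mathbf{A}_{\mathrm{cris}}(R)$.
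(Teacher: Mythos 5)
This statement is not proved in the paper at all: it is recalled verbatim from \cite{GR22} and \cite{DLMS22} as background, so there is no in-paper argument to compare yours against. Judged against the actual proofs in those references, your outline is a fair reconstruction of the overall architecture, closest in spirit to the Kummer-tower/Kisin-module approach of \cite{DLMS22}: localize to small smooth affines, use the relative Breuil--Kisin prism and $\Ainf(R_\infty)$, realize a $\mathbb{Z}_p$-local system via the Laurent $F$-crystal equivalence, check crystallinity by evaluating on crystalline-type (PD) prisms, and recover essential surjectivity by descending a $G$-equivariant bundle on the analytic locus of $\Spec \Ainf(R_\infty)$ to one on the analytic locus of $\Spec\mathfrak{S}_R$. (For comparison, \cite{GR22} organizes the surjectivity differently, reducing by purity to a CDVR with imperfect residue field rather than running the boundedness estimates directly over $\mathfrak{S}_R$.)

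The genuine gap is that the one step you yourself identify as "where the real work lies" is left entirely as a black box. The effectivity of the descent datum --- producing a relative Kisin module from the $G$-equivariant $\Ainf(R_\infty)$-data, i.e.\ the boundedness/decompletion estimates controlling the matrices of the $G$-action and of Frobenius so that the bundle actually descends across $\mathfrak{S}_R\to\Ainf(R_\infty)$ --- is the entire mathematical content of the cited papers; "a Sen-style decompletion and boundedness argument in the spirit of Kisin and Du--Liu" names the strategy but supplies none of the estimates, and it is exactly here that naive descent fails without the crystalline hypothesis. There is also a smaller unjustified step in your full-faithfulness argument: reducing to matching $\Hom$-groups of local systems presupposes that the \'etale realization on analytic prismatic $F$-crystals is itself fully faithful, which requires knowing that such a crystal is determined by its associated Laurent $F$-crystal (equivalently, that sections over the analytic locus of $\mathfrak{S}_R$ inject into their $E$-inverted $p$-completions); this is true but needs an argument, not just the comparison of cohomologies you invoke. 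As written, your text is a correct plan of attack rather than a proof.
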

Here the right handside is crystalline $\mathbb{Z}_p$-local systems on the generic fiber of $X$, while the left handside is the so called analytic prismatic $F$-crystals consisting of crystals $\mathcal{F}$ on $X_{\Prism}$ equipped with Frobenius structures whose evaluations at a prism $(A,I)\in X_{\Prism}$ are vector bundles on the analytic locus of $\Spec(A)$, we refer the readers to \cite[Remark 1.4]{GR22} for details.

On the other hand, one might ask what's the story if one drops the Frobenius structure and works with prismatic crystals. 

Towards this direction, Yu Min and Yupeng Wang study Hodge-Tate crystals on $\mathcal{O}_K$ in \cite{MW21}. Later Hui Gao relates rational Hodge-Tate crystals on $\mathcal{O}_K$ to nearly Hodge-Tate representations in \cite{Gao22}. Let us review their work here. For simplicity we assume $p>2$ from now on.

Utilizing the Breuil-Kisin prism as the chosen weakly final object in $(\mathcal{O}_K)_{\Prism}$ and by identifying $\mathfrak{S}^1/E$ with $\mathcal{O}_K\{X_1\}^{\wedge}_{\rm pd}$, which is involved in the stratification of the Breuil-Kisin prism, Min-Wang prove the following result (see \cite[Theorem 3.5]{MW21}):
\begin{theorem}
    To give a rank $l$ Hodge-Tate crystal $\mathbb{M}$, it is the same as specifying a pair $(M,\varepsilon)$, where $M$ is a finite free $\mathfrak{S}$-module $M$ of rank $l$ and the stratification $\varepsilon$ is an $\mathcal{O}_K\{X_1\}^{\wedge}_{\rm pd}$-linear isomorphism 
$$\varepsilon: M\otimes_{\mathcal{O}_K}  \mathcal{O}_K\{X_1\}^{\wedge}_{\rm pd} \stackrel{\cong}{\rightarrow} M\otimes_{\mathcal{O}_K}  \mathcal{O}_K\{X_1\}^{\wedge}_{\rm pd}. $$
Moreover, $\varepsilon$ is uniquely determined by a matrix $A_1\in M_{l\times l}(\mathcal{O}_K)$ such that $\lim_{n\to+\infty}\prod_{i=0}^n(iE'(\pi)+A_{1}) = 0$ in the sense that
\begin{equation*}
    \varepsilon(\underline e)=\underline e \sum_{n=0}^{\infty}A_n X_1^{[n]}
\end{equation*}
with $A_0=I$, $A_n=\prod_{i=0}^{n-1}(iE'(\pi)+A_{1})$ for $n\geq 1$.
\end{theorem}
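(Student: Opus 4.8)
The plan is to prove the theorem by descent along the Breuil--Kisin prism. Since $(\mathfrak{S},E)$ is a weakly final object of $(\mathcal{O}_K)_{\Prism}$, a Hodge--Tate crystal $\mathbb{M}$ of rank $l$ is the same datum as a finite projective module $M:=\mathbb{M}(\mathfrak{S},E)$ over $\overline{\mathcal{O}}_{\Prism}(\mathfrak{S},E)=\mathfrak{S}/E=\mathcal{O}_K$ equipped with a stratification along the \v{C}ech nerve of $(\mathfrak{S},E)$; as $\mathcal{O}_K$ is a complete discrete valuation ring, $M$ is automatically free of rank $l$, and we fix a basis $\underline{e}$. Concretely, a stratification is an isomorphism $\varepsilon$ between the two base changes of $M$ along the two structure maps $\mathcal{O}_K\rightrightarrows\overline{\mathcal{O}}_{\Prism}(\mathfrak{S}^{(1)})$, where $\mathfrak{S}^{(1)}$ is the self-product of $(\mathfrak{S},E)$ in $(\mathcal{O}_K)_{\Prism}$, subject to the unitality condition and to the cocycle condition over the triple self-product $\mathfrak{S}^{(2)}$. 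The content of the theorem is to convert this descent datum into the matrix sequence $\{A_n\}$.

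The first step is to describe the self-products explicitly. Writing $u_0,u_1$ for the images of the variable $u$ under the two structure maps, one identifies $\overline{\mathcal{O}}_{\Prism}(\mathfrak{S}^{(1)})$ with $\mathcal{O}_K\{X_1\}^{\wedge}_{\rm pd}$, where $X_1$ is the class of $(u_0-u_1)/E(u_0)$ and the divided powers on $X_1$ are forced by the $\delta$-structure of the envelope $\mathfrak{S}^{(1)}$. Likewise $\overline{\mathcal{O}}_{\Prism}(\mathfrak{S}^{(2)})$ is a $p$-completed divided-power polynomial ring in two variables, and the decisive computation is the comultiplication relating the coordinate $X_1^{(02)}$ associated with the outer pair $(u_0,u_2)$ to the coordinates $X_1^{(01)},X_1^{(12)}$ associated with the adjacent pairs: from $u_0-u_2=(u_0-u_1)+(u_1-u_2)$ one gets $X_1^{(02)}=X_1^{(01)}+\tfrac{E(u_1)}{E(u_0)}\,X_1^{(12)}$, and Taylor-expanding $E(u_1)=E\bigl(u_0-E(u_0)X_1^{(01)}\bigr)$ and reducing modulo $E$ (so that $u_0$ becomes $\pi$) yields
\[
\Delta(X_1)=X_1\otimes 1+1\otimes X_1-E'(\pi)\,X_1\otimes X_1
\]
together with the counit $X_1\mapsto 0$. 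It is exactly the coefficient $E'(\pi)$ that produces the factors $iE'(\pi)$ in the statement.

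Now one translates. Since $\varepsilon$ is $\mathcal{O}_K\{X_1\}^{\wedge}_{\rm pd}$-linear and $\mathcal{O}_K\{X_1\}^{\wedge}_{\rm pd}$ is the $p$-adic completion of $\bigoplus_{n\geq 0}\mathcal{O}_K\,X_1^{[n]}$, we may write $\varepsilon(\underline{e})=\underline{e}\sum_{n\geq 0}A_nX_1^{[n]}$ with $A_n\in M_{l\times l}(\mathcal{O}_K)$, and the unitality condition (i.e.\ $\varepsilon$ is the identity at $X_1=0$) forces $A_0=I$. Feeding the comultiplication formula above into the cocycle identity — which dictates how $X_1$ pulls back along the three faces $\mathfrak{S}^{(1)}\to\mathfrak{S}^{(2)}$ — and comparing the coefficients of $X_1^{[i]}\otimes X_1^{[j]}$ (the divided-power monomials forming a topological basis) shows, after a short binomial computation, that given $A_0=I$ the cocycle condition is equivalent to the recursion $A_{n+1}=A_n\bigl(nE'(\pi)+A_1\bigr)$, i.e.\ to $A_n=\prod_{i=0}^{n-1}\bigl(iE'(\pi)+A_1\bigr)$; in particular $\mathbb{M}$ is determined by $A_1$ alone. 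Finally, for $\varepsilon$ to be a well-defined (and then automatically invertible, an inverse being supplied by the flip automorphism of $\mathfrak{S}^{(1)}$) isomorphism one needs exactly that the series $\sum_nA_nX_1^{[n]}$ converge, i.e.\ $A_n\to 0$ $p$-adically, i.e.\ $\lim_n\prod_{i=0}^{n}(iE'(\pi)+A_1)=0$; conversely any $A_1$ with this limit property produces, via the same recursion, a Hodge--Tate crystal, and one checks the two constructions are mutually inverse.

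I expect the main difficulty to lie in the first step: identifying the prismatic envelopes $\mathfrak{S}^{(1)}$ and $\mathfrak{S}^{(2)}$ and, above all, the careful bookkeeping of the $\delta$-structure and of the divided powers needed to pin down the comultiplication with the precise coefficient $E'(\pi)$. Once this is in hand, the rest is formal descent together with the elementary recursion and the convergence bookkeeping.
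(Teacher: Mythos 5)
Your proposal is correct and follows essentially the same route as the source: the paper quotes this theorem from \cite[Theorem 3.5]{MW21} rather than reproving it, and both that proof and the paper's de Rham analogue in Section 2 proceed exactly as you do --- descent to a stratification on the Breuil--Kisin evaluation, identification of $\mathfrak{S}^1/E$ with $\mathcal{O}_K\{X_1\}^{\wedge}_{\rm pd}$, the comultiplication $X_1\mapsto X_1\otimes 1+1\otimes X_1-E'(\pi)\,X_1\otimes X_1$, and comparison of divided-power coefficients. The one step you understate is the converse direction, namely that the recursion $A_{n+1}=(nE'(\pi)+A_1)A_n$ together with $A_n\to 0$ implies the cocycle identity in \emph{all} bidegrees $X_1^{[i]}\otimes X_1^{[j]}$: this is not just ``a short binomial computation'' but is handled in \cite[Lemma 3.6]{MW21} by summing the recursion to the closed form $\sum_n A_nX_1^{[n]}=(1-E'(\pi)X_1)^{-A_1/E'(\pi)}$ and exploiting the factorization $1-E'(\pi)\Delta(X_1)=\bigl(1-E'(\pi)X_1\otimes 1\bigr)\bigl(1-E'(\pi)\,1\otimes X_1\bigr)$; that this converse is genuinely delicate is underscored by the fact that its de Rham deformation is precisely what the present paper leaves open as \cref{conjecture e=1}.
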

Then essentially by proving that the Sen operator of the $\mathbb{C}_p$-representation of $G_K$ associated to such a (rational) Hodge-Tate crystal is $-\frac{A_1}{\beta}$ for $\beta=E^{\prime}(\pi)$, which was conjectured by Min and Wang, Gao constructs the following diagram (see \cite[Thm 1.1.3]{Gao22}):
\begin{theorem}\label{Gao theorem}
    \[\xymatrix{\operatorname{Vect}((\mathcal{O}_{K})_{\Prism}, \overline{\mathcal{O}}_{\Prism}[\frac{1}{p}])\ar[d]^{\cong}\ar[r]^{}& \operatorname{Vect}((\mathcal{O}_{K})_{\Prism}^{\perf}, \overline{\mathcal{O}}_{\Prism}[\frac{1}{p}])\ar[d]_{}^{\cong}
\\\Rep_{\mathbb{C}_p}^{nHT}(G_K)  \ar@{^{(}->}[r]&\Rep_{\mathbb{C}_p}^{}(G_K)}\]
Here $\Rep_{\mathbb{C}_p}^{nHT}(G_K)$ is defined by Gao, which is the full subcategory of $\Rep_{\mathbb{C}_p}^{}(G_K)$ consisting of those $W\in \Rep_{\mathbb{C}_p}^{}(G_K)$ with Sen weights in the subset $\mathbb{Z}+E^{\prime}(\pi)^{-1}\mathfrak{m}_{\mathcal{O}_{\overline{K}}}$.
\end{theorem}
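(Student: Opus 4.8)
The plan is to prove that both vertical functors are equivalences, that the left one has essential image exactly $\Rep_{\mathbb{C}_p}^{nHT}(G_K)$, and that the square commutes, the lower arrow being the tautological inclusion of a full subcategory. The right vertical equivalence $\Vect((\mathcal{O}_K)_{\Prism}^{\perf},\overline{\mathcal{O}}_{\Prism}[1/p])\cong\Rep_{\mathbb{C}_p}(G_K)$ I treat as an input from the literature and would record as a preliminary lemma: the perfect prismatic site of $\Spf\mathcal{O}_K$ is, after tilting, the site of perfectoid $\mathcal{O}_K$-algebras; it is covered by $\Ainf=W(\mathcal{O}_{\mathbb{C}_p}^\flat)$ together with the prisms $W(S)$ for $S$ perfectoid over $\mathcal{O}_{\mathbb{C}_p}^\flat$; $\overline{\mathcal{O}}_{\Prism}[1/p]$ evaluates on $\Ainf$ to $\mathbb{C}_p$ with its $G_K$-action; and the crystal condition is exactly a semilinear descent datum along the pro-\'etale cover $\mathcal{O}_{\mathbb{C}_p}/\mathcal{O}_K$, so that one reads off a finite $\mathbb{C}_p$-semilinear $G_K$-representation, an assignment which is an equivalence by (almost) faithfully flat descent.

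For the left vertical functor I first put a rational Hodge--Tate crystal in coordinates. Running the Min-Wang analysis over $K$ instead of $\mathcal{O}_K$ — the Breuil-Kisin prism $(\mathfrak{S},E)$ is weakly final in $(\mathcal{O}_K)_{\Prism}$, $\overline{\mathcal{O}}_{\Prism}[1/p]$ evaluates on it to $\mathfrak{S}/E[1/p]=K$, and the Hodge--Tate reduction of its self-product is $\mathcal{O}_K\{X_1\}^{\wedge}_{\mathrm{pd}}[1/p]$ — shows that a rank-$l$ rational Hodge--Tate crystal $\mathbb{M}$ is the same as a finite-dimensional $K$-vector space $M=\mathbb{M}(\mathfrak{S},E)$ together with a matrix $A_1\in M_{l\times l}(K)$, the stratification being $\varepsilon(\underline e)=\underline e\sum_{n\geq 0}A_nX_1^{[n]}$ with $A_0=I$, $A_n=\prod_{i=0}^{n-1}(iE'(\pi)+A_1)$, subject to $p$-adic convergence of this series, i.e. $\prod_{i=0}^{n-1}(iE'(\pi)+A_1)\to 0$. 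I then make the composite $\Vect((\mathcal{O}_K)_{\Prism},\overline{\mathcal{O}}_{\Prism}[1/p])\to\Vect((\mathcal{O}_K)_{\Prism}^{\perf},\overline{\mathcal{O}}_{\Prism}[1/p])\cong\Rep_{\mathbb{C}_p}(G_K)$ explicit via the map of prisms $(\mathfrak{S},E)\to(\Ainf,\ker\theta)$, $u\mapsto[\pi^\flat]$: it sends $\mathbb{M}$ to $W=M\otimes_K\mathbb{C}_p$ on which $g\in G_K$ acts semilinearly by the matrix $\sum_{n\geq 0}A_n\,x_g^{[n]}$, where $x_g\in\mathcal{O}_{\mathbb{C}_p}$ is the image of $X_1$ under the map $\mathcal{O}_K\{X_1\}^{\wedge}_{\mathrm{pd}}=\mathfrak{S}^1/E\to\Ainf/\ker\theta$ induced by the pair of embeddings $\mathfrak{S}\rightrightarrows\Ainf$, $u\mapsto[\pi^\flat]$ and $u\mapsto g[\pi^\flat]$. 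Commutativity of the square is then visibly true, since both composites amount to evaluating $\mathbb{M}$ at $\Ainf$ and untwisting.

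The crux — this is the Min-Wang conjecture, proven by Gao — is to identify the Sen operator of $W$ as $\Theta_W=-A_1/\beta$ with $\beta=E'(\pi)$. Writing $g[\pi^\flat]=[\underline\varepsilon]^{c(g)}[\pi^\flat]$ for the cocycle $c\colon G_K\to\mathbb{Z}_p$ attached to the chosen compatible system of $p$-power roots of $\pi$ (here $[\underline\varepsilon]$ is the Teichm\"uller lift of $(1,\zeta_p,\zeta_{p^2},\dots)$), one gets $x_g=[\pi^\flat]\,(1-[\underline\varepsilon]^{c(g)})\,/\,E([\underline\varepsilon]^{c(g)}[\pi^\flat])$, an \emph{a priori} $0/0$-type expression which has a well-defined value in the divided-power completion lying in the PD-ideal. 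Expanding to first order in $c(g)$, using $1-[\underline\varepsilon]^{c(g)}\equiv-c(g)\,t$ modulo higher order for $t=\log[\underline\varepsilon]$, and $E([\pi^\flat])\equiv E'(\pi)\,([\pi^\flat]-\pi)$ modulo $\Fil^2 B_{\dR}^+$ (both $t$ and $[\pi^\flat]-\pi$ being generators of $\Fil^1 B_{\dR}^+$), a bookkeeping of these $\mathbb{C}_p(1)$-periods against the normalization of the Sen operator (the relation between $c(g)$ and $\log\chi(g)$) shows that the derivative of $g\mapsto x_g$ along $G_K$, read in $\mathbb{C}_p=B_{\dR}^+/\Fil^1$, equals $-1/E'(\pi)$; hence $\Theta_W=-A_1/\beta$. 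I expect the genuine work to be precisely this constant-chasing: making the periods and the factor $E'(\pi)$ cancel to give exactly $-1/E'(\pi)$, and checking independence of the choice of $p$-power roots (which changes $W$ only by a canonical isomorphism).

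With $\Theta_W=-A_1/\beta$ in hand everything else is formal Sen theory. Sen's decompletion theorem gives an equivalence between $\Rep_{\mathbb{C}_p}(G_K)$ and the category of finite-dimensional $K$-vector spaces equipped with an endomorphism, $W\mapsto(D_{\mathrm{Sen}}(W),\Theta_W)$, and the construction above identifies $(D_{\mathrm{Sen}}(W),\Theta_W)$ with $(M,-A_1/\beta)$ naturally in $\mathbb{M}$. Hence $\Hom_{\Rep_{\mathbb{C}_p}(G_K)}(W,W')=\{f\colon M\to M'\mid fA_1=A_1'f\}$, which is exactly the Hom-space of the corresponding pairs, i.e. of the crystals, so the left vertical functor is fully faithful. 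For the essential image: given $W$ with all Sen weights in $\mathbb{Z}+E'(\pi)^{-1}\mathfrak{m}_{\mathcal{O}_{\overline K}}$, put $A_1:=-\beta\Theta_W\in M_{l\times l}(K)$; then $\prod_{i=0}^{n-1}(iE'(\pi)+A_1)=\beta^{n}\prod_{i=0}^{n-1}(i-\Theta_W)$, and a direct eigenvalue valuation estimate (using $v_p(\beta)\geq 0$) shows this product tends to $0$ in $M_{l\times l}(K)$ if and only if every eigenvalue of $\Theta_W$ lies in $\mathbb{Z}+E'(\pi)^{-1}\mathfrak{m}_{\mathcal{O}_{\overline K}}$. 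Therefore $(M,A_1)$ defines a rational Hodge--Tate crystal mapping to $W$; conversely, every crystal yields a $W$ whose Sen weights lie in that set. Thus the left vertical functor is an equivalence onto $\Rep_{\mathbb{C}_p}^{nHT}(G_K)$, and together with the right vertical equivalence, the explicit form of the top (restriction) arrow, and the tautological nature of the bottom inclusion, the diagram commutes as asserted.
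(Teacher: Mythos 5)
This theorem is not proven in the paper at all: it is imported from Gao \cite{Gao22} (Thm.~1.1.3 there), with the classification of rational Hodge--Tate crystals and the right-hand vertical equivalence coming from Min--Wang \cite{MW21}. Measured against those sources, your outline reconstructs the actual architecture faithfully: descent over the perfect prismatic site for the right vertical arrow, the stratification datum $(M,A_1)$ with the convergence condition $\prod_{i=0}^{n}(iE'(\pi)+A_1)\to 0$ for the source of the left arrow, the identification of the Sen operator with $-A_1/\beta$, and then formal Sen theory plus the eigenvalue/valuation analysis of the convergence condition to get full faithfulness and the essential image. The equivalence of the convergence condition with the nearly Hodge--Tate condition on eigenvalues is exactly Gao's Corollary~4.1.6, as you guessed.

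The one place where your sketch is materially thinner than the real argument is the computation of the Sen operator, and the difficulty there is not only constant-chasing. The classical Sen operator lives on the cyclotomic decompletion $D_{\mathrm{Sen},K_{p^\infty}}(W)$ and is the derivative of the $\Gamma_K$-action; but the $K$-form $M$ furnished by the crystal is a decompletion along the \emph{Kummer} tower: since $\gamma$ fixes $[\pi^\flat]$ one has $X_1(\gamma)=0$, so $U(\gamma)=I$ and differentiating $g\mapsto x_g$ in the $\gamma$-direction gives nothing. What your first-order expansion actually computes is $\nabla_\tau$, i.e.\ the Kummer--Sen operator $N_\nabla=\tfrac{1}{p\mathfrak t}\nabla_\tau$ acting on $M\otimes_K K_\infty=(W^{G_L})^{\hat G\text{-la},\gamma=1}$ (compare \cref{operator and stratification} for the de Rham lift of exactly this computation). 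Converting $N_\nabla$ into the cyclotomic Sen operator requires the locally-analytic-vector machinery of Berger--Colmez and Gao--Poyeton: both operators extend to $D\otimes\hat L^{\hat G\text{-la}}$, where they satisfy $N_\nabla=\theta(u\lambda')\,\nabla_\gamma$, and only after dividing by the unit $\theta(u\lambda')$ does one land on $-A_1/\beta$. So the plan is right, but the step you describe as bookkeeping of periods is really the two-tower comparison that constitutes the main theorem of \cite{Gao22}; as written, ``the derivative of $g\mapsto x_g$ along $G_K$ read in $\mathbb{C}_p$'' is not yet the Sen operator.
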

Then Gao asked a natural question that whether we can deform this diagram by replacing the rational Hodge-Tate crystals with de Rham crystals in this picture, see \cite[Remark 1.1.4]{Gao22}. 

This is one of the motivations of our study on the de Rham prismatic crystals over $(\mathcal{O}_{K})_{\Prism}$ and we wish to give a partial answer to this question. To state our result, we need some notations:
\begin{notation}
\begin{itemize}
    \item $\Rep_{B_{\dR}^+}^{\fp}(G_K)$ is defined to be the category of finite projective $B_{\dR}^+$-modules equipped with a semi-linear $G_K$ action.
    \item $M\in \Rep_{B_{\dR}^+}^{\fp}(G_K)$ is called \textit{nearly de Rham} if $M/tM\in \Rep_{\mathbb{C}_p}^{nHT}(G_K)$, i.e. all of the Sen weights of $M/tM$ are in the subset $\mathbb{Z}+E^{\prime}(\pi)^{-1}\mathfrak{m}_{\mathcal{O}_{\overline{K}}}$. (In this paper $t$ is identified with $E([\pi^{\flat}])$, where $[\pi^{\flat}]$ is defined in the end of this section).  Denote $\Rep_{B_{\dR}^+}^{\fp,ndR}(G_K)$ to be the full subcategory of $\Rep_{B_{\dR}^+}^{\fp}(G_K)$ consisting of such $M$.
\end{itemize}
\end{notation}

Now we are ready to state our main result:
\begin{theorem}\label{main commutative diagram}
    We have a commutative diagram
    \[\xymatrix{\operatorname{Vect}((\mathcal{O}_{K})_{\Prism}, (\mathcal{O}_{\Prism}[\frac{1}{p}])_{\mathcal{I}}^{\wedge})\ar[d]^{}\ar[r]^{V}& \operatorname{Vect}((\mathcal{O}_{K})_{\Prism}^{\perf}, (\mathcal{O}_{\Prism}[\frac{1}{p}])_{\mathcal{I}}^{\wedge})\ar[d]_{}^{T}
\\\Rep_{B_{\dR}^+}^{\fp,ndR}(G_K)  \ar@{^{(}->}[r]&\Rep_{B_{\dR}^+}^{\fp}(G_K)}\]
such that $V$ is fully faithful and $T$ is an equivalence. Here $V$ is given by restricting a de Rham prismatic crystal to the perfect prismatic site and $T$ is the evaluation at the $\Ainf$ prism. We will call $T$ the \textbf{de Rham realization functor} later.
\end{theorem}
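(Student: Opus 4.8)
The proof will proceed by first analyzing the right-hand vertical arrow $T$, then the left vertical arrow, then the commutativity of the square, and finally the full faithfulness of $V$. For $T$: evaluation at the $\Ainf$ prism of a de Rham crystal on the perfect prismatic site $(\mathcal{O}_K)_{\Prism}^{\perf}$ yields a finite projective module over $(\Ainf[\tfrac1p])^{\wedge}_{(I)} \cong B_{\dR}^+$ (the completed localization), and the crystal property with respect to the $G_K$-action on $\Ainf$ (which is the standard trick, already used for $T$ in Theorem~\ref{Gao theorem} and in \cite{BS21}, \cite{Wu21}) equips it with a semilinear $G_K$-action, hence an object of $\Rep_{B_{\dR}^+}^{\fp}(G_K)$. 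Essential surjectivity and full faithfulness of $T$ should follow from the fact that the perfect prismatic site is "pro-\'etale-local" — i.e., $\Ainf$ (or rather its $G_K$-orbit) is a cover, so descent for finite projective $B_{\dR}^+$-modules with semilinear $G_K$-action against this cover gives the equivalence; this is a $B_{\dR}^+$-version of Kedlaya--Liu style descent and is structurally identical to the perfect case in the Hodge--Tate setting of \cite{Gao22}. I expect this half to be essentially formal given the preceding sections.

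For the left vertical arrow I would use the structural description of de Rham crystals on $(\mathcal{O}_K)_{\Prism}$ established earlier in the paper (the one alluded to in the abstract: a de Rham crystal is controlled by the sequence of matrices $\{A_{m,1}\}_{m\geq 0}$ with $A_{0,1}$ nilpotent, which is the de Rham deformation of the Min--Wang / Gao classification in Theorem~2.7 and Theorem~\ref{Gao theorem}). Concretely, evaluating a de Rham crystal at the Breuil--Kisin prism gives a finite projective module over $(\mathfrak{S}[\tfrac1p])^{\wedge}_{(E)}$ together with a stratification encoded by these matrices; base-changing along $(\mathfrak{S}[\tfrac1p])^{\wedge}_{(E)} \to B_{\dR}^+$ and using the stratification to produce the $G_K$-descent datum yields the functor landing in $\Rep_{B_{\dR}^+}^{\fp}(G_K)$. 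The point that the image lands in the \emph{nearly de Rham} subcategory $\Rep_{B_{\dR}^+}^{\fp,ndR}(G_K)$ is: modulo $t$, one recovers precisely the rational Hodge--Tate crystal picture, whose associated $\mathbb{C}_p$-representation has Sen operator $-A_{0,1}/\beta$ by Gao's theorem; since $A_{0,1}$ is nilpotent, the Sen weights of $M/tM$ lie in $\mathbb{Z} + E'(\pi)^{-1}\mathfrak{m}_{\mathcal{O}_{\overline K}}$ (in fact the nilpotence forces the weights to be in $E'(\pi)^{-1}\mathfrak{m}$, which sits inside the required subset). Commutativity of the square is then the compatibility of the two descriptions of the $G_K$-action — one coming from the stratification of the Breuil--Kisin prism, the other from the $G_K$-action on $\Ainf$ — which is checked by tracing through the canonical map of prisms $\mathfrak{S} \to \Ainf$ and the induced comparison of crystal evaluations; this is a bookkeeping computation with the explicit formulas for the $A_{m,1}$.

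The substantive content — and the main obstacle — is the full faithfulness of $V$. Faithfulness is clear once $T$ is an equivalence and the square commutes, so the real work is essential injectivity on $\Hom$'s, i.e., showing that a morphism of de Rham crystals on $(\mathcal{O}_K)_{\Prism}$ can be detected after restriction to the perfect site, equivalently after evaluation at $\Ainf$ together with its $G_K$-action. This amounts to a \textbf{decompletion/descent statement}: the category of de Rham crystals on $(\mathcal{O}_K)_{\Prism}$ (equivalently, finite projective modules over $(\mathfrak{S}[\tfrac1p])^{\wedge}_{(E)}$ with the prescribed stratification data) embeds fully faithfully into $B_{\dR}^+$-representations of $G_K$. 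The key tool, as the abstract signals, is a Sen-style decompletion theorem for $B_{\dR}^+$-representations of $G_K$: one descends a $G_K$-equivariant $B_{\dR}^+$-module first to $B_{\dR,L}^+$ for a suitable Kummer-type or cyclotomic-type extension $L/K$ and then, using the $\Gamma$- (or $\hat G$-) action, further down to the "small" ring over which the Breuil--Kisin-type stratification lives; the nearly de Rham hypothesis is exactly what guarantees the relevant cohomology groups controlling the descent (the $H^0$ and $H^1$ of $\mathrm{ad}(M)$ along the tower, graded by powers of $t$, with $H^*$ of $\mathbb{C}_p$-twists governed by Sen weights avoiding $\mathbb{Z}_{>0}$ or similar) behave well enough for the descent to be an equivalence onto its image. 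I would carry this out by the usual dévissage on the $t$-adic filtration: reduce to the Hodge--Tate ($\mathbb{C}_p$-coefficient) statement, where Gao's fully faithfulness in Theorem~\ref{Gao theorem} applies, and then inductively lift across the extensions $0 \to t^{n+1}M/t^{n+2}M \to M/t^{n+2}M \to M/t^{n+1}M \to 0$, the obstruction and ambiguity groups vanishing because the Sen weights of each graded piece (a Tate twist of $M/tM$) stay in the nearly-Hodge--Tate range. The delicate points will be (i) making the decompletion work uniformly over all $t$-adic layers rather than just modulo $t$, which requires controlling constants in the Tate--Sen axioms as the twist grows, and (ii) matching the decompleted object with the stratification data of an honest prismatic crystal — i.e., promoting the descended $G_K$-action to genuine crystal/stratification structure on $(\mathcal{O}_K)_{\Prism}$, which is where the explicit matrix description and its convergence ($\lim \prod (iE'(\pi) + A_{0,1}) = 0$) re-enter to certify that one indeed lands in the image of the evaluation functor.
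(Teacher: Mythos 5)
Your outline for the right vertical arrow $T$ and for why the composite lands in $\Rep_{B_{\dR}^+}^{\fp,ndR}(G_K)$ matches the paper's approach (Galois/\v{C}ech descent against the weakly final perfect prism plus the first descent step $W\mapsto W^{G_L}$, and reduction mod $\mathcal{I}$ to Gao's theorem), apart from a misreading: $A_{0,1}$ is only ``nilpotent'' in the topological sense $\lim_n\prod_{i=0}^n(iE'(\pi)+A_{0,1})=0$, which forces the Sen weights into $\mathbb{Z}+E'(\pi)^{-1}\mathfrak{m}_{\mathcal{O}_{\overline K}}$, not into $E'(\pi)^{-1}\mathfrak{m}_{\mathcal{O}_{\overline K}}$ alone. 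You also correctly identify the Sen-style decompletion along the Kummer tower as the key tool. (One caveat there: the decompletion theorem is proved for all of $\Rep_{B_{\dR}^+}^{\fp}(G_K)$ and does not use the nearly de Rham hypothesis, nor Tate--Sen constants across twists; it goes through locally analytic vectors.)

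The genuine gap is in your mechanism for fullness of $V$. Your d\'evissage claims the ``obstruction and ambiguity groups'' along the extensions $0\to t^{n+1}M/t^{n+2}M\to M/t^{n+2}M\to M/t^{n+1}M\to 0$ vanish because the Sen weights of the graded pieces stay in the nearly Hodge--Tate range. That is false: the relevant groups are $H^1$ of (Breuil--Kisin twists of) a rational Hodge--Tate crystal on the prismatic side and $H^1(G_K,-)$ on the Galois side, and neither vanishes in general (the paper's Section~3 only shows prismatic cohomology is concentrated in degrees $0$ and $1$). What the five-lemma comparison of the two long exact sequences actually requires is an isomorphism on $H^0$ \emph{and injectivity of the comparison map on $H^1$} for each twisted graded piece; Gao's equivalence of categories gives the former but you neither state nor establish the latter, and there are further $R^1\lim$ issues in passing from $M/t^n$ to $M$. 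The paper avoids all of this with a different argument that your proposal has no substitute for: by the decompletion theorem a $G_K$-invariant vector of $V(\mathcal{M})$ lies in $D_{\mathrm{Sen},K_{\infty}[[t]]}(V(\mathcal{M}))=M\otimes_{K[[t]]}K_{\infty}[[t]]$, so one writes it as $\underline e\sum_m D_m t^m$ with $D_m$ over $K_{\infty}$ and computes the $\tau$-action explicitly; the crucial step is that $\theta(X_1(\tau))$ is transcendental over $K$ (via \cite[Lemma~2.4.4]{DL21} and Weierstrass preparation), which upgrades the invariance identity, a priori valid only at the specific element $X_1(\tau)$, to an identity in a free variable --- and that identity is precisely the \v{C}ech--Alexander condition computing $H^0_{\Prism}(\mathcal{O}_K,\mathcal{M})$, simultaneously forcing $D_m\in M_{l\times 1}(K)$. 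Without this transcendence input (or a correct $H^1$-injectivity statement), your argument does not close.
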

As a corollary, if we abuse the notation by still using $V$ to denote the composition of $T$ and $V$, then we have that:
\begin{cor}
$V: \operatorname{Vect}((\mathcal{O}_{K})_{\Prism}, (\mathcal{O}_{\Prism}[\frac{1}{p}])_{\mathcal{I}}^{\wedge})\longrightarrow \Rep_{B_{\dR}^+}^{\fp,ndR}(G_K)$ is fully faithful. 
\end{cor}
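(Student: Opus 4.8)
The plan is to deduce this formally from Theorem \ref{main commutative diagram}; no further input is needed. First I would extract from that theorem the two facts about the displayed square that we use: the top horizontal functor $V$ is fully faithful, and the right vertical functor $T$ (the de Rham realization functor) is an equivalence of categories. Since the composition of a fully faithful functor with an equivalence of categories is again fully faithful, the composite
\[
T\circ V\colon \operatorname{Vect}\bigl((\mathcal{O}_{K})_{\Prism}, (\mathcal{O}_{\Prism}[\tfrac{1}{p}])_{\mathcal{I}}^{\wedge}\bigr)\longrightarrow \Rep_{B_{\dR}^+}^{\fp}(G_K)
\]
is fully faithful.

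Next I would invoke commutativity of the square. It identifies $T\circ V$ with the composite of the left vertical functor, say $V'\colon \operatorname{Vect}\bigl((\mathcal{O}_{K})_{\Prism}, (\mathcal{O}_{\Prism}[\tfrac{1}{p}])_{\mathcal{I}}^{\wedge}\bigr)\to \Rep_{B_{\dR}^+}^{\fp,ndR}(G_K)$, followed by the inclusion $\iota\colon \Rep_{B_{\dR}^+}^{\fp,ndR}(G_K)\hookrightarrow \Rep_{B_{\dR}^+}^{\fp}(G_K)$. In particular $T\circ V$ factors through the full subcategory of nearly de Rham representations, which is exactly what justifies the abuse of notation in the statement (writing $V$ for the functor $T\circ V$ regarded as landing in $\Rep_{B_{\dR}^+}^{\fp,ndR}(G_K)$, i.e. for $V'$). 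Finally, $\iota$ is the inclusion of a full subcategory and hence fully faithful, so for all objects $x,y$ the bijection $\Hom(x,y)\xrightarrow{\sim}\Hom(\iota V'x,\iota V'y)$ factors as $\Hom(x,y)\to\Hom(V'x,V'y)\xrightarrow{\sim}\Hom(\iota V'x,\iota V'y)$, the second map being bijective; therefore the first map is bijective, i.e. $V'$ is fully faithful. This $V'$ is the functor named $V$ in the corollary, completing the argument.

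There is essentially no obstacle here: the corollary is a purely formal consequence of Theorem \ref{main commutative diagram}, into which all the substantive content has already been absorbed — namely the full faithfulness arising from the matrix description $\{A_{m,1}\}_{m\ge 0}$ of de Rham crystals and the $B_{\dR}^+$-decompletion (Sen-style) theorem underlying the equivalence $T$. The only thing to be careful about is the bookkeeping of which arrow in the square is being composed with which, and the elementary two-out-of-three remark for the Hom-set maps used in the last step.
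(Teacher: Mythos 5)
Your argument is correct and matches the paper's intent exactly: the corollary is stated there as an immediate formal consequence of \cref{main commutative diagram} (composite of a fully faithful functor with an equivalence, landing in a full subcategory by commutativity of the square), and no further input is needed. The only substantive content is indeed already absorbed into the theorem, so your careful bookkeeping of the corestriction through the full subcategory $\Rep_{B_{\dR}^+}^{\fp,ndR}(G_K)$ is all that the paper leaves implicit.
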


We now explain how to prove \cref{main commutative diagram}. 
Once the basic properties of $V$ and $T$ are established, then the image of $V$ naturally lands in $\Rep_{B_{\dR}^+}^{\fp,ndR}(G_K)$ as a de Rham crystal naturally specializes to a rational Hodge-Tate crystal (see \cref{specializing Hodge Tate} for details) and rational Hodge-Tate crystals are realized to nearly Hodge-Tate representations by \cref{Gao theorem}.

To study the behavior of $V$ and $T$, we need several preliminaries.

First we give an explicit description of de Rham crystals in terms of a stratification of the \v{C}ech nerve associated to the Breuil-Kisin prism by identifying $\bdr(\mathfrak{S}^{1})$ (resp. $\bdr(\mathfrak{S}^{1})$) with $K[[T]]$ (resp. $K\{X_1\}^{\wedge}_{\rm pd}[[T]]$) (see \cref{Equ-structure} for details), then we have the following:
\begin{theorem}[\cref{Main theorem I}]
    Let $M$ be a finite free $\bdr(\mathfrak{S})$-module and $\varepsilon$ be an $\mathcal{O}_K\{X_1\}^{\wedge}_{\rm pd}$-linear isomorphism 
$$\varepsilon: M\otimes_{K[[T]],\delta_{0}^{1}} K\{X_1\}^{\wedge}_{\rm pd}[[T]] \stackrel{\sim}{\rightarrow} M\otimes_{K[[T]],\delta_{1}^{1}} K\{X_1\}^{\wedge}_{\rm pd}[[T]].$$
  Fix a basis $\underline e$ of $M$ and write $\epsilon(\underline{e})=\underline e\cdot \sum_{m\geq0}(\sum_{n\geq 0}A_{m,n}X^{[n]})t^m$, then if $(M,\varepsilon)$ is induced from a de Rham prismatic crystal $\mathcal{M} \in \operatorname{Vect}((\mathcal{O}_{K})_{\Prism}, (\mathcal{O}_{\Prism}[\frac{1}{p}])_{\mathcal{I}}^{\wedge})$, the following holds:
    \begin{itemize}
        \item $A_{0,0}=I$ and $A_{i,0}=0$ for $i>0$.
        \item $A_{0,1} \in M_l(K)$ satisfies that $\lim_{n\to+\infty}\prod_{i=0}^n(iE'(\pi)+A_{0,1}) = 0$.
        \item $A_{m,n+1}=(\beta(n-m)+A_{0,1})A_{m,n}+\sum_{\substack{i+j=m\\i\leq m-1}}(A_{j,1}+(n-i)\theta_{1,j})A_{i,n}$ for $m,n\in \mathbb{N}^{\geq 0}$. In particular, $\{A_{m,n}\}$ is determined by $\{A_{m,1}\}_{m\geq 0}$.
    \end{itemize}
    where $\theta_{1,j}$ is defined in \cref{image of t}.
\end{theorem}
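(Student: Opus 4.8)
The plan is to translate the crystal condition into a differential-type recursion by exploiting the comparison of the two projection maps $\delta_0^1, \delta_1^1: \bdr(\mathfrak{S}) \to \bdr(\mathfrak{S}^1) \cong K\{X_1\}^{\wedge}_{\mathrm{pd}}[[T]]$ together with the cocycle condition satisfied by $\varepsilon$ on the double \v{C}ech nerve $\mathfrak{S}^2$. First I would recall, from the identifications set up in \cref{Equ-structure} and \cref{image of t}, exactly how $T$ (the variable in $K\{X_1\}^{\wedge}_{\mathrm{pd}}[[T]]$) and $t = E([\pi^\flat])$ relate under $\delta_0^1$ and $\delta_1^1$: one map sends $t \mapsto t$ while the other introduces a correction involving $X_1$ and $\beta = E'(\pi)$, and this is precisely where the quantities $\theta_{1,j}$ enter. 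Writing $\varepsilon(\underline e) = \underline e \cdot \sum_{m \geq 0}\big(\sum_{n \geq 0} A_{m,n} X_1^{[n]}\big) t^m$, the normalization $A_{0,0} = I$ and $A_{i,0} = 0$ for $i > 0$ is just the statement that $\varepsilon$ reduces to the identity modulo the augmentation ideal $(X_1)$ — i.e. that $\varepsilon$ is genuinely a stratification (descent datum) refining the module $M$, which is automatic for a crystal.

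Next I would extract the claim about $A_{0,1}$. Reducing $(M,\varepsilon)$ modulo $t$ recovers the rational Hodge--Tate crystal underlying $\mathcal{M}$ (see \cref{specializing Hodge Tate}), and the stratification matrix of that Hodge--Tate crystal is exactly $A_{0,1}$ read off modulo $t$; by the Min--Wang description recalled above (their Theorem on $(M,\varepsilon)$ for Hodge--Tate crystals), such a matrix must satisfy $\lim_{n \to \infty}\prod_{i=0}^{n}(iE'(\pi) + A_{0,1}) = 0$. This reduces the first two bullets to already-established results.

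The heart of the argument is the recursion in the third bullet. Here I would impose the cocycle condition $p_{02}^*\varepsilon = p_{12}^*\varepsilon \circ p_{01}^*\varepsilon$ on $\mathfrak{S}^2$, expand both sides as power series in the two pd-variables $X_1, X_2$ and in $t$, and compare coefficients. Equivalently — and more efficiently — one differentiates: the associativity/cocycle identity forces the "connection-like" operator obtained by applying $\partial/\partial X_1$ at $X_1 = 0$ to be compatible with the $t$-adic filtration, and tracking how $t$ itself moves under $\delta_0^1$ versus $\delta_1^1$ contributes the terms $(\beta(n-m) + A_{0,1})A_{m,n}$ (the "diagonal" part, coming from the leading behavior of $t \mapsto t + (\text{shift})$) and the convolution sum $\sum_{i+j=m, i \leq m-1}(A_{j,1} + (n-i)\theta_{1,j})A_{i,n}$ (the lower-order corrections, where $\theta_{1,j}$ records the $t^j$-coefficient of $\delta_1^1(t)/\delta_0^1(t)$ or the analogous discrepancy). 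Solving this for $A_{m,n+1}$ in terms of $A_{m,n}$ and lower $A_{i,n}$ gives the stated formula, and an easy induction on the pair $(m,n)$ ordered appropriately then shows every $A_{m,n}$ is a polynomial expression in $\{A_{m',1}\}_{m' \geq 0}$, proving the final "in particular" clause.

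The main obstacle I anticipate is purely bookkeeping rather than conceptual: one must pin down the precise power-series expansion of $\delta_0^1(t)$ and $\delta_1^1(t)$ in $K\{X_1\}^{\wedge}_{\mathrm{pd}}[[T]]$ (hence the exact definition of $\theta_{1,j}$ from \cref{image of t}) and then carefully match divided-power coefficients $X^{[n]}$ across a product of two such series on $\mathfrak{S}^2$ — the divided-power combinatorics $X^{[a]}X^{[b]} = \binom{a+b}{a}X^{[a+b]}$ must be handled correctly, and the $t$-adic and $X$-adic expansions must be organized so that the recursion closes (i.e. so that $A_{m,n+1}$ depends only on $A_{m,n}$ and on $A_{i,n}$ with $i < m$, never on $A_{m',n'}$ with $n' > n$). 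Once the expansions of $\delta_0^1(t), \delta_1^1(t)$ are fixed, the comparison of coefficients is a finite, mechanical computation, so I would present the expansion lemma carefully and then state the coefficient comparison, relegating the arithmetic to a short verification.
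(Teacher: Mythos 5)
Your proposal follows essentially the same route as the paper: the recursion is extracted by imposing the cocycle condition on $\mathfrak{S}^2$, expanding everything in $X_1$, $X_2$ and $t$ using $\delta_0^1(t)=\alpha t$ with $\alpha=1-\beta X_1+\cdots$, and comparing the coefficients of $X_1^{[0]}$ and $X_1^{[1]}$ (the latter being your ``differentiate at $X_1=0$'' step); the only input beyond divided-power bookkeeping is the identity $d_{p,s,1}=-p\theta_{1,s}$ for the $X_1$-linear part of the $t^s$-coefficient of $\alpha^p$, which is exactly the paper's \cref{calculate c}. Your two shortcuts for the easier bullets are legitimate variants rather than gaps: the paper derives $A_{i,0}=0$ for $i>0$ from the constant-in-$X_1$ part of the same cocycle identity by induction on $m$ (only $A_{0,0}=I$ is taken as given), whereas you invoke the diagonal/unit normalization of the stratification; and the paper obtains the convergence condition on $A_{0,1}$ as the $m=0$ instance of the recursion combined with $\lim_{n}A_{0,n}=0$, whereas you specialize mod $t$ to the underlying Hodge--Tate crystal and quote Min--Wang---both are valid since the mod-$t$ stratification is indeed $\sum_n A_{0,n}X_1^{[n]}$.
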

Actually we conjecture that the reverse is also true:
\begin{conjecture}[\cref{conjecture e=1}]\label{restate e=1 conjecture}
 Assume that $\{B_{m,1}\}$ is a sequence of matrices in $ M_l(K)$ such that
    \begin{itemize}
        \item $\lim_{n\to+\infty}\prod_{i=0}^n(iE'(\pi)+B_{0,1}) = 0$.
    \end{itemize}
    Then if we define a sequence of matrices
    $\{A_{m,n}\}$ via following:
\begin{itemize}
        \item $A_{0,0}=I$ and $A_{i,0}=0$ for $i>0$, $A_{m,1}=B_{m,1}$.
        \item $A_{m,n+1}=(\beta(n-m)+A_{0,1})A_{m,n}+\sum_{\substack{i+j=m\\i\leq m-1}}(A_{j,1}+(n-i)\theta_{1,j})A_{i,n}$ for $m,n\in \mathbb{N}^{\geq 0}$.
    \end{itemize}
then 
\begin{equation*}
    \epsilon(\underline{e})=\underline e\cdot \sum_{m\geq0}(\sum_{n\geq 0}A_{m,n}X^{[n]})t^m.
\end{equation*}
can serve as the stratification associated to certain prismatic de Rham crystal, i.e. the cocycle condition is satisfied.
\end{conjecture}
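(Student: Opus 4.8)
The plan is to verify directly the three statements that, by \cref{Equ-structure} and \cref{Main theorem I}, are what is needed for $(M,\epsilon)$ with $\epsilon(\underline e)=\underline e\cdot F$ and $F:=\sum_{m\geq 0}\big(\sum_{n\geq 0}A_{m,n}X_1^{[n]}\big)t^m$ to arise from a de Rham prismatic crystal: that $F$ in fact lies in $M_l(K\{X_1\}^{\wedge}_{\rm pd}[[T]])$, i.e.\ that each slice $\sum_n A_{m,n}X_1^{[n]}$ converges in $K\{X_1\}^{\wedge}_{\rm pd}$; that $F$ is invertible there; and that $F$ satisfies the cocycle condition over $\bdr(\mathfrak{S}^2)$. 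The $m=0$ slice is $\sum_n A_{0,n}X_1^{[n]}$ with $A_{0,n}=\prod_{i=0}^{n-1}(iE'(\pi)+A_{0,1})$, since the recursion at $m=0$ degenerates to $A_{0,n+1}=(nE'(\pi)+A_{0,1})A_{0,n}$, so its convergence is \emph{exactly} the hypothesis $\lim_n\prod_{i=0}^{n}(iE'(\pi)+A_{0,1})=0$ — the Min--Wang/Gao input already used in \cref{Gao theorem}. For $m\geq 1$ one argues by induction on $m$: since $A_{m,0}=0$, the recursion exhibits $A_{m,n}$ as a convolution of the already-convergent lower slices with the partial resolvents $\prod_{i=k+1}^{n-1}(E'(\pi)(i-m)+A_{0,1})$, and one bounds these. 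Invertibility of $F$ follows because $F\equiv A_{0,0}=I$ modulo $(X_1,t)$ with topologically nilpotent correction, treating the $X_1$-direction as in \cite{Gao22}.

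The essential point is the cocycle condition, which I would reduce to a uniqueness statement for a linear differential system. The recursion for $\{A_{m,n}\}$ is equivalent to saying that $F$ is the unique solution with $F|_{X_1=0}=I$ of an explicit first-order system $\partial_{X_1}F=\Theta\cdot F$ over $K\{X_1\}^{\wedge}_{\rm pd}[[T]]$, where $\Theta$ is assembled from $\{A_{m,1}\}_{m\geq 0}$, the coefficients $\theta_{1,j}$ of \cref{image of t} (which record the $X_1$-dependence of $t=E([\pi^{\flat}])$ inside $\bdr(\mathfrak{S}^1)$) and $E'(\pi)$; concretely the ``Euler'' contribution $E'(\pi)(n-m)$ in the recursion is $\partial_{X_1}$ paired against $X_1\partial_{X_1}-t\partial_t$, and the rest is matrix multiplication. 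This characterisation is intrinsic to the coordinate structure of the \v{C}ech nerve, so pulling it back along the three cosimplicial structure maps $\bdr(\mathfrak{S}^1)\to\bdr(\mathfrak{S}^2)$ shows that $p_{02}^{*}F$ and $(p_{12}^{*}F)(p_{01}^{*}F)$ both solve the same induced differential system over $\bdr(\mathfrak{S}^2)\cong K\{X_1,X_2\}^{\wedge}_{\rm pd}[[T]]$ with value $I$ at $X_1=X_2=0$; that system is integrable, being the pullback of a genuine coordinate stratification, so it has a unique such solution and therefore $p_{02}^{*}F=(p_{12}^{*}F)(p_{01}^{*}F)$, which is the cocycle identity. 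To separate the combinatorics from the convergence issues, I would first run this over a universal base in which the entries of the $B_{m,1}$ are indeterminates and all rings are PD- and $(X_1,X_2,t)$-adically completed — there the cocycle is a formal power-series identity — and only then specialise, which is legitimate thanks to the hypothesis on $A_{0,1}$.

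The hard part will be analytic rather than combinatorial. First, convergence of $\sum_n A_{m,n}X_1^{[n]}$ for $m\geq 1$ requires controlling $\prod_{i=k+1}^{n-1}(E'(\pi)(i-m)+A_{0,1})$ as $n\to\infty$, which is delicate precisely along the generalised $0$-eigenspace of $A_{0,1}$, where $\prod_{i=0}^{k}(E'(\pi)(i-m)+A_{0,1})$ need not be invertible so one cannot simply divide by it; this is the step that genuinely exploits the shape of the hypothesis, and is presumably why the statement is still only conjectural here. Second, upgrading ``the transported systems agree'' to ``the solutions agree'' demands a uniqueness theorem for $\partial_{X_1}F=\Theta F$ valid in the PD-completed rings, not merely in formal power series — again an interplay between the PD denominators $n!$ and the non-topologically-nilpotent part of $\Theta$. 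An alternative route is a $t$-adic induction with the Hodge--Tate cocycle of \cref{Gao theorem} (the case $t=0$) as base case, extending it one $t$-degree at a time and checking the obstruction vanishes; but that vanishing is the same consistency calculation, so the analytic difficulty above persists.
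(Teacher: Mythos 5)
First, a point of bookkeeping: the statement you are proving is stated in the paper as \cref{conjecture e=1} and is explicitly left open there. The paper offers no proof, only partial evidence: a reduction (in the case where $A_{0,1}$ commutes with all $A_{m,1}$) to explicit control of the polynomials $h_{m,j}$ of \cref{main theorem II}, a reformulation of the cocycle condition as the coboundary identity $\alpha^{a}\tilde f(\alpha t)\tilde f(t)^{-1}=\sum_{m}(\sum_n A_{m,n}X_1^{[n]})t^m$ of \cref{find invertible functions} (which would make the cocycle condition automatic by telescoping), and a verification of that identity only in $t$-degrees $k=0,1,2$. So there is no proof in the paper to compare yours against, and your proposal must be judged on its own.

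As a proof it has a genuine gap, and it is not quite the one you flag. The convergence of $\sum_n A_{m,n}X_1^{[n]}$ for $m\geq 1$, which you single out as the delicate analytic point, is asserted in \cref{remark with MW relation} to follow automatically by induction from the hypothesis on $A_{0,1}$, and in the commutative case this is confirmed by the closed forms of \cref{main theorem II}, which express each slice as a finite $K$-linear combination of the convergent series $(1-\beta X)^{m-j}X^{j}(1-\beta X)^{-A_{0,1}/\beta}$; so this is not where the conjecture gets stuck. The real gap is in your cocycle step. Your recursion is indeed equivalent to a first-order system $(1-\beta X_1)\partial_{X_1}F=\Theta F$ with $F|_{X_1=0}=I$, but to conclude that $p_{02}^{*}F$ and $(p_{12}^{*}F)(p_{01}^{*}F)$ solve \emph{the same} induced system over $\bdr(\mathfrak{S}^2)$ you must know how $\Theta$ transforms under the three cosimplicial maps of \cref{Equ-structure} — and that compatibility (a Griffiths-transversality-type identity involving $\delta_0^{2}(t)=\alpha t$ and the $\theta_{1,j}$) is precisely equivalent to the cocycle condition you are trying to establish. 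Your justification ``that system is integrable, being the pullback of a genuine coordinate stratification'' assumes the conclusion: until the cocycle condition is proved, $F$ is not known to be a stratification. Making the connection-to-stratification dictionary rigorous for $(\mathcal{O}_{\Prism}[\frac 1p])^{\wedge}_{\mathcal I}$-coefficients (where $t$ itself is not horizontal) is exactly the missing content; the paper's coboundary reformulation \cref{find invertible functions} is an attempt to sidestep this circularity, and it too remains unverified beyond low $t$-degree.
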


To show that $V$ is fully faithful, essentially it suffices to show that $H^0((\mathcal{O}_{K})_{\Prism},\mathcal{M})\cong \mathcal{M}(\Ainf)^{G_K}$. The idea is to write $\Vec{v}\in \mathcal{M}(\Ainf)$ as linear combinations of $\underline e$ with coefficients in $B_{\dR}^+\cong \mathbb{C}_p[[t]]$, then characterize the Galois invariant action. However, the main difficulty is that $B_{\dR}^+\cong \mathbb{C}_p[[t]]$ is only an abstract isomorphism, but couldn't be Galois equivariant! To solve this problem, we want to control the coefficient of $\Vec{v}$ provided that $\Vec{v}$ is Galois invariant. The key ingredient is the following Sen style decompletion theorem (see the end of this section for notations):
\begin{theorem}[\cref{combine three theorems}]
    Given $W\in \Rep_{B_{\dR}^+}^{\fp}(G_K)$ of rank $d$, define
\[D_{\mathrm{Sen},K_{\infty}[[t]]}(W)=(W^{G_L})^{\hat{G}-la,\gamma=1}.\]
Then this is a finite projective $K_{\infty}[[t]]$-module of rank $d$ such that 
\[D_{\mathrm{Sen},K_{\infty}[[t]]}(W)\otimes_{K_{\infty}[[t]]}B_{\dR}^{+}=W.\]
\end{theorem}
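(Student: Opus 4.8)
The plan is to factor the functor $W\mapsto D_{\mathrm{Sen},K_{\infty}[[t]]}(W)=(W^{G_L})^{\hat{G}-la,\gamma=1}$ as a chain of three successive decompletions, each of which I would prove by $t$-adic dévissage onto the corresponding (known) statement for $\mathbb{C}_p$-representations. Set $B_{\dR,L}^{+}:=(B_{\dR}^{+})^{G_L}$ and $L_{\dR}^{+}:=(B_{\dR,L}^{+})^{\hat{G}-la}$; note first that $t=E([\pi^{\flat}])$ is fixed by $G_L$ and by $\gamma$, so $t\in L_{\dR}^{+}$. The three steps are: (i) descent from $\overline{K}$ to $L$, giving a finite projective $B_{\dR,L}^{+}$-module $W^{G_L}$ of rank $d$ with $W^{G_L}\otimes_{B_{\dR,L}^{+}}B_{\dR}^{+}\xrightarrow{\sim}W$; (ii) passage to $\hat{G}$-locally analytic vectors, giving a finite projective $L_{\dR}^{+}$-module $(W^{G_L})^{\hat{G}-la}$ of rank $d$ with base change to $B_{\dR,L}^{+}$ recovering $W^{G_L}$; (iii) the $\gamma=1$-part, for which one shows $(L_{\dR}^{+})^{\gamma=1}=K_{\infty}[[t]]$ and that $((W^{G_L})^{\hat{G}-la})^{\gamma=1}$ is finite projective of rank $d$ over $K_{\infty}[[t]]$ with base change to $L_{\dR}^{+}$ recovering $(W^{G_L})^{\hat{G}-la}$. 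Composing the three base-change isomorphisms yields $D_{\mathrm{Sen},K_{\infty}[[t]]}(W)\otimes_{K_{\infty}[[t]]}B_{\dR}^{+}\xrightarrow{\sim}W$; and since $K_{\infty}[[t]]$ is a complete discrete valuation ring with residue field $K_{\infty}$, finite projective of rank $d$ over it just means finite free of rank $d$.

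For each of (i)--(iii) I would argue by reduction modulo $t$. Since $W$ is finite projective over $B_{\dR}^{+}$ it is $t$-adically complete and $t$-torsion-free, so $W=\varprojlim_{n}W/t^{n}W$ and each graded piece $t^{i}W/t^{i+1}W$, being annihilated by $t$, is a $\mathbb{C}_p$-representation of $G_K$ of dimension $d$ — concretely $W/tW$ twisted by the $1$-cocycle $g\mapsto\overline{g(t)/t}\in\mathbb{C}_p^{\times}$, which is trivial on $G_{K_{\infty}}$. To such $\mathbb{C}_p$-representations the Sen theory over the Kummer--cyclotomic tower $L/K$ applies (the Tate--Sen formalism of Colmez together with the locally analytic vectors techniques of Berger--Colmez and Gao's analysis of this $2$-dimensional tower): successively taking $G_L$-invariants, $\hat{G}$-locally analytic vectors and the $\gamma=1$-part produces finite free modules of rank $d$ over $\widehat{L}$, over $(\widehat{L})^{\hat{G}-la}$ and over $K_{\infty}$, with the corresponding base-change isomorphisms and — crucially for the dévissage — with these functors \emph{exact} on $\mathbb{C}_p$-representations (the obstructing $H^{1}$'s being either almost zero or annihilated by passing to locally analytic vectors and imposing $\gamma=1$). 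Granting this $\mathbb{C}_p$-input, I would then bootstrap up the $t$-adic filtration: applying the left-exact functor $D:=((-)^{G_L})^{\hat{G}-la,\gamma=1}$ to $0\to t^{i}W/t^{i+1}W\to W/t^{i+1}W\to W/t^{i}W\to 0$ and using exactness on $\mathbb{C}_p$-representations shows each $0\to D(t^{i}W/t^{i+1}W)\to D(W/t^{i+1}W)\to D(W/t^{i}W)\to 0$ is exact, so by induction $D(W/t^{n}W)$ is finite free of rank $d$ over $K_{\infty}[[t]]/t^{n}$. Taking the inverse limit, $D_{\mathrm{Sen},K_{\infty}[[t]]}(W)=\varprojlim_{n}D(W/t^{n}W)$ is $t$-adically complete, $t$-torsion-free, with $D_{\mathrm{Sen},K_{\infty}[[t]]}(W)/t$ finite free of rank $d$ over $K_{\infty}$, hence is itself finite free of rank $d$ over $K_{\infty}[[t]]$; and the canonical map $D_{\mathrm{Sen},K_{\infty}[[t]]}(W)\otimes_{K_{\infty}[[t]]}B_{\dR}^{+}\to W$ reduces mod $t$ to the $\mathbb{C}_p$-base-change isomorphism, so by $t$-adic completeness of both sides it is an isomorphism.

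The hard part will be step (iii), the decompletion along the cyclotomic direction: one must show that imposing $\gamma=1$ on the $\hat{G}$-locally analytic vectors pushes the coefficient ring all the way down from $\widehat{K_{\infty}}[[t]]$ to $K_{\infty}[[t]]$, i.e. $(L_{\dR}^{+})^{\gamma=1}=K_{\infty}[[t]]$, and not merely to some intermediate ring of locally analytic vectors. This requires verifying the Tate--Sen axioms (normalized traces with bounded denominators, an almost-splitting of the $\gamma$-action) for the tower $L/K$, which departs from the classical cyclotomic situation through the extra Kummer direction $\tau$, together with a description of $L_{\dR}^{+}$ precise enough to see that $\frac{\log\gamma}{\log\chi(\gamma)}$ converges on locally analytic vectors and defines a genuine Sen operator, so that the $\gamma=1$-part has the correct rank $d$. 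A secondary point of vigilance is the $t$-adic bootstrap itself: one must confirm that the obstruction groups arising when lifting across $0\to t^{i}W/t^{i+1}W\to W/t^{i+1}W\to W/t^{i}W\to 0$ are exactly the $H^{1}$'s killed by the $\mathbb{C}_p$-input, so that the filtration argument really is exact at every stage and the rank does not drop in the limit.
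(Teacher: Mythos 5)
Your three-step factorization ($G_L$-invariants, then $\hat{G}$-locally analytic vectors, then the $\gamma=1$ part) is exactly the decomposition used in the paper, whose proof of this theorem is literally the concatenation of \cref{descend along perfectoid L}, \cref{decend along locally analytic} and \cref{Main theorem III}. Your step (i) also matches the paper's argument for \cref{descend along perfectoid L}: dévissage along the $t$-adic filtration, with the surjectivity at each stage supplied by the vanishing of $H^1(G_L,V)$ for $V\in\Rep_{\mathbb{C}_p}(G_K)$ (almost purity over the perfectoid field $\hat{L}$), followed by a Mittag--Leffler passage to the limit.

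The genuine gap is in your proposal to run the \emph{same} dévissage for steps (ii) and (iii). Exactness of the decompletion functors on $\mathbb{C}_p$-representations (which does hold, a posteriori, by faithful flatness of $K_\infty\to\mathbb{C}_p$) is not what the bootstrap needs: to get $0\to D(t^iW/t^{i+1}W)\to D(W/t^{i+1}W)\to D(W/t^iW)\to 0$ exact on the right you need the connecting map into the first derived functor of $D$ evaluated on the $\mathbb{C}_p$-representation $t^iW/t^{i+1}W$ to vanish, and for the locally-analytic and $\gamma$-directions that obstruction group does \emph{not} vanish on $\mathbb{C}_p$-representations: already $H^1_{\mathrm{an}}(\Gamma_K,K_\infty)$ contains the nonzero class $\log\chi$, and more generally $H^1(\Gamma_K,\mathbb{C}_p(n))\neq 0$ exactly when $n=0$, which does occur among the graded pieces $t^iW/t^{i+1}W\cong (W/tW)(i)$ whenever $W/tW$ has a Sen weight equal to $-i$. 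So the surjectivity $D(W/t^{i+1}W)\twoheadrightarrow D(W/t^iW)$ cannot be extracted from an $H^1$-vanishing statement about the graded pieces; it is in fact equivalent to the decompletion theorem at level $i+1$, which is what you are trying to prove. The paper sidesteps this by establishing the torsion case for each fixed $k$ \emph{directly} rather than by induction on $k$: for step (ii) it invokes the identity $W^{\hat{G}\text{-}la}=(W^{\Gal(L/K_{p^\infty})})^{\Gamma_K\text{-}la}\otimes_{(B_{\dR,K_{p^\infty}}^+)^{\Gamma_K\text{-}la}}(B_{\dR,L}^+)^{\hat{G}\text{-}la}$ from Gao--Poyeton and reduces to Fontaine's classical decompletion via $K$-finite vectors for the one-dimensional group $\Gamma_K$; for step (iii) it first kills $\nabla_\gamma$ by exhibiting an explicit horizontal basis $H=\sum_s(-1)^sD_s\frac{(\theta_k(\mathfrak{t})-\beta_n)^s}{s!}$ solving $\partial_\gamma(H)+D_\gamma H=0$ over the whole ring $(B_{\dR,k,L}^+)^{\hat{G}\text{-}la}$ (convergence checked after $\theta$), and then performs étale descent from $L$ to $K_\infty$; only after the torsion case is settled for every $k$ does it pass to the inverse limit, where the surjectivity of the transition maps comes for free from the already-proved isomorphisms. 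Your plan would be repaired by replacing the inductive bootstrap in (ii) and (iii) with such a direct treatment of each $t^k$-torsion level (your identification $(L_{\dR}^+)^{\gamma=1}=K_\infty[[t]]$, correctly flagged as the other key input, is \cref{de rham analytic calculation} in the paper).
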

Here $(W^{G_L})^{\hat{G}-la,\gamma=1}$ is the subspace of $W$ consisting of those vectors which are $G_L$-invariant, $\hat{G}$-locally analytic and $\Gal(L/K_{\infty})$-invariant, see \cref{de rham analytic vectors} for details.

For those $W$ associated to a de Rham crystal, we can write $D_{\mathrm{Sen},K_{\infty}[[t]]}(W)$ concretely:
\begin{theorem}[\cref{decompletion for de Rham crystals}]\label{restate decompletion for de Rham crystals}
    Suppose $\mathcal{M}\in \operatorname{Vect}((\mathcal{O}_{K})_{\Prism}, (\mathcal{O}_{\Prism}[\frac{1}{p}])_{\mathcal{I}}^{\wedge})$. Let $M=\mathcal{M}(\mathfrak{S},(E(u)))$, and $V(\mathcal{M})=\mathcal{M}(\Ainf,(\xi))$, then
    \[D_{\mathrm{Sen},K_{\infty}[[t]]}(V(\mathcal{M}))=M\otimes_{K[[t]]} K_{\infty}[[t]],\]
here we have identified $\bdr(\mathfrak{S})$ with $K[[T]]$.
\end{theorem}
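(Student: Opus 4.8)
The plan is to realize $M\otimes_{K[[t]]}K_{\infty}[[t]]$ as an explicit $K_{\infty}[[t]]$-lattice inside $V(\mathcal{M})$, to check that it is contained in $D_{\mathrm{Sen},K_{\infty}[[t]]}(V(\mathcal{M}))$, and then to promote this inclusion to an equality by a faithful flatness argument. The basic input is the prism morphism $(\mathfrak{S},(E(u)))\to(\Ainf,(\xi))$ sending $u\mapsto[\pi^{\flat}]$. Since $\mathcal{M}$ is a crystal, this induces a semilinear map $M\to V(\mathcal{M})$ whose linearization is an isomorphism $M\otimes_{\bdr(\mathfrak{S})}B_{\dR}^{+}\xrightarrow{\sim}V(\mathcal{M})$; under the identification $\bdr(\mathfrak{S})\cong K[[t]]$ of \cref{Equ-structure} (so that $t=E([\pi^{\flat}])$) this becomes $M\otimes_{K[[t]]}B_{\dR}^{+}\xrightarrow{\sim}V(\mathcal{M})$. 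Put $N:=M\otimes_{K[[t]]}K_{\infty}[[t]]$; since $K_{\infty}[[t]]\to B_{\dR}^{+}$ is injective, $N$ maps injectively into $V(\mathcal{M})$, and the $B_{\dR}^{+}$-linear map $N\otimes_{K_{\infty}[[t]]}B_{\dR}^{+}\to V(\mathcal{M})$ it induces is the isomorphism above. Granting that $N$ lands in $D:=D_{\mathrm{Sen},K_{\infty}[[t]]}(V(\mathcal{M}))$, we finish as follows: by \cref{combine three theorems}, $D$ is finite projective over $K_{\infty}[[t]]$ and $D\otimes_{K_{\infty}[[t]]}B_{\dR}^{+}\xrightarrow{\sim}V(\mathcal{M})$; since this identification is compatible with the inclusion $N\hookrightarrow D$, the map $N\otimes_{K_{\infty}[[t]]}B_{\dR}^{+}\to D\otimes_{K_{\infty}[[t]]}B_{\dR}^{+}$ is an isomorphism, so $\coker(N\hookrightarrow D)\otimes_{K_{\infty}[[t]]}B_{\dR}^{+}=0$; as $B_{\dR}^{+}$ is a domain it is torsion-free, hence flat, over the DVR $K_{\infty}[[t]]$, and the homomorphism is local, hence faithfully flat, so $\coker(N\hookrightarrow D)=0$ and $N=D$.

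It remains to verify that a fixed basis $\underline{e}$ of $M$, viewed inside $V(\mathcal{M})$, lies in $(V(\mathcal{M})^{G_{L}})^{\hat{G}-la,\,\gamma=1}$. For any $g\in G_{\ki}$ one has $g([\pi^{\flat}])=[\pi^{\flat}]$, so $g$ fixes the image of the prism map $\mathfrak{S}\to\Ainf$; by functoriality of $\mathcal{M}$ under the resulting automorphism of $(\Ainf,(\xi))$, the group $G_{\ki}$ therefore acts trivially on $M\subseteq V(\mathcal{M})$. Since $G_{L}\subseteq G_{\ki}$ and $K_{\infty}[[t]]\subseteq(B_{\dR}^{+})^{G_{L}}$ (because $K_{\infty}\subseteq L$ and $t=E([\pi^{\flat}])$ is $G_{\ki}$-fixed), this gives $N\subseteq V(\mathcal{M})^{G_{L}}$. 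Applying the same observation to a lift of $\gamma\in\Gal(L/K_{\infty})$ to $G_{\ki}$ shows that $\gamma$ acts trivially on $N$, hence $N\subseteq(V(\mathcal{M})^{G_{L}})^{\gamma=1}$.

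The remaining and, I expect, hardest point is $\hat{G}$-local analyticity of $\underline{e}$. For this I would use the explicit stratification of \cref{Main theorem I}: evaluating the cocycle $\varepsilon$ along the two maps $\bdr(\mathfrak{S}^{1})\to B_{\dR}^{+}$ attached to an element $g\in G_{K}$ (the standard one and its $g$-twist, as identified in \cref{Equ-structure}) expresses the descent matrix $C(g)$, defined by $g(\underline{e})=\underline{e}\cdot C(g)$ with $C(g)$ invertible over $B_{\dR}^{+}$, in closed form as $C(g)=\sum_{m\geq0}\bigl(\sum_{n\geq0}A_{m,n}\,\lambda(g)^{[n]}\bigr)t^{m}$, where $\lambda(g)\in B_{\dR}^{+}$ is the value of $X_{1}$ under the $g$-twisted map. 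Concretely $\lambda(g)$ depends on $g$ only through a continuous $\mathbb{Z}_{p}$-valued Kummer cocycle and, on a small enough open subgroup of $\hat{G}$, is a locally analytic function of $g$ with topologically nilpotent values. The convergence hypothesis $\lim_{n\to+\infty}\prod_{i=0}^{n}(iE'(\pi)+A_{0,1})=0$, together with the recursion for the $A_{m,n}$ in \cref{Main theorem I}, is exactly what makes the double series converge and define a locally analytic, $B_{\dR}^{+}$-matrix-valued function near the identity of $\hat{G}$; hence $\underline{e}$ consists of $\hat{G}$-locally analytic vectors, so $N\subseteq D$ and the proof is complete. The difficulty is concentrated in this last step: one has to unwind the identifications of \cref{Equ-structure} to pin down $\lambda(g)$ precisely and then control the $p$-adic sizes of the divided powers $\lambda(g)^{[n]}$ uniformly in a neighborhood of $1$, so as to obtain genuine local analyticity of $g\mapsto C(g)$ and not merely continuity.
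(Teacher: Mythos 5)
Your proposal follows the same three\-/step route as the paper: embed $N:=M\otimes_{K[[t]]}K_{\infty}[[t]]$ into $V(\mathcal{M})$ via the prism map $u\mapsto[\pi^{\flat}]$, show that a basis $\underline e$ of $M$ lies in $(V(\mathcal{M})^{G_L})^{\hat G-la,\gamma=1}$, and promote the inclusion $N\subseteq D_{\mathrm{Sen},K_{\infty}[[t]]}(V(\mathcal{M}))$ to an equality by faithfully flat base change along $K_{\infty}[[t]]\to B_{\dR}^{+}$ together with \cref{combine three theorems}. Your $G_L$- and $\gamma$-invariance arguments (via $g([\pi^{\flat}])=[\pi^{\flat}]$ for $g\in G_{K_\infty}$, so that $U(g)=I$ since $X_1(g)=0$) and your descent step are correct and coincide with the paper's.

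The genuine gap is the step you yourself flag: the $\hat G$-local analyticity of $\underline e$. You describe what would need to be controlled but do not carry it out, and this is exactly where the content of the proof sits (it is \cref{analytic of unit} in the paper). To close it one argues as follows. By \cref{de rham analytic vectors} it suffices to work in $V(\mathcal{M})/t^{k}$ for each $k$. By \cref{image of X under theta}, $\theta_{k}(X_{1}(g))$ equals $\theta_{k}\left(\frac{[\pi^{\flat}](1-[\varepsilon])}{E([\pi^{\flat}])}\right)$ times an explicit polynomial in $c(g)$ with coefficients in $B_{\dR,k,L}^{+}$; hence $g\mapsto U(g)$ modulo $t^{k}$ becomes a power series in the coordinate $c(g)$ whose coefficients are controlled by the terms $A_{m,s}\,\theta_{k}\left(\frac{[\pi^{\flat}](1-[\varepsilon])}{E([\pi^{\flat}])}\right)^{s}/s!$ for $m\le k-1$, and local analyticity reduces to showing these tend to $0$ as $s\to\infty$. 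By \cref{check convergence after modulo} this convergence may be tested after applying $\theta$, where $\theta\left(\frac{[\pi^{\flat}](1-[\varepsilon])}{E([\pi^{\flat}])}\right)=\pi(1-\xi_{p})\,\theta\left(\frac{\xi}{E([\pi^{\flat}])}\right)$; the estimate $v_{p}\left((1-\xi_{p})^{s}\right)=\frac{s}{p-1}>v_{p}(s!)$ together with $\lim_{s\to\infty}A_{m,s}=0$ (which holds for every $m$, not only $m=0$, by the recursion of \cref{Main theorem I} as recorded in \cref{remark with MW relation}) gives the required decay. Without the explicit polynomial expression of \cref{image of X under theta} and the reduction\-/to\-/$\theta$ device of \cref{check convergence after modulo}, the ``control of the divided powers $\lambda(g)^{[n]}$'' you invoke remains an expectation rather than a proof.
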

 Now if $\vec{v}\in V(\mathcal{M})$ is $G_K$ invariant, then in particular it is a $\hat{G}$-locally analytic, $\Gal(L/K_{\infty})$ invariant element, hence  $\vec{v} \in D_{\mathrm{Sen},K_{\infty}[[t]]}(V(\mathcal{M}))$. Thanks to \cref{restate decompletion for de Rham crystals}, we can assume that $\vec{v}=\underline e\sum_{m=1}^{\infty} D_mt^m \in \mathcal{M}$ with $D_m\in M_{l\times 1}(K_\infty)$, from which we can calculate the $\hat{G}$-action concretely using \cref{cocycle g action}:
 $$0=(g-\Id)(\underline e\sum_{m=1}^{\infty} D_mt^m)=\underline e (\cdot (\sum_{m\geq0}(\sum_{n\geq 0}A_{m,n}X_{1}(g)^{[n]})t^m)(\sum_{m=1}^{\infty} g(D_m)(\alpha t)^m))-\underline e\sum_{m=1}^{\infty} D_mt^m.$$
 We want to show that this implies $\{D_m\}$ are actually matrices in $M_{l\times 1}(K)$ which are exactly determined by the same condition computing $H^0((\mathcal{O}_{K})_{\Prism},\mathcal{M})$ via \v{C}ech-Alexander complex, see \cref{prismatic invariant} for details. Then another difficulty is that $X_1$ is no longer a variable and we couldn't just compare the coefficient! To overcome it, essentially using \cite[Lemma 2.4.4]{DL21}, one can show that $\theta(X_1(\tau))$ is transcendental over $K$, this (plus some other conditions) implies the right hand side is always $0$ when viewing $X_1(g)$ as a "variable". As a result, we prove that 
\begin{theorem}[\cref{fully faithful of the restriction functor}]
    The restriction functor 
    \[V: \operatorname{Vect}((\mathcal{O}_{K})_{\Prism}, (\mathcal{O}_{\Prism}[\frac{1}{p}])_{\mathcal{I}}^{\wedge})\longrightarrow \operatorname{Vect}((\mathcal{O}_{K})_{\Prism}^{\perf}, (\mathcal{O}_{\Prism}[\frac{1}{p}])_{\mathcal{I}}^{\wedge})\]
is fully faithful.
\end{theorem}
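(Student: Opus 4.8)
The plan is to deduce full faithfulness of $V$ from a single cohomological statement: for every de Rham prismatic crystal $\mathcal{M}$ on $(\mathcal{O}_K)_\Prism$, the natural map
\[
H^0\big((\mathcal{O}_K)_\Prism,\mathcal{M}\big)\longrightarrow \mathcal{M}(\Ainf)^{G_K}
\]
is an isomorphism. This suffices because for de Rham crystals $\mathcal{M},\mathcal{N}$ the internal hom $\mathcal{H}=\underline{\Hom}(\mathcal{M},\mathcal{N})$ is again a de Rham crystal (its evaluations are finite projective and the crystal condition is preserved under $\Hom$ of projective modules), $\Hom(\mathcal{M},\mathcal{N})=H^0((\mathcal{O}_K)_\Prism,\mathcal{H})$, and, using that $T$ is an equivalence (\cref{main commutative diagram}), $\Hom(V\mathcal{M},V\mathcal{N})=H^0((\mathcal{O}_K)_\Prism^\perf,\mathcal{H}|_\perf)\cong\mathcal{H}(\Ainf)^{G_K}$; the comparison map is exactly the displayed one for $\mathcal{H}$. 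So I would first set up $V,T$ and the internal-hom formalism, then concentrate on the displayed isomorphism.

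For injectivity I would use the \v{C}ech--Alexander complex attached to the Breuil--Kisin prism: writing $M=\mathcal{M}(\mathfrak{S},(E(u)))$, one has $H^0((\mathcal{O}_K)_\Prism,\mathcal{M})=\Eq\big(M\rightrightarrows M\otimes_{\bdr(\mathfrak{S})}\bdr(\mathfrak{S}^1)\big)$, and the structure map $\mathfrak{S}\to\Ainf$ (with $u\mapsto[\pi^\flat]$) induces $M\hookrightarrow\mathcal{M}(\Ainf)$ because $\bdr(\mathfrak{S})\cong K[[t]]\hookrightarrow B_{\dR}^+$ and $M$ is free; hence only surjectivity is at stake. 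For surjectivity, let $\vec v\in\mathcal{M}(\Ainf)$ be $G_K$-invariant. Then $\vec v$ is $G_L$-invariant, $\hat G$-locally analytic (being $\hat G$-fixed), and $\Gal(L/K_\infty)$-invariant, so $\vec v\in D_{\mathrm{Sen},K_\infty[[t]]}(V(\mathcal{M}))$ by \cref{de rham analytic vectors} and \cref{combine three theorems}; by \cref{restate decompletion for de Rham crystals} this module is $M\otimes_{K[[t]]}K_\infty[[t]]$, so I may write $\vec v=\underline e\cdot\sum_{m\geq 0}D_mt^m$ with $D_m\in M_{l\times1}(K_\infty)$ and $t$-adic convergence. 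It then remains to prove that the $D_m$ lie in $M_{l\times1}(K)$ and satisfy exactly the relations that cut $H^0$ out of the \v{C}ech--Alexander complex (\cref{prismatic invariant}); granting this, $\vec v\in M$ is in the equalizer above and maps to the given vector.

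To extract those relations I would impose $G_K$-invariance through the explicit cocycle formula of \cref{cocycle g action}: for $g\in\hat G$, with stratification matrices $A_{m,n}$, the element $X_1(g)$, and the scalar $\alpha=\alpha(g)$ acting on $t$,
\[
0=(g-\Id)\vec v=\underline e\cdot\Big(\big(\textstyle\sum_{m}\sum_{n}A_{m,n}X_1(g)^{[n]}t^m\big)\big(\textstyle\sum_{m}g(D_m)(\alpha t)^m\big)-\textstyle\sum_{m}D_mt^m\Big).
\]
I expect the main obstacle here: $X_1(g)$ is not a free variable but a fixed element of the ambient ring, so one cannot directly compare coefficients in $X_1$. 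The way around it, following \cite[Lemma 2.4.4]{DL21}, is to take $g=\tau$ a topological generator of $\Gal(L/K_\infty)$ and show $\theta(X_1(\tau))$ is transcendental over $K$; combined with the $t$-adic convergence and the $K_\infty$-rationality of the $D_m$, this lets one treat $X_1(\tau)$ as an indeterminate, so the displayed identity forces the $X_1$-coefficientwise equations --- which are precisely the recursions governed by the $\{A_{m,1}\}$ of \cref{Main theorem I} (and the $\theta_{1,j}$ of \cref{image of t}) that describe $H^0$. Running the remaining elements of $\Gal(K_\infty/K)$ (i.e.\ the full $G_K$-action) then forces $D_m\in M_{l\times1}(K)$, placing $\vec v$ in $M$ and finishing the argument. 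Beyond the transcendence step, the remaining care is in identifying the resulting equations with the \v{C}ech--Alexander differential and in tracking the descent from $K_\infty[[t]]$ down to $\bdr(\mathfrak{S})=K[[t]]$.
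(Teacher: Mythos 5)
Your proposal is correct and follows essentially the same route as the paper: reduce to showing $H^0((\mathcal{O}_K)_\Prism,\mathcal{M})\cong\mathcal{M}(\Ainf)^{G_K}$, place a $G_K$-invariant vector in $D_{\mathrm{Sen},K_\infty[[t]]}(V(\mathcal{M}))=M\otimes_{K[[t]]}K_\infty[[t]]$ via the decompletion theorem, and use the transcendence of $\theta(X_1(\tau))$ over $K$ (via \cite[Lemma 2.4.4]{DL21} together with Weierstrass preparation) to promote the $\tau$-invariance identity, inductively modulo $t^k$, to an identity in a free variable, which yields exactly the \v{C}ech--Alexander equations. One small correction: there is no group $\Gal(K_\infty/K)$ to "run over"; in the paper the $K$-rationality of the $D_m$ falls out of the same forced polynomial identity (its constant term gives $\tau(D_m)=D_m$, and $K_\infty\cap K_{p^\infty}=K$), while $\gamma$-invariance is automatic because $X_1(\gamma)=0$ and $\gamma$ fixes $K_\infty$.
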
   
On the other hand, we want to understand the essential image of $V$, i.e. to determine which kind of de Rham representation of $G_K$ (here we implicitly identify $\operatorname{Vect}((\mathcal{O}_{K})_{\Prism}^{\perf}, (\mathcal{O}_{\Prism}[\frac{1}{p}])_{\mathcal{I}}^{\wedge})$ with $\rb$ via $T$ in \cref{main commutative diagram}) can be associated to a de Rham prismatic crystal. To state our conjecture in this direction, we need some preliminaries. 

For $W\in \rb$, $D_{\mathrm{Sen},K_{\infty}[[t]]}(W)$ is actually equipped with a natural monodromy operator $N_{\nabla}$:
\begin{theorem}[\cref{Kummer Sen operator}]\label{Restate Kummer Sen operator}
    Given $W\in \rb$, there is a $K_{\infty}$-linear operator
    \begin{equation*}
        N_{\nabla}: D_{\mathrm{Sen},K_{\infty}[[t]]}(W)\longrightarrow  D_{\mathrm{Sen},K_{\infty}[[t]]}(W)
    \end{equation*}
such that $N_{\nabla}$ satisfies Leibniz rule and that
\begin{equation*}
    N_{\nabla}(tv)=N_{\nabla}(t)v+tN_{\nabla}(v)=E^{\prime}(u)\lambda u \cdot v+tN_{\nabla}(v).
\end{equation*}
\end{theorem}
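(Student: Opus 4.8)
The plan is to construct $N_\nabla$ from the infinitesimal action of the ``Kummer element'' of $\hat{G}=\Gal(L/K)$ on $\hat{G}$-locally analytic vectors, renormalised so that it descends to the $K_\infty[[t]]$-submodule $D_{\mathrm{Sen},K_\infty[[t]]}(W)\subset W$. First I would fix a topological generator $\tau$ of $\Gal(L/K_{p^{\infty}})\cong\mathbb{Z}_p$, normalised by $\tau(\pi^{1/p^n})=\zeta_{p^n}\pi^{1/p^n}$ relative to the compatible systems $(\zeta_{p^n})$, $(\pi^{1/p^n})$ used to form $[\pi^{\flat}]$, so that $\gamma\tau\gamma^{-1}=\tau^{\chi(\gamma)}$ for all $\gamma\in\Gal(L/K_\infty)$. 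From the decompletion machinery behind \cref{combine three theorems}, $\tau$ acts locally analytically on $M_{\mathrm{la}}:=(W^{G_L})^{\hat{G}\text{-la}}$, and I would write $\nabla_\tau$ for its infinitesimal generator $\nabla_\tau(x)=\lim_{n}(\tau^{p^n}(x)-x)/p^n$; this is a derivation of $M_{\mathrm{la}}$, semilinear over the corresponding derivation of $(B_{\dR,L}^{+})^{\hat{G}\text{-la}}$, and $D_{\mathrm{Sen},K_\infty[[t]]}(W)=M_{\mathrm{la}}^{\Gal(L/K_\infty)=1}$.

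The renormalisation is forced by the desired value on $t$. Setting $t_{\mathrm{cyc}}:=\log[\epsilon]\in\Fil^{1}B_{\dR}^{+}$ (with $[\epsilon]$ the Teichm\"uller lift of the standard cyclotomic element of the tilt), and using that $\tau$ fixes $\epsilon$, one has $\tau^{p^n}([\pi^{\flat}])=[\epsilon]^{p^n}[\pi^{\flat}]=\exp(p^{n}t_{\mathrm{cyc}})[\pi^{\flat}]$, hence $\nabla_\tau([\pi^{\flat}])=t_{\mathrm{cyc}}[\pi^{\flat}]$ and $\nabla_\tau(t)=\nabla_\tau(E(u))=E'(u)t_{\mathrm{cyc}}u$. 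Since $t_{\mathrm{cyc}}\notin K_\infty[[t]]$, the bare operator $\nabla_\tau$ does not preserve $D_{\mathrm{Sen},K_\infty[[t]]}(W)$, so I would instead define
\[
N_\nabla:=\frac{\lambda}{t_{\mathrm{cyc}}}\cdot\nabla_\tau,\qquad\lambda:=\prod_{n\ge 0}\varphi^{n}\big(E(u)/E(0)\big).
\]
Here $\lambda\in\Fil^{1}B_{\dR}^{+}\setminus\Fil^{2}B_{\dR}^{+}$ --- the $n=0$ factor $E(u)/E(0)$ generates $\Fil^{1}$ because $E([\pi^{\flat}])$ is a distinguished element, and the factors with $n\ge 1$ are units --- so $\lambda/t_{\mathrm{cyc}}$ is a unit of $B_{\dR}^{+}$ and $N_\nabla(t)=\tfrac{\lambda}{t_{\mathrm{cyc}}}E'(u)t_{\mathrm{cyc}}u=E'(u)\lambda u\in\bdr(\mathfrak{S})\subset K_\infty[[t]]$, as required.

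Next I would run three verifications, all essentially formal. (i) $N_\nabla$ maps $D_{\mathrm{Sen},K_\infty[[t]]}(W)$ to itself: for $v$ with $\gamma v=v$, the identity $g\cdot(Xw)=(\Ad(g)X)\cdot(gw)$ together with $\Ad(\gamma)\nabla_\tau=\chi(\gamma)\nabla_\tau$ (obtained by differentiating $\gamma\tau\gamma^{-1}=\tau^{\chi(\gamma)}$) yields $\gamma(\nabla_\tau v)=\chi(\gamma)\nabla_\tau v$, while $\gamma(\lambda/t_{\mathrm{cyc}})=\chi(\gamma)^{-1}\lambda/t_{\mathrm{cyc}}$ because $\lambda$ is a power series in the $\Gal(L/K_\infty)$-fixed element $u=[\pi^{\flat}]$ and $\gamma(t_{\mathrm{cyc}})=\chi(\gamma)t_{\mathrm{cyc}}$; the two twists cancel. (ii) $\nabla_\tau$ annihilates $K_\infty$: $\tau^{p^n}$ fixes $\pi^{1/p^n}$, hence $\nabla_\tau$ kills each $K(\pi^{1/p^n})$ and thus all of $K_\infty$; consequently $N_\nabla$ is $K_\infty$-linear and, being $\lambda/t_{\mathrm{cyc}}$ times a derivation, satisfies $N_\nabla(xv)=N_\nabla(x)v+xN_\nabla(v)$ for $x\in K_\infty[[t]]$ --- in particular $N_\nabla(tv)=E'(u)\lambda u\cdot v+tN_\nabla(v)$. (iii) $N_\nabla v$ remains $\hat{G}$-locally analytic.

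I expect step (iii) to be the only real obstacle. That $\nabla_\tau v$ is locally analytic is standard (the Lie-algebra action preserves locally analytic vectors), but one needs $\lambda/t_{\mathrm{cyc}}\in(B_{\dR,L}^{+})^{\hat{G}\text{-la}}$, whereas a priori it is merely a quotient of two locally analytic generators of $\Fil^{1}$. I would deduce this from the explicit description of $(B_{\dR,L}^{+})^{\hat{G}\text{-la}}$ coming from the decompletion theory: writing $\lambda/t_{\mathrm{cyc}}=\big(E(u)/(E(0)t_{\mathrm{cyc}})\big)\cdot\prod_{n\ge 1}\varphi^{n}\big(E(u)/E(0)\big)$, the infinite product lies in $\bdr(\mathfrak{S})$ and so is locally analytic, reducing us to showing $E(u)/t_{\mathrm{cyc}}\in(B_{\dR,L}^{+})^{\hat{G}\text{-la}}$, which should follow because $E(u)=E([\pi^{\flat}])$ and $t_{\mathrm{cyc}}=\log[\epsilon]$ are locally analytic elements of $B_{\dR,L}^{+}$ generating the same ideal $\Fil^{1}$. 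With (iii) settled, $N_\nabla$ has all the stated properties.
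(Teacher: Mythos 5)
Your construction coincides with the paper's: since $\mathfrak{t}=\log([\varepsilon])/(p\lambda)$, your operator $(\lambda/t_{\mathrm{cyc}})\cdot\nabla_\tau$ is literally the paper's $N_\nabla=\tfrac{1}{p\mathfrak{t}}\nabla_\tau$, and your checks (i)--(iii) — the $\chi(\gamma)$-twist cancellation showing $\gamma N_\nabla=N_\nabla\gamma$, the vanishing of $\nabla_\tau$ on $K_\infty$, and the computation $N_\nabla(t)=E'(u)\lambda u$ — reproduce the paper's argument step for step (the paper just works modulo $t^k$ and passes to the inverse limit). The one point you leave heuristic, local analyticity of $\lambda/\log([\varepsilon])=(p\mathfrak{t})^{-1}$, is precisely what \cref{the image of t} supplies via the Frobenius-descent argument of Gao--Poyeton for $\mathfrak{t}^{\pm1}$, so the approach is essentially the same.
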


In the special case that $W=V(\mathcal{M})$, where $\mathcal{M}\in \operatorname{Vect}((\mathcal{O}_{K})_{\Prism}, (\mathcal{O}_{\Prism}[\frac{1}{p}])_{\mathcal{I}}^{\wedge})$ is a de Rham crystal corresponding to the pair $(M,\varepsilon)$ via \cref{algebraic description of crystal} for a finite free $\bdr(\mathfrak{S})$-module $M$ and a  stratification $\varepsilon$ such that  $$\varepsilon(\underline{e})=\underline e\cdot \sum_{m\geq0}(\sum_{n\geq 0}A_{m,n}X^{[n]})t^m,$$
then the monodromy operator $N_{\nabla}$ on $D_{\mathrm{Sen},K_{\infty}[[t]]}(W)$ reflects the stratification information. More precisely,
\begin{theorem}[\cref{operator and stratification}]
    On $D_{\mathrm{Sen},K_{\infty}[[t]]}(W)$, we have that 
    \[N_{\nabla}(\underline e)=-\underline e \lambda_1 u(\sum_{m=0}^{\infty}A_{m,1}t^m).\]
    Here $\lambda_1=\frac{\lambda}{E(u)}\in K[[t]]^{\times}$.
\end{theorem}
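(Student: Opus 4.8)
The plan is to read $N_{\nabla}$ off the stratification cocycle. By \cref{decompletion for de Rham crystals} we may identify $D:=D_{\mathrm{Sen},K_{\infty}[[t]]}(W)$ with $M\otimes_{K[[t]]}K_{\infty}[[t]]$, so that $\underline e$ is simultaneously a basis of $M$ over $\bdr(\mathfrak S)=K[[t]]$, of $W$ over $\bdr$, and of $D$ over $K_{\infty}[[t]]$; in particular $\underline e\otimes 1\in D$. Recall from the proof of \cref{Kummer Sen operator} that $N_{\nabla}$ is built from the infinitesimal action of the Kummer direction on $\hat G$-locally analytic vectors: writing $\tau$ for the chosen generator of the Kummer part of $\hat G$ (normalised so that $\tau([\pi^{\flat}])=[\varepsilon][\pi^{\flat}]$), $\nabla_{\tau}(v):=\frac{d}{dc}\big|_{c=0}\tau^{c}(v)$ for locally analytic $v$, and $t_{\mathrm{cyc}}:=\log[\varepsilon]$, one has $\nabla_{\tau}(t)=\nabla_{\tau}(E([\pi^{\flat}]))=E'([\pi^{\flat}])\,[\pi^{\flat}]\,t_{\mathrm{cyc}}=E'(u)\,u\,t_{\mathrm{cyc}}$. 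On $D$ the cyclotomic direction acts trivially (that is the $\gamma=1$ condition), so the residual Lie action is carried by $\nabla_{\tau}$ alone; since $\nabla_{\tau}$ raises the $\nabla_{\gamma}$-weight by $1$ (from $\gamma\tau\gamma^{-1}=\tau^{\chi(\gamma)}$) while $t_{\mathrm{cyc}}$ has weight $1$, the operator $\tfrac{\lambda}{t_{\mathrm{cyc}}}\nabla_{\tau}$ — and, up to a $K_{\infty}$-scalar, only it — preserves $D$, and it is pinned to be $N_{\nabla}$ by $N_{\nabla}(t)=\tfrac{\lambda}{t_{\mathrm{cyc}}}\nabla_{\tau}(t)=E'(u)\lambda u$, matching \cref{Kummer Sen operator}. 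Thus it suffices to compute $\nabla_{\tau}(\underline e)$.

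For the latter, recall from \cref{cocycle g action} that the semilinear $G_{K}$-action on $W=M\otimes_{K[[t]]}\bdr$ is $g(\underline e)=\underline e\cdot C(g)$ with
\[C(g)=\sum_{m\geq 0}\Big(\sum_{n\geq 0}A_{m,n}X_{1}(g)^{[n]}\Big)t^{m},\]
where $X_{1}(g)\in\bdr$ is the image of the stratification coordinate $X_{1}$ under the evaluation map attached to $g$, and $X_{1}(g)^{[n]}=X_{1}(g)^{n}/n!$ since $\bdr$ is a $\mathbb{Q}$-algebra. By \cref{image of t}, up to a unit $X_{1}(g)=\frac{([\varepsilon]^{c(g)}-1)[\pi^{\flat}]}{E([\pi^{\flat}])}$, with $c(g)\in\mathbb{Z}_{p}$ the Kummer cocycle; in particular $X_{1}(\mathrm{id})=0$, consistent with $C(\mathrm{id})=A_{0,0}=I$ by the normalisation $A_{i,0}=0$ ($i>0$) of \cref{Main theorem I}. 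Differentiating $C(\tau^{c})$ at $c=0$, the $n=0$ terms are constant and the $n\geq 2$ terms vanish because $X_{1}(\tau^{c})$ has a simple zero at $c=0$; hence, with $\dot X_{1}:=\frac{d}{dc}\big|_{c=0}X_{1}(\tau^{c})=-\,\tfrac{t_{\mathrm{cyc}}\,[\pi^{\flat}]}{E([\pi^{\flat}])}=-\,\tfrac{t_{\mathrm{cyc}}\,u}{t}$ (using $c(\tau)=1$, the sign fixed by the orientation of $\tau$),
\[\nabla_{\tau}(\underline e)=\underline e\cdot\frac{d}{dc}\Big|_{c=0}C(\tau^{c})=\underline e\cdot\dot X_{1}\sum_{m\geq 0}A_{m,1}t^{m}.\]

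Putting the pieces together, $N_{\nabla}(\underline e)=\tfrac{\lambda}{t_{\mathrm{cyc}}}\nabla_{\tau}(\underline e)=\underline e\cdot\tfrac{\lambda}{t_{\mathrm{cyc}}}\big(-\tfrac{t_{\mathrm{cyc}}u}{t}\big)\sum_{m\geq 0}A_{m,1}t^{m}=-\,\underline e\,\tfrac{\lambda}{t}\,u\sum_{m\geq 0}A_{m,1}t^{m}=-\,\underline e\,\lambda_{1}u\sum_{m\geq 0}A_{m,1}t^{m}$, using $\lambda_{1}=\lambda/E(u)=\lambda/t$; the non-rational factor $t_{\mathrm{cyc}}/t$ in $\dot X_{1}$ is cancelled by the normalisation, which is exactly why the answer lands in $D$. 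I expect the main obstacle to be pinning down this normalisation rigorously — that on $D$ one has literally $N_{\nabla}=\tfrac{\lambda}{t_{\mathrm{cyc}}}\nabla_{\tau}$, with no stray twist appearing when restricting the $\hat G$-Lie action to the $\gamma=1$ part — and keeping the sign and the unit in $X_{1}(g)$ coherent with the conventions of \cref{Kummer Sen operator} and \cref{image of t}; the differentiation step is routine once $X_{1}(\mathrm{id})=0$ is known.
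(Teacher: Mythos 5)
Your proposal is correct and follows essentially the same route as the paper: the paper also computes $\nabla_{\tau}(\underline e)$ by extracting the coefficient of $c(g)$ in $U(\tau^{p^n})$ (working mod $t^k$ via $\theta_k$ and \cref{image of X under theta}, which is the rigorous version of your differentiation at $c=0$, with the vanishing of the $n\ge 2$ terms and the convergence of $\sum_s A_{m,s}X_1(g)^{[s]}$ justified by \cref{analytic of unit}), and then divides by $p\mathfrak t=\log([\varepsilon])/\lambda$ so that the $\log([\varepsilon])$ factors cancel exactly as in your computation. The only blemish is the sign in your displayed formula for $X_1(g)$ (the paper's convention is $X_1(g)=\frac{(1-[\varepsilon]^{c(g)})[\pi^{\flat}]}{E([\pi^{\flat}])}$), but your $\dot X_1$ and hence the final answer are computed with the correct convention.
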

Based on this theorem, we see that the coefficients of $N_{\nabla}$ on $D_{\mathrm{Sen},K_{\infty}[[t]]}(V(\mathcal{M}))$ need to be in $K[[t]]$, not just in $K_{\infty}[[t]]$. Motivated by this observation, we make the following definition:
\begin{defi}\label{strong}
$W\in \Rep_{B_{\dR}^+}^{\fp,ndR}(G_K)$ is called \textit{strong nearly de Rham} if there exists a $K[[t]]$-module $M$ inside $D_{\mathrm{Sen},K_{\infty}[[t]]}(W)$ such that the following holds:
\begin{itemize}
    \item $M$ is stable under $N_{\nabla}$;
    \item $M\otimes_{K[[t]]}K_{\infty}[[t]]=D_{\mathrm{Sen},K_{\infty}[[t]]}(W)$.
\end{itemize}
We use $\Rep_{B_{\dR}^+}^{\fp,SndR}(G_K)$ to denote the full subcategory of $\Rep_{B_{\dR}^+}^{\fp,ndR}(G_K)$ consisting of such $W$.
\end{defi}
\begin{remark}
Such $M$ is unique if exists. Actually, for $W\in \Rep_{B_{\dR}^+}^{\fp,SndR}(G_K)$, if both $M_1$ and $M_2$ satisfies the desired property, then without loss of generality, we can assume $M_1 \subseteq M_2$, to show the inclusion $M_1\hookrightarrow M_2$ is an equality, it suffices to check that after the faithfully flat base change $K[[t]]\rightarrow K_{\infty}[[t]]$, which follows by definition.
\end{remark}
Then we have the following result:
\begin{prop}[\cref{weakly essential surjectivity}]
Consider the composition of the de Rham realization functor and the restriction functor, which we still denoted as $V$ by abuse of notation, then we get a fully faithful functor 
\begin{equation*}
     V: \operatorname{Vect}((\mathcal{O}_{K})_{\Prism}, (\mathcal{O}_{\Prism}[\frac{1}{p}])_{\mathcal{I}}^{\wedge})\longrightarrow \Rep_{B_{\dR}^+}^{\fp,SndR}(G_K).
\end{equation*}
Moreover, assume that \cref{restate e=1 conjecture} holds, then $ V$ is essentially surjective, in particular, it induces an equivalence of categories between $\operatorname{Vect}((\mathcal{O}_{K})_{\Prism}, (\mathcal{O}_{\Prism}[\frac{1}{p}])_{\mathcal{I}}^{\wedge})$ and $\Rep_{B_{\dR}^+}^{\fp,SndR}(G_K)$.
\end{prop}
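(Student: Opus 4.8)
The plan is to assemble the statement from the pieces already established in the introduction, and then to analyze carefully what extra input \cref{restate e=1 conjecture} provides for essential surjectivity. First, full faithfulness of $V$ into $\Rep_{B_{\dR}^+}^{\fp,SndR}(G_K)$: by \cref{fully faithful of the restriction functor} the composition $V$ is fully faithful as a functor into $\Rep_{B_{\dR}^+}^{\fp}(G_K)$, so it suffices to check that $V$ indeed lands in the subcategory $\Rep_{B_{\dR}^+}^{\fp,SndR}(G_K)$. Given $\mathcal{M}$ corresponding to $(M,\varepsilon)$ with $\varepsilon(\underline e)=\underline e\cdot\sum_{m\ge 0}(\sum_{n\ge 0}A_{m,n}X^{[n]})t^m$, \cref{restate decompletion for de Rham crystals} identifies $D_{\mathrm{Sen},K_{\infty}[[t]]}(V(\mathcal{M}))$ with $M\otimes_{K[[t]]}K_{\infty}[[t]]$ (under $\bdr(\mathfrak{S})\cong K[[T]]$), and \cref{operator and stratification} shows $N_{\nabla}(\underline e)=-\underline e\,\lambda_1 u(\sum_{m\ge 0}A_{m,1}t^m)$ with $\lambda_1\in K[[t]]^{\times}$, so the $K[[t]]$-span $M$ of $\underline e$ is $N_{\nabla}$-stable and satisfies $M\otimes_{K[[t]]}K_{\infty}[[t]]=D_{\mathrm{Sen},K_{\infty}[[t]]}(V(\mathcal{M}))$. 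This exhibits $W=V(\mathcal{M})$ as strong nearly de Rham. Membership in $\Rep_{B_{\dR}^+}^{\fp,ndR}(G_K)$ is already recorded (a de Rham crystal specializes to a rational Hodge–Tate crystal, which is nearly Hodge–Tate by \cref{Gao theorem}, cf. \cref{specializing Hodge Tate}).

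For essential surjectivity, I would start from $W\in\Rep_{B_{\dR}^+}^{\fp,SndR}(G_K)$ with its distinguished $N_{\nabla}$-stable $K[[t]]$-lattice $M\subseteq D_{\mathrm{Sen},K_{\infty}[[t]]}(W)$, and reverse-engineer the pair $(M,\varepsilon)$. Choose a $K[[t]]$-basis $\underline e$ of $M$ and define matrices $B_{m,1}\in M_l(K)$ by writing $N_{\nabla}(\underline e)=-\underline e\,\lambda_1 u(\sum_{m\ge 0}B_{m,1}t^m)$ — this is possible precisely because $M$ is $N_{\nabla}$-stable and $\lambda_1 u$ is a unit times the relevant element, so the expansion has $K$-coefficients. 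The condition $\lim_{n\to\infty}\prod_{i=0}^n(iE'(\pi)+B_{0,1})=0$ should come from the nearly-Hodge–Tate/nearly-de Rham hypothesis on $W$: the mod-$t$ reduction $W/tW$ is nearly Hodge–Tate, so by Gao's theorem its Sen operator is $-B_{0,1}/\beta$ (matching the normalization in \cref{Gao theorem}), and the nilpotence-type convergence condition is the translation of "Sen weights in $\mathbb{Z}+E'(\pi)^{-1}\mathfrak{m}$" into the language of the matrix $A_{0,1}$, exactly as in the Min–Wang statement. Then I would feed $\{B_{m,1}\}$ into the recursion of \cref{restate e=1 conjecture} to produce $\{A_{m,n}\}$ and the candidate stratification $\varepsilon(\underline e)=\underline e\cdot\sum_{m\ge 0}(\sum_{n\ge 0}A_{m,n}X^{[n]})t^m$; granting \cref{restate e=1 conjecture}, this $\varepsilon$ satisfies the cocycle condition, hence (via \cref{algebraic description of crystal}) defines a de Rham crystal $\mathcal{M}$.

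It then remains to verify $V(\mathcal{M})\cong W$ as $B_{\dR}^+$-representations of $G_K$. The natural strategy: by construction the $N_{\nabla}$-module $(D_{\mathrm{Sen},K_{\infty}[[t]]}(V(\mathcal{M})),N_{\nabla})$ is isomorphic, over $K[[t]]$ with its monodromy operator, to $(D_{\mathrm{Sen},K_{\infty}[[t]]}(W),N_{\nabla})$ — indeed both are recovered from the single matrix tuple $\{B_{m,1}\}$ via \cref{operator and stratification} — and then one invokes the equivalence $T$ together with the decompletion theorem \cref{combine three theorems} to conclude that a $B_{\dR}^+$-representation is determined by its Kummer–Sen module with monodromy. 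Concretely, $W\mapsto D_{\mathrm{Sen},K_{\infty}[[t]]}(W)$ is fully faithful with inverse base change $-\otimes_{K_{\infty}[[t]]}B_{\dR}^+$ (by \cref{combine three theorems}), and the $\hat G$-action can be reconstructed from $N_{\nabla}$ and the $K_{\infty}/K$-descent data, so an isomorphism of Kummer–Sen modules respecting $N_{\nabla}$ upgrades to an isomorphism of representations. Finally, once $V$ is fully faithful and essentially surjective it is an equivalence onto $\Rep_{B_{\dR}^+}^{\fp,SndR}(G_K)$.

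The main obstacle I anticipate — beyond the conjectural input \cref{restate e=1 conjecture}, which is explicitly assumed — is the last step: showing that the bare $N_{\nabla}$-module over $K_{\infty}[[t]]$ (equivalently, the tuple $\{B_{m,1}\}$) genuinely determines the $G_K$-representation, i.e. that no information is lost in passing from $W$ to $(D_{\mathrm{Sen},K_{\infty}[[t]]}(W),N_{\nabla})$ and that the crystal built from $\{B_{m,1}\}$ reproduces \emph{the same} $W$ rather than merely some $W'$ with the same Kummer–Sen module. This requires knowing that the full $\hat G$-locally-analytic action — not just its infinitesimal piece $N_{\nabla}$ — is pinned down, which should follow from the locally analytic/decompletion formalism underlying \cref{combine three theorems}, but needs to be spelled out carefully (e.g. that the $\gamma$-action along $\Gal(L/K_{\infty})$ is itself reconstructible, or trivial after the relevant normalization). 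A secondary technical point is checking that the basis $\underline e$ of $M$ can be chosen so that $A_{0,0}=I$, $A_{i,0}=0$ for $i>0$ (the normalization in \cref{Main theorem I}), which is a matter of adjusting $\underline e$ by a unit and is routine.
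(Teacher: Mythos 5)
Your proposal follows the same route as the paper's proof, step for step: full faithfulness plus the computation of $(D_{\mathrm{Sen},K_{\infty}[[t]]}(V(\mathcal{M})),N_{\nabla})$ to land in $\Rep_{B_{\dR}^+}^{\fp,SndR}(G_K)$, then for essential surjectivity reading off $\{B_{m,1}\}$ from $N_{\nabla}$ on the distinguished lattice, deducing the convergence condition on $B_{0,1}$ from the nearly Hodge--Tate property of $W/tW$ via Gao's identification of the Sen operator, feeding $\{B_{m,1}\}$ into the conjectural recursion to build $\mathcal{M}$, and matching $V(\mathcal{M})$ with $W$ through their Kummer--Sen modules. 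The one point where your justification of the last step is off: the functor $W\mapsto (D_{\mathrm{Sen},K_{\infty}[[t]]}(W),N_{\nabla})$ is \emph{not} fully faithful with inverse given by base change --- morphisms in $\Mod_{K_{\infty}[[t]]}^{N_{\nabla}}$ are $K_{\infty}$-linear while morphisms in $\rb$ are only $K$-linear, and the paper explicitly remarks on this after \cref{D is exact}. What is actually true, and what the paper invokes, is the weaker statement that this functor detects isomorphism classes (\cref{detect isomorphic class}), whose proof rests on the Fontaine-style identity $K_{\infty}\otimes_K\Hom_{\rb}(W_1,W_2)\cong\Hom_{\Mod_{K_{\infty}[[t]]}^{N_{\nabla}}}(\cdots)$ of \cref{general isomorphism class} together with a specialization argument (choosing a $K$-rational combination with nonvanishing determinant mod $t$, using that $K$ is infinite). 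You correctly flag this as the main obstacle, so the gap is one of citation rather than of strategy; replacing your appeal to full faithfulness by an appeal to \cref{detect isomorphic class} completes the argument exactly as in the paper.
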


\begin{remark}
It is natural to ask whether the artificial condition in the definition of $\Rep_{B_{\dR}^+}^{\fp,SndR}(G_K)$ can be removed. In other words, is the natural embedding $\Rep_{B_{\dR}^+}^{\fp,SndR}(G_K)\xhookrightarrow{} \Rep_{B_{\dR}^+}^{\fp,ndR}(G_K)$ essentially surjective? It is unclear to the author whether this should be true.

On the other hand, in the final stage of our project, we learned that Gao-Min-Wang have also initiated a similar project and in the forthcoming paper they can actually prove a similar version of \cref{restate e=1 conjecture}, in other words, they can give a sufficient condition to make $\varepsilon$ serve as a stratification of a de Rham crystal. Moreover, based on such sufficient condition, they could prove the stronger result that $V: \operatorname{Vect}((\mathcal{O}_{K})_{\Prism}, (\mathcal{O}_{\Prism}[\frac{1}{p}])_{\mathcal{I}}^{\wedge})\longrightarrow \Rep_{B_{\dR}^+}^{\fp,ndR}(G_K)$ is essentially surjective, which essentially implies that the natural embedding $\Rep_{B_{\dR}^+}^{\fp,SndR}(G_K)\xhookrightarrow{} \Rep_{B_{\dR}^+}^{\fp,ndR}(G_K)$ should be essentially surjective.
\end{remark}

\textbf{Outline}. The paper is organized as follows. 

In section 2, after introducing stratification of de Rham crystals over $(\mathcal{O}_K)_{\Prism}$, we give some necessary conditions on a sequence of matrices $\{A_{m,n}\}$ controlling such a stratification and we give some evidence of the conjecture that such conditions are also sufficient.

In section 3, we study absolute prismatic cohomology of de Rham crystals over $(\mathcal{O}_K)_{\Prism}$. In particular, we show such cohomology is concentrated on degree $0$ and $1$.

In section 4, we introduce locally analytic vectors and use this theory to study $\rb$. In particular, we prove a Sen style decompletion theorem for de Rham representations of $G_K$. As a consequence, we construct a decompletion functor $D_{\mathrm{Sen},K_{\infty}[[t]]}(\cdot)$ from $\rb$ to $\Mod_{K_{\infty}[[t]]}^{N_{\nabla}}$, which loses no information on detecting isomorphism classes. 

In section 5, we apply the decompletion theory developed in section 4 to study de Rham representations arising from de Rham prismatic crystals. We show that $D_{\mathrm{Sen},K_{\infty}[[t]]}(V(\mathcal{M}))$ can be written explicitly. As a consequence, we prove that the restriction functor $V$ in \cref{main commutative diagram} is fully faithful. Moreover, we show that we can extract stratification information of a de Rham crystal from the monodromy operator $N_{\nabla}$ after applying the decompletion functor $D_{\mathrm{Sen},K_{\infty}[[t]]}(\cdot)$ to its associated de Rham representations. As a consequence, we "narrow" the essential image of $V$ to what so called and argue that those strong nearly de Rham representations are precisely the image of $V$ provided \cref{restate e=1 conjecture} is true.
\vspace{0.2cm}

\textbf{Notations and conventions} In this paper $\mathcal{O}_K$ is a complete discrete valuation ring of mixed characteristic with fraction field $K$ and perfect residue field $k$ of characteristic $p>2$. Fix a uniformizer $\pi$ of $\mathcal{O}_K$. Fix an algebraic closure $\overline K$ of $K$, whose $p$-adic completion is denoted as $\mathbb{C}_p$. $\{\pi_n\}$ is a chosen sequence of $p$-power roots of $\pi$ in $\overline K$ such that $\pi_0=\pi$, $\pi_{n+1}^p=\pi_n$ while $\{\xi_n\}$ is a chosen sequence of $p$-power roots of unity such that $\xi_0=1$, $\xi_{n+1}^p=\xi_n$. Let $K_{\infty}=\cup_{n=1}^{\infty} K\left(\pi_{n}\right)$ and $K_{{p}^{\infty}}=\cup_{n=1}^{\infty} K\left(\xi_{n}\right)$. Finally we use $ L=\cup_{n=1}^{\infty} K\left(\pi_{n}, \xi_{n}\right)$ to denote the union of $K_{\infty}$ and $K_{{p}^{\infty}}$. $\hat{L}$ is the $p$-adic completion of $L$ with ring of integers $\mathcal{O}_{\hat{L}}$.

$\Ainf=W(\varprojlim_{x\mapsto x^p}\mathcal{O}_{\mathbb{C}_p}/p)$ with $\theta: \Ainf \longrightarrow \mathcal{O}_{\mathbb{C}_p}$. $B_{\dR}^+=(\Ainf[\frac{1}{p}])_{I}^{\wedge}$ where $I=\Ker \theta$. Similarly we define $A_L=\Ainf(\mathcal{O}_{\hat{L}})=W(\varprojlim_{x\mapsto x^p}\mathcal{O}_{\hat{L}}/p)$ and $B_{\dR,L}^+=(A_L[\frac{1}{p}])_{I}^{\wedge}$. Denote $\pi^{\flat}$ to be $(\pi_0,\pi_1,\cdots)\in \mathcal{O}_{\hat{L}}^{\flat}=\varprojlim_{x\mapsto x^p}\mathcal{O}_{\hat{L}}/p$ and $\varepsilon$ to be $(1,\xi_1,\xi_2,\cdots)\in \mathcal{O}_{\hat{L}}^{\flat}$. $\xi=\frac{[\varepsilon]-1}{[\varepsilon]^{1/p}-1}$ is a generator of $\Ker(\theta)$. $A_{\mathrm{cris}}$ is the usual crystalline ring, i.e. the $p$-completion of the divided power envelope of $\Ainf$ with respect to $\ker(\theta)$. $B_{\mathrm{cris}}^+=A_{\mathrm{cris}}[\frac{1}{p}]$.

In this paper we will embed the Breuil-Kisin prism $(\mathfrak{S},E)=(W(k)[[u]], E(u))$ into $(\Ainf,\Ker(\theta))$ by sending $u$ to $[\pi^{\flat}]$.
Let $t=E([\pi^{\flat}])$, where $E(u)$ is the Eisenstein polynomial of our chosen uniformizer $\pi$. $\beta=E^{\prime}(\pi)$ and $\lambda=\prod_{n\geq 0}\varphi^n(\frac{E(u)}{E(0)})\in B_{\mathrm{cris}}^+$.

Next we fix some Galois groups. $G_{K_{\infty}}=\operatorname{Gal}\left(\bar{K} / K_{\infty}\right)$ $G_{K_{p^{\infty}}}=\operatorname{Gal}\left(\bar{K} / K_{p^{\infty}}\right)$ $G_{L}=\operatorname{Gal}(\bar{K} / L)$ while $\Gamma_K=\operatorname{Gal}(K_{p^{\infty}}/K)$
and $\hat{G}=\Gal(L/K)\cong \Gal(L/K_{{p}^{\infty}})\rtimes \Gamma_K \cong \mathbb{Z}_{p} \tau \rtimes \Gamma_K$, here $\tau\in \Gal(L/K_{{p}^{\infty}})$ is a topological generator such that $\tau \pi_n=\xi_{p^n}\pi_n$. Moreover, fix $\gamma$ to be a topological generator of $\Gamma_K$. Then $\gamma \tau \gamma^{-1}=\tau^{\chi(\gamma)}$, here $\chi$ is the $p$-adic cyclotomic character. For $g\in \hat{G}$, $c(g)\in \mathbb{Z}_p$ is defined such that $g(\pi^{\flat})=\varepsilon^{c(g)}\pi^{\flat}$.

$K\{X_1\}^{\wedge}_{\rm pd}$ is the $p$-completion of the free pd polynomial with one variable $X_1$ over $K$. $X_1^{[n]}=\frac{X_1^n}{n!}$ is the $n$-th divided power of $X_1$. $A\{X\}_{\delta}$ means the free $\delta$-algebra over $A$ with one variable $X$. $A\langle T\rangle$ is the $p$-completion of the polynomial ring $A[T]$.

$(\mathcal{O}_{K})_{\Prism}$ is the absolute prismatic site of $\mathcal{O}_{K}$ while $(\mathcal{O}_{K})_{\Prism}^{\perf}$ is the site of perfect prisms over $\mathcal{O}_{K}$.
\vspace{0.2cm}

\textbf{Acknowledgement} The author is grateful to his advisor Kiran Kedlaya for patiently leading him to the study of $p$-adic Hodge theory and consistent encouragement as well as helpful discussions throughout the writing of this paper. Special thanks to Tong Liu for his interest in this project and valuable discussions in the preparation for the project. I thank Hui Gao for correspondence. During the preparation of the project, the author was partially supported by NSF1802161 under Professor Kedlaya. This work will be part of the author's Ph.D. thesis.

\section{de Rham prismatic crystals}
\subsection{Stratification of de Rham prismatic crystals}
Let us restate the definition of de Rham prismatic crystals first.
\begin{defi}
A de Rham prismatic crystal on $(\mathcal{O}_K)_{\Prism}$ is a sheaf $\mathcal{M}$ of $(\mathcal{O}_{\Prism}[\frac{1}{p}])_{\mathcal{I}}^{\wedge}$-modules such that for any $(A,I)\in (\mathcal{O}_K)_{\Prism}$, $\mathcal{M}(A,I)$ is a finite projective $(A[\frac{1}{p}])_{I}^{\wedge}$-module and that for any morphism $(A,I)\rightarrow (B,J)$ in $(\mathcal{O}_K)_{\Prism}$ (hence $J=IB$), the "crystal" property is satisfied, i.e. there is a canonical isomorphism
\[\mathcal{M}((A, I)) \otimes_{(A[\frac{1}{p}])_{I}^{\wedge}} (B[\frac{1}{p}])_{I}^{\wedge} \stackrel{\cong}{\rightarrow} \mathcal{M}((B, J)).\]
We denote by $\operatorname{Vect}((\mathcal{O}_{K})_{\Prism}, (\mathcal{O}_{\Prism}[\frac{1}{p}])_{\mathcal{I}}^{\wedge})$ the category of de Rham prismatic crystals.
\end{defi}
Later given $(A,I)\in (\mathcal{O}_{K})_{\Prism}$, we will use $\bdr(A)$ to denote $(A[\frac{1}{p}])_{I}^{\wedge}$ when $I$ is clear in the context.

Given $\mathcal{M}\in \operatorname{Vect}((\mathcal{O}_{K})_{\Prism}, (\mathcal{O}_{\Prism}[\frac{1}{p}])_{\mathcal{I}}^{\wedge})$, we can specialize it to get a rational Hodge-Tate crystal:
\begin{cor}\label{specializing Hodge Tate}
Define $\mathbb{M}:=\mathcal{M}/\mathcal{I}$, then $\mathbb{M}$ is a rational Hodge-Tate crystal, i.e. $\mathbb{M}\in \operatorname{Vect}((\mathcal{O}_{K})_{\Prism},\overline{\mathcal{O}}_{\Prism}[\frac{1}{p}])$, where the later is essentially defined in \cite[Definition 3.8]{MW21}. Moreover, for any $(A,I)\in (\mathcal{O}_K)_{\Prism}$, $\mathbb{M}(A,I)=\mathcal{M}((A,I))/I$.
\end{cor}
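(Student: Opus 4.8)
The plan is to unwind the definitions: I would check that the presheaf $(A,I)\mapsto \mathcal{M}(A,I)/I\mathcal{M}(A,I)$ on $(\mathcal{O}_K)_{\Prism}$ is a finite projective $\overline{\mathcal{O}}_{\Prism}[\frac1p]$-module satisfying the crystal condition, and that it is already a sheaf, so that it equals $\mathcal{M}/\mathcal{I}$ with no sheafification needed. The first point to pin down is the relation between the two period sheaves. For $(A,I)\in(\mathcal{O}_K)_{\Prism}$ a generator $d$ of $I$ exists locally on $\Spec A$ and is a nonzerodivisor (prisms are $I$-torsion free), so $\bdr(A)=(A[\frac1p])^{\wedge}_I$ is the $d$-adic completion of $A[\frac1p]$ and $\bdr(A)/I\cong A[\frac1p]/d\cong (A/I)[\frac1p]$, using that neither $d$-adic completion nor inverting $p$ affects the quotient by $d$. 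Hence $\bdr(A)/I$ is functorially the value of $\overline{\mathcal{O}}_{\Prism}[\frac1p]$ at $(A,I)$, and $\mathbb{M}(A,I):=\mathcal{M}(A,I)/I\mathcal{M}(A,I)$ is naturally a module over it.

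Finite projectivity is then immediate: the base change of the finite projective $\bdr(A)$-module $\mathcal{M}(A,I)$ along $\bdr(A)\to\bdr(A)/I=(A/I)[\frac1p]$ is finite projective. For the crystal property, given a morphism $(A,I)\to(B,J)$ in $(\mathcal{O}_K)_{\Prism}$ (so $J=IB$), I would reduce the canonical isomorphism $\mathcal{M}(A,I)\otimes_{\bdr(A)}\bdr(B)\stackrel{\cong}{\rightarrow}\mathcal{M}(B,J)$ modulo $J$ and use associativity of $\otimes$ to get
\[ \mathbb{M}(A,I)\otimes_{(A/I)[\frac1p]}(B/J)[\tfrac1p]\;\cong\;\mathcal{M}(A,I)\otimes_{\bdr(A)}\bigl(\bdr(B)/J\bigr)\;\cong\;\mathcal{M}(B,J)/J\;=\;\mathbb{M}(B,J), \]
the required canonical isomorphism; its compatibility with composition of morphisms is inherited from that of $\mathcal{M}$.

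It remains to see that $\mathbb{M}$ is a sheaf, equivalently that the presheaf cokernel above already satisfies descent. I would argue this either via the stratification dictionary --- $\mathcal{M}$ corresponds to a finite projective $\bdr(\mathfrak{S})$-module with a stratification relative to the \v{C}ech nerve of the Breuil-Kisin prism, and reducing the stratification modulo $\mathcal{I}$ yields a finite projective $\bdr(\mathfrak{S})/\mathcal{I}$-module with stratification, which is exactly the datum of a rational Hodge-Tate crystal in the sense of \cite[Definition 3.8]{MW21} --- or directly: $\overline{\mathcal{O}}_{\Prism}[\frac1p]$ is a sheaf, and $\mathbb{M}$ is, Zariski-locally on each $\Spec A$ and hence after refining any prismatic cover, a finite free $\overline{\mathcal{O}}_{\Prism}[\frac1p]$-module, so the sheaf axiom for $\mathbb{M}$ reduces to that of $\overline{\mathcal{O}}_{\Prism}[\frac1p]$. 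In particular no sheafification is needed and $(\mathcal{M}/\mathcal{I})(A,I)=\mathcal{M}(A,I)/I$, which is the last assertion. I do not expect a serious obstacle; the only steps needing care are the functorial identification $\bdr(A)/I\cong(A/I)[\frac1p]$ of structure sheaves and the standard input that these de Rham and Hodge-Tate period sheaves are genuinely sheaves on $(\mathcal{O}_K)_{\Prism}$.
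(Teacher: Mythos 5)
Your proposal is correct and follows essentially the same route as the paper: both arguments verify that the presheaf $(A,I)\mapsto \mathcal{M}(A,I)/I$ is already a sheaf by combining the fact that $\overline{\mathcal{O}}_{\Prism}[\frac{1}{p}]$ is itself a crystal with the finite projectivity of $\mathcal{M}(A,I)/I$ (so tensoring preserves exactness of the descent sequence), and then identify the terms via the crystal property of $\mathcal{M}$ reduced modulo $\mathcal{I}$. The extra details you supply on the functorial identification $\bdr(A)/I\cong (A/I)[\frac{1}{p}]$ are taken for granted in the paper but are a harmless elaboration.
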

\begin{proof}
We claim that the presheaf associating each $(A,I)\in (\mathcal{O}_K)_{\Prism}$ to $\mathcal{M}((A,I))/I$ is a sheaf. Let $(A,I)\rightarrow (B,IB)$ be a cover in $(\mathcal{O}_K)_{\Prism}$ and $(C,IC)$ be the  self product of $(B,J)$ over $(A,I)$ in $(\mathcal{O}_K)_{\Prism}$ with two structure morphisms $p_1,p_2: (B,IB)\rightarrow (C,JC)$. As $\overline{\mathcal{O}}_{\Prism}[\frac{1}{p}] \in \operatorname{Vect}((\mathcal{O}_{K})_{\Prism},\overline{\mathcal{O}}_{\Prism}[\frac{1}{p}])$ and $\mathcal{M}((A,I))/I$ is a finite projective $(A/I)[\frac{1}{p}]$-module, we have a short exact sequence
\[0\rightarrow  \mathcal{M}(A,I)/I \longrightarrow (B/IB)[\frac{1}{p}]\otimes_{(A/I)[\frac{1}{p}]} (\mathcal{M}(A,I)/I)\stackrel{p_1\otimes 1-p_2\otimes 1}{\longrightarrow}(C/IC)[\frac{1}{p}]\otimes_{(A/I)[\frac{1}{p}]} (\mathcal{M}(A,I)/I).\]
By the crystal property of $\mathcal{M}$, this can be identified with 
\[0\rightarrow \mathcal{M}(A,I)/I\rightarrow \mathcal{M}(B,IB)/IB\rightarrow \mathcal{M}(C,IC)/IC.\]
Hence we are done.
\end{proof}
\begin{remark}\label{mod n analogue}
A similar argument shows that $\mathcal{M}/\mathcal{I}^n \in \operatorname{Vect}((\mathcal{O}_{K})_{\Prism},\mathcal{O}_{\Prism}/\mathcal{I}^n[\frac{1}{p}])$ and that for any $(A,I)\in (\mathcal{O}_K)_{\Prism}$, $\mathcal{M}/\mathcal{I}^n(A,I)=\mathcal{M}((A,I))/I^n$.
\end{remark}

\begin{cor}\label{algebraic description of crystal}
The category of de Rham prismatic crystals is equivalent to the category of finite free $\bdr(\mathfrak{S})$-modules $M$ equipped with a stratification satisfying cocycle condition, i.e a $\bdr(\mathfrak{S}^{1})$-linear isomorphism 
\[\epsilon: M\otimes_{\bdr(\mathfrak{S}),\delta_{0}^{1}} \bdr(\mathfrak{S}^{1}) \stackrel{\sim}{\rightarrow} M\otimes_{\bdr(\mathfrak{S}),\delta_{1}^{1}} \bdr(\mathfrak{S}^{1}),\]
where $\delta_{i}^{1}$ is defined in \cref{Equ-structure} such that the cocycle condition is satisfied:
\[\delta_{1}^{2, *}(\epsilon)=\delta_{2}^{2, *}(\epsilon) \circ \delta_{0}^{2, *}(\epsilon).\]
\end{cor}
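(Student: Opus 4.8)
The plan is to deduce this from the general principle that a crystal on a site possessing a weakly final object is the same datum as its value on that object together with descent data along the associated \v{C}ech nerve, combined with the explicit computation of the \v{C}ech nerve of the Breuil--Kisin prism already recorded in \cref{Equ-structure}. Concretely: $(\mathfrak{S},E)$ is a weakly final object of $(\mathcal{O}_K)_\Prism$ (Bhatt--Scholze), so every object of $(\mathcal{O}_K)_\Prism$ is covered by a self-coproduct of copies of $(\mathfrak{S},E)$; write $\mathfrak{S}^\bullet$ for the cosimplicial prism whose $n$-th term $\mathfrak{S}^n$ is the $(n{+}1)$-fold coproduct of $(\mathfrak{S},E)$ in $(\mathcal{O}_K)_\Prism$, with coface maps $\delta_i^n$ as in \cref{Equ-structure}. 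Given $\mathcal{M}$, set $M=\mathcal{M}(\mathfrak{S},E)$, a finite projective $\bdr(\mathfrak{S})$-module; the crystal condition applied to the two structure maps $(\mathfrak{S},E)\rightrightarrows(\mathfrak{S}^1,E\mathfrak{S}^1)$ produces the $\bdr(\mathfrak{S}^1)$-linear isomorphism $\epsilon\colon \delta_0^{1,*}M\xrightarrow{\sim}\delta_1^{1,*}M$, and the crystal condition on $(\mathfrak{S}^2,E\mathfrak{S}^2)$ yields the cocycle identity $\delta_1^{2,*}(\epsilon)=\delta_2^{2,*}(\epsilon)\circ\delta_0^{2,*}(\epsilon)$; conversely a pair $(M,\epsilon)$ determines a cartesian sheaf of modules by the usual reconstruction from descent data.

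The input that makes this work is that the structure presheaf $(\mathcal{O}_\Prism[\tfrac1p])^\wedge_{\mathcal I}$ is a sheaf for the flat topology on $(\mathcal{O}_K)_\Prism$, and that along any cover $(A,I)\to(B,IB)$ the induced map $\bdr(A)\to\bdr(B)$ is faithfully flat with the associated \v{C}ech--Alexander complex being a resolution. This is the $p$-inverted, $\mathcal I$-adically completed analogue of the sheaf property of $\mathcal{O}_\Prism$ established in \cite{BS19}, and it is the same descent input used by Min--Wang for Hodge--Tate crystals; I would verify it by checking that the operations $(-)[\tfrac1p]$ and $(-)^{\wedge}_{\mathcal I}$ preserve the relevant faithful flatness and exactness for covers in $(\mathcal{O}_K)_\Prism$. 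Granting this, faithfully flat descent identifies the category of sheaves of $(\mathcal{O}_\Prism[\tfrac1p])^\wedge_{\mathcal I}$-modules satisfying the crystal condition with the category of finite projective $\bdr(\mathfrak{S})$-modules equipped with a stratification $\epsilon$ as above satisfying the cocycle condition (the unit/normalization condition being automatic from the cocycle identity).

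It remains to upgrade ``finite projective'' to ``finite free.'' For this I would observe that $\bdr(\mathfrak{S})=(\mathfrak{S}[\tfrac1p])^\wedge_{(E)}$ is a complete discrete valuation ring with uniformizer $E$ and residue field $K$: indeed $\mathfrak{S}[\tfrac1p]$ is a regular Noetherian domain of dimension one in which $(E)$ is a maximal ideal with $\mathfrak{S}[\tfrac1p]/(E)=\mathcal{O}_K[\tfrac1p]=K$, so its completion there is a complete DVR, in particular a local ring, over which every finite projective module is free. This is the one place where using the Breuil--Kisin prism specifically, rather than an arbitrary object of $(\mathcal{O}_K)_\Prism$, is essential.

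The step I expect to be the main obstacle is the sheaf-theoretic input of the second paragraph: namely controlling how $(-)[\tfrac1p]$ and $\mathcal I$-adic completion interact with the faithfully flat \v{C}ech descent for $\mathcal{O}_\Prism$, and in particular confirming exactness of the \v{C}ech--Alexander complex of $\mathfrak{S}^\bullet$ after these operations. Once that is in hand, the remainder is bookkeeping: unwinding the cosimplicial identities of \cref{Equ-structure} to recognize the descent datum as exactly the stated stratification, and invoking locality of $\bdr(\mathfrak{S})$ for freeness.
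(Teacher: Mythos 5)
Your proposal is correct and follows essentially the same route as the paper: evaluate at the weakly final Breuil--Kisin prism to get $(M,\epsilon)$, and reconstruct $\mathcal{M}(A,I)$ as the equalizer of the two maps into the self-product of a cover of $(A,I)$ lying over $(\mathfrak{S},(E))$, with the descent input you flag as the main obstacle handled in the paper by citing the $(p,I)$-completely faithfully flat descent for de Rham crystals from the proof of \cite[Proposition 2.7]{BS21}.
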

\begin{proof}
Clearly given a de Rham prismatic crystal $\mathcal{M}$ we can get a finite free $\bdr(\mathfrak{S})$-module by evaluating $\mathcal{M}$ at the Breuil-Kisin prism $(\mathfrak{S},(E))$. Moreover, the desired stratification $\varepsilon$ follows by the crystal property of $\mathcal{M}$.

On the other hand, starting with such a pair $(M,\varepsilon)$, we can construct a de Rham crystal as follows. For any $(A,I)\in (\mathcal{O}_K)_{\Prism}$, as $(\mathfrak{S},(E))$ is a weakly final object in $(\mathcal{O}_K)_{\Prism}$, there exists $(B,J)\in (\mathcal{O}_K)_{\Prism}$ covering $(A,I)$ (i.e. $A\rightarrow B$ is $p$-completely faithfully flat) which also lies over $(\mathfrak{S},(E))$. For example, one can take $(B,J)$ to be $(A,I)\times_{(\mathcal{O}_K)_{\Prism}} (\mathfrak{S},(E))$, which satisfies the desired property thanks to \cite[Corollary 3.14]{BS19}. Consequently we can define 
\begin{equation*}
    \mathcal{M}(A,I)=\Eq(M\otimes_{\bdr(\mathfrak{S})} \bdr(B)\rightrightarrows M\otimes_{\bdr(\mathfrak{S})}\bdr(\tilde{B})),
\end{equation*}
where $(\tilde{B},\tilde{J}):=(B,J)\times_{(A,I)} (B,J)$ is the self product of $(B,J)$ over $(A,I)$. Here to give the two arrows in the diagram we implicitly use the universal property of $\mathfrak{S}^1$ to base change $\varepsilon$ along $\mathfrak{S}^1 \rightarrow \tilde{B}$. But by the proof of \cite[Proposition 2.7]{BS21}, de Rham crystals satisfy $(p,I)$-completely faithfully flat descent, hence $\mathcal{M}(A,I)$ is a finite projective $\bdr(A)$-module, we are done.
\end{proof}

Let $\mathfrak{S}^{n}$ be the $n+1$-th self product of the Breuil-Kisin prism $(\mathfrak{S},E)=(W(k)[[u]], E(u))$, then essentially using \cite[Corollary 3.14]{BS19}, we have that 
\begin{equation*}
    \mathfrak{S}^{n}=W(k)[[u_0,\cdots,u_{n}]]\{\frac{u_0-u_1}{E(u_0)},\cdots, \frac{u_0-u_n}{E(u_0)}\}_{\delta}^{{\wedge}_{(p,E(u_0))}}.
\end{equation*}
Later we will view $\mathfrak{S}^{n}$ as an $\mathfrak{S}$-algebra via $p_0: \mathfrak{S}\rightarrow \mathfrak{S}^{n}$ defined by sending $u$ to $u_0$.

First we would like to give a concrete description of $\bdr(\mathfrak{S}^{n})$.

\begin{lemma}\label{identification de rham ring}
we can identify $\bdr(\mathfrak{S}^{n})$ with $K\{X_1,\dots, X_n\}^{\wedge}_{\rm pd}[[t]]$ by identifying $t$ with the uniformizer $E(u_0)$ in the $E(u_0)$-adic complete ring $\bdr(\mathfrak{S}^n)$ and identifying $X_i$ with $\frac{u_0-u_i}{E(u_0)}$ in $\bdr(\mathfrak{S}^n)$.
\end{lemma}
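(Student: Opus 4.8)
The plan is to realise $\bdr(\mathfrak{S}^n)$ as the $E(u_0)$-adic completion of a ring whose reduction modulo $E(u_0)$ we already understand, and then to match it term by term with $K\{X_1,\dots,X_n\}^{\wedge}_{\rm pd}[[t]]$. First I would assemble two inputs. Since $(\mathfrak{S}^n,(E(u_0)))$ is a prism, $\mathfrak{S}^n$ is $E(u_0)$-torsion free and $(p,E(u_0))$-complete (\cite{BS19}); hence $\bdr(\mathfrak{S}^n)=(\mathfrak{S}^n[1/p])^{\wedge}_{(E(u_0))}$ is again $E(u_0)$-torsion free and satisfies $\bdr(\mathfrak{S}^n)/E(u_0)=(\mathfrak{S}^n/E(u_0))[1/p]$. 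The second, essential, input is the Hodge--Tate computation $\mathfrak{S}^n/E(u_0)\cong\mathcal{O}_K\{X_1,\dots,X_n\}^{\wedge}_{\rm pd}$ under which $X_i$ corresponds to the image of $\frac{u_0-u_i}{E(u_0)}$; for $n=1$ this is \cite[Theorem 3.5]{MW21}, and for general $n$ it follows from the same $\delta$-ring computation applied to $W(k)[[u_0,\dots,u_n]]\{\frac{u_0-u_1}{E(u_0)},\dots,\frac{u_0-u_n}{E(u_0)}\}_{\delta}^{\wedge}$, or equivalently by writing $\mathfrak{S}^n$ as an iterated coproduct of the Breuil--Kisin prism and keeping track of the resulting pd-polynomial structure. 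Inverting $p$ then gives $\bdr(\mathfrak{S}^n)/E(u_0)\cong K\{X_1,\dots,X_n\}^{\wedge}_{\rm pd}$.

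Next I would construct the comparison map. As $\bdr(\mathfrak{S}^n)$ is a $\mathbb{Q}$-algebra, the elements $\frac{u_0-u_i}{E(u_0)}\in\mathfrak{S}^n\subseteq\bdr(\mathfrak{S}^n)$ carry unique divided powers there, and $E(u_0)$ is topologically nilpotent in the $E(u_0)$-adically complete ring $\bdr(\mathfrak{S}^n)$; so after a routine continuity check one obtains a $K$-algebra homomorphism
\[
f\colon K\{X_1,\dots,X_n\}^{\wedge}_{\rm pd}[[t]]\longrightarrow\bdr(\mathfrak{S}^n),\qquad X_i\mapsto\tfrac{u_0-u_i}{E(u_0)},\quad t\mapsto E(u_0),
\]
which by construction reduces modulo $(t)$, resp. $(E(u_0))$, to the Hodge--Tate isomorphism $\bar f\colon K\{X_1,\dots,X_n\}^{\wedge}_{\rm pd}\areq\bdr(\mathfrak{S}^n)/E(u_0)$.

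It then remains to see that $f$ is an isomorphism, which I would extract from three facts: the source $S:=K\{X_1,\dots,X_n\}^{\wedge}_{\rm pd}[[t]]$ is $t$-adically complete and separated, the target $\bdr(\mathfrak{S}^n)$ is $E(u_0)$-adically complete and $E(u_0)$-torsion free, and $\bar f$ is an isomorphism. Surjectivity follows by successive approximation: given $r\in\bdr(\mathfrak{S}^n)$, lift $r\bmod E(u_0)$ through $\bar f$ to $s_0\in S$, note $r-f(s_0)\in E(u_0)\bdr(\mathfrak{S}^n)=f(t)\bdr(\mathfrak{S}^n)$, divide by $f(t)$, iterate, and sum the $t$-adically convergent series $\sum_k t^k s_k$. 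Injectivity follows because if $f(s)=0$ with $s\neq 0$, then by $t$-separatedness of $S$ one may write $s=t^k s'$ with $s'\notin tS$, so $E(u_0)^k f(s')=0$ forces $f(s')=0$ by torsion-freeness, contradicting injectivity of $\bar f$ on the nonzero class of $s'$. The main obstacle is the Hodge--Tate input in ranks $n\geq 2$: one must either run the several-variable $\delta$-ring computation directly or verify its compatibility with the coproduct description of $\mathfrak{S}^{\bullet}$; the remaining ingredients — torsion-freeness of prisms, the behaviour of $E(u_0)$-adic completion, the continuity of $f$, and the approximation argument — are routine.
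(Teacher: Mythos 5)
Your overall strategy is the same as the paper's: reduce modulo $E(u_0)$ to the Hodge--Tate identification $\bdr(\mathfrak{S}^n)/E\cong K\{X_1,\dots,X_n\}^{\wedge}_{\rm pd}$, build a map $f$ with $X_i\mapsto \frac{u_0-u_i}{E(u_0)}$ and $t\mapsto E(u_0)$, and conclude by completeness. Your successive-approximation surjectivity plus torsion-freeness injectivity is just an unwound form of the paper's one-line appeal to derived Nakayama, and that part is fine.

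The gap is in the construction of $f$ itself, which you dismiss as ``a routine continuity check.'' The source $K\{X_1,\dots,X_n\}^{\wedge}_{\rm pd}$ is the $p$-adic (Banach) completion of the free pd-polynomial ring, so a typical element is a series $\sum_m a_m X_i^{[m]}$ with $a_m\to 0$ $p$-adically, and to map it to $\sum_m a_m\bigl(\tfrac{u_0-u_i}{E(u_0)}\bigr)^{[m]}$ you must show that the divided powers $\bigl(\tfrac{u_0-u_i}{E(u_0)}\bigr)^{[m]}=\tfrac{1}{m!}\bigl(\tfrac{u_0-u_i}{E(u_0)}\bigr)^{m}$ remain in a bounded subset of $\bdr(\mathfrak{S}^n)/E^j$ for each $j$. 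The fact that a $\mathbb{Q}$-algebra has unique divided powers is irrelevant here: the obstruction is topological, since $\frac{1}{m!}$ is $p$-adically unbounded, and boundedness of these elements is a genuine integrality statement. This is precisely where the paper's proof does its work: it uses that modulo $E$ the divided powers land in the \emph{integral} ring $\mathfrak{S}^n/E$ (the content of the Min--Wang/Tian computation, not merely the rational identification you invoke), writes the image of $X_i^{[m]}$ in $\bdr(\mathfrak{S}^n)/E^j$ as $z+Ew$ with $z\in\mathfrak{S}^n$, and observes that all its powers then lie in the bounded $\mathfrak{S}^n$-module $\mathfrak{S}^n+w\mathfrak{S}^n+\cdots+w^{j-1}\mathfrak{S}^n$. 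Without this (or an equivalent estimate) your homomorphism is not defined on the completed pd-ring, and the rest of the argument has nothing to act on. Supplying this step would complete your proof along essentially the paper's lines.
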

\begin{proof}
Recall $\bdr(\mathfrak{S}^n)/(E)=\mathfrak{S}^n[1/p]/(E)$, hence we can identify the former with $K\{X_1,\dots, X_n\}^{\wedge}_{\rm pd}$ thanks to \cite[Lemma 2.7]{MW21} or \cite[Corollary 4.5]{Tian21}, where $X_i$ corresponds to the image of $\frac{u_0-u_i}{E(u_0)} \in \bdr(\mathfrak{S}^n)/E$. By \cite[\href{https://stacks.math.columbia.edu/tag/0ALJ}{Tag 0ALJ}]{SP22}, $(\bdr(\mathfrak{S}^n),(E))$ is a henselian pair , hence we can embed $K$ canonically into $\bdr(\mathfrak{S}^n)$. We then construct a ring homomorphism $f: K[X_1,\cdots,X_n]\rightarrow \bdr({\mathfrak{S}})$ by sending $X_i$ to $\frac{u_0-u_i}{E(u_0)} \in \bdr(\mathfrak{S}^n)$. To show that $f$ can actually be extended to a ring homomorphism  $K\{X_1,\dots, X_n\}^{\wedge}_{\rm pd}\rightarrow \bdr(\mathfrak{S}^n)$, by writing $\bdr(\mathfrak{S}^n)$ as $\varprojlim (\bdr(\mathfrak{S}^n)/E^j)$ it suffices to show that the induced ring homomorphism $f_j: K[X_1,\cdots,X_n] \rightarrow \bdr(\mathfrak{S}^n)/E^j$ can be extended to $\tilde{f}_j: K\{X_1,\dots, X_n\}^{\wedge}_{\rm pd}\rightarrow \bdr(\mathfrak{S}^n)/E^j$ for each $j$. In other words, we need to check that the image of $\frac{X_i^t}{t!}$ in $\bdr(\mathfrak{S}^n)/E^j$ is bounded for each $t$. First notice $f_1(\frac{X_i^t}{t!})$ actually lies in $\mathfrak{S}^n/E$ by the proof of \cite[Lemma 2.7]{MW21}, hence for a general $j$, $f_j(\frac{X_i^t}{t!})=z_{i,t}+Ew_{i,t}$ for some $z_{i,t}\in \mathfrak{S}^n$, $w_{i,t}\in \bdr(\mathfrak{S}^n)$, this implies that in $\bdr(\mathfrak{S}^n)/E^j$ we have that 
\begin{equation*}
\begin{split}
    f_j(\frac{X_i^t}{t!})^k=(z_{i,t}+Ew_{i,t})^k= z_{i,t}^k+\binom{k}{1} z_{i,t}^{k-1}\cdot Ew_{i,t}+\cdots+ \binom{k}{j-1} z_{i,t}^{k-(j-1)}\cdot (Ew_{i,t})^{j-1}
\end{split}
\end{equation*}
for arbitrary $k\in \mathbb{N}$, hence $f_j(\frac{X_i^t}{t!})^k \in (\mathfrak{S}^n+w_{i,t}\mathfrak{S}^n+\cdots+w_{i,t}^{j-1}\mathfrak{S}^n)$ is bounded in $\bdr(\mathfrak{S}^n)/E^j$ for any $k\geq 0$.

Now we have extended $f$ to a ring homomorphism $K\{X_1,\dots, X_n\}^{\wedge}_{\rm pd}\rightarrow \bdr(\mathfrak{S}^n)$. Then after choosing the uniformizer $E(u_0)$ in $\bdr(\mathfrak{S}^n)$, we get a ring homomorphism $K\{X_1,\dots, X_n\}^{\wedge}_{\rm pd}[[t]]\rightarrow \bdr(\mathfrak{S}^n)$ by sending $t$ to $E(u_0)$, which is actually an isomorphism by using derived Nakayama and checking that after modulo $t$. 
\end{proof}
\begin{remark}
This construction is compatible with the identification $\bdr(\mathfrak{S}^n)/(E)\cong K\{X_1,\dots, X_n\}^{\wedge}_{\rm pd}$ after modulo $E$.
\end{remark}

\begin{example}\label{image of t}
Let us calculate the image of $\frac{E(u_1)}{E(u_0)}\in \bdr(\mathfrak{S}^{n})$ under this identification. Recall that
\[E(u_1)=E(u_0)+\sum_{n=1}^{e}\frac{E^{(n)}(u_0)}{n!}(u_1-u_0)^n,\]
hence
\begin{equation}\label{first quotient}
\begin{split}
    \frac{E(u_1)}{E(u_0)}&=1+\sum_{n=1}^{e}\frac{E^{(n)}(u_0)}{n!}(\frac{u_1-u_0}{E(u_0)})^n(E(u_0))^{n-1}\\&=1+\sum_{n=1}^{e}\frac{E^{(n)}(u_0)}{n!}(-1)^n(\frac{u_0-u_1}{E(u_0)})^n(E(u_0))^{n-1}.
\end{split}
\end{equation}
Let $E^{(n)}(u_0)=\sum_{i=0}^{\infty}\theta_{n,i}t^i\in \bdr{\mathfrak{S}}\cong K[[t]]$, here $\theta_{n,i}\in K$. Then we have that $\theta_{n,0}=E^{(n)}(\pi)$. We will use $\beta$ to denote $\theta_{1,0}=E^{\prime}(\pi)$

Then \cref{first quotient} turns into
\begin{equation*}
    \begin{split}
         \frac{E(u_1)}{E(u_0)}&=1+\sum_{n=1}^{e}(\sum_{i=0}^{\infty}\theta_{n,i}t^i)\frac{(-1)^n X_1^n t^{n-1}}{n!}
         \\&=1+\sum_{i=0}^{\infty} t^i(\sum_{n=1}^e \theta_{n,i-(n-1)}\frac{(-1)^nX_1^n}{n!})
         \\&=1-\beta X_1+\sum_{i=1}^{\infty} t^i(\sum_{n=1}^e \theta_{n,i-(n-1)}\frac{(-1)^nX_1^n}{n!}).
    \end{split}
\end{equation*}
We will denote $\alpha$ to be $\frac{E(u_1)}{E(u_0)}$ for later use.

\end{example}

\begin{remark}\label{nonvanishing}
By definition, $\theta_{n,0}=E^{(n)}(\pi)$ is non zero as $E(x)$ is the minimal polynomial of $\pi$ over $W(k)$ and the degree of $E^{(n)}(x)$ is strictly smaller than that of $E(x)$.
\end{remark}

\begin{lemma}\label{Equ-structure}
   For any $0\leq i\leq n+1$, let $\delta_{i}^{n+1}:\bdr(\mathfrak{S}^n)\to\bdr(\mathfrak{S}^{n+1})$ be the structure morphism induced by the order-preserving map \[\{0,\dots,n\}\to\{0,\dots,i-1,i+1\dots,n+1\}.\]
  Then by identifying $\bdr(\mathfrak{S}^{n})$ with $K\{X_1,\dots, X_n\}^{\wedge}_{\rm pd}[[t]]$ under \cref{identification de rham ring}, we have that
  \begin{equation}\label{Equ-structure morphism on variables}
      \delta_{i}^{n+1}(X_j) = \left\{
      \begin{array}{rcl}
           (X_{j+1}-X_1)\alpha^{-1}, & i=0  \\
           X_j, & j<i \\
           X_{j+1}, & 0<i\leq j
      \end{array}
      \right.
  \end{equation}
  \begin{equation}\label{Equ-structure morphism on uniformizer}
      \delta_{i}^{n+1}(t) = \left\{
      \begin{array}{rcl}
           \alpha t, & i=0  \\
           t, & i>0
      \end{array}
      \right.
  \end{equation}
\end{lemma}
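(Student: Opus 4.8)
The plan is to push the statement down to the defining cosimplicial coface maps of the tower $\mathfrak{S}^{\bullet}$ and then read off the answer through the identification of \cref{identification de rham ring}. Recall that the coface $\mathfrak{S}^{n}\to\mathfrak{S}^{n+1}$ attached to the order-preserving injection $\{0,\dots,n\}\hookrightarrow\{0,\dots,i-1,i+1,\dots,n+1\}$ sends the variable $u_{j}$ to $u_{f_{i}(j)}$, where $f_{i}(j)=j$ for $j<i$ and $f_{i}(j)=j+1$ for $j\geq i$; this extends over the $\delta$-localization and, after inverting $p$ and $(p,E(u_{0}))$-completing, is exactly $\delta_{i}^{n+1}$. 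Since under \cref{identification de rham ring} the generator $t$ is the honest element $E(u_{0})$ and $X_{j}$ is $\tfrac{u_{0}-u_{j}}{E(u_{0})}$, it suffices to apply the ring homomorphism $\delta_{i}^{n+1}$ to these two elements and re-express the images in the generators $t=E(u_{0})$, $X_{1},\dots,X_{n+1}$ of $\bdr(\mathfrak{S}^{n+1})$, using $u_{0}-u_{j}=X_{j}\,E(u_{0})=X_{j}t$.

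For $i>0$ we have $\delta_{i}^{n+1}(u_{0})=u_{0}$, so $\delta_{i}^{n+1}(t)=E(u_{0})=t$, and from $\delta_{i}^{n+1}(X_{j})\cdot E(u_{0})=u_{0}-u_{f_{i}(j)}$ we read off $\delta_{i}^{n+1}(X_{j})=X_{j}$ for $j<i$ and $=X_{j+1}$ for $0<i\leq j$. For $i=0$ we have $\delta_{0}^{n+1}(u_{j})=u_{j+1}$ for all $j$, hence $\delta_{0}^{n+1}(t)=E(u_{1})$ and $\delta_{0}^{n+1}(X_{j})\cdot E(u_{1})=u_{1}-u_{j+1}$. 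Now $u_{1}-u_{j+1}=(u_{0}-u_{j+1})-(u_{0}-u_{1})=(X_{j+1}-X_{1})\,E(u_{0})=(X_{j+1}-X_{1})t$, and by \cref{image of t} $E(u_{1})=\alpha E(u_{0})=\alpha t$ with $\alpha=\tfrac{E(u_{1})}{E(u_{0})}$; granting $\alpha\in\bdr(\mathfrak{S}^{n+1})^{\times}$, this yields $\delta_{0}^{n+1}(t)=\alpha t$ and $\delta_{0}^{n+1}(X_{j})=(X_{j+1}-X_{1})\alpha^{-1}$, as claimed.

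The only step beyond bookkeeping is the invertibility of $\alpha$, which is what makes the division by $\alpha t$ legitimate. For this I would use that $u\mapsto u_{0}$ and $u\mapsto u_{1}$ both define morphisms of prisms $(\mathfrak{S},(E))\to(\mathfrak{S}^{n+1},(E(u_{0})))$, so $E(u_{0})$ and $E(u_{1})$ are distinguished elements generating the same ideal of $\mathfrak{S}^{n+1}$ and therefore differ by a unit; hence $\alpha\in(\mathfrak{S}^{n+1})^{\times}\subseteq\bdr(\mathfrak{S}^{n+1})^{\times}$. (Alternatively one can argue $t$-adically: by \cref{image of t}, $\alpha\equiv 1-\beta X_{1}\pmod{t}$, which is a unit in $K\{X_{1},\dots,X_{n+1}\}^{\wedge}_{\rm pd}$ since $\sum_{m\geq 0}\beta^{m}m!\,X_{1}^{[m]}$ converges, and $t$-adic completeness of $\bdr(\mathfrak{S}^{n+1})$ then promotes this to a unit.) Once $\alpha$ is a unit, $E(u_{1})=\alpha t$ is a nonzerodivisor, so the displayed relations determine $\delta_{0}^{n+1}(X_{j})$ uniquely; what remains is the routine case split $j<i$ versus $j\geq i$ with the index shift $j\mapsto j+1$, and I do not expect any genuine obstacle there.
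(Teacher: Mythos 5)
Your proposal is correct and follows essentially the same route as the paper: compute the coface maps on the variables $u_j$, then rewrite $\delta_i^{n+1}(E(u_0))$ and $\delta_i^{n+1}\bigl(\tfrac{u_0-u_j}{E(u_0)}\bigr)$ in terms of $t$, $X_1,\dots,X_{n+1}$, using $u_1-u_{j+1}=(u_0-u_{j+1})-(u_0-u_1)$ and $E(u_1)=\alpha E(u_0)$ for the $i=0$ case. Your explicit justification that $\alpha$ is a unit (via rigidity of prism morphisms, or via $\alpha\equiv 1-\beta X_1 \pmod t$) is a detail the paper leaves implicit but is entirely consistent with it.
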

\begin{proof}

For the image of $t$ under $\delta_{i}^{n+1}$, the result follows from the definition of $\delta_{i}^{n+1}$ and  \cref{image of t}. Also,
\begin{equation*}
    \begin{split}
        \delta_{0}^{n+1}(X_j)=\delta_{0}^{n+1}(\frac{u_0-u_j}{E(u_0)})=\frac{u_1-u_{j+1}}{E(u_1)}=\frac{(u_0-u_{j+1})-(u_0-u_1)}{E(u_0)}\cdot \frac{E(u_0)}{E(u_1)}=(X_{j+1}-X_1)\alpha^{-1}.
    \end{split}
\end{equation*}
For $0<i\leq j$, $\delta_{i}^{n+1}$ preserves $u_0$ and sends $u_j$ to $u_{j+1}$, hence $\delta_{i}^{n+1}(X_{j})=X_{j+1}$. For $i>j$, $\delta_{i}^{n+1}$ preserves both $u_0$ and $u_j$, hence it also preserves $X_j$.
\end{proof}

Let $M$ be a finite free $\bdr(\mathfrak{S})$-module endowed with a stratification $\epsilon$ with respect to the cover $\mathfrak{S}^{\bullet}$. Fix a $\bdr(\mathfrak{S})$ basis $e_1,\ldots,e_l$ of $M$ and we write
\begin{equation}\label{stratification equation}
    \epsilon(\underline{e})=\underline e\cdot \sum_{m\geq0}(\sum_{n\geq 0}A_{m,n}X_1^{[n]})t^m,
\end{equation}
where $A_{m,n}\in M_l(K)$, $A_{0,0} = I,\lim_{n\rightarrow \infty}A_{m,n}=0$ for any $m$. Here we have identified $\bdr(\mathfrak{S})$ (resp. $\bdr(\mathfrak{S}^1))$ with $K[[T]]$ (resp. $K\{X_1\}^{\wedge}_{\rm pd}[[T]]$). 

To calculate the cocycle condition, notice that 
\begin{equation}\label{stratification 0}
    \delta_{1}^{2, *}(\varepsilon)(\underline e) =\underline e\sum_{m\geq0}(\sum_{n\geq 0}A_{m,n}X_2^{[n]})t^m,
\end{equation}

and by \cref{Equ-structure}, we have that
\begin{equation}\label{stratification calculation}
    \begin{split}
        \delta_{2}^{2, *}(\epsilon) \circ \delta_{0}^{2, *}(\epsilon)(\underline e) & =  \delta_{2}^{2, *}(\varepsilon)(\underline e\sum_{p,q\geq 0} A_{p,q}(X_2-X_1)^{[q]}\alpha^{-q}\alpha^pt^p)\\
        & = \underline e(\sum_{m\geq0}(\sum_{n\geq 0}A_{m,n}X_{1}^{[n]})t^m)(\sum_{p,q\geq 0} A_{p,q}(X_2-X_1)^{[q]}\alpha^{p-q}t^p).
     \end{split}
\end{equation}
For $p\in \mathbb{Z}$, write
\begin{equation*}
    \begin{split}
        \alpha^p=\sum_{s=0}^{\infty}c_{p,s}t^s,
    \end{split}
\end{equation*}
 where $c_{p,s}=c_{p,s}(X_1)\in K\{X_1\}^{\wedge}_{\rm pd}$. In paricular, $c_{p,0}=(1-\beta X_{1})^p$.

Then \cref{stratification calculation} turns into 
\begin{equation}\label{stratification calculation II}
    \begin{split}
        \delta_{2}^{2, *}(\epsilon) \circ \delta_{0}^{2, *}(\epsilon)(\underline e) 
        & = \underline e(\sum_{m\geq0}(\sum_{n\geq 0}A_{m,n}X_{1}^{[n]})t^m)(\sum_{p,q\geq 0} A_{p,q}(X_2-X_1)^{[q]}t^p\sum_{s=0}^{\infty}c_{p-q,s}t^s)
        \\&= \underline e(\sum_{m\geq0}(\sum_{n\geq 0}A_{m,n}X_{1}^{[n]})t^m)(\sum_{i\geq 0}t^i\sum_{p=0}^{i}\sum_{q\geq 0} A_{p,q}(X_2-X_1)^{[q]}c_{p-q,i-p})
        \\&=\underline e \sum_{m\geq 0}t^m\sum_{\substack{i+j=m}}(\sum_{n\geq 0}A_{j,n}X_{1}^{[n]})(\sum_{p=0}^{i}\sum_{q\geq 0} A_{p,q}(X_2-X_1)^{[q]}c_{p-q,i-p}).
     \end{split}
\end{equation}
Then by comparing the coefficient of $t^m$ in \cref{stratification 0} and \cref{stratification calculation II}, we see that the cocycle condition is equivalent to the following:
\begin{equation*}
    \begin{split}
        \sum_{n\geq 0}A_{m,n}X_2^{[n]}&=\sum_{\substack{i+j=m}}(\sum_{s\geq 0}A_{j,s}X_{1}^{[s]})(\sum_{p=0}^{i}\sum_{q\geq 0} A_{p,q}(X_2-X_1)^{[q]}c_{p-q,i-p})
        \\&=\sum_{\substack{i+j=m}}(\sum_{s\geq 0}A_{j,s}X_{1}^{[s]})(\sum_{p=0}^{i}\sum_{k=0}^{\infty}X_2^{[k]}\sum_{v=0}^{\infty}A_{p,k+v}(-1)^vX_1^{[v]}c_{p-(k+v),i-p})
        \\&=\sum_{k=0}^{\infty}X_2^{[k]}\sum_{\substack{i+j=m}}(\sum_{s\geq 0}A_{j,s}X_{1}^{[s]})(\sum_{p=0}^{i}\sum_{v=0}^{\infty}A_{p,k+v}(-1)^vX_1^{[v]}c_{p-(k+v),i-p}).
    \end{split}
\end{equation*}
Further compare the coefficient of $X_2^{[n]}$, the cocycle relation can be reinterpreted as
\begin{equation}\label{cocycle equivalent II}
    A_{m,k}=\sum_{\substack{i+j=m}}(\sum_{s\geq 0}A_{j,s}X_{1}^{[s]})(\sum_{p=0}^{i}\sum_{v=0}^{\infty}A_{p,k+v}(-1)^vX_1^{[v]}c_{p-(k+v),i-p}).
\end{equation}

Now we can state our main result in this section.

\begin{theorem}\label{Main theorem I}
Keep notations as above. Then
\begin{itemize}
    \item If $(M,\varepsilon)$ is induced from a de Rham prismatic crystal $\mathcal{M} \in \operatorname{Vect}((\mathcal{O}_{K})_{\Prism}, (\mathcal{O}_{\Prism}[\frac{1}{p}])_{\mathcal{I}}^{\wedge})$, then the following holds:
    \begin{itemize}
        \item $A_{0,0}=I$ and $A_{i,0}=0$ for $i>0$.
        \item $A_{0,1} \in M_l(K)$ satisfies that $\lim_{n\to+\infty}\prod_{i=0}^n(iE'(\pi)+A_{0,1}) = 0$.
        \item $A_{m,n+1}=(\beta(n-m)+A_{0,1})A_{m,n}+\sum_{\substack{i+j=m\\i\leq m-1}}(A_{j,1}+(n-i)\theta_{1,j})A_{i,n}$ for $m,n\in \mathbb{N}^{\geq 0}$. In particular, $\{A_{m,n}\}$ is determined by $\{A_{m,1}\}_{m\geq 0}$.
    \end{itemize}
\end{itemize}
\end{theorem}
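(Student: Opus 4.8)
The three bullets come from, respectively, the normalization of the stratification, the cocycle relation \eqref{cocycle equivalent II}, and the convergence built into \eqref{stratification equation}. For the first: since $\varepsilon$ is the canonical isomorphism attached to a de Rham prismatic crystal, it restricts to the identity along the codegeneracy $\mathfrak{S}^1\to\mathfrak{S}$, which sends both $u_0$ and $u_1$ to $u$; under \cref{identification de rham ring} this map is $K\{X_1\}^{\wedge}_{\rm pd}[[t]]\to K[[t]]$ with $X_1\mapsto 0$ and $t\mapsto t$, so applying it to \eqref{stratification equation} gives $\underline e\cdot\sum_{m\geq 0}A_{m,0}t^m=\underline e$, hence $A_{0,0}=I$ and $A_{m,0}=0$ for $m\geq 1$.

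The heart of the argument is the third bullet, which I would read off the cocycle identity \eqref{cocycle equivalent II}, an equality in $K\{X_1\}^{\wedge}_{\rm pd}$ whose left-hand side $A_{m,k}$ is constant in $X_1$. Comparing $X_1^{[0]}$-coefficients yields nothing new --- it collapses to $A_{m,k}=\sum_{i+j=m}A_{j,0}A_{i,k}=A_{m,k}$ via the first bullet together with $c_{r,0}|_{X_1=0}=1$ and $c_{r,s}|_{X_1=0}=0$ for $s\geq 1$ (because $\alpha|_{X_1=0}=1$). The content is that the $X_1^{[1]}$-coefficient of the right-hand side vanishes, and to compute it one needs exactly one extra input: the linearization $(\partial_{X_1}\alpha)|_{X_1=0}=-E'(u_0)$, equivalently $(\partial_{X_1}c_{r,s})|_{X_1=0}=-\,r\,\theta_{1,s}$ for all $r,s$, which follows from \cref{image of t} using $\beta=\theta_{1,0}$ and $E'(u_0)=\sum_{i\geq 0}\theta_{1,i}t^i$ under $\bdr(\mathfrak{S})\cong K[[t]]$. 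The $X_1^{[1]}$-coefficient of the right-hand side of \eqref{cocycle equivalent II} splits into three families of terms, according to whether the linear $X_1$ is supplied by the factor $\sum_{s}A_{j,s}X_1^{[s]}$, by a factor $X_1^{[v]}$ (which forces $v=1$ there), or by a factor $c_{p-(k+v),\,i-p}$; in each family the remaining factors get evaluated at $X_1=0$, using $c_{r,0}|_{X_1=0}=1$ and $c_{r,s}|_{X_1=0}=0$ for $s\geq 1$, and every contribution with $j\neq 0$ drops out since $A_{j,0}=0$. The relation then collapses to
\begin{equation*}
0=\sum_{i+j=m}A_{j,1}A_{i,k}-A_{m,k+1}+\sum_{i+j=m}(k-i)\,\theta_{1,j}\,A_{i,k}.
\end{equation*}
Merging the two sums and peeling off the summand with $i=m$, $j=0$ (where $\theta_{1,0}=\beta$) gives precisely $A_{m,n+1}=(\beta(n-m)+A_{0,1})A_{m,n}+\sum_{i+j=m,\,i\leq m-1}(A_{j,1}+(n-i)\theta_{1,j})A_{i,n}$; since this determines every $A_{m,n}$ with $n\geq 1$ from the $A_{m',1}$ with $m'\leq m$, the family $\{A_{m,n}\}$ is determined by $\{A_{m,1}\}_{m\geq 0}$.

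Finally, for the second bullet, set $m=0$ in the recursion just obtained: $A_{0,n+1}=(\beta n+A_{0,1})A_{0,n}$ with $A_{0,0}=I$, and as all factors are polynomials in the single matrix $A_{0,1}$ they commute, so $A_{0,n}=\prod_{i=0}^{n-1}(iE'(\pi)+A_{0,1})$; the convergence $\lim_{n}A_{0,n}=0$ built into \eqref{stratification equation} then reads $\lim_{n\to\infty}\prod_{i=0}^{n}(iE'(\pi)+A_{0,1})=0$, as asserted. (Conceptually this is the statement that the rational Hodge--Tate crystal $\mathcal{M}/\mathcal{I}$ of \cref{specializing Hodge Tate} obeys the rational analogue of the Min--Wang classification recalled in the introduction.) I expect the only real difficulty to be the bookkeeping in the derivation of the displayed relation --- tracking which of the three $X_1$-dependent factors in \eqref{cocycle equivalent II} supplies the linear term and correctly specializing the others at $X_1=0$. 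No new idea beyond the linearization of $\alpha$ is needed, and the recursion drops out of this single coefficient comparison.
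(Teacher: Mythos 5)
Your proposal is correct and follows essentially the same route as the paper: both extract the recursion by comparing the $X_1^{[0]}$- and $X_1^{[1]}$-coefficients of the cocycle identity \eqref{cocycle equivalent II}, with the same key input $d_{p,s,1}=-p\,\theta_{1,s}$ (the paper's \cref{calculate c}, which you obtain more slickly by linearizing $\alpha^p$ at $X_1=0$), and then specialize to $m=0$ for the convergence condition. The only local deviation is the first bullet, which you get from the normalization of $\varepsilon$ along the codegeneracy $X_1\mapsto 0$, whereas the paper deduces $A_{i,0}=0$ for $i>0$ from the constant-term comparison by induction; both are valid.
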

\begin{proof}
Recall by \cref{cocycle equivalent II}, the cocycle condition is equivalent to the following:
\begin{equation}\label{local ref}
    A_{m,k}=\sum_{\substack{i+j=m}}(\sum_{s\geq 0}A_{j,s}X_{1}^{[s]})(\sum_{p=0}^{i}\sum_{v=0}^{\infty}A_{p,k+v}(-1)^vX_1^{[v]}c_{p-(k+v),i-p}).
\end{equation}
The idea is to compare the coefficients of $X_{1}^{[0]}$ and $X_{1}^{[1]}$ on both sides. Write
\begin{equation*}
    c_{p,s}=\sum_{i=0}d_{p,s,i}X_{1}^{[i]},
\end{equation*}
where $d_{p,s,i}\in K$. An easy observation is that $d_{m,0,,0}=1$,  $d_{p,i,0}=0$ for $m\in \mathbb{Z}$ and $i>0$.

First consider the constant term in \cref{local ref}, then we get
\begin{equation}\label{calculating 0 column}
    \begin{split}
        A_{m,k}=\sum_{i+j=m}A_{j,0}(\sum_{p=0}^i A_{p,k}d_{p-k,i-p,0}).
    \end{split}
\end{equation}
Take $m=1, k=0$, this implies that $A_{1,0}=0$ as $d_{0,1,0}=0$ and $A_{0,0}=I$ by assumption.

To show $A_{m,0}=0$ for general $m>0$, we proceed by induction on $m$. Suppose $A_{k,0}=0$ for all positive integers no larger than $m-1$ ($m\geq 2$). Than \cref{calculating 0 column} turns into 
\begin{equation*}
    A_{m,0}=d_{m,0,0}A_{m,0}+A_{m,0},
\end{equation*}
from which we see $A_{m,0}$ also vanishes. Hence we finish the proof of the first part in \cref{Main theorem I}.

Next we compare the coefficients of $X_{1}$ on left and right side of \cref{local ref}:
\begin{equation*}
\begin{split}
    0&=A_{0,0}(\sum_{p=0}^{m}(A_{p,k+1}\cdot(-1)\cdot d_{p-k-1,m-p,0}+A_{p,k}d_{p-k,m-p,1}))+\sum_{i+j=m}(A_{j,1}(\sum_{p=0}^{i}d_{p-k,i-p,0}A_{p,k})
    \\&=-A_{m,k+1}+(k-m)\beta A_{m,k}+\sum_{p=0}^{m-1}(k-p)\theta_{1,m-p}A_{p,k}+\sum_{i+j=m}A_{j,1}A_{i,k}.
\end{split}
\end{equation*}
Here the first identity follows from the vanishing of $A_{r,0}$ for $r>0$, the second equality is due to the vanishing of $d_{r,i,0}$ for $i>0$ and \cref{calculate c}.

As a result,
\begin{equation}\label{inductive formula}
    A_{m,k+1}=(\beta(k-m)+A_{0,1})A_{m,k}+\sum_{\substack{i+j=m\\i\leq m-1}}(A_{j,1}+(k-i)\theta_{1,j})A_{i,k}.
\end{equation}

Take $m=0$, this implies $$A_{0,n+1}=(\beta n+A_{0,1})A_{0,n}.$$ Since $\lim_{n\rightarrow \infty}A_{0,n}=0$, $A_{0,1} \in M_l(K)$ satisfies that $\lim_{n\to+\infty}\prod_{i=0}^n(i\beta+A_{0,1}) = 0$. We finishe the proof of \cref{Main theorem I}.
\end{proof}

The following lemma is used in the proof:
\begin{lemma}\label{calculate c}
$d_{p,s,1}=-p\theta_{1,s}$ for $p\in \mathbb{Z}$, $s\geq 0$.
\end{lemma}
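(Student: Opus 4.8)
The plan is to isolate the $X_1$-linear part of $\alpha^p$ and match its $t$-expansion against $\alpha^p=\sum_{s\ge 0}c_{p,s}t^s=\sum_{s\ge 0}\big(\sum_{i\ge 0}d_{p,s,i}X_1^{[i]}\big)t^s$.

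First I would record the linear part of $\alpha$ itself. Under the identification of \cref{identification de rham ring} we have $u_1=u_0-E(u_0)X_1=u_0-tX_1$, so the exact Taylor expansion of the degree-$e$ polynomial $E$ gives
\[E(u_1)=E(u_0-tX_1)=\sum_{n=0}^{e}\frac{E^{(n)}(u_0)}{n!}(-tX_1)^n=t+\sum_{n\ge 1}(-1)^nE^{(n)}(u_0)\,t^nX_1^{[n]},\]
hence $\alpha=E(u_1)/E(u_0)=1+\sum_{n\ge 1}(-1)^nE^{(n)}(u_0)\,t^{n-1}X_1^{[n]}$, which is just \cref{image of t} rewritten. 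In particular the coefficient of $X_1^{[1]}=X_1$ in $\alpha$ equals $-E^{(1)}(u_0)=-\sum_{s\ge 0}\theta_{1,s}t^s$.

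The main step is to push $\alpha^p$ through the $K[[t]]$-algebra homomorphism
\[\rho\colon \bdr(\mathfrak{S}^1)=K\{X_1\}^{\wedge}_{\mathrm{pd}}[[t]]\longrightarrow (K[X_1]/(X_1^2))[[t]],\qquad X_1^{[0]}\mapsto 1,\ \ X_1^{[1]}\mapsto X_1,\ \ X_1^{[i]}\mapsto 0\ (i\ge 2).\]
This is well defined as a ring map: the only relations to check are $X_1^{[i]}X_1^{[j]}=\binom{i+j}{i}X_1^{[i+j]}$, and once $i+j\ge 2$ both sides lie in the square-zero ideal $(X_1)$ and are sent to $0$. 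Applying $\rho$ to the two expansions of $\alpha^p$ above, $\rho(\alpha^p)=\sum_{s\ge0}(d_{p,s,0}+d_{p,s,1}X_1)t^s$, while $\rho(\alpha)=1-\big(\sum_s\theta_{1,s}t^s\big)X_1$ has square-zero $X_1$-part, so $\rho(\alpha)^p=1-p\big(\sum_s\theta_{1,s}t^s\big)X_1$. Comparing coefficients of $X_1t^s$ yields $d_{p,s,1}=-p\theta_{1,s}$.

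The one point I would be careful about — and the only real subtlety — is that $\alpha^p$ makes sense, and that the identity $\rho(\alpha)^p=1-p\,\rho(\alpha-1)$ is valid, for negative $p$ as well. This is fine: $\alpha$ is a unit in $\bdr(\mathfrak{S}^1)$ (it reduces mod $t$ to $1-\beta X_1$, which is invertible in $K\{X_1\}^{\wedge}_{\mathrm{pd}}$ with inverse $\sum_n n!\beta^nX_1^{[n]}$, the series converging $p$-adically since $v_p(n!)\to\infty$, and $\bdr(\mathfrak{S}^1)$ is $t$-adically complete); then $\rho$, being a ring homomorphism, respects $\alpha\mapsto\alpha^{-1}$, and in the target ring $(1+x)^{-1}=1-x$ whenever $x^2=0$, so the negative exponents are handled uniformly with the positive ones. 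No genuine obstacle remains: the lemma is essentially bookkeeping about the divided-power structure.
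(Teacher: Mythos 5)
Your proof is correct and rests on the same idea as the paper's: only the $X_1$-linear part of $\alpha$ can contribute to the coefficient of $X_1$ in $\alpha^p$, and raising $1+(\text{linear part})$ to the $p$-th power multiplies that part by $p$. Your packaging via the square-zero quotient $(K[X_1]/(X_1^2))[[t]]$ is a cleaner formalization of the paper's multinomial-expansion argument and, unlike the paper (which only asserts the $p\le 0$ case is "similar"), handles negative exponents uniformly by noting that $\alpha$ is a unit and that $\rho$ is a ring homomorphism.
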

\begin{proof}
When $s=0$, by definition, $c_{p,0}=(1-\beta X_1)^p$, hence $d_{p,0,1}=-p\beta=-p\theta_{1,1}$. For $s\geq 1$, we first assume $p\geq 1$, then by definition, $d_{p,s,1}$ is the coefficient of $X_{1}$ in 
\[c_{p,s}=\sum_{\substack{(g_1,\cdots,g_p)\in \mathbb{N}^p\\0\leq g_i\leq s\\ g_1+\cdots+g_p=s}}\prod_{i=1}^{p}a_{g_i},\]
here $a_i=\sum_{n=1}^{e}\theta_{n,i-(n-1)}\frac{(-1)^nX_1^n}{n!}$. 

Then observe that for $g_1,g_2\geq 1$, $X_{1}^2$ is a factor of $a_{g_1}a_{g_2}$, hence such $\prod_{i=1}^{p}a_{g_i}$ does not contribute to the coefficient of $X_{1}$ in $c_{p,s}$, then the desired result follows as the coefficient of $X_{1}$ in $a_s$ is precisely $\theta_{1,s}$.
For $p\leq 1$, a similar argument leads to the result.
\end{proof}

\begin{remark}\label{remark with MW relation}
\begin{itemize}
    \item Given a de Rham prismatic crystal $\mathcal{M} \in \operatorname{Vect}((\mathcal{O}_{K})_{\Prism}, (\mathcal{O}_{\Prism}[\frac{1}{p}])_{\mathcal{I}}^{\wedge})$, then $\mathcal{M}/\mathcal{I}$ defines a Hodge Tate crystal in the sense of \cite[Definition 3.1]{MW21} by \cref{specializing Hodge Tate}, then one can see our work is compatible with Min and Wang's results in the sense that if we let $m$ be $0$, then the equation $A_{m,n+1}=(\beta(n-m)+A_{0,1})A_{m,n}+\sum_{\substack{i+j=m\\i\leq m-1}}(A_{j,1}+(n-i)\theta_{1,j})A_{i,n}$ turns into $A_{0,n+1}=A_{0,n}(\beta n+A_{0,1})$, which is precisely the condition in \cite[Theorem 3.5]{MW21}.
    \item Thanks to \cref{Main theorem I}, by induction on $m$ and applying the inductive formula $A_{m,n+1}=(\beta(n-m)+A_{0,1})A_{m,n}+\sum_{\substack{i+j=m\\i\leq m-1}}(A_{j,1}+(n-i)\theta_{1,j})A_{i,n}$ guarantees that $\lim_{n\rightarrow \infty}A_{m,n}=0$ is satisfied automatically thanks to the convergence property that $A_{0,1}$ satisfies. In other words, there is no new restriction on $A_{1,1},A_{2,1},\cdots$ to make $\lim_{n\rightarrow \infty}A_{m,n}=0$ holds for $m\geq 1$.
    \item One might wonder whether the three properties in the theorem are sufficient to define the stratification of a prismatic de Rham crystal. Inspired by the proof of \cite[Lemma 3.6]{MW21}, we believe that the answer is positive. We have the following conjecture:
\end{itemize}

\end{remark}

\begin{conjecture}\label{conjecture e=1}
Assume that $\{A_{m,n}\}$ is a sequences of matrices in $ M_l(K)$ such that the following holds:
\begin{itemize}
        \item $A_{0,0}=I$ and $A_{i,0}=0$ for $i>0$.
        \item $A_{0,1} \in M_l(K)$ satisfies that $\lim_{n\to+\infty}\prod_{i=0}^n(iE'(\pi)+A_{0,1}) = 0$.
        \item $A_{m,n+1}=(\beta(n-m)+A_{0,1})A_{m,n}+\sum_{\substack{i+j=m\\i\leq m-1}}(A_{j,1}+(n-i)\theta_{1,j})A_{i,n}$ for $m,n\in \mathbb{N}^{\geq 0}$. In particular, $\{A_{m,n}\}$ is determined by $\{A_{m,1}\}_{m\geq 0}$.
    \end{itemize}
Then 
\begin{equation*}
    \epsilon(\underline{e})=\underline e\cdot \sum_{m\geq0}(\sum_{n\geq 0}A_{m,n}X^{[n]})t^m
\end{equation*}
can serve as the stratification associated to a certain prismatic de Rham crystal.
\end{conjecture}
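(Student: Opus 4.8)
The plan is to verify the cocycle condition by hand, in the spirit of the proof of \cite[Lemma 3.6]{MW21}. By \cref{algebraic description of crystal} it suffices to show that $(M,\epsilon)$ satisfies the cocycle condition. Here $\epsilon$ is automatically well defined: $\lim_{n}A_{m,n}=0$ for every $m$ by induction on $m$ from the recursion, as in \cref{remark with MW relation}, and $\epsilon$ is invertible because modulo $t$ it is the Hodge--Tate stratification $(1-\beta X)^{-A_{0,1}/\beta}$, a unit, while $K\{X\}^\wedge_{\rm pd}[[t]]$ is $t$-adically complete. So the content is the family of identities \cref{cocycle equivalent II} in $K\{X_1\}^\wedge_{\rm pd}$ for all $m,k\geq 0$; writing $\mathbf A(X,t):=\sum_{m,n}A_{m,n}X^{[n]}t^m$, \cref{Equ-structure} recasts these as the single identity
\[\mathbf A(X_2,t)=\mathbf A(X_1,t)\cdot\mathbf A\!\big((X_2-X_1)\alpha^{-1},\ \alpha t\big)\quad\text{in }M_l\big(K\{X_1,X_2\}^\wedge_{\rm pd}[[t]]\big),\]
with $\alpha=E(u_1)/E(u_0)$ as in \cref{image of t}.

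The first approach I would try is an induction on the $t$-degree $m$. For $m=0$ the identity \cref{cocycle equivalent II} is precisely the cocycle condition for the Hodge--Tate stratification $\underline e\mapsto\underline e\sum_nA_{0,n}X_1^{[n]}$, governed by the recursion $A_{0,n+1}=(\beta n+A_{0,1})A_{0,n}$ and the convergence hypothesis on $A_{0,1}$, hence holds by the ($K$-linear analogue of) \cite[Theorem 3.5, Lemma 3.6]{MW21}. For the inductive step I would fix $m\geq1$, assume \cref{cocycle equivalent II} at all smaller $t$-degrees, and prove degree $m$ by a secondary induction on the divided-power degree $n$ in $X_1$: the $X_1^{[0]}$-coefficient is trivial since $A_{i,0}=0$ for $i>0$, the $X_1^{[1]}$-coefficient reproduces exactly the recursion \cref{inductive formula} — this is the computation in the proof of \cref{Main theorem I} run backwards, using \cref{calculate c} — and for $n\geq2$, subtracting from the $X_1^{[n]}$-coefficient of \cref{cocycle equivalent II} the appropriate $K\{X_1\}^\wedge_{\rm pd}$-combination of identities already established (at lower $X_1$-degree and lower $t$-degree) should leave an expression that vanishes identically by \cref{inductive formula}.

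A cleaner organization, and the one I would ultimately pursue, rests on the observation that the recursion \cref{inductive formula} is equivalent to the first-order linear PDE
\[\big(1-E'(u_0)X\big)\,\partial_X\mathbf A+E'(u_0)\,t\,\partial_t\mathbf A=\Big(\sum_{j\geq0}A_{j,1}t^j\Big)\mathbf A,\qquad\mathbf A\big|_{X=t=0}=I,\]
whose characteristic field, in the coordinates $u_0$ (with $t=E(u_0)$) and $u_1=u_0-Xt$, is a scalar multiple of $\partial_{u_0}$. Thus $\mathbf A$, viewed as a function of $(u_0,u_1)$, is pinned down by $\partial_{u_0}\mathbf A=\big(\sum_jA_{j,1}E(u_0)^{j-1}\big)\mathbf A$ and the normalization $\mathbf A|_{u_1=u_0}=I$, so formally $\mathbf A(u_0,u_1)=\Psi(u_0)\Psi(u_1)^{-1}$ for $\Psi$ the fundamental solution of $\Psi'=\big(\sum_jA_{j,1}E(u_0)^{j-1}\big)\Psi$ — which exists once one adjoins the fractional power $(u_0-\pi)^{A_{0,1}/\beta}$ forced by the regular singular point at $u_0=\pi$, consistently with $\mathbf A\equiv(1-\beta X)^{-A_{0,1}/\beta}\bmod t$. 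Checking via \cref{Equ-structure} that the substitution $(X_1,t)\mapsto\big((X_2-X_1)\alpha^{-1},\alpha t\big)$ is exactly the coordinate change $(u_0,u_1)\mapsto(u_1,u_2)$, with $u_2:=u_0-X_2t$, the cocycle identity collapses to the associativity $\Psi(u_0)\Psi(u_1)^{-1}\cdot\Psi(u_1)\Psi(u_2)^{-1}=\Psi(u_0)\Psi(u_2)^{-1}$.

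The main obstacle is making one of these arguments rigorous. In the direct route it is the bookkeeping: \cref{cocycle equivalent II} is a two-variable identity in $K\{X_1,X_2\}^\wedge_{\rm pd}[[t]]$ whereas the recursion defining $\{A_{m,n}\}$ is one-variable, so one must control how differentiation in $X_1$ on one side corresponds to an index shift on the other (via $\partial_XX^{[n]}=X^{[n-1]}$ and $X\cdot X^{[n-1]}=nX^{[n]}$) while carrying along the coefficients $c_{p,s}$ of $\alpha^p$ and the constants $\theta_{1,j}$. In the conceptual route the obstacle is the precise construction of $\Psi$ and the verification that, although $\Psi(u_0)$ and $\Psi(u_1)$ separately live only in a mild enlargement of $K\{X\}^\wedge_{\rm pd}[[t]]$ (obtained by adjoining $(u_0-\pi)^{A_{0,1}/\beta}$), their ratio and all intermediate expressions in the associativity computation stay inside $K\{X_1,X_2\}^\wedge_{\rm pd}[[t]]$, the relevant fractional powers and logarithms converging thanks to the divided-power structure. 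I expect the second route to be cleaner, but in either case this is where essentially all the work lies; everything else is formal or already contained in the proof of \cref{Main theorem I}.
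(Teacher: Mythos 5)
The statement you are addressing is stated in the paper as a \emph{conjecture} (\cref{conjecture e=1}); the paper offers no proof, only low-degree evidence and a reduction, and your proposal does not close that gap. Both of your routes terminate exactly where the paper's own attempt does. Your ``conceptual'' route is essentially the paper's \cref{find invertible functions}: writing the stratification as $\Psi(u_0)\Psi(u_1)^{-1}$ (the paper's $\alpha^a\,\tilde f(\alpha t)/\tilde f(t)$) makes the cocycle condition formal by telescoping, but the entire difficulty is thereby shifted into proving that this product actually equals $\sum_{m,n}A_{m,n}X_1^{[n]}t^m$ with the coefficients produced by the recursion, i.e.\ that the ratio lands in $K\{X_1\}^{\wedge}_{\rm pd}[[t]]$ and matches term by term. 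The paper verifies this only for $t$-degree $k\le 2$ and explicitly records that it cannot push further even under the extra hypothesis that $A_{0,1}$ commutes with all $A_{j,1}$. In the general (non-commutative) case your factorization is additionally problematic: the fractional power $(u_0-\pi)^{A_{0,1}/\beta}$ need not commute with the regular part of the fundamental solution, and when eigenvalues of $A_{0,1}/\beta$ differ by integers (resonance) the fundamental solution of the regular singular ODE acquires logarithmic terms whose convergence in the divided-power ring is precisely what must be controlled; the hypothesis $\lim_{n\to\infty}\prod_{i=0}^n(i\beta+A_{0,1})=0$ does not rule resonance out.

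Your first route has the same status. The cocycle condition \cref{cocycle equivalent II} imposes one identity for each pair $(m,k)$ and each divided-power degree $n$ in $X_1$; the paper's \cref{Main theorem I} extracts only the $X_1^{[0]}$ and $X_1^{[1]}$ coefficients to obtain \emph{necessary} conditions, which are exactly the recursion you start from. Your claim that for $n\ge 2$ ``subtracting the appropriate combination of identities already established should leave an expression that vanishes identically by \cref{inductive formula}'' is precisely the open sufficiency direction, and nothing in the sketch indicates how the infinitely many higher-coefficient constraints --- which involve all $A_{p,k+v}$ for $v\ge 0$, weighted by the non-constant coefficients $c_{p-(k+v),i-p}\in K\{X_1\}^{\wedge}_{\rm pd}$ --- are to be reduced to that single recursion. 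You correctly identify where ``essentially all the work lies,'' but that work is the statement itself; as written, the proposal is a restatement of the paper's own unproven strategy rather than a proof.
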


Nevertheless, although we couldn't prove \cref{conjecture e=1} now, we will give an explicit expression such that in the case that $A_{0,1}$ commutes with all $A_{j,1}$, \cref{conjecture e=1} can be reduced to the study of a sequence of very explicit polynomials in variables $\{A_{0,1}, A_{1,1}\}$ showing up in the following theorem:
\begin{theorem}\label{main theorem II}
Assume that $\{A_{m,n}\}$ is a sequence of matrices in $ M_l(K)$ such that the following holds (for example, thanks to \cref{Main theorem I}, this is satisfied when coming from the stratification of a prismatic de Rham crystal such that $A_{0,1}$ commutes with all $A_{j,1}$ ):
    \begin{itemize}
        \item $A_{0,0}=I$ and $A_{i,0}=0$ for $i>0$.
        \item $A_{0,1} \in M_l(K)$ satisfies that $\lim_{n\to+\infty}\prod_{i=0}^n(iE'(\pi)+A_{0,1}) = 0$.
        \item $A_{m,n+1}=(\beta(n-m)+A_{0,1})A_{m,n}+\sum_{\substack{i+j=m\\i\leq m-1}}(A_{j,1}+(n-i)\theta_{1,j})A_{i,n}$ for $m,n\in \mathbb{N}^{\geq 0}$. In particular, $\{A_{m,n}\}$ is determined by $\{A_{m,1}\}_{m\geq 0}$.
        \item $A_{0,1}$ commutes with all $A_{j,1}$.
    \end{itemize}
    then there exists a sequence of polynomials $\{h_{m,j}\}$ (concentrated on $0\leq j\leq 2m$) such that 
\begin{align*}
    \sum_{n=0}^{\infty}A_{m,n}X^{[n]}=\sum_{j=0}^{m}h_{m,j}(1-\beta X)^{m-j}X^j(1-\beta X)^{-\frac{A_{0,1}}{\beta}}+\sum_{j=m+1}^{2m}h_{m,j}A_{0,j-m}(1-\beta X)^{m-j}X^j(1-\beta X)^{-\frac{A_{0,1}}{\beta}}
\end{align*}
Moreover, $\{h_{m,j}\}$ satisfies the following properties:
\begin{itemize}
    \item $h_{0,0}=1$ and for $m\geq 1$ $h_{m,j}$ is a polynomial of degree no more than $m$ in variables $A_{0,1},\cdots,A_{m,1}$ with coefficients in $K$, where $A_{i,1}$ is given weighted degree $i$, Moreover, $h_{m,0}=0$ for $m\geq 1$.
    \item $\{h_{m,j}\}$ can be calculated inductively via 
    \begin{itemize}
        \item for $1\leq j\leq m$,
        \begin{align*}
        h_{m,j}=&f_{m,j}A_{m,1}+\sum_{\substack{f+r=m\\ 1\leq f,r\leq m-1}}(A_{r,1}-f\theta_{1,r})(\sum_{i=1}^{f}h_{f,i}g_{m,f,i}^j+\sum_{i=f+1}^{2f}h_{f,i}g_{m,f,i}^jA_{0,i-f})
         \\&+\sum_{\substack{f+r=m\\ f\leq m-1}}\theta_{1,r}(\sum_{i=0}^{f}((j-1)g_{m,f,i}^j+(1-\frac{1}{j})g_{m,f,i}^{j-1}(-(m-(j-1))\beta+A_{0,1}))h_{f,i}
        \\&+\sum_{i=f+1}^{2f}(h_{f,i}A_{0,i-f}((j-1)g_{m,f,i}^j+(1-\frac{1}{j})g_{m,f,i}^{j-1}(-(m-(j-1))\beta+A_{0,1})))).
       \end{align*}
    \item for $m+1\leq j\leq 2m$,
    
    \begin{align*}
        h_{m,j}&=\sum_{\substack{f+r=m\\ 1\leq f,r\leq m-1}}(A_{r,1}-f\theta_{1,r})(\sum_{i=f+1}^{2f}h_{f,i}g_{m,f,i}^j\frac{A_{0,i-f}}{A_{0,j-m}})
        \\&+\sum_{\substack{f+r=m\\ f\leq m-1}}\theta_{1,r}(\sum_{i=f}^{2f}h_{f,i}(\frac{A_{0,i-f}}{A_{0,j-m}}((j-1)g_{m,f,i}^j)+(1-\frac{1}{j})g_{m,f,i}^{j-1}\frac{A_{0,i-f}}{A_{0,j-1-m}}).
        \end{align*}
    \end{itemize}
    
    Here $f_{m,1}=1$, $f_{m,i}=\frac{\beta^{i-1}}{i!}(m-1)\cdots (m-(i-1))$ for $2\leq i\leq m$ and
    \begin{equation*}\label{calculating g}
      g_{m,f,i}^j= \left\{
      \begin{array}{rcl}
           \frac{\beta^{j-i-1}}{j}\frac{1}{(j-i-1)!}(m-(f+1))\cdots(m-(f+j-i-1)), & m\geq f+1 & i+1\leq j\leq m-f+i  \\
           0, & \text{otherwise}
      \end{array}
      \right.
  \end{equation*}
\end{itemize}
\end{theorem}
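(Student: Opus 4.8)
The plan is to encode the sequence $\{A_{m,n}\}_n$ into the generating function $F_m(X):=\sum_{n\ge 0}A_{m,n}X^{[n]}\in M_l(K)[[X]]$ and to solve, by induction on $m$, the linear ODE that the recursion of \cref{Main theorem I} forces on it. Term-by-term differentiation gives $F_m'(X)=\sum_{n\ge 0}A_{m,n+1}X^{[n]}$ and $XF_m'(X)=\sum_{n\ge 0}nA_{m,n}X^{[n]}$, so substituting $A_{m,n+1}=(\beta(n-m)+A_{0,1})A_{m,n}+\sum_{i+j=m,\,i\le m-1}(A_{j,1}+(n-i)\theta_{1,j})A_{i,n}$ yields
\begin{equation*}
(1-\beta X)\,F_m'(X)-(A_{0,1}-\beta m)\,F_m(X)=\sum_{\substack{i+j=m\\ i\le m-1}}\bigl[(A_{j,1}-i\theta_{1,j})F_i(X)+\theta_{1,j}\,XF_i'(X)\bigr].
\end{equation*}
The right-hand side involves only $F_0,\dots,F_{m-1}$, and the homogeneous equation $(1-\beta X)F'=(A_{0,1}-\beta m)F$ is solved by $(1-\beta X)^{m-A_{0,1}/\beta}$, interpreted as a formal binomial series (which makes sense because $A_{0,1}$ commutes with itself). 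For $m=0$ the ODE reads $(1-\beta X)F_0'=A_{0,1}F_0$ with $F_0(0)=A_{0,0}=I$, so $F_0(X)=(1-\beta X)^{-A_{0,1}/\beta}$ and $h_{0,0}=1$.

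For the inductive step, assume $F_0,\dots,F_{m-1}$ have the asserted shape. The hypothesis that $A_{0,1}$ commutes with every $A_{j,1}$ (hence with every matrix appearing in the $F_i$) is exactly what allows the factor $(1-\beta X)^{\pm A_{0,1}/\beta}$ to be moved freely through the computation, so that variation of parameters gives
\begin{equation*}
F_m(X)=(1-\beta X)^{m-A_{0,1}/\beta}\int (1-\beta X)^{A_{0,1}/\beta-m-1}\,\Bigl(\sum_{\substack{i+j=m\\ i\le m-1}}\bigl[(A_{j,1}-i\theta_{1,j})F_i+\theta_{1,j}XF_i'\bigr]\Bigr)\,dX,
\end{equation*}
where $\int$ denotes the formal antiderivative with vanishing constant term and the remaining integration constant (a multiple of $(1-\beta X)^{m-A_{0,1}/\beta}$) is pinned down by the initial condition $F_m(0)=A_{m,0}=0$. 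Once $F_m$ is known to have the asserted shape, evaluating at $X=0$ kills every $X^j$ with $j\ge 1$ and forces $h_{m,0}=F_m(0)=0$.

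It remains to carry out the integral and check it lands in the claimed function space. Each summand of each $F_i$ is $(\text{matrix commuting with }A_{0,1})\cdot(1-\beta X)^{i-j'}X^{j'}(1-\beta X)^{-A_{0,1}/\beta}$ with $0\le j'\le 2i$; multiplying by $(1-\beta X)^{A_{0,1}/\beta-m-1}$ cancels the factor $(1-\beta X)^{-A_{0,1}/\beta}$ and leaves an integrand $(1-\beta X)^{c}X^{d}$ with $c=i-j'-m-1$ and $d\le 2i+1$, and similarly for the $XF_i'$ terms after applying $\tfrac{d}{dX}\bigl((1-\beta X)^{a}(1-\beta X)^{-A_{0,1}/\beta}\bigr)=(A_{0,1}-\beta a)(1-\beta X)^{a-1}(1-\beta X)^{-A_{0,1}/\beta}$ (this identity is the source of the factors $-(m-(j-1))\beta+A_{0,1}$ in the formula for $h_{m,j}$). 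Writing $X^{d}=\beta^{-d}\sum_{k}\binom{d}{k}(-1)^{k}(1-\beta X)^{k}$ reduces everything to $\int (1-\beta X)^{c+k}\,dX$; the only value of $k$ that would produce a logarithm is $k=-c-1=j'+m-i$, which exceeds $d$ because $i\le m-1$, so that binomial coefficient vanishes and no logarithm appears. Integrating and multiplying back by $(1-\beta X)^{m-A_{0,1}/\beta}$ produces a combination of terms $(1-\beta X)^{a}(1-\beta X)^{-A_{0,1}/\beta}$ with $-m\le a\le m$; re-expanding in the basis $\{(1-\beta X)^{m-j}X^{j}\}_{0\le j\le 2m}$ (an invertible, essentially triangular change of basis, since $X=\beta^{-1}(1-(1-\beta X))$) gives the asserted shape, the coefficient $A_{0,j-m}$ in the slots $j>m$ appearing because the extra negative powers of $(1-\beta X)$ are exactly those produced by iterated differentiation of $(1-\beta X)^{-A_{0,1}/\beta}$, which brings out $A_{0,1}(A_{0,1}+\beta)\cdots=A_{0,k}$. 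Reading off coefficients term by term gives the recursion for the $h_{m,j}$, with $f_{m,i}$ collecting the contribution of the $i=0$, $j=m$ summand $A_{m,1}F_0$ and $g^{j}_{m,f,i}$ collecting that of the general summand built from $F_f$; the bound that $h_{m,j}$ has degree $\le m$ in $A_{0,1},\dots,A_{m,1}$ with $A_{i,1}$ of weighted degree $i$ follows by induction, since passing from the $h_{f,i}$ to the $h_{m,j}$ multiplies by at most one matrix factor $A_{r,1}$ with $f+r=m$ and otherwise only by scalars.

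The main obstacle is precisely this last bookkeeping: organising the integrated expression, re-expressing pure powers of $(1-\beta X)$ in terms of the mixed monomials $(1-\beta X)^{m-j}X^{j}$, and verifying that the negative-exponent terms carry exactly the coefficient $A_{0,j-m}$ with no spurious factors, so that the extraction of the $h_{m,j}$ matches the stated formulas for $f_{m,i}$ and $g^j_{m,f,i}$. This is a lengthy but routine manipulation of formal power series, provided one keeps the commutativity hypothesis in force at every step and tracks the derivative identity for $(1-\beta X)^{a}(1-\beta X)^{-A_{0,1}/\beta}$ carefully.
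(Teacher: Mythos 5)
Your proposal is correct in outline but takes a genuinely different route from the paper. You package each row $\{A_{m,n}\}_n$ into the generating function $F_m(X)=\sum_n A_{m,n}X^{[n]}$, observe that the recursion is equivalent to the inhomogeneous ODE $(1-\beta X)F_m'-(A_{0,1}-\beta m)F_m=\sum_{i+j=m,\,i\le m-1}[(A_{j,1}-i\theta_{1,j})F_i+\theta_{1,j}XF_i']$, and solve by variation of parameters against the homogeneous solution $(1-\beta X)^{m-A_{0,1}/\beta}$; your verification that no logarithmic term can arise (the dangerous exponent $-1$ would require $k=j'+m-i>d$, which is out of range since $i\le m-1$) is the right key point and is correct. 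The paper instead stays entirely at the level of the coefficients: it unrolls the recursion into the finite sum \cref{inductive m n}, proves the closed form of \cref{preliminary lemma} expressing $A_{m,s}$ as $A_{0,s-m}$ times an explicit polynomial in $s$ and the operators $(-t\beta+A_{0,1})$, using the discrete summation identities of \cref{change m calculation order} and \cref{change m f i order} (Abel summation playing the role of your integration by parts), and only converts to a power series at the very end via \cref{exponential sum}. The two computations are essentially dual — your antiderivative with vanishing constant term is the continuous shadow of the paper's sums $\sum_{c=0}^{s-1}$ — but your version makes the structural facts (why $(1-\beta X)^{-A_{0,1}/\beta}$ appears, why the answer lies in the $(2m+1)$-dimensional span of $(1-\beta X)^{m-j}X^j$, why the slots $j>m$ carry the factor $A_{0,j-m}$ via the polynomial ratios $A_{0,a}/A_{0,b}$) more transparent, and it is arguably better adapted to attacking \cref{conjecture e=1}. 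The one caveat: the theorem's content is largely the explicit inductive formulas for $f_{m,i}$ and $g^j_{m,f,i}$, and you defer matching your integrated expression against them as "routine bookkeeping"; that bookkeeping is precisely where the paper spends most of its effort, so your argument as written establishes the qualitative statement (existence and shape of the $h_{m,j}$, the degree bound, $h_{m,0}=0$) but not yet the quoted closed forms. Also, your remark that the induction multiplies $h_{f,i}$ "otherwise only by scalars" is slightly inaccurate — extra polynomial factors in $A_{0,1}$ such as $-(m-(j-1))\beta+A_{0,1}$ and $A_{0,i-f}/A_{0,j-m}$ do appear — but since $A_{0,1}$ carries weighted degree $0$ this does not affect the degree bound.
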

\begin{remark}\label{variant main theorem II}
If we define 
\begin{equation*}\label{variant h}
      \tilde{h}_{m,j}= \left\{
      \begin{array}{rcl}
           h_{m,j}, & j\leq m  \\
           h_{m,j}A_{0,j-m}, & j+1\leq m \leq m+1
      \end{array}
      \right.
  \end{equation*}
Then we have a much cleaner inductive formula for calculating $ \tilde{h}_{m,j}$ and \cref{main theorem II} can be restated as
\[\sum_{n=0}^{\infty}A_{m,n}X^{[n]}=\sum_{j=1}^{2m}\tilde{h}_{m,j}(1-\beta X)^{m-j}X^j(1-\beta X)^{-\frac{A_{0,1}}{\beta}}.\]
Later we will freely interchange between $\tilde{h}$ and $h$.
\end{remark}

In the unramified case, we have a much cleaner expression:
\begin{cor}\label{e=1 known}
Assume as in \cref{main theorem II} and moreover $e=1$, then 
\begin{align*}
    \sum_{n=0}^{\infty}A_{m,n}X^{[n]}=\sum_{j=1}^{m}h_{m,j}(1-\beta X)^{m-j}X^j(1-\beta X)^{-\frac{A_{0,1}}{\beta}}
\end{align*}
such that:
\begin{itemize}
    \item $h_{0,0}=1$ and for $m\geq 1$ $h_{m,j}$ is a homogeneous polynomial of degree $m$ in variables $A_{1,1},\cdots,A_{m,1}$, where $A_{i,1}$ is given weighted degree $i$.
    \item $\{h_{m,j}\}$ can be calculated inductively via 
    \begin{align*}
        h_{m,j}=f_{m,j}A_{m,1}+\sum_{\substack{f+r=m\\ 1\leq f,r\leq m-1}}A_{r,1}\sum_{i}h_{f,i}g_{m,f,i}^j,
    \end{align*}
    where $f_{m,1}=1$ and $g_{m,f,i}^j$ are defined in \cref{main theorem II}.
\end{itemize}
\end{cor}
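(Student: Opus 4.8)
The strategy I would follow is to deduce \cref{e=1 known} directly from \cref{main theorem II} by setting $e=1$ and tracking which terms of the general inductive description of $\{h_{m,j}\}$ degenerate. The first step is to record the effect of $e=1$ on the constants $\theta_{n,i}$ of \cref{image of t}: since the Eisenstein polynomial $E(u)$ then has degree one, its derivative $E'(u)$ is the constant $\beta=E'(\pi)$ and $E^{(n)}(u)=0$ for every $n\geq 2$; comparing with $E^{(n)}(u_0)=\sum_i\theta_{n,i}t^i$ this yields $\theta_{1,0}=\beta$, $\theta_{1,i}=0$ for $i\geq 1$, and $\theta_{n,i}=0$ for all $n\geq 2$. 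The scalars $f_{m,j}$ and $g_{m,f,i}^j$, as well as the overall shape of the expansion, are taken over verbatim from \cref{main theorem II}.

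The heart of the argument is an induction on $m$ establishing simultaneously: (i) $h_{m,j}=0$ for $j>m$; (ii) for $1\leq j\leq m$, $h_{m,j}$ is a polynomial in $A_{1,1},\dots,A_{m,1}$ alone, homogeneous of weighted degree $m$; and (iii) the recursion of \cref{main theorem II} collapses to $h_{m,j}=f_{m,j}A_{m,1}+\sum_{f+r=m,\,1\leq f,r\leq m-1}A_{r,1}\sum_i h_{f,i}g_{m,f,i}^j$. The mechanism is purely formal. In the general formula for $h_{m,j}$ with $1\leq j\leq m$, every $\theta_{1,r}$ occurring carries $r=m-f\geq 1$, hence vanishes; this annihilates the entire trailing $\theta_{1,r}$-sum and deletes each $-f\theta_{1,r}$ correction. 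In the surviving middle sum the indices $i=f+1,\dots,2f$ all satisfy $i>f$, so the corresponding $h_{f,i}$ vanish by the inductive hypothesis, removing every summand that contains an $A_{0,i-f}$; what is left is exactly the recursion in (iii), which involves only $A_{1,1},\dots,A_{m,1}$. Applying the same two vanishings to the general formula for $m+1\leq j\leq 2m$ --- where again each $\theta_{1,r}$ has $r\geq 1$ and the surviving summands reference only $h_{f,i}$ with $i>f$ --- forces $h_{m,j}=0$, which is (i). Homogeneity in (ii) is then a weighted-degree bookkeeping on the collapsed recursion: $f_{m,j}A_{m,1}$ has weighted degree $m$ because $f_{m,j}\in K$, and each $A_{r,1}h_{f,i}g_{m,f,i}^j$ has weighted degree $r+f=m$ by induction, the factor $g_{m,f,i}^j$ being a scalar.

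Finally, feeding $h_{m,0}=0$ (already part of \cref{main theorem II}) and the vanishing $h_{m,j}=0$ for $j>m$ into the expansion of \cref{main theorem II} discards the $j=0$ and $j>m$ terms and leaves, for $m\geq 1$, exactly $\sum_{n\geq 0}A_{m,n}X^{[n]}=\sum_{j=1}^{m}h_{m,j}(1-\beta X)^{m-j}X^j(1-\beta X)^{-A_{0,1}/\beta}$, the case $m=0$ being the known rational Hodge-Tate formula with $h_{0,0}=1$. I do not expect a genuine obstacle here; the only points requiring care are choosing the inductive hypothesis strong enough that the ``no $A_{0,1}$'' property, the vanishing for $i>f$, and the homogeneity all propagate together, and checking the base case $m=1$, where the double sum over $f+r=m$ with $1\leq f,r\leq m-1$ is empty and one gets $h_{1,1}=f_{1,1}A_{1,1}=A_{1,1}$ and $h_{1,j}=0$ for $j\neq 1$.
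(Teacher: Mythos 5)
Your proposal is correct and follows exactly the paper's (one-line) argument: observe that $e=1$ forces $\theta_{1,r}=0$ for $r>0$ and then induct on $m$ using the recursion of \cref{main theorem II}. You have simply made explicit the bookkeeping the paper leaves implicit --- in particular that the induction must carry the auxiliary statement $h_{f,i}=0$ for $i>f$ so that the $A_{0,i-f}$-terms and the whole range $m+1\leq j\leq 2m$ drop out --- and this is a faithful elaboration rather than a different route.
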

\begin{proof}
In this case $\theta_{1,r}=0$ for $r>0$, then the result follows from induction on $m$ using the inductive formula for $h
_{m,j}$.
\end{proof}
\begin{example}\label{m small}
Under the assumption of \cref{main theorem II},
\begin{equation*}
    \begin{split}
        \sum_{n=0}^{\infty}A_{1,n}X^{[n]}&=(1-\beta X)^{-\frac{A_{0,1}}{\beta}}(A_{1,1}X+\frac{\theta_{1,1}}{2}A_{0,1}(1-\beta X)^{-1}X^2).
        \\\sum_{n=0}^{\infty}A_{2,n}X^{[n]}&=(1-\beta X)^{-\frac{A_{0,1}}{\beta}}(A_{2,1}X(1-\beta X)+(\frac{\beta}{2}A_{2,1}+\frac{1}{2}A_{1,1}^2+\frac{\theta_{1,2}A_{0,1}}{2})X^2+
        \\&(\frac{\theta_{1,1}A_{1,1}}{2}+\frac{\theta_{1,1}^2}{6}+\frac{\beta\theta_{1,2}}{3})A_{0,1}(1-\beta X)^{-1}X^3+\frac{\theta_{1,1}^2}{8}A_{0,2}(1-\beta X)^{-2}X^4).
    \end{split}
\end{equation*}
\end{example}

\subsection{Calculation in the commutative case}
Our goal is to prove \cref{main theorem II}. 
In this subsection we assume that $\{A_{m,n}\}$ is a sequence of matrices in $ M_l(K)$ satisfies assumptions in \cref{main theorem II}. In particular, $A_{0,1}$ commutes with all $A_{j,1}$.

Our strategy is to prove the following lemma:
\begin{lemma}\label{preliminary lemma}
There exists a sequence of polynomials $\{h_{m,j}\}$ satisfying desired properties in \cref{main theorem II} such that \[A_{m,s}=A_{0,s-m}(\sum_{j=1}^{m}(-(m-j)\beta+ A_{0,1})\cdots (-\beta+A_{0,1})s\cdots (s-j+1)h_{m,j}+\sum_{j=m+1}^{2m}s\cdots(s-j+1)h_{m,j})\]
for non negative integers $m,s$. Here we abuse the notation by requiring that $$A_{0,-j}(-j\beta+A_{0,1})\cdots (-\beta+A_{0,1})=I$$ for $1\leq j \leq m$.
\end{lemma}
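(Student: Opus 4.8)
The plan is to prove \cref{preliminary lemma} by strong induction on $m$, passing at once to generating functions in order to linearize the combinatorics. Work inside the $K$-subalgebra $S\subseteq M_l(\overline K)$ generated by all the $A_{j,1}$ (it contains every $A_{m,n}$ by the recursion); by hypothesis $A_{0,1}$ is central in $S$, hence so is each $A_{0,n}=\prod_{i=0}^{n-1}(i\beta+A_{0,1})$ and each factor $-k\beta+A_{0,1}$, which legitimizes the rearrangements below. For $m=0$ the recursion forces $A_{0,s}=\prod_{i=0}^{s-1}(i\beta+A_{0,1})$, i.e.\ $\sum_s A_{0,s}X^{[s]}=(1-\beta X)^{-A_{0,1}/\beta}$ with $h_{0,0}=I$, and the same computation gives $A_{0,k}\cdot(1-\beta X)^{-k-A_{0,1}/\beta}=\sum_s A_{0,s+k}X^{[s]}$ for $k\ge0$. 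Using these identities one checks that, for a fixed $m$, \cref{preliminary lemma} is \emph{equivalent} to the generating-function statement
\[
F_m(X):=\sum_{s\ge0}A_{m,s}X^{[s]}=\Big(\textstyle\sum_{j=0}^{2m}\tilde h_{m,j}(1-\beta X)^{m-j}X^{j}\Big)(1-\beta X)^{-A_{0,1}/\beta},
\]
with $\tilde h_{m,j}=h_{m,j}$ for $j\le m$ and $\tilde h_{m,j}=h_{m,j}A_{0,j-m}$ for $j>m$ (the shape of \cref{variant main theorem II}): extracting $X^{[s]}$ recovers the displayed formula, and the convention ``$A_{0,-j}(-j\beta+A_{0,1})\cdots(-\beta+A_{0,1})=I$'' is exactly what makes the $j\le m$ terms come out correctly for $s<m$, the terms with $j>s$ being killed by the falling factorial $s(s-1)\cdots(s-j+1)$.

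Next, the defining recursion for $\{A_{m,n}\}$ turns into the linear ODE
\[
(1-\beta X)F_m'=(A_{0,1}-\beta m)F_m+\sum_{\substack{i+j=m\\ i\le m-1}}\big[(A_{j,1}-i\theta_{1,j})F_i+\theta_{1,j}XF_i'\big],
\]
whose homogeneous solution is $(1-\beta X)^{m-A_{0,1}/\beta}$. Write $F_m=(1-\beta X)^{-A_{0,1}/\beta}G_m$ and substitute $Y=X/(1-\beta X)$; under this $(1-\beta X)^{m-j}X^{j}=(1+\beta Y)^{-m}Y^{j}$, so $G_m=(1+\beta Y)^{-m}H_m(Y)$ with $H_m(Y)=\sum_j\tilde h_{m,j}Y^{j}$ and $F_m=(1+\beta Y)^{A_{0,1}/\beta-m}H_m$, and the spanning set $\{(1-\beta X)^{m-j}X^{j}\}_j$ becomes the $S$-linearly independent family $\{(1+\beta Y)^{-m}Y^{j}\}_j$. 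Dividing by the central unit $(1+\beta Y)^{A_{0,1}/\beta-m+1}$, the ODE collapses to
\[
H_m'(Y)=\sum_{\substack{i+j=m\\ i\le m-1}}\Big[(A_{j,1}-i\theta_{1,j})(1+\beta Y)^{j-1}H_i+\theta_{1,j}\big(Y(1+\beta Y)^{j}H_i'+(A_{0,1}-i\beta)Y(1+\beta Y)^{j-1}H_i\big)\Big].
\]
Since $j\ge1$ here, every power of $(1+\beta Y)$ that appears is $\ge0$, so the right-hand side is a polynomial built from the $H_i$ with $i<m$ (and a degree count gives $\deg H_m\le 2m$, so only finitely many $\tilde h_{m,j}$ are nonzero). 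Integrating and using $\tilde h_{m,0}=[Y^{0}]F_m=A_{m,0}$ (which is $0$ for $m\ge1$, whence $h_{m,0}=0$, and $I$ for $m=0$) determines $H_m$ uniquely, hence determines the $\tilde h_{m,j}$ and therefore the $h_{m,j}$ (each $\tilde h_{m,j}$ with $j>m$ is visibly a multiple of $A_{0,j-m}$, so $h_{m,j}$ is well defined). Reading off $\tilde h_{m,j}=\tfrac1j[Y^{j-1}]H_m'$ for $j\ge1$ and expanding the binomials of the $(1+\beta Y)^{j-1}$, $(1+\beta Y)^{j}$ yields precisely the inductive formula of \cref{main theorem II}, the coefficients $f_{m,i}$ and $g^{j}_{m,f,i}$ being exactly the binomial/derivative coefficients that arise and the split $j\le m$ versus $j>m$ reflecting whether an $A_{0,\cdot}$-factor is carried; the asserted degree/weight bounds on the $h_{m,j}$ follow from this recursion by the same induction, and when $\theta_{1,r}=0$ for $r\ge1$ one lands in \cref{e=1 known}.

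The only real work — the main obstacle — is purely computational: carrying out the binomial bookkeeping that converts $\tilde h_{m,j}=\tfrac1j[Y^{j-1}]H_m'$ into the explicit formulas of \cref{main theorem II}, in particular keeping the two ranges $j\le m$ and $j>m$ straight and tracking the higher Taylor coefficients $\theta_{1,r}$ ($r\ge1$), which produce the extra second sum in \cref{main theorem II}. A generating-function-free alternative, at the cost of heavier algebra, is to substitute the ansatz of \cref{preliminary lemma} directly into $A_{m,s+1}=(\beta(s-m)+A_{0,1})A_{m,s}+\sum_{i+j=m,\,i\le m-1}(A_{j,1}+(s-i)\theta_{1,j})A_{i,s}$, pull every $A_{0,\cdot}$ back to the common factor $A_{0,s+1-m}$ via $A_{0,n+1}=(\beta n+A_{0,1})A_{0,n}$, absorb each scalar $s-i$ into the falling factorials using $(s-i)\,s(s-1)\cdots(s-k+1)=s(s-1)\cdots(s-k)+(k-i)\,s(s-1)\cdots(s-k+1)$, and compare coefficients of the falling factorials; this produces the same recursion for the $h_{m,j}$.
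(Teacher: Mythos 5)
Your proof is correct, but it takes a genuinely different route from the paper's. The paper first unfolds the recursion for $A_{m,n}$ by induction on $n$ into the explicit triple sum \cref{inductive m n}, and then proves two auxiliary combinatorial lemmas (\cref{change m calculation order} and \cref{change m f i order}), each by induction with Abel summation, which convert sums of the form $\sum_{c=0}^{s-1}\prod(\cdot+c\beta)\,c\cdots(c-i+1)$ into linear combinations of central products times falling factorials of $s$ with the explicit coefficients $f_{m,i}$ and $g^j_{m,f,i}$; the three pieces are then recombined. You instead encode the recursion as the first-order equation $(1-\beta X)F_m'=(A_{0,1}-\beta m)F_m+\cdots$ for the exponential generating function, strip off the homogeneous solution, and substitute $Y=X/(1-\beta X)$, after which everything collapses to $H_m'=(\text{polynomial in the }H_i,\ i<m)$; integrating both \emph{derives} the ansatz of the lemma rather than verifying a guessed one and gives the degree bound $\deg H_m\le 2m$ for free, with the Abel-summation lemmas replaced by reading off binomial coefficients of $(1+\beta Y)^{r}$. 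The two recursions do agree: the $i=0$ term yields $\tfrac1j\binom{m-1}{j-1}\beta^{j-1}A_{m,1}=f_{m,j}A_{m,1}$, and the identity $(j-1)\binom{r-1}{k}+(j-1-r)\binom{r-1}{k-1}=(j-1-k)\binom{r}{k}$ with $k=j-1-i$, $r=m-f$ reconciles your $\theta_{1,r}$-terms with the paper's $(j-1)g^j_{m,f,i}+(1-\tfrac1j)g^{j-1}_{m,f,i}(-(m-(j-1))\beta+A_{0,1})$. Two points you should spell out rather than wave at: (i) that $\tilde h_{m,j}$ for $j>m$ is a multiple of $A_{0,j-m}$ is not quite "visible" — in the contribution of $Y(1+\beta Y)^{r}H_i'$ one can have $k-i=j-1-m$, and it is the extra central factor $((j-1-m)\beta+A_{0,1})$ that completes $A_{0,j-1-m}$ to $A_{0,j-m}$; (ii) the binomial bookkeeping you defer is genuinely needed, since the lemma asserts the specific formulas of \cref{main theorem II} and not merely the existence of some polynomials $h_{m,j}$ — it does check out, but it must be written down.
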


\begin{proof}[Proof of \cref{main theorem II} under \cref{preliminary lemma}]
We separate the calculation of $\sum_{s=0}^{\infty}A_{m,s}X^{[s]}$ into two parts. For the sum over $1\leq j\leq m$,
\begin{equation}\label{smaller than m}
    \begin{split}
        &\sum_{j=1}^{m}\sum_{s=0}^{\infty}A_{0,s-m}(-(m-j)\beta+ A_{0,1})\cdots (-\beta+A_{0,1})s(s-1)\cdots (s-j+1)X^{[s]}h_{m,j}
        \\=&\sum_{j=1}^{m}\sum_{s=j}^{\infty}A_{0,s-m}(-(m-j)\beta+ A_{0,1})\cdots (-\beta+A_{0,1})s(s-1)\cdots (s-j+1)X^{[s]}h_{m,j}
        \\=&\sum_{j=1}^{m}X^j\sum_{s=0}^{\infty}A_{0,s-(m-j)}(-(m-j)\beta+ A_{0,1})\cdots (-\beta+A_{0,1})X^{[s]}h_{m,j}
        \\=&\sum_{j=1}^{m}X^j(1-\beta X)^{-\frac{-(m-j)\beta+A_{0,1}}{\beta}}h_{m,j}
        \\=&\sum_{j=1}^{m}X^j(1-\beta X)^{m-j}(1-\beta X)^{\frac{-A_{0,1}}{\beta}}h_{m,j},
    \end{split}
\end{equation}
here the second to last equality follows from applying \cref{exponential sum}, which is a key observation from \cite{MW21} to $A=-(m-j)\beta+A_{0,1}$ thanks to \cref{remark with MW relation}.

For $m+1\leq j \leq 2m$ part,
\begin{equation}
    \begin{split}\label{larger than m}
        \sum_{j=m+1}^{2m}\sum_{s=0}^{\infty} h_{m,j}A_{0,s-m}s\cdots(s-j+1)X^{[s]}&=\sum_{j=m+1}^{2m}X^j h_{m,j}\sum_{s=0}^{\infty}A_{0,s+j-m}X^{[s]}
        \\&=\sum_{j=m+1}^{2m}X^j h_{m,j}A_{0,j-m}(1-\beta x)^{\frac{-A_{0,1}}{\beta}-(j-m)},
    \end{split}
\end{equation}
here the last equality also follows from \cref{exponential sum}.

Combine \cref{smaller than m}, \cref{larger than m}, than thanks to \cref{preliminary lemma},
we have that
\begin{equation*}
    \sum_{n=0}^{\infty}A_{m,n}X^{[n]}=\sum_{j=1}^{m}h_{m,j}(1-\beta X)^{m-j}X^j(1-\beta X)^{-\frac{A_{0,1}}{\beta}}+\sum_{j=m+1}^{2m}h_{m,j}A_{0,j-m}(1-\beta X)^{m-j}X^j(1-\beta X)^{-\frac{A_{0,1}}{\beta}}. \qedhere
\end{equation*}
\end{proof}
\begin{lemma}\label{exponential sum}
Suppose $A \in M_l(K)$ satisfies that $\lim_{n\to+\infty}\prod_{i=0}^n(i\beta+A) = 0$. Further we define $A_0=I$, $A_{n+1}=\prod_{i=0}^{n}(i\beta+A)$ for $n\geq 0$, then for a fixed non negative integer $k$,
\[\sum_{s=0}^{\infty}A_{k+s}X^{[s]}=A_k(1-\beta X)^{-\frac{A}{\beta}-k}.\]
\end{lemma}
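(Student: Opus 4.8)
The plan is to recognize the right-hand side as a (matrix) binomial series and to match coefficients of $X^{[s]}$ term by term; the only points needing care are that the factors $i\beta+A$ pairwise commute and that everything converges, both of which are handled by the standing hypothesis on $A$.

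Fix $k\geq 0$ and set $c:=\beta^{-1}A+k\in M_l(K)$, which makes sense because $\beta=E'(\pi)\in K^{\times}$. I would interpret $(1-\beta X)^{-c}$ as the binomial series
\[(1-\beta X)^{-c}=\sum_{s\geq 0}\binom{-c}{s}(-\beta X)^{s},\qquad \binom{-c}{s}=\frac{1}{s!}\prod_{i=0}^{s-1}(-c-i),\]
the products being unambiguous since all factors are polynomials in the single matrix $A$ and hence pairwise commute (this agrees with any definition via $\exp(-c\log(1-\beta X))$ by the usual formal identity).

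Next I simplify a single term. Since $X^{s}=s!\,X^{[s]}$ and the two sign factors cancel,
\[\binom{-c}{s}(-\beta X)^{s}=\frac{1}{s!}\Bigl(\prod_{i=0}^{s-1}(c+i)\Bigr)\beta^{s}X^{s}=\Bigl(\prod_{i=0}^{s-1}(\beta c+i\beta)\Bigr)X^{[s]}=\Bigl(\prod_{i=0}^{s-1}\bigl(A+(k+i)\beta\bigr)\Bigr)X^{[s]},\]
using $\beta c=A+k\beta$. Multiplying on the left by $A_{k}=\prod_{i=0}^{k-1}(i\beta+A)$ and using commutativity of the factors $j\beta+A$,
\[A_{k}\prod_{i=0}^{s-1}\bigl((k+i)\beta+A\bigr)=\prod_{i=0}^{k-1}(i\beta+A)\cdot\prod_{i=k}^{k+s-1}(i\beta+A)=\prod_{i=0}^{k+s-1}(i\beta+A)=A_{k+s}.\]
Hence $A_{k}(1-\beta X)^{-c}=\sum_{s\geq0}A_{k+s}X^{[s]}$, which is exactly the claimed identity.

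Finally, convergence: by hypothesis $\prod_{i=0}^{n}(i\beta+A)=A_{n+1}\to 0$ in $M_l(K)$, so the coefficients $A_{k+s}$ of the right-hand series tend to $0$ and that series converges in the relevant $p$-adically complete divided-power ring; the computation above shows the left-hand binomial series has the same coefficients and so converges to the same element. I do not expect a genuine obstacle here — the one place that actually needs an argument is the commutativity bookkeeping that makes the matrix rising factorials well-defined and lets $A_{k}$ telescope with the tail product to produce $A_{k+s}$.
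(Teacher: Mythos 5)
Your proof is correct. Note that the paper itself does not prove this lemma — it simply cites \cite[Lemma 3.6]{MW21} — so your binomial-series computation supplies the direct verification that the paper outsources: interpreting $(1-\beta X)^{-\frac{A}{\beta}-k}$ as the matrix binomial series, the identity $\binom{-c}{s}(-\beta X)^s=\prod_{i=0}^{s-1}(A+(k+i)\beta)\,X^{[s]}$ with $c=\beta^{-1}A+k$ and the telescoping $A_k\cdot\prod_{i=k}^{k+s-1}(i\beta+A)=A_{k+s}$ are exactly right, and commutativity is automatic since every factor is a polynomial in $A$. The only caveat worth recording is that if $A_k$ is not invertible, the hypothesis $A_{n+1}\to 0$ does not by itself give convergence of the bare series $(1-\beta X)^{-\frac{A}{\beta}-k}$ (whose coefficients are the tail products $\prod_{i=k}^{k+s-1}(i\beta+A)$); but the asserted identity only concerns the product $A_k(1-\beta X)^{-\frac{A}{\beta}-k}$, whose coefficients are the $A_{k+s}$ and hence do tend to $0$, which is how you (and the paper's usage of the expression) read it — so this is a matter of interpretation of the right-hand side, not a gap.
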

\begin{proof}
This is proven in \cite[Lemma 3.6]{MW21}.
\end{proof}

Now we proceed to prove \cref{preliminary lemma}. By assumption for $m,n\geq 1$ we have that
\begin{equation*}
    \begin{split}
        A_{m,n+1}=(\beta(n-m)+A_{0,1})A_{m,n}+\sum_{\substack{i+j=m\\i\leq m-1}}(A_{j,1}+(n-i)\theta_{1,j})A_{i,n}.
    \end{split}
\end{equation*}

Then by induction on $n$ we see that
\begin{equation}\label{inductive m n}
    \begin{split}
        A_{m,s}=&A_{m,1}\sum_{b=1}^{s}A_{0,s-b}\prod_{t=1}^{b-1}(\beta(s-m-t)+A_{0,1})+\sum_{\substack{f+r=m\\1\leq f, r \leq 1}}(A_{r,1}-f\theta_{1,r})\sum_{b=1}^{s-1}A_{f,s-b}\prod_{t=1}^{b-1}(\beta(s-m-t)+A_{0,1})
        \\&+\sum_{\substack{f+r=m\\ f\leq m-1}}\sum_{b=1}^{s-1}A_{f,s-b}\theta_{1,r}(s-b)\prod_{t=1}^{b-1}(\beta(s-m-t)+A_{0,1}).
    \end{split}
\end{equation}

Our strategy is to treat the three terms on the right side of \cref{inductive m n} separately.

First by assumption $A_{0,t}=\prod_{t=0}^{t-1}(t\beta+A_{0,1})$, hence that
\begin{equation}\label{inductive m term}
    \begin{split}
        A_{m,1}\sum_{b=1}^{s}A_{0,s-b}\prod_{t=1}^{b-1}(\beta(s-m-t)+A_{0,1})&=A_{m,1}A_{0,s-m}\sum_{b=0}^{s-1}(-(m-1)\beta+A_{0,1}+b\beta)\cdots(-\beta+A_{0,1}+b\beta),
    \end{split}
\end{equation}
here we abuse the notation in the following sense:
\begin{itemize}
    \item When $m=1$, $\sum_{b=0}^{n-1}(-(m-1)\beta+A_{0,1}+b\beta)\cdots(-\beta+A_{0,1}+b\beta)$ is defined to be $n$.
    \item $A_{0,-j}(-j\beta+A_{0,1})\cdots (-\beta+A_{0,1})=I$ for $1\leq j \leq t$.
\end{itemize}
To see our abuse of notation makes sense, we need the following lemma:
\begin{lemma}\label{change m calculation order}
For $m\geq 1$, $1\leq i \leq m$, there exists a constant $f_{m,i}$ which only depends on $m$ and $i$ such that
\begin{equation*}
    \sum_{c=0}^{s-1}(-(m-1)\beta+A_{0,1}+c\beta)\cdots(-\beta+A_{0,1}+c\beta)=\sum_{i=1}^{m}(-(m-i)\beta+A_{0,1})\cdots(-\beta+A_{0,1})s(s-1)\cdots(s-i+1)f_{m,i}.
\end{equation*}
Moreover, $f_{m,i}$ is given as in \cref{e=1 known}.
\end{lemma}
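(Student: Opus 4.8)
The plan is to recognize the summand on the left-hand side as a shifted falling factorial and then to reduce the whole statement to two classical identities from the finite-difference calculus. Since the only matrix appearing is $A_{0,1}$ (alongside the scalar $\beta=E'(\pi)\in K^{\times}$), the asserted equality is really an identity in the commutative polynomial ring $K[A_{0,1}]$, so I would prove it there and invoke commutativity only at the very end. Put $x:=A_{0,1}/\beta$ and factor one copy of $\beta$ out of each of the $m-1$ factors of the summand; writing $z^{\underline{r}}:=z(z-1)\cdots(z-r+1)$ for the falling factorial, this gives
\[
\prod_{t=1}^{m-1}\bigl(-(m-t)\beta+A_{0,1}+c\beta\bigr)=\beta^{\,m-1}\prod_{k=1}^{m-1}(c+x-k)=\beta^{\,m-1}\,(c+x-1)^{\underline{m-1}},
\]
the case $m=1$ being the trivial identity $\sum_{c=0}^{s-1}1=s$, with empty products read as $I$.

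Next I would sum a shifted falling factorial using the forward difference $\Delta f(z):=f(z+1)-f(z)$: from the one-line check $\Delta\bigl(\tfrac1m (z+x-1)^{\underline{m}}\bigr)=(z+x-1)^{\underline{m-1}}$ one gets the telescoping evaluation
\[
\sum_{c=0}^{s-1}(c+x-1)^{\underline{m-1}}=\frac{(s+x-1)^{\underline{m}}-(x-1)^{\underline{m}}}{m}.
\]
Then the Vandermonde convolution for falling factorials, $(s+y)^{\underline{m}}=\sum_{i=0}^{m}\binom{m}{i}s^{\underline{i}}\,y^{\underline{m-i}}$ applied with $y=x-1$, expands the first term; its $i=0$ summand is exactly $(x-1)^{\underline{m}}$ and cancels the second term, leaving
\[
\sum_{c=0}^{s-1}(c+x-1)^{\underline{m-1}}=\sum_{i=1}^{m}\frac1m\binom{m}{i}\,(x-1)^{\underline{m-i}}\,s^{\underline{i}}.
\]

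Finally I would restore the factor $\beta^{m-1}$ and rewrite each coefficient via $\beta^{\,m-i}(x-1)^{\underline{m-i}}=(-(m-i)\beta+A_{0,1})\cdots(-\beta+A_{0,1})$ (which is the empty product $I$ when $i=m$), bringing the left-hand side to exactly the shape of the right-hand side with
\[
f_{m,i}=\frac{\beta^{\,m-1}}{\beta^{\,m-i}}\cdot\frac1m\binom{m}{i}=\frac{\beta^{\,i-1}}{i!}\,(m-1)(m-2)\cdots(m-(i-1)),\qquad f_{m,1}=1,
\]
which is precisely the constant of \cref{e=1 known}, and matching coefficients of $s^{\underline{i}}$ finishes the proof. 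There is no deep obstacle here; the single idea is to spot the shifted-falling-factorial structure that makes the Vandermonde identity applicable, and the only point genuinely requiring care is the bookkeeping in the degenerate ranges (empty products at $m=1$ and at $i=m$) together with the observation that, as every expression in sight is a polynomial in the single matrix $A_{0,1}$, the scalar identity in $K[x]$ lifts verbatim to the matrix statement.
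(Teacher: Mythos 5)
Your proof is correct, and it takes a genuinely different route from the paper's. The paper proves the lemma by induction on $m$: it splits the extra factor as $(-m\beta+A_{0,1}+c\beta)=(-m\beta+A_{0,1})+c\beta$, applies the inductive hypothesis to the first piece, handles the $c\beta$ piece with Abel summation, and thereby derives the recursion $f_{m+1,i}=f_{m,i}+\beta f_{m,i-1}-\tfrac{\beta}{i}f_{m,i-1}$, whose explicit solution must then be guessed and verified by a second induction. You instead observe that the summand is $\beta^{m-1}(c+x-1)^{\underline{m-1}}$ with $x=A_{0,1}/\beta$ (legitimate since $\beta=E'(\pi)\neq 0$ and everything lives in the commutative subring $K[A_{0,1}]\subseteq M_l(K)$), telescope the sum of falling factorials, and expand by the Vandermonde convolution; the $i=0$ term cancels the boundary term and the closed form $f_{m,i}=\tfrac{\beta^{i-1}}{m}\binom{m}{i}=\tfrac{\beta^{i-1}}{i!}(m-1)\cdots(m-(i-1))$ drops out in one pass, with the degenerate cases ($m=1$, $i=m$) handled by empty products exactly as the paper's conventions require. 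Your argument is shorter, avoids the double induction, and explains conceptually why $f_{m,i}$ has a binomial-coefficient shape; the one thing the paper's scheme buys is that its Abel-summation template is reused verbatim in the more general \cref{change m f i order}, where an additional factor $c\cdots(c-i+1)$ appears — your method extends there too, but would need a further Vandermonde expansion to merge that factor into the falling-factorial framework.
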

\begin{proof}
We prove it by induction on $m$. When $m=1$, $f_{1,1}=1$. Suppose it holds for $m\geq 1$, then consider $m+1$ case:
\begin{equation*}
    \begin{split}
            \sum_{c=0}^{s-1}\prod_{i=1}^{m}(-i\beta+A_{0,1}+c\beta)&=(-m\beta+A_{0,1})\sum_{c=0}^{s-1}\prod_{i=1}^{m-1}(-i\beta+A_{0,1}+c\beta)+\sum_{c=0}^{s-1}c\beta\prod_{i=1}^{m-1}(-i\beta+A_{0,1}+c\beta).
        \end{split}
\end{equation*}
By induction,
\begin{equation}\label{m I}
    \begin{split}
        &(-m\beta+A_{0,1})\sum_{c=0}^{s-1}\prod_{i=1}^{m-1}(-i\beta+A_{0,1}+c\beta)
        \\=&(-m\beta+A_{0,1})\sum_{i=1}^{m}f_{m,i}(\prod_{t=1}^{m-i}(-t\beta+A_{0,1}))(\prod_{j=0}^{i-1}(s-j))
        \\=&\sum_{i=1}^{m}(-(m+1-i)\beta+A_{0,1}-(i-1)\beta)\prod_{t=1}^{m-i}(-t\beta+A_{0,1})\prod_{j=0}^{i-1}(s-j)f_{m,i}
        \\=&\sum_{i=1}^{m}(\prod_{t=1}^{m+1-i}(-t\beta+A_{0,1})\prod_{j=0}^{i-1}(s-j)f_{m,i}-(i-1)\beta f_{m,i}(\prod_{t=1}^{m-i}(-t\beta+A_{0,1}))(\prod_{j=0}^{i-1}(s-j)))
        \\=&\sum_{i=1}^{m+1}\prod_{t=1}^{m+1-i}(-t\beta+A_{0,1})(f_{m,i}\prod_{j=0}^{i-1}(s-j)-(i-2)\beta f_{m,i-1}\prod_{j=0}^{i-2}(s-j)).
    \end{split}
\end{equation}
By induction and Abel's summation formula,
\begin{equation}\label{m II}
    \begin{split}
        &\sum_{c=0}^{s-1}c\beta\prod_{i=1}^{m-1}(-i\beta+A_{0,1}+c\beta)\\=&\beta(s-1)\sum_{c=0}^{s-1}\prod_{i=1}^{m-1}(-i\beta+A_{0,1}+c\beta)-\beta\sum_{c=0}^{s-2}\sum_{t=0}^{c}\prod_{i=1}^{m-1}(-i\beta+A_{0,1}+t\beta)
        \\=&\beta(s-1)\sum_{i=1}^{m}f_{m,i}(\prod_{t=1}^{m-i}(-t\beta+A_{0,1}))(\prod_{j=0}^{i-1}(s-j))-\beta\sum_{c=0}^{s-2}f_{m,i}(\prod_{t=1}^{m-i}(-t\beta+A_{0,1}))(\prod_{j=0}^{i-1}(c+1-j))
        \\=&\sum_{i=1}^{m+1}(\prod_{t=1}^{m+1-i}(-t\beta+A_{0,1}))(\beta(s-1) f_{m,i-1}\prod_{j=0}^{i-2}(s-j)-\beta f_{m,i-1}\sum_{c=0}^{s-2}\prod_{j=0}^{i-1}(c+1-j)).
    \end{split}
\end{equation}
Using that 
\[\sum_{c=0}^{s-2}\prod_{j=0}^{i-1}(c+1-j)=\frac{1}{i}\prod_{j=0}^{i-1}(s-j),\]
and combine \cref{m I} and \cref{m II}, we have that
\begin{align*}
    \begin{split}
        &\sum_{c=0}^{s-1}\prod_{i=1}^{m}(-i\beta+A_{0,1}+c\beta)
        \\=&\sum_{i=1}^{m+1}(\prod_{t=1}^{m+1-i}(-t\beta+A_{0,1}))((f_{m,i}-\frac{\beta}{i}f_{m,i-1})\prod_{j=0}^{i-1}(s-j)+(\beta(s-1) f_{m,i-1}-(i-2)\beta f_{m,i-1})\prod_{j=0}^{i-2}(s-j))
        \\=&\sum_{i=1}^{m+1}(f_{m,i}+\beta f_{m,i-1}-\frac{\beta}{i}f_{m,i-1})(\prod_{t=1}^{m+1-i}(-t\beta+A_{0,1}))(\prod_{j=0}^{i-1}(s-j)),
    \end{split}
\end{align*}
hence the lemma holds for $m+1$ provided that we define $f_{m+1,i}$ to be $f_{m,i}+\beta f_{m,i-1}-\frac{\beta}{i}f_{m,i-1}$.
Finally by induction we calculate $f_{m,i}$ explicitly: 
\begin{equation*}
    f_{m,i}=\frac{\beta^{i-1}}{i!}(m-1)\cdots (m-(i-1)).\qedhere
\end{equation*}
\end{proof}

Now we use similar strategy to calculate the second term on the right handside of \cref{inductive m n}.

For this, we need a slight generalization of \cref{change m calculation order}.
\begin{lemma}\label{change m f i order}
Suppose $m\geq f+1$, then there exist constants $g_{m,f,i}^1,g_{m,f,i}^2,\cdots,g_{m,f,i}^{m-f+i}$ such that
\begin{equation*}
    \begin{split}
         &\sum_{c=0}^{s-1}(-(m-1)\beta+A_{0,1}+c\beta)\cdots(-(f+1)\beta+A_{0,1}+c\beta)c\cdots(c-i+1)
         \\=&\sum_{j=1}^{m-f+i}(-(m-j)\beta+A_{0,1})\cdots(-(f-i+1)\beta+A_{0,1})s\cdots(s-(j-1))g_{m,f,i}^{j}.
    \end{split}
\end{equation*}
Moreover, $g_{m,f,i}^{j}$ is given as in \cref{main theorem II}.
\end{lemma}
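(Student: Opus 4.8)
The strategy is to run the same induction that proves \cref{change m calculation order}, now dragging the extra falling-factorial factor $c(c-1)\cdots(c-i+1)$ through the computation. First I would reduce to the case $f=0$: replacing $A_{0,1}$ by $A_{0,1}-f\beta$ and $m$ by $m-f$ identifies each $(-k\beta+A_{0,1})$ with $(-(k-f)\beta+(A_{0,1}-f\beta))$ and leaves every falling-factorial factor in $c$ and in $s$ untouched, so both sides of the claimed identity transform compatibly, and the hypothesis $m\geq f+1$ becomes $m-f\geq 1$. (Alternatively one may keep $f$ general; the argument is identical.) It then suffices to prove, by induction on $m\geq 1$ and for all $i\geq 0$ simultaneously, that $\sum_{c=0}^{s-1}(-(m-1)\beta+A_{0,1}+c\beta)\cdots(-\beta+A_{0,1}+c\beta)\,c\cdots(c-i+1)$ admits the asserted expansion in the falling factorials $s\cdots(s-(j-1))$ with the stated coefficients. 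Here all matrix factors occurring are polynomials in the single matrix $A_{0,1}$, the remaining factors being scalars, so they commute and no ordering subtlety arises.

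For the base case $m=1$ the product over $k$ is empty, and $\sum_{c=0}^{s-1}c(c-1)\cdots(c-i+1)=\tfrac{1}{i+1}\,s(s-1)\cdots(s-i)$ by the standard summation of a falling factorial; this is exactly the claimed right-hand side, whose only nonzero coefficient in this case is $g^{i+1}_{1,0,i}=\tfrac{1}{i+1}$, in agreement with \cref{main theorem II}. For the inductive step I would peel the leading factor off the product, writing $-(m-1)\beta+A_{0,1}+c\beta=(-(m-1)\beta+A_{0,1})+c\beta$, producing two sums. In the first, $(-(m-1)\beta+A_{0,1})$ times $\sum_{c=0}^{s-1}$ of the shorter product against $c\cdots(c-i+1)$, I apply the inductive hypothesis at level $m-1$ with the same $i$, and then rewrite the scalar-matrix prefactor as $-(m-1)\beta+A_{0,1}=(-(m-j)\beta+A_{0,1})-(j-1)\beta$ so as to either lengthen the displayed product $(-(m-j)\beta+A_{0,1})\cdots$ by one factor or split off a scalar $(j-1)\beta$; this moves its contribution between consecutive values of $j$. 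For the second sum I use
\[
c\cdot c(c-1)\cdots(c-i+1)=c(c-1)\cdots(c-i)+i\cdot c(c-1)\cdots(c-i+1),
\]
which --- in contrast to the proof of \cref{change m calculation order}, where an Abel-summation step was needed --- already presents the summand in falling-factorial form, so the sum splits as $\beta$ times the two sums covered by the inductive hypothesis at level $m-1$, with falling-factorial lengths $i+1$ and $i$. Applying $\sum_{c=0}^{s-1}c(c-1)\cdots(c-j+1)=\tfrac{1}{j+1}\,s(s-1)\cdots(s-j)$ to each piece and collecting the coefficient of $s\cdots(s-(j-1))$ then produces a recursion expressing $g^j_{m,0,i}$ as an explicit $\mathbb{Q}[\beta]$-linear combination of $g$-values at level $m-1$, up to re-indexing of the product ranges.

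It remains to verify that the closed form $g^j_{m,f,i}=\tfrac{\beta^{j-i-1}}{j}\,\tfrac{1}{(j-i-1)!}\,(m-(f+1))\cdots(m-(f+j-i-1))$ for $i+1\leq j\leq m-f+i$, and $0$ otherwise, solves this recursion. This is an elementary induction on $m$ using only manipulations of factorials and binomial coefficients, entirely parallel to the final computation of $f_{m,i}$ in the proof of \cref{change m calculation order} --- indeed $g^j_{m,f,0}=f_{m-f,j}$ recovers the case $i=0$. The only points needing care are the boundary indices $j=i+1$ and $j=m-f+i$, where certain product ranges become empty, together with consistency with the degenerate empty-product conventions already in force. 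This closed-form verification is the one genuinely tedious ingredient; everything else is exactly the peeling argument already used to establish \cref{change m calculation order}.
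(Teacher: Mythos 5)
Your overall architecture --- induction on $m$ with base case $m=f+1$, peeling the leading factor as $(-m\beta+A_{0,1}+c\beta)=(-m\beta+A_{0,1})+c\beta$, and handling the resulting first sum by rewriting $-m\beta+A_{0,1}=(-(m+1-j)\beta+A_{0,1})-(j-1)\beta$ --- is exactly the paper's. The reduction to $f=0$ is harmless (the closed form depends on $f$ only through $m-f$, and indeed $g^j_{m,f,0}=f_{m-f,j}$), and the base case is identical. Where you genuinely diverge is the second sum $\sum_c c\beta(\cdots)c(c-1)\cdots(c-i+1)$: the paper uses Abel summation, which keeps the parameter $i$ fixed and --- this is the point of that step --- produces a term $\beta(s-1)g^{j-1}_{m,f,i}\prod_{u=0}^{j-2}(s-u)$ that combines with the first sum's leftover $-(j-2)\beta g^{j-1}_{m,f,i}\prod_{u=0}^{j-2}(s-u)$ to give $\beta g^{j-1}_{m,f,i}\prod_{u=0}^{j-1}(s-u)$, so every term lands in the target shape $\prod_{t=f-i+1}^{m+1-j}(-t\beta+A_{0,1})\cdot s\cdots(s-(j-1))$ and the scalar recursion $g^{j}_{m+1,f,i}=g^{j}_{m,f,i}+\beta(1-\tfrac1j)g^{j-1}_{m,f,i}$ falls out in one line.

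Your replacement $c\cdot c\cdots(c-i+1)=c\cdots(c-i)+i\cdot c\cdots(c-i+1)$ is correct, but invoking the inductive hypothesis at level $i+1$ creates a problem you do not address: that instance of the hypothesis returns matrix products $\prod_{t=f-i}^{m-j}(-t\beta+A_{0,1})$, whose lower endpoint is $f-i$ rather than the $f-i+1$ required by the target. The extra bottom factor $-(f-i)\beta+A_{0,1}$ is a matrix, not a scalar, so it cannot be absorbed into $g$; splitting it as $(-(m+1-j)\beta+A_{0,1})+(m+1-j-f+i)\beta$ leaves terms carrying the slot-$(j+1)$ product paired with a falling factorial of length only $j$, and your first sum's leftover $-(j-1)\beta g^{j}_{m,f,i}\prod_{t=f-i+1}^{m-j}(-t\beta+A_{0,1})\,s\cdots(s-(j-1))$ has the same defect. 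These terms lie outside the target form altogether, and (unlike in the paper's scheme) there is no $\beta(s-1)$ term to complete them; instead they must be shown to cancel identically against one another and against the $i\beta S_i$ contribution. They do cancel --- I checked low cases --- but this is an extra family of identities your write-up never states, and it is precisely why "collecting the coefficient of $s\cdots(s-(j-1))$" does not immediately produce a scalar recursion: the coefficients are polynomials in $A_{0,1}$, and the assertion that each one factors as the prescribed product times a scalar is part of what must be proved. The resulting recursion also mixes $g$-values at parameters $i$ and $i+1$, so the closed-form verification is not "entirely parallel" to the computation of $f_{m,i}$. Either supply the cancellation argument explicitly, or switch to the Abel-summation step, which is engineered to avoid the issue.
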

\begin{proof}
We prove it by induction on $m$. When $m=f+1$,
\begin{equation*}
    \begin{split}
        \LHS&=\sum_{c=0}^{s-1}c\cdots(c-i+1)
        =\frac{1}{i+1}s\cdots(s-i),
    \end{split}
\end{equation*}
hence we can take $g_{f+1,f,i}^{1}=\cdots g_{f+1,f,i}^{1}=0$ and $g_{f+1,f,i}^{i+1}=\frac{1}{i+1}$ to make the identify holds.

Suppose the lemma is proven for $m\geq f+1$, then consider $m+1$ case:
\begin{equation*}
    \begin{split}
            &\sum_{c=0}^{s-1}(-m\beta+A_{0,1}+c\beta)\cdots\cdots(-(f+1)\beta+A_{0,1}+c\beta)c\cdots(c-i+1)
            \\=&(-m\beta+A_{0,1})\sum_{c=0}^{s-1}(\prod_{t=f+1}^{m-1}(-i\beta+A_{0,1}+c\beta))(\prod_{u=0}^{i-1}(c-u))+\sum_{c=0}^{s-1}c\beta(\prod_{t=f+1}^{m-1}(-i\beta+A_{0,1}+c\beta))(\prod_{u=0}^{i-1}(c-u)).
        \end{split}
\end{equation*}
By induction,
\begin{equation}\label{m f j I}
    \begin{split}
        &(-m\beta+A_{0,1})\sum_{c=0}^{s-1}(\prod_{t=f+1}^{m-1}(-i\beta+A_{0,1}+c\beta))(\prod_{u=0}^{i-1}(c-u))
        \\=&(-m\beta+A_{0,1})\sum_{j=1}^{m-f+i}(-(m-j)\beta+A_{0,1})\cdots(-(f-i+1)\beta+A_{0,1})s\cdots(s-(j-1))g_{m,f,i}^{j}
        \\=&\sum_{j=1}^{m-f+i}(-(m+1-j)\beta+A_{0,1}-(j-1)\beta)g_{m,f,i}^{j}\prod_{t=f-i+1}^{m-j}(-t\beta+A_{0,1})\prod_{u=0}^{j-1}(s-u)
        \\=&\sum_{j=1}^{m-f+i}(g_{m,f,i}^{j}(\prod_{t=f-i+1}^{m+1-j}(-t\beta+A_{0,1}))(\prod_{u=0}^{j-1}(s-u))-(j-1)\beta g_{m,f,i}^{j}(\prod_{t=f-i+1}^{m-j}(-t\beta+A_{0,1}))(\prod_{u=0}^{j-1}(s-u)))
        \\=&\sum_{j=1}^{m+1-f+i}(\prod_{t=f-i+1}^{m+1-i}(-t\beta+A_{0,1}))(g_{m,f,i}^{j}\prod_{u=0}^{j-1}(s-u)-(j-2)\beta g_{m,f,i}^{j-1}\prod_{u=0}^{j-2}(s-u)).
    \end{split}
\end{equation}
By induction and Abel's summation formula,
\begin{equation}\label{m f j II}
    \begin{split}
        &\sum_{c=0}^{s-1}c\beta(\prod_{t=f+1}^{m-1}(-i\beta+A_{0,1}+c\beta))(\prod_{u=0}^{i-1}(c-u))
        \\=&\beta(s-1)\sum_{c=0}^{s-1}(\prod_{t=f+1}^{m-1}(-i\beta+A_{0,1}+c\beta))(\prod_{u=0}^{i-1}(c-u))-\beta\sum_{c=0}^{s-2}\sum_{p=0}^{c}(\prod_{t=f+1}^{m-1}(-i\beta+A_{0,1}+p\beta))(\prod_{u=0}^{i-1}(p-u))
        \\=&\beta(s-1)\sum_{j=1}^{m-f+i}(g_{m,f,i}^{j}(\prod_{t=f-i+1}^{m-j}(-t\beta+A_{0,1}))(\prod_{u=0}^{j-1}(s-u))-\beta\sum_{c=1}^{s-1}g_{m,f,i}^{j}(\prod_{t=f-i+1}^{m-j}(-t\beta+A_{0,1}))(\prod_{j=0}^{i-1}(c-j)))
        \\=&\sum_{j=1}^{m+1-f
        +i}(\prod_{t=f-i+1}^{m+1-j}(-t\beta+A_{0,1}))(\beta(s-1) g_{m,f,i-1}^j\prod_{u=0}^{j-2}(s-u)-\beta g_{m,f,i-1}^j\sum_{c=1}^{s-1}\prod_{u=0}^{j-2}(c-u)).
    \end{split}
\end{equation}
Using that 
\[\sum_{c=1}^{s-1}\prod_{u=0}^{j-2}(c-u)=\frac{1}{j}\prod_{u=0}^{j-1}(s-u),\]
and combine \cref{m f j I} and \cref{m f j II}, we have that
\begin{align*}
    \begin{split}
        &\sum_{c=0}^{s-1}(\prod_{t=f+1}^{m}(-t\beta+A_{0,1}+c\beta))(\prod_{u=0}^{i-1}(c-u))
        \\=&\sum_{j=1}^{m+1-f+i}(\prod_{t=f-i+1}^{m+1-i}(-t\beta+A_{0,1}))((g_{m,f,i}^{j}-\frac{\beta}{j}g_{m,f,i}^{j-1})\prod_{u=0}^{j-1}(s-u)+\beta(s-1-(j-2)) g_{m,f,i}^{j-1}\prod_{u=0}^{j-2}(s-u))
        \\=&\sum_{j=1}^{m+1-f+i}(g_{m,f,i}^{j}-\frac{\beta}{j}g_{m,f,i}^{j-1}+\beta g_{m,f,i}^{j-1})(\prod_{t=f-i+1}^{m+1-i}(-t\beta+A_{0,1}))(\prod_{u=0}^{j-1}(s-u)).
    \end{split}
\end{align*}
Hence the lemma holds for $m+1$ provided that we define $g_{m+1,f,i}^{j}$ to be $g_{m+1,f,i}^{j-1}+\beta g_{m+1,f,i}^{j-1}-\frac{\beta}{j}g_{m+1,f,i}^{j-1}$.
Then a easy induction shows that
\begin{equation*}\label{calculating g II}
      g_{m,f,i}^j= \left\{
      \begin{array}{rcl}
           \frac{\beta^{j-i-i}}{j}\frac{1}{(j-i-1)!}(m-(f+1))\cdots(m-(f+j-i-1)), & m\geq f+1 & i+1\leq j\leq m-f+i  \\
           0, & \text{otherwise}
      \end{array}
      \right. \qedhere
  \end{equation*}
\end{proof}

Now we are ready to prove \cref{preliminary lemma}.

\begin{proof}[Proof of \cref{preliminary lemma}]
We proceed by induction on $m$. When $m=1$,by induction on $s$, one can show $A_{1,s}=sA_{1,1}A_{0,s-1}+A_{0,s-1}s(s-1)\frac{\theta_{1,1}}{2}$, hence we can take $h_{1,1}=A_{1,1}=f_{1,1}A_{1,1}$ and $h_{1,2}=\frac{\theta_{1,1}}{2}$. Suppose it is proven for up to $m-1$. Consider the $m$ case. We treat three terms showing up on the right handside of \cref{inductive m n} separately. 

\textbf{Step I} For a fixed pair $(f,r)$ such that $f+r=m$, $1\leq f,r\leq m-1$, 
\begin{equation}\label{calculation for second term}
    \begin{split}
        &\sum_{b=1}^{s-1}A_{f,s-b}\prod_{t=1}^{b-1}(\beta(s-m-t)+A_{0,1})
        \\=&\sum_{b=1}^{s-1}A_{0,s-b-f}(\sum_{i=1}^{f}(\prod_{t=1}^{f-i}(-t\beta+A_{0,1}))(\prod_{u=0}^{i-1}(s-b-u))h_{f,i}+\sum_{i=f+1}^{2f}h_{f,i}(\prod_{u=0}^{i-1}(s-b-u)))(\prod_{t=1}^{b-1}((s-m-t)\beta+A_{0,1}))
        \\=&\sum_{b=1}^{s-1}A_{0,s-m}(\sum_{i=1}^{f}(\prod_{t=1}^{f-i}(-t\beta+A_{0,1}))(\prod_{u=0}^{i-1}(s-b-u))h_{f,i}+\sum_{i=f+1}^{2f}h_{f,i}(\prod_{u=0}^{i-1}(s-b-u)))(\prod_{t=1}^{r-1}((s-b-f)\beta+A_{0,1}-t\beta))
        \\=&A_{0,s-m}((\sum_{i=1}^{f}((\prod_{t=1}^{f-i}(-t\beta+A_{0,1}))h_{f,i})(\sum_{c=1}^{s-1}(\prod_{u=0}^{i-1}(c-u))(\prod_{t=1}^{r-1}((c-f)\beta+A_{0,1}-t\beta))))
        \\~~~~~~&+\sum_{i=f+1}^{2f}h_{f,i}(\sum_{c=1}^{s-1}(\prod_{u=0}^{i-1}(c-u))(\prod_{t=1}^{r-1}((c-f)\beta+A_{0,1}-t\beta)))).
    \end{split}
\end{equation}
Here the first identity follows from induction, the second equality is due to $A_{0,t}=\prod_{t=0}^{t-1}(t\beta+A_{0,1})$. The last equality just change $s-b$ to $c$.

For $1\leq i \leq f$, we have that $m-f+i\leq m$ and that
\begin{equation}\label{second term I}
    \begin{split}
        &((\prod_{t=1}^{f-i}(-t\beta+A_{0,1}))h_{f,i})(\sum_{c=1}^{s-1}(\prod_{u=0}^{i-1}(c-u))(\prod_{t=1}^{r-1}((c-f)\beta+A_{0,1}-t\beta)))
        \\=&((\prod_{t=1}^{f-i}(-t\beta+A_{0,1}))h_{f,i})(\sum_{c=1}^{s-1}(\prod_{u=0}^{i-1}(c-u))(\prod_{t=f+1}^{m-1}(-t\beta+A_{0,1}+c\beta)))
        \\=&\sum_{j=1}^{m-f+i}h_{f,i}g_{m,f,i}^j(\prod_{t=1}^{m-j}(-t\beta+A_{0,1}))(\prod_{u=0}^{j-1}(s-u))
    \end{split}
\end{equation}
by applying \cref{change m f i order}.

For $f+1\leq i \leq 2f$,
\begin{equation}\label{second term II}
    \begin{split}
        &h_{f,i}(\sum_{c=1}^{s-1}(\prod_{u=0}^{i-1}(c-u))(\prod_{t=1}^{r-1}((c-f)\beta+A_{0,1}-t\beta)))
        \\=&\sum_{j=1}^{m-f+i}h_{f,i}g_{m,f,i}^j(\prod_{t=f-i+1}^{m-j}(-t\beta+A_{0,1}))(\prod_{u=0}^{j-1}(s-u))
        \\=&\sum_{j=1}^{m}h_{f,i}g_{m,f,i}^j(A_{0,i-f}\prod_{t=1}^{m-j}(-t\beta+A_{0,1}))(\prod_{u=0}^{j-1}(s-u))+\sum_{j=m+1}^{m-f+i}h_{f,i}g_{m,f,i}^j\frac{A_{0,i-f}}{A_{0,j-m}}(\prod_{u=0}^{j-1}(s-u)).
    \end{split}
\end{equation}

\textbf{Step II} Finally we treat the last term in \cref{inductive m n}. For this, we fix a pair $(f,r)$ such that $f+r=m$ and $f\leq m-1$. Repeating the calculation as in \cref{calculation for second term} leads to the following:
\begin{equation}\label{third term I}
    \begin{split}
        &\sum_{b=1}^{s-1}A_{f,s-b}(s-b)\prod_{t=1}^{b-1}(\beta(s-m-t)+A_{0,1})
        \\=&A_{0,s-m}((\sum_{i=0}^{f}((\prod_{t=1}^{f-i}(-t\beta+A_{0,1}))h_{f,i})(\sum_{c=1}^{s-1}(c\prod_{u=0}^{i-1}(c-u))(\prod_{t=1}^{r-1}((c-f)\beta+A_{0,1}-t\beta))))
        \\~~~~~~&+\sum_{i=f+1}^{2f}h_{f,i}(\sum_{c=1}^{s-1}(c\prod_{u=0}^{i-1}(c-u))(\prod_{t=1}^{r-1}((c-f)\beta+A_{0,1}-t\beta)))).
    \end{split}
\end{equation}

For $0\leq i \leq f$, arguing as in \cref{second term I}, 
\begin{equation}\label{third term II}
    \begin{split}
        &((\prod_{t=1}^{f-i}(-t\beta+A_{0,1}))h_{f,i})(\sum_{c=1}^{s-1}(u\prod_{u=0}^{i-1}(c-u))(\prod_{t=1}^{r-1}((c-f)\beta+A_{0,1}-t\beta)))
        \\=&\sum_{j=1}^{m-f+i}h_{f,i}(g_{m,f,i}^j(\prod_{t=1}^{m-j}(-t\beta+A_{0,1}))((\prod_{u=0}^{j-1}(s-u))(s-1)-\frac{1}{j+1}\prod_{u=0}^{j}(s-u))
        \\=&\sum_{j=1}^{m-f+i}h_{f,i}(g_{m,f,i}^j(\prod_{t=1}^{m-j}(-t\beta+A_{0,1}))((\prod_{u=0}^{j-1}(s-u))(s-j+j-1)-\frac{1}{j+1}\prod_{u=0}^{j}(s-u))
        \\=&\sum_{j=1}^{m-f+i+1}h_{f,i}((j-1)g_{m,f,i}^j+(1-\frac{1}{j})g_{m,f,i}^{j-1}(-(m-(j-1))\beta+A_{0,1})^{\delta_j})(\prod_{t=1}^{m-j}(-t\beta+A_{0,1}))(\prod_{u=0}^{j-1}(s-u)).
    \end{split}
\end{equation}
Here the first identity follows from \cref{change m f i order} and Abel's summation formula, $\delta_j=1$ if $j\leq m$ and $\delta_j=0$ if $j=m+1$. 

For $f+1\leq i\leq 2f$, arguing as in \cref{second term II}, 
\begin{equation}\label{third term III}
    \begin{split}
        &h_{f,i}(\sum_{c=1}^{s-1}(c\prod_{u=0}^{i-1}(c-u))(\prod_{t=1}^{r-1}((c-f)\beta+A_{0,1}-t\beta)))
        \\=&\sum_{j=1}^{m}h_{f,i}((j-1)g_{m,f,i}^j+(1-\frac{1}{j})g_{m,f,i}^{j-1}(-(m-(j-1))\beta+A_{0,1}))A_{0,i-f}\prod_{t=1}^{m-j}(-t\beta+A_{0,1}))(\prod_{u=0}^{j-1}(s-u))
        \\&+\sum_{j=m+1}^{m-f+i}h_{f,i}(\frac{A_{0,i-f}}{A_{0,j-m}}(j-1)g_{m,f,i}^j+\frac{A_{0,i-f}}{A_{0,j-m-1}}(1-\frac{1}{j})g_{m,f,i}^{j-1}(-(m-(j-1))\beta+A_{0,1}))(\prod_{u=0}^{j-1}(s-u)).
    \end{split}
\end{equation}

\textbf{Step III} Combine \cref{inductive m n},\cref{inductive m term},\cref{change m calculation order},\cref{calculation for second term}, \cref{second term I},\cref{second term II},\cref{third term I}, \cref{third term II} AND \cref{third term III}, we conclude that $h_{m,j}$ ($1\leq j\leq 2m $) can be defined as in \cref{main theorem II} to make \cref{preliminary lemma} holds for $m$.
Hence \cref{preliminary lemma} holds for $m$.
\end{proof}

\subsection{Evidence of the conjecture}
We give some evidence of \cref{conjecture e=1} and explain why \cref{conjecture e=1} can be reduced to the study of polynomials $\{h_{m,j}\}$ showing up in \cref{main theorem II} in the case that $A_{0,1}$ commutes with all $A_{m,1}$. Indeed, given a sequence of matrices $\{A_{m,n}\}$ in $\in M_l(K)$ as in \cref{conjecture e=1}, then 
\begin{equation*}
    \epsilon(\underline{e})=\underline e\cdot \sum_{m\geq0}(\sum_{n\geq 0}A_{m,n}X_1^{[n]})t^m
\end{equation*}
defines an isomorphism as $\cdot \sum_{m\geq0}(\sum_{n\geq 0}A_{m,n}X^{[n]})$ is invertible. To verify that it can be obtained from a de Rham prismatic crystal, it suffices to check that $\varepsilon$ satisfies the cocycle condition, which is obvious if we could show the following conjecture: 
\begin{conjecture}\label{find invertible functions}
Let $a=\frac{-A_{0,1}}{\beta}$ and  $\tilde{f}(T)=\sum_{i=0}^{\infty}a_it^i\in M_{l\times l}(K)[[t]]$ such that $a_0=1$ and $a_k$ is defined inductively for $k\geq 1$ using that
\begin{equation*}
    a_k=\frac{1}{k\beta}(\sum_{\substack{i+s=k\\i\leq k-1}}(d_{i+a,s,1}a_i-A_{s,1})a_i).
\end{equation*}
 Then we have the following identity:
\begin{equation}\label{invertible functions}
    \alpha^a\frac{\tilde{f}(\alpha t)}{\tilde{f}(t)}= \sum_{m\geq0}(\sum_{n\geq 0}A_{m,n}X_1^{[n]})t^m.
\end{equation}
Here $\alpha^a$ is well defined due to the convergence property of $A_{0,1}$.
\end{conjecture}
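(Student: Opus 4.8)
The plan is to prove the identity \cref{invertible functions} by showing that its two sides, regarded as elements of $M_l(K\{X_1\}^{\wedge}_{\rm pd})[[t]]$, (i) agree after setting $X_1=0$, (ii) have the same derivative in $X_1$ at $X_1=0$, and (iii) satisfy one and the same recursion in the remaining indices; by the uniqueness built into \cref{Main theorem I} they must then coincide. Write $Q:=\sum_{m\ge 0}\bigl(\sum_{n\ge 0}A_{m,n}X_1^{[n]}\bigr)t^m$ for the right-hand side and $P:=\alpha^a\,\tilde f(\alpha t)\,\tilde f(t)^{-1}$ for the left-hand side (reading the quotient as $\tilde f(\alpha t)\tilde f(t)^{-1}$); both are well defined, $\alpha^a$ by the convergence property of $A_{0,1}$ noted after the statement and $\tilde f(t)^{-1}$ because $\tilde f(t)\in 1+tM_l(K)[[t]]$. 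Expand $P=\sum_m\bigl(\sum_n B_{m,n}X_1^{[n]}\bigr)t^m$ with $B_{m,n}\in M_l(K)$; the goal is $B_{m,n}=A_{m,n}$.

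For (i): since $X_1=\tfrac{u_0-u_1}{E(u_0)}$, setting $X_1=0$ forces $u_0=u_1$, so $\alpha|_{X_1=0}=1$ and $P|_{X_1=0}=\tilde f(t)\tilde f(t)^{-1}=I$, matching $Q|_{X_1=0}=\sum_m A_{m,0}t^m=I$; thus $B_{m,0}=A_{m,0}$. For (ii): $\partial_{X_1}P|_{X_1=0}=\sum_m B_{m,1}t^m$, and a product-rule computation at $X_1=0$ (using $\partial_{X_1}\alpha|_{X_1=0}=-\sum_{i\ge 0}\theta_{1,i}t^i$ from \cref{image of t}, with $\theta_{1,0}=\beta$) gives $\partial_{X_1}P|_{X_1=0}=a\,\partial_{X_1}\alpha|_{X_1=0}+\bigl(\partial_{X_1}\tilde f(\alpha t)\bigr)|_{X_1=0}\tilde f(t)^{-1}$. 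Imposing $\partial_{X_1}P|_{X_1=0}=\sum_m A_{m,1}t^m$, multiplying by $\tilde f(t)$ and reading off the coefficient of $t^k$, one solves for $a_k$ and recovers exactly the recursion defining $\tilde f$, using $d_{i+a,s,1}=-(i+a)\theta_{1,s}$ from \cref{calculate c}. (The recursion as it matches this computation is $a_k=\tfrac1{k\beta}\sum_{i+s=k,\,i\le k-1}(d_{i+a,s,1}-A_{s,1})a_i$; the displayed formula carries a spurious extra factor $a_i$.) Its $t^0$-part gives $B_{0,1}=A_{0,1}$, so the initial data of the recursions for $P$ and $Q$ coincide.

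For (iii) it suffices to show the $B_{m,n}$ obey $B_{m,n+1}=(\beta(n-m)+A_{0,1})B_{m,n}+\sum_{i+j=m,\,i\le m-1}(B_{j,1}+(n-i)\theta_{1,j})B_{i,n}$: this is precisely the recursion that, in the proof of \cref{Main theorem I}, was extracted from the cocycle relation \cref{cocycle equivalent II} by comparing the coefficients of $X_1^{[0]}$ and $X_1^{[1]}$, so it is enough to know that $P$ satisfies the cocycle relation. This is where $P$ being a ``coboundary'' is meant to enter: $\alpha=E(u_1)/E(u_0)$ satisfies the multiplicative cocycle $\delta_2^2(\alpha)\delta_0^2(\alpha)=\delta_1^2(\alpha)$ (directly, $\tfrac{E(u_1)}{E(u_0)}\cdot\tfrac{E(u_2)}{E(u_1)}=\tfrac{E(u_2)}{E(u_0)}$), hence so does $\alpha^a$ because its two pulled-back factors are commuting scalars raised to the fixed matrix exponent $a$; and $\tilde f(\alpha t)\tilde f(t)^{-1}=\delta_0^1(\tilde f(t))\,\delta_1^1(\tilde f(t))^{-1}$ is literally the coboundary of $\tilde f(t)\in\GL_l(\bdr(\mathfrak{S}))$, hence a cocycle.

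The hard part is to combine these: a product of two cocycles is again a cocycle only if the factors commute in the relevant way, and the matrix coefficients of $\alpha^a$ are polynomials in $A_{0,1}$ while those of $\tilde f$ involve all the $A_{s,1}$. So the coboundary argument closes unconditionally exactly when $A_{0,1}$ commutes with every $A_{m,1}$ --- the hypothesis of \cref{main theorem II} --- and in that regime one simultaneously obtains \cref{conjecture e=1} without passing through the explicit polynomials $h_{m,j}$. In the genuinely non-commutative case there is no such shortcut and one must verify the recursion for $B_{m,n}$ directly from the $\tilde f$-recursion; the route I would attempt is to make $t^a\tilde f(t)$ an honest element by formally adjoining $\ell=\log t$ (so that $\delta_0^1(\ell)=\log\alpha+\ell$ with $\log\alpha$ convergent), which reduces the cocycle property of $P$ to controlling the conjugation $\exp(a\ell)\bigl(\tilde f(\alpha t)\tilde f(t)^{-1}\bigr)\exp(-a\ell)$ --- and I expect the full strength of the $\tilde f$-recursion to be precisely what is needed to absorb this defect. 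That is, I believe, the real content of the conjecture.
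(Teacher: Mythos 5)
You should be aware that \cref{find invertible functions} is stated in the paper as a conjecture and is \emph{not} proved there: the paper only explains that it would imply the cocycle condition, reduces it to the coefficient identity \cref{theory calculation}, and verifies that identity by hand for $k=0,1,2$. Your proposal is likewise not a complete proof — you say so yourself in the final paragraph — but the reduction it sets up is sound and genuinely different from the paper's term-by-term check. Granting that $P=\alpha^a\tilde f(\alpha t)\tilde f(t)^{-1}$ satisfies the cocycle relation, its coefficients $B_{m,n}$ must obey the recursion extracted in the proof of \cref{Main theorem I} from the $X_1^{[0]}$- and $X_1^{[1]}$-coefficients of \cref{cocycle equivalent II}; that recursion determines everything from the $B_{m,0}$ and $B_{m,1}$, and your steps (i) and (ii) correctly pin these down. (Your diagnosis that the displayed recursion for $a_k$ carries a spurious factor $a_i$ is correct: the intended formula is $a_k=\tfrac1{k\beta}\sum(d_{i+a,s,1}-A_{s,1})a_i$, as the paper's own values of $a_1$ and $a_2$ confirm.) If completed, this route would be more conceptual than the paper's and would deliver \cref{conjecture e=1} directly.

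The genuine gap is exactly where you put it: the cocycle property of $P$ is nowhere established, and it is not free. Two points sharpen this. First, with the paper's convention $\delta_1^{2,*}(U)=\delta_2^{2,*}(U)\cdot\delta_0^{2,*}(U)$, i.e. $U_{02}=U_{01}U_{12}$, the coboundaries that telescope without any commutativity are those of the form $g(u_0)^{-1}g(u_1)$, whereas $\tilde f(\alpha t)\tilde f(t)^{-1}=\tilde f(E(u_1))\tilde f(E(u_0))^{-1}$ has the opposite shape $g_1g_0^{-1}$; so even the factor you call ``literally the coboundary'' is not a cocycle for the relevant convention unless the $a_i$ commute among themselves. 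Second, even in the commutative case, where the telescoping $\alpha_{01}^a\alpha_{12}^a=\alpha_{02}^a$ and the cancellation of the $\tilde f$-factors do formally close the argument, you still need to justify that $\alpha^a$ and its pullbacks converge in $K\{X_1,X_2\}^{\wedge}_{\rm pd}[[t]]$ and that the multiplicativity of the matrix-exponent binomial series holds there — estimates comparable to \cref{exponential sum}, and not addressed. This omission is not cosmetic: supplying it would prove \cref{find invertible functions}, hence \cref{conjecture e=1}, in the commutative case, which the paper explicitly leaves open even there. In the non-commutative case your $\log t$ device is only a heuristic, and the conjecture remains entirely open.
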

Actually once \cref{find invertible functions} can be verified, then recall $\alpha$ is defined to be $\frac{E(u_1)}{E(u_0)}$, hence
\begin{equation*}
    \delta_{1}^{2, *}(\varepsilon)(\underline e) =\underline e (\frac{E(u_2)}{E(u_0)})^a\cdot \frac{\tilde{f}(E(u_2))}{\tilde{f}(E(u_0))},
\end{equation*}
while 
\begin{equation*}
    \begin{split}
        \delta_{2}^{2, *}(\epsilon) \circ \delta_{0}^{2, *}(\epsilon)(\underline e) & =  \delta_{2}^{2, *}(\varepsilon)(\underline e(\frac{E(u_2)}{E(u_1)})^a\cdot \frac{\tilde{f}(E(u_2))}{\tilde{f}(E(u_1))})\\
        & = \underline e((\frac{E(u_1)}{E(u_0)})^a\cdot \frac{\tilde{f}(E(u_1))}{\tilde{f}(E(u_0))} (\frac{E(u_2)}{E(u_1)})^a\cdot \frac{\tilde{f}(E(u_2))}{\tilde{f}(E(u_1))})
        \\ &=\underline e (\frac{E(u_2)}{E(u_0)})^a\cdot \frac{\tilde{f}(E(u_2))}{\tilde{f}(E(u_0))}.
     \end{split}
\end{equation*}
Hence $\delta_{1}^{2, *}(\varepsilon)(\underline e)=\delta_{2}^{2, *}(\epsilon) \circ \delta_{0}^{2, *}(\epsilon)(\underline e)$, i.e. the cocycle condition is satisfied.

On the other hand, we claim that the proof of \cref{invertible functions} is purely an algebraic problem with respect to polynomials $\{h_{m,j}\}$ in \cref{main theorem II}. Actually, \cref{invertible functions} is equivalent to that 
\begin{equation}
    \alpha^a \tilde{f}(\alpha t)\tilde{f}(t)= (\sum_{m\geq0}(\sum_{n\geq 0}A_{m,n}X_1^{[n]})t^m)\tilde{f}(t),
\end{equation}
 then we calculate that
\begin{equation*}
    \begin{split}
        \LHS&=\sum_{i=0}^{\infty}a_i\alpha^{i+a}t^i=\sum_{i=0}^{\infty}a_i(\sum_{s=0}^{\infty}c_{i+a,s}t^s)t^i=\sum_{k=0}^{\infty}t^k(\sum_{i+s=k}c_{i+a,s}a_i)
        \\&=\sum_{k=0}^{\infty}t^k(\sum_{i+s=k}(\sum_{n=0}^{\infty}d_{i+a,s,n}X_1^{[n]})a_i),
    \end{split}
\end{equation*}
and that 
\begin{equation*}
    \begin{split}
        \RHS&=(\sum_{m\geq0}(\sum_{n\geq 0}A_{m,n}X_1^{[n]})t^m)\tilde{f}(t)
        \\&=\sum_{k=0}^{\infty}t^k(\sum_{m+l=k}(\sum_{n=0}^{\infty}A_{m,n}X_1^{[n]})a_l).
    \end{split}
\end{equation*}
Hence \cref{find invertible functions} is true if and only if for any $k\geq 0$,
\begin{equation}\label{theory calculation}
    \sum_{i+s=k}(\sum_{n=0}^{\infty}d_{i+a,s,n}X_1^{[n]})a_i=\sum_{m+l=k}(\sum_{n=0}^{\infty}A_{m,n}X_1^{[n]})a_l.
\end{equation}
But recall $\alpha^p$ satisfies cocycle condition (see the paragraph after \cref{find invertible functions}) and in our notation $\alpha^p=\sum_{s=0}^{\infty}c_{p,s}t^s=\sum_{s=0}^{\infty}(\sum_{k=0}^{\infty}d_{p,s,k}X_1^{[k]})t^s$, 
hence theoretically we can use \cref{main theorem II} and \cref{calculate c} to calculate both sides of \cref{theory calculation} and verify it provided that we understand the polynomial $\{h_{m,j}\}$ in \cref{main theorem II} very well!

We calculate the difference between left hand side and right hand side of \cref{theory calculation} for small degrees using $d_{p,s,1}=-p\theta_{1,s}$ thanks to \cref{calculate c} (We implicitly extend this notation to allow $p=a=\frac{-A_{0,1}}{\beta}$ thanks to convergence property of $A_{0,1}$).

When $k=0$,
\begin{equation*}
    \LHS-\RHS=\sum_{n=0}^{\infty}d_{a,0,n}X_1^{[n]}-\sum_{n=0}^{\infty}A_{0,n}X_1^{[n]}=(1-\beta X_1)^{\frac{-A_{0,1}}{\beta}}-(1-\beta X_1)^{\frac{-A_{0,1}}{\beta}}=0.
\end{equation*}
Here the second equality holds as $d_{a,0,1}=-a\beta=A_{0,1}$ and then we can apply \cref{main theorem II}.

When $k=1$, one use that $a_0=1$, $d_{a+1,0,1}=-\beta+A_{0,1}$, $d_{a,1,1}=\frac{A_{0,1}\theta_{1,1}}{\beta}$  and then use \cref{m small} to see
\begin{equation*}
\begin{split}
    \LHS&=(\sum_{n=0}^{\infty}d_{1+a,0,n}X_1^{[n]})a_1+\sum_{n=0}^{\infty}d_{a,1,n}X_1^{[n]}
    \\&=((1-\beta X_1)^{\frac{-d_{a+1,0,1}}{\beta}}a_1)+(d_{a,1,1}X_1(1-\beta X_1)^{\frac{-d_{a,0,1}}{\beta}}+\frac{\theta_{1,1}}{2}X_1^2d_{a,0,1}(1-\beta X_1)^{-1-\frac{d_{a,0,1}}{\beta}})
    \\&=(1-\beta X_1)^{\frac{-A_{0,1}}{\beta}}((1-\beta X_1)a_1+\frac{A_{0,1}\theta_{1,1}X_1}{\beta}+\frac{\theta_{1,1}}{2}A_{0,1}\frac{X_1^2}{1-\beta X_1}),
\end{split}
\end{equation*}
and that 
\begin{equation*}
\begin{split}
    \RHS&=(\sum_{n=0}^{\infty}A_{0,n}X_1^{[n]})a_1+\sum_{n=0}^{\infty}A_{1,n}X_1^{[n]}
    \\&=(1-\beta X_1)^{\frac{-A_{0,1}}{\beta}}(a_1+A_{1,1}X_1+\frac{\theta_{1,1}}{2}A_{0,1}\frac{X_1^2}{1-\beta X_1}).
\end{split}
\end{equation*}
By definition $a_1=\frac{-A_{1,1}}{\beta}+\frac{\theta_{1,1}A_{0,1}}{\beta^2}$, then one easily verifies that $\LHS=\RHS$ for $k=1$.

When $k=2$, 
\begin{equation*}
    \begin{split}
        \LHS&=(\sum_{n=0}^{\infty}d_{2+a,1,n}X_1^{[n]})a_2+(\sum_{n=0}^{\infty}d_{1+a,1,n}X_1^{[n]})a_1+\sum_{n=0}^{\infty}d_{a,2,n}X_1^{[n]},
        \\\RHS&=(\sum_{n=0}^{\infty}A_{0,n}X_1^{[n]})a_2+(\sum_{n=0}^{\infty}A_{1,n}X_1^{[n]})a_1+\sum_{n=0}^{\infty}A_{2,n}X_1^{[n]}.
    \end{split}
\end{equation*}
Using $d_{a,0,1}=-a\beta=A_{0,1}, d_{a,1,1}=\frac{A_{0,1}\theta_{1,1}}{\beta}, d_{a,2,1}=\frac{A_{0,1}\theta_{1,2}}{\beta}$, $a_1=\frac{-A_{1,1}}{\beta}+\frac{\theta_{1,1}A_{0,1}}{\beta^2}$ and \cref{m small}, we have that
\begin{equation}\label{k=2 I}
\begin{split}
    &\sum_{n=0}^{\infty}A_{2,n}X_1^{[n]}+(\sum_{n=0}^{\infty}A_{1,n}X_1^{[n]})a_1-\sum_{n=0}^{\infty}d_{a,2,n}X_1^{[n]}-(\sum_{n=0}^{\infty}d_{1+a,1,n}X_1^{[n]})a_1
    \\=&(1-\beta X_1)^{\frac{-A_{0,1}}{\beta}}(\frac{1}{2}\theta_{1,1}(A_{1,1}-\frac{A_{0,1}\theta_{1,1}}{\beta})A_{0,1}\frac{X_1^3}{1-\beta X_1}+\frac{\beta}{2}(A_{2,1}-\frac{A_{0,1}\theta_{1,2}}{\beta})X_1^2+\frac{1}{2}(A_{1,1}^2-\frac{A_{0,1}^2\theta_{1,1}^2}{\beta^2})X_1^2
    \\+&(A_{2,1}-\frac{A_{0,1}\theta_{1,2}}{\beta})(1-\beta X_1)X_1+(A_{1,1}X_1+\frac{\theta_{1,1}A_{0,1}}{2}\frac{X_1^2}{1-\beta X_1}
    \\+&\theta_{1,1}(1-\frac{A_{0,1}}{\beta})X_1(1-\beta X_1)-\frac{\theta_{1,1}}{2}(-\beta+A_{0,1})X_1^2)\frac{\theta_{1,1}A_{0,1}-\beta A_{1,1}}{\beta^2})
    \\=&(1-\beta X_1)^{\frac{-A_{0,1}}{\beta}}\frac{X_1(\beta X_1-2)}{2}(\frac{\theta_{1,2}A_{0,1}}{\beta}-A_{2,1}+(-\theta_{1,1}+\frac{\theta_{1,1}A_{0,1}}{\beta}-A_{1,1})(\frac{-A_{1,1}}{\beta}+\frac{\theta_{1,1}A_{0,1}}{\beta^2})),
\end{split}
\end{equation}
while by definition we calculate that  $a_2=\frac{1}{2\beta}(\frac{\theta_{1,2}A_{0,1}}{\beta}-A_{2,1}+(-\theta_{1,1}+\frac{\theta_{1,1}A_{0,1}}{\beta}-A_{1,1})(\frac{-A_{1,1}}{\beta}+\frac{\theta_{1,1}A_{0,1}}{\beta^2}))$, hence 
\begin{equation}\label{k=2 II}
\begin{split}
    &(\sum_{n=0}^{\infty}A_{0,n}X_1^{[n]})a_2-(\sum_{n=0}^{\infty}d_{2+a,1,n}X_1^{[n]})a_2
    \\=&(1-\beta X_1)^{\frac{-A_{0,1}}{\beta}}\frac{X_1(2-\beta X_1)}{2}(\frac{\theta_{1,2}A_{0,1}}{\beta}-A_{2,1}+(-\theta_{1,1}+\frac{\theta_{1,1}A_{0,1}}{\beta}-A_{1,1})(\frac{-A_{1,1}}{\beta}+\frac{\theta_{1,1}A_{0,1}}{\beta^2})).
\end{split}
\end{equation}
Combine \cref{k=2 I} and \cref{k=2 II}, we get the desired result $\LHS-\RHS=0$ for $k=2$.

Hopefully these calculations in the small degree give some evidence of \cref{conjecture e=1}. For larger $k$, the verification of \cref{conjecture e=1} is hard, even if we further assume that $A_{0,1}$ commutes with all $A_{m,1}$, we still need to understand $\{h_{m,j}\}$ better to prove \cref{conjecture e=1} or \cref{find invertible functions}.

\section{Absolute prismatic cohomology of de Rham crystals}
In this section we study the absolute prismatic cohomology of de Rham crystals $\mathcal{M} \in \operatorname{Vect}((\mathcal{O}_{K})_{\Prism}, (\mathcal{O}_{\Prism}[\frac{1}{p}])_{\mathcal{I}}^{\wedge})$. Our strategy is to study the cohomology of its restriction to prisms in $(\mathcal{O}_{K})_{\Prism}$ first, then use \v{C}ech-Alexander complex to calculate $R \Gamma_{\Prism}(\mathcal{O}_{K},\mathcal{M})$.

\begin{lemma}\label{vanishing of prisms}
Let $\mathcal{M} \in \operatorname{Vect}((\mathcal{O}_{K})_{\Prism}, (\mathcal{O}_{\Prism}[\frac{1}{p}])_{\mathcal{I}}^{\wedge})$, then for any $U=(A,I)\in (\mathcal{O}_{K})_{\Prism}$ and $q>0$, we have that $$H^q(U,\mathcal{M})=0,$$
and that \[H^q(U,\mathcal{M}/\mathcal{I}^n)=0.\]
Here $\mathcal{I}^n$ is the ideal sheaf on $(\mathcal{O}_{K})_{\Prism}$ by sending $(A,I)$ to $I^n\bdr(A)$.
\end{lemma}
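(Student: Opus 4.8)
The plan is to deduce this from faithfully flat descent for vector bundles on the prismatic site, in the same spirit as the proof of \cref{algebraic description of crystal}. Fix $U=(A,I)\in(\mathcal{O}_K)_{\Prism}$ and work in the localized site $(\mathcal{O}_K)_{\Prism/U}$, in which $U$ is a final object; thus $\Gamma(U,-)$ is evaluation at $(A,I)$ and $H^q(U,-)=H^q\bigl((\mathcal{O}_K)_{\Prism/U},-\bigr)$. Both $\mathcal{M}$ and $\mathcal{M}/\mathcal{I}^n$ are crystals in finite projective modules over a sheaf of rings on $(\mathcal{O}_K)_{\Prism}$ — namely $(\mathcal{O}_{\Prism}[\tfrac1p])^{\wedge}_{\mathcal{I}}$, respectively $\mathcal{O}_{\Prism}/\mathcal{I}^n[\tfrac1p]$ by \cref{mod n analogue} — and both satisfy $(p,\mathcal{I})$-completely faithfully flat descent (for $\mathcal{M}$ this is the input used in \cref{algebraic description of crystal}, coming from the proof of \cite[Proposition 2.7]{BS21}; the same argument handles $\mathcal{M}/\mathcal{I}^n$). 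Hence it is enough to prove one statement: any such crystal $\mathcal{F}$ is acyclic on representable objects, i.e. $H^q(W,\mathcal{F})=0$ for every $W\in(\mathcal{O}_K)_{\Prism/U}$ and $q>0$. (Alternatively the $\mathcal{M}/\mathcal{I}^n$-statement could be reached by d\'evissage along $0\to(\mathcal{M}/\mathcal{I})\otimes(\mathcal{I}^{n-1}/\mathcal{I}^n)\to\mathcal{M}/\mathcal{I}^n\to\mathcal{M}/\mathcal{I}^{n-1}\to0$ from the rational Hodge--Tate case, and then the $\mathcal{M}$-statement recovered from $\mathcal{M}=R\varprojlim_n\mathcal{M}/\mathcal{I}^n$ together with Mittag--Leffler.)

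For the acyclicity I would fix an object $(A',I')$ of $(\mathcal{O}_K)_{\Prism/U}$; any covering of it refines to one of the form $(A',I')\to(B,I'B)$ with $A'\to B$ being $(p,I')$-completely faithfully flat. For each $m$ the $(m{+}1)$-fold self-product $(B^{m},I'B^{m})$ of $(B,I'B)$ over $(A',I')$ is again an object of the site, with the explicit $\delta$-ring description of \cite[Corollary 3.14]{BS19}; the induced map $\bdr(A')\to\bdr(B^{m})$ (and its reduction modulo $\mathcal{I}^n$) is faithfully flat, and by the crystal property $\mathcal{F}(B^{m})\cong\mathcal{F}(A')\otimes_{\bdr(A')}\bdr(B^{m})$. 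Consequently the augmented \v{C}ech--Alexander complex $\mathcal{F}(A')\to\mathcal{F}(B^{\bullet})$ is the Amitsur complex of the finite projective $\bdr(A')$-module $\mathcal{F}(A')$ along a faithfully flat extension, hence exact, so $\check H^{q}\bigl(\{(A',I')\to(B,I'B)\},\mathcal{F}\bigr)=0$ for $q>0$.

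Finally, such coverings are cofinal among all coverings and are stable under forming self-products, so all terms of the associated \v{C}ech nerves remain in the site; the vanishing of \v{C}ech cohomology in positive degrees then propagates to sheaf cohomology via the \v{C}ech-to-derived-functor spectral sequence and induction on cohomological degree (Cartan's acyclicity criterion; cf.\ \cite[\href{https://stacks.math.columbia.edu/tag/03F9}{Tag 03F9}]{SP22}), giving $H^{q}(W,\mathcal{F})=0$ for all $W$ and $q>0$. Taking $W=U$ yields $H^{q}(U,\mathcal{M})=0$ and $H^{q}(U,\mathcal{M}/\mathcal{I}^{n})=0$ for $q>0$. The step I expect to require the most care is the upgrade from the cited $(p,\mathcal{I})$-complete faithful flatness to genuine faithful flatness of $\bdr(A')\to\bdr(B^{m})$ — equivalently, that underived faithfully flat descent applies to the finite projective module $\mathcal{F}(A')$ — together with checking the hypotheses of the acyclicity criterion for the site $(\mathcal{O}_K)_{\Prism/U}$; both are routine but are the real content of the argument.
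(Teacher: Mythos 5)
Your overall architecture matches the paper's: both arguments reduce to showing that the (augmented) \v{C}ech complex of the crystal along a $(p,I)$-completely faithfully flat cover $(A,I)\to(B,IB)$ is exact in positive degrees, and then pass to sheaf cohomology via Cartan's acyclicity criterion (the paper cites \cite[\href{https://stacks.math.columbia.edu/tag/01EV}{Tag 01EV}]{SP22}). The divergence is in how that exactness is obtained. The paper never claims that $\bdr(A)\to\bdr(B^m)$ is faithfully flat. Instead it notes that all terms of the \v{C}ech complex are $I$-adically complete, applies derived Nakayama, uses that $\bdr(A)/I$ has $I$-complete Tor-amplitude in $[-1,0]$ so that $-\otimes_{\bdr(A)}\bdr(A)/I$ commutes with the totalization (\cite[Corollary 3.1.13]{KP21}), identifies the reduction with the \v{C}ech complex of the rational Hodge--Tate crystal $\mathcal{M}/\mathcal{I}$ via \cref{specializing Hodge Tate}, and quotes \cite[Lemma 3.18]{MW21} for its exactness; the case of $\mathcal{M}/\mathcal{I}^n$ is handled the same way using \cref{mod n analogue}.

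Your route instead rests on the assertion that $\bdr(A')\to\bdr(B^{m})$ is genuinely faithfully flat, so that the augmented complex is an exact Amitsur complex. That assertion is the crux, and it is not routine: $(p,I')$-complete faithful flatness of $A'\to B$ only controls $B\otimes^{\mathbb{L}}_{A'}A'/(p,I')$, and neither inverting $p$ nor $I$-adic completion formally preserves (faithful) flatness. Already modulo $I'$ one would need flatness of $(A'/I')[\frac{1}{p}]\to(B/I'B)[\frac{1}{p}]$, which does not follow formally from $p$-complete flatness of $A'/I'\to B/I'B$; this is precisely the delicate point that forces the detour through derived Nakayama and the Hodge--Tate computation in the paper (and, one level down, in \cite[Proposition 2.7]{BS21} and \cite{MW21}). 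As written, then, the Amitsur-complex step is a genuine gap: you must either prove the faithful flatness of $\bdr(A')\to\bdr(B^{m})$ directly --- which amounts to re-deriving the descent results you are citing as input --- or replace this step by the derived Nakayama reduction to the rational Hodge--Tate case. Your parenthetical alternative (d\'evissage along $\mathcal{I}^{n-1}/\mathcal{I}^{n}$ from the $n=1$ case, then $\mathcal{M}=R\varprojlim\mathcal{M}/\mathcal{I}^{n}$ with Mittag--Leffler) is sound and is in fact closer to what the paper does for the absolute cohomology in \cref{induct on n for cohomology} and \cref{main theorem cohomology}, but its base case is again the Min--Wang computation rather than a flatness assertion, so it does not rescue the Amitsur argument.
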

\begin{proof}
Let $(A,I)\rightarrow (B,IB)$ be a cover in $(\mathcal{O}_{K})_{\Prism}$, denote $V=(B,IB)$  we claim that the \v{C}ech complex of $\mathcal{M}$ for this cover $R \Gamma(U^{\bullet},\mathcal{M})$ is concentrated in degree $0$. By derived Nakayama lemma, it suffices to check that $R \Gamma(U^{\bullet},\mathcal{M})\otimes_{\bdr(A)}^{\mathbb{L}}(\bdr(A)/I)$ is concentrated in degree $0$. However, $\bdr(A)/I$ has $I$-complete Tor amplitude $[-1,0]$ as a $\bdr(A)$-module, hence $-\otimes_{\bdr(A)}(\bdr(A)/I)$ commutes with totalization by \cite[Corollary 3.1.13]{KP21}, this implies that $R \Gamma(U^{\bullet},\mathcal{M})\otimes_{\bdr(A)}^{\mathbb{L}}(\bdr(A)/I)$ is the \v{C}ech complex of the rational Hodge-Tate crystal $\mathcal{M}/\mathcal{I}$ thanks to \cref{specializing Hodge Tate}, which is concentrated in degree $0$ by \cite[Lemma 3.18]{MW21}.

This implies that $H^q(U,\mathcal{M})=0$  by \cite[\href{https://stacks.math.columbia.edu/tag/01EV}{Tag 01EV}]{SP22}.

The desired results for $\mathcal{M}/\mathcal{I}^n$ can be proven similarly thanks to \cref{mod n analogue}.
\end{proof}

\begin{cor}\label{alex}
Let $\mathcal{M} \in \operatorname{Vect}((\mathcal{O}_{K})_{\Prism}, (\mathcal{O}_{\Prism}[\frac{1}{p}])_{\mathcal{I}}^{\wedge})$, then the absolute prismatic cohomology $R \Gamma_{\Prism}(\mathcal{O}_{K},\mathcal{M})$ can be calculated by the \v{C}ech-Alexander complex
\begin{equation}\label{alexander}
    \mathcal{M}(\mathfrak{S},(E))\stackrel{d_0}{\rightarrow}\mathcal{M}(\mathfrak{S}^1,(E))\stackrel{d_1}{\rightarrow}\mathcal{M}(\mathfrak{S}^2,(E))\rightarrow\cdots,
\end{equation}
where $d_n=\sum_{i=0}^{n+1}(-1)^i\delta_{i}^{n+1}$.

Similarly, $R \Gamma_{\Prism}(\mathcal{O}_{K},\mathcal{M}/\mathcal{I}^n)$ can be calculated by the \v{C}ech-Alexander complex
\begin{equation}\label{alexander mod n}
    \mathcal{M}/\mathcal{I}^n(\mathfrak{S},(E))\stackrel{d_0}{\rightarrow}\mathcal{M}/\mathcal{I}^n(\mathfrak{S}^1,(E))\stackrel{d_1}{\rightarrow}\mathcal{M}/\mathcal{I}^n(\mathfrak{S}^2,(E))\rightarrow\cdots.
\end{equation}
\end{cor}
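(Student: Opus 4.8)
The Čech–Alexander complex \eqref{alexander} is, by construction, the cochain complex attached by Dold–Kan to the cosimplicial abelian group $[n]\mapsto\mathcal{M}(\mathfrak{S}^n,(E))$, whose coface maps are the $\delta^{n+1}_i$ of \cref{Equ-structure}. So the plan is to identify its totalization with $\RG_{\Prism}(\mathcal{O}_K,\mathcal{M})$ by Čech descent along the weakly final prism $(\mathfrak{S},(E))$, collapsing the resulting spectral sequence by means of \cref{vanishing of prisms}.

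The argument rests on two inputs. First, $(\mathfrak{S},(E))$ is weakly final in $(\mathcal{O}_K)_{\Prism}$: for every $(A,I)$ the product $(A,I)\times_{(\mathcal{O}_K)_{\Prism}}(\mathfrak{S},(E))$ exists and the first projection $(A,I)\to (A,I)\times(\mathfrak{S},(E))$ is a cover — exactly as used in the proof of \cref{algebraic description of crystal}, via \cite[Corollary 3.14]{BS19} — so the sheaf represented by $(\mathfrak{S},(E))$ is a local epimorphism onto the terminal sheaf of $\Shv((\mathcal{O}_K)_{\Prism})$. Second, finite products being computed by coproducts of prisms, the Čech nerve of this covering is $[n]\mapsto\mathfrak{S}^n$, and by \cite[Corollary 3.14]{BS19} each $\mathfrak{S}^n$ is a \emph{bounded} prism — explicitly the ring displayed above — hence an object of $(\mathcal{O}_K)_{\Prism}$ to which \cref{vanishing of prisms} applies.

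Granting this, I would choose an injective resolution $\mathcal{M}\to\mathcal{J}^\bullet$ of abelian sheaves on $(\mathcal{O}_K)_{\Prism}$ and form the first-quadrant double complex $C^{s,t}=\mathcal{J}^t(\mathfrak{S}^s)$, with horizontal differential $\sum_i(-1)^i\delta^{s+1}_i$ and vertical differential induced by $\mathcal{J}^\bullet$. Running the vertical spectral sequence first: since restriction to the slice site over $\mathfrak{S}^s$ preserves injectives, the $t$-th cohomology of the $s$-th column is $H^t(\mathfrak{S}^s,\mathcal{M})$, which by \cref{vanishing of prisms} vanishes for $t>0$ and equals $\mathcal{M}(\mathfrak{S}^s,(E))$ for $t=0$; hence $\Tot(C^{\bullet,\bullet})$ is quasi-isomorphic to \eqref{alexander} with differentials $d_n=\sum_{i=0}^{n+1}(-1)^i\delta^{n+1}_i$. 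Running the horizontal spectral sequence first: each injective $\mathcal{J}^t$ is flasque, hence Čech-acyclic for the covering $\{(\mathfrak{S},(E))\to\,\ast\}$, so only the $s=0$ column survives and gives $\mathcal{J}^\bullet(\ast)=\RG_{\Prism}(\mathcal{O}_K,\mathcal{M})$. Comparing the two computations of $\Tot(C^{\bullet,\bullet})$ yields the first assertion. The statement for $\mathcal{M}/\mathcal{I}^n$ is proven verbatim, replacing $\mathcal{M}$ by $\mathcal{M}/\mathcal{I}^n$ — a sheaf by \cref{mod n analogue} — and invoking the second vanishing statement of \cref{vanishing of prisms}, so that the surviving row is now \eqref{alexander mod n}.

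The one point requiring care, and the closest thing to an obstacle, is the second input: one must know that the self-products $\mathfrak{S}^n$ are bounded prisms, since \cref{vanishing of prisms} — and the Hodge–Tate computation \cite[Lemma 3.18]{MW21} underlying it — is phrased for objects of $(\mathcal{O}_K)_{\Prism}$. This is precisely \cite[Corollary 3.14]{BS19}, and it is already tacitly used in the discussion preceding \cref{algebraic description of crystal}; granting it, everything else is formal homological algebra.
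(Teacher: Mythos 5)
Your proposal is correct and follows essentially the same route as the paper: cohomological descent along the weakly final object $(\mathfrak{S},(E))$ identifies $R\Gamma_{\Prism}(\mathcal{O}_K,\mathcal{M})$ with $\Tot(R\Gamma(\mathfrak{S}^\bullet,\mathcal{M}))$, and \cref{vanishing of prisms} collapses this to the \v{C}ech--Alexander complex. The only difference is that the paper invokes this descent as a "standard fact" while you spell it out via the injective-resolution double complex; the substance and the key inputs (weak finality, boundedness of the $\mathfrak{S}^n$ via \cite[Corollary 3.14]{BS19}, and \cref{vanishing of prisms}) are identical.
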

\begin{proof}
As $(\mathfrak{S}^n,(E))$ is the \v{C}ech complex associated to the weakly final object $(\mathfrak{S},(E))$ in $\mathcal{O}_{K})_{\Prism}$, it is a standard fact that $R \Gamma_{\Prism}(\mathcal{O}_{K},\mathcal{M})$ (resp. $R \Gamma_{\Prism}(\mathcal{O}_{K},\mathcal{M}/\mathcal{I}^n)$) can be computed via $\Tot(R\Gamma (\mathfrak{S}^\bullet,\mathcal{M}))$ (resp. $\Tot(R\Gamma (\mathfrak{S}^\bullet,\mathcal{M}/\mathcal{I}^n))$), which is just \cref{alexander} (resp. \cref{alexander mod n}) thanks to \cref{vanishing of prisms}.
\end{proof}

\begin{theorem}\label{main theorem cohomology}
    Given $\mathcal{M} \in \operatorname{Vect}((\mathcal{O}_{K})_{\Prism}, (\mathcal{O}_{\Prism}[\frac{1}{p}])_{\mathcal{I}}^{\wedge})$, then $ H^i((\mathcal{O}_{K})_{\Prism},\mathcal{M})=0$ for $i>1$.
\end{theorem}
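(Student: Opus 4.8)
The plan is to reduce the statement to the Hodge-Tate case by a d\'evissage along the $\mathcal{I}$-adic filtration of $\mathcal{M}$ and then to pass to a derived inverse limit. By \cref{alex}, $R\Gamma_{\Prism}(\mathcal{O}_{K},\mathcal{M})$ is computed by the \v{C}ech-Alexander complex $C^{\bullet}$ with $C^{j}=\mathcal{M}(\mathfrak{S}^{j},(E))$, and $R\Gamma_{\Prism}(\mathcal{O}_{K},\mathcal{M}/\mathcal{I}^{n})$ by $C^{\bullet}/E^{n}C^{\bullet}$. Each $C^{j}$ is finite projective over $\bdr(\mathfrak{S}^{j})\cong K\{X_{1},\dots,X_{j}\}^{\wedge}_{\rm pd}[[t]]$ (see \cref{identification de rham ring}), hence $E$-adically complete with $E=t$ a nonzerodivisor on it; so $C^{j}$ is derived $E$-complete, and since $C^{\bullet}$ is a homotopy limit of the derived $E$-complete modules $C^{j}$, it too is derived $E$-complete. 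As $E$ is a nonzerodivisor on each $C^{j}$, the cofiber of multiplication by $E^{n}$ on $C^{\bullet}$ is $C^{\bullet}/E^{n}C^{\bullet}$, which by \cref{alex} represents $R\Gamma_{\Prism}(\mathcal{O}_{K},\mathcal{M}/\mathcal{I}^{n})$; derived $E$-completeness of $C^{\bullet}$ therefore gives
\[
R\Gamma_{\Prism}(\mathcal{O}_{K},\mathcal{M})\;\simeq\;R\varprojlim_{n}R\Gamma_{\Prism}(\mathcal{O}_{K},\mathcal{M}/\mathcal{I}^{n}).
\]

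Next I would prove, by induction on $n$, that $R\Gamma_{\Prism}(\mathcal{O}_{K},\mathcal{M}/\mathcal{I}^{n})$ is concentrated in cohomological degrees $0$ and $1$. For $n=1$ this is exactly the statement that the absolute prismatic cohomology of the rational Hodge-Tate crystal $\mathcal{M}/\mathcal{I}$ of \cref{specializing Hodge Tate} vanishes in degrees $\ge 2$, which is a theorem of Min-Wang \cite{MW21}. For the inductive step, apply $R\Gamma_{\Prism}(\mathcal{O}_{K},-)$ to the short exact sequence of abelian sheaves on $(\mathcal{O}_{K})_{\Prism}$
\[
0\longrightarrow \mathcal{I}^{n}\mathcal{M}/\mathcal{I}^{n+1}\mathcal{M}\longrightarrow \mathcal{M}/\mathcal{I}^{n+1}\longrightarrow \mathcal{M}/\mathcal{I}^{n}\longrightarrow 0
\]
and note that $\mathcal{I}^{n}\mathcal{M}/\mathcal{I}^{n+1}\mathcal{M}\cong(\mathcal{I}^{n}/\mathcal{I}^{n+1})\otimes_{\overline{\mathcal{O}}_{\Prism}[\frac{1}{p}]}(\mathcal{M}/\mathcal{I})$ is the tensor product of the rank-one Breuil-Kisin twist $\mathcal{I}^{n}/\mathcal{I}^{n+1}$ with $\mathcal{M}/\mathcal{I}$, hence again a rational Hodge-Tate crystal; its cohomology thus lies in degrees $\le 1$ by the $n=1$ case, and the long exact cohomology sequence gives the claim for $\mathcal{M}/\mathcal{I}^{n+1}$.

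Finally I would read off the theorem from the Milnor exact sequence
\[
0\to R^{1}\varprojlim_{n}H^{i-1}\big((\mathcal{O}_{K})_{\Prism},\mathcal{M}/\mathcal{I}^{n}\big)\to H^{i}\big((\mathcal{O}_{K})_{\Prism},\mathcal{M}\big)\to\varprojlim_{n}H^{i}\big((\mathcal{O}_{K})_{\Prism},\mathcal{M}/\mathcal{I}^{n}\big)\to 0 .
\]
For $i\ge 3$ both outer groups vanish because $H^{j}((\mathcal{O}_{K})_{\Prism},\mathcal{M}/\mathcal{I}^{n})=0$ for all $j\ge 2$, so $H^{i}((\mathcal{O}_{K})_{\Prism},\mathcal{M})=0$. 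For $i=2$ the only term that can contribute is $R^{1}\varprojlim_{n}H^{1}((\mathcal{O}_{K})_{\Prism},\mathcal{M}/\mathcal{I}^{n})$; this vanishes because the tower $\{H^{1}((\mathcal{O}_{K})_{\Prism},\mathcal{M}/\mathcal{I}^{n})\}_{n}$ has surjective transition maps. Surjectivity follows from the long exact sequence of the short exact sequence above: the connecting map $H^{1}((\mathcal{O}_{K})_{\Prism},\mathcal{M}/\mathcal{I}^{n})\to H^{2}((\mathcal{O}_{K})_{\Prism},\mathcal{I}^{n}\mathcal{M}/\mathcal{I}^{n+1}\mathcal{M})$ lands in a group that is zero by the Hodge-Tate case. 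Hence $H^{2}((\mathcal{O}_{K})_{\Prism},\mathcal{M})=0$, which completes the argument.

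The one substantive ingredient here is the Min-Wang computation that the absolute prismatic cohomology of a rational Hodge-Tate crystal over $\mathcal{O}_{K}$ is concentrated in degrees $0$ and $1$; everything else is formal homological algebra. The point that needs a little care is the Mittag-Leffler condition for the $H^{1}$-tower, and there one genuinely uses the vanishing of $H^{2}$ of Hodge-Tate crystals — so the bound ``degrees $\le 1$ for Hodge-Tate'' really propagates to ``degrees $\le 1$ for de Rham'' rather than being obtained from a soft truncation argument.
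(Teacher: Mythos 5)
Your proof is correct and follows the same overall architecture as the paper's: d\'evissage along the $\mathcal{I}$-adic filtration, reduction of each graded piece to the Min--Wang vanishing for rational Hodge--Tate crystals, and then the Milnor sequence with the Mittag--Leffler argument for the $H^1$-tower (including the key point that surjectivity of the transition maps comes from the vanishing of $H^2$ of a Hodge--Tate crystal, not from a soft truncation). There are two technical divergences worth noting. First, the paper handles the reduction $R\Gamma_{\Prism}(\mathcal{O}_K,\mathcal{M})\simeq R\varprojlim_n R\Gamma_{\Prism}(\mathcal{O}_K,\mathcal{M}/\mathcal{I}^n)$ by passing to the restricted site $(\mathcal{O}_{K})_{\Prism}^{\prime}$ of objects under the Breuil--Kisin prism, invoking repleteness of its topos to get $\mathcal{M}=R\lim\mathcal{M}/\mathcal{I}^n$ as sheaves, and comparing cohomology via \cref{restriction to}; you instead deduce it from derived $E$-completeness of the \v{C}ech--Alexander complex itself, which is equally valid and bypasses the auxiliary site for this step. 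Second, and more substantively, the paper needs the restricted site because its inductive step uses multiplication by the chosen generator $E$ of $\mathcal{I}$, i.e.\ the sequence $0\to\mathcal{M}/\mathcal{I}^{n-1}\xrightarrow{\times E}\mathcal{M}/\mathcal{I}^{n}\to\mathcal{M}/\mathcal{I}\to 0$, which only makes sense where $\mathcal{I}$ is trivialized; your identification of the kernel of $\mathcal{M}/\mathcal{I}^{n+1}\to\mathcal{M}/\mathcal{I}^{n}$ with the Breuil--Kisin twist $(\mathcal{I}^{n}/\mathcal{I}^{n+1})\otimes(\mathcal{M}/\mathcal{I})$ is orientation-free and works directly on the full site $(\mathcal{O}_{K})_{\Prism}$, at the small cost of having to observe that a twist of a rational Hodge--Tate crystal is again one. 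Your version is arguably cleaner; the paper's restricted-site detour buys nothing here beyond what your twist observation provides.
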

To prove this theorem, we need to consider the restricted site $(\mathcal{O}_{K})_{\Prism}^{\prime}$ introduced in \cite[Section 3.4]{MW21}, whose underlying category is the full subcategory of $(\mathcal{O}_{K})_{\Prism}$ spanned by those objects admitting maps from $(\mathfrak{S},(E))$ with coverings inherited from those in $(\mathcal{O}_{K})_{\Prism}$.

The reason to restrict to $(\mathcal{O}_{K})_{\Prism}^{\prime}$ is that for any $(A,I)\in (\mathcal{O}_{K})_{\Prism}^{\prime}$, $I=EA$ is oriented by rigidity of morphism of prisms (See \cite[Prop 1.5]{BS19}), hence it is more convenient to relate $\mathcal{M}/\mathcal{I}^{n+1}$ and $\mathcal{M}/\mathcal{I}^{n}$, for example see the proof of \cref{induct on n for cohomology}.

We warn the reader that $(\mathcal{O}_{K})_{\Prism}^{\prime}$ is not the relative prismatic site $(\mathcal{O}_{K}/(\mathfrak{S},(E)))_{\Prism}$ as for example one can see that products are calculated differently.

By abuse of notation we still use $\mathcal{M}$ to define its restriction on $(\mathcal{O}_{K})_{\Prism}^{\prime}$.
\begin{lemma}\label{restriction to}
For $\mathcal{M} \in \operatorname{Vect}((\mathcal{O}_{K})_{\Prism}, (\mathcal{O}_{\Prism}[\frac{1}{p}])_{\mathcal{I}}^{\wedge})$, the following holds:
\begin{itemize}
    \item $R\Gamma((\mathcal{O}_{K})_{\Prism}^{\prime},\mathcal{M})=R\Gamma((\mathcal{O}_{K})_{\Prism},\mathcal{M})=\text{\v{C}A}(\mathcal{M})$.
    \item $R\Gamma((\mathcal{O}_{K})_{\Prism}^{\prime},\mathcal{M}/E^n)=R\Gamma((\mathcal{O}_{K})_{\Prism},\mathcal{M}/\mathcal{I}^n)=\text{\v{C}A}(\mathcal{M}/\mathcal{I}^n)$.
\end{itemize}
here \v{C}A denotes the \v{C}ech-Alexander complex associated to the \v{C}ech Nerve of the weakly final object $(\mathfrak{S},(E))$.
\end{lemma}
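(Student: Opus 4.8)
The plan is to reduce both chains of identifications to \cref{alex} by comparing the topos attached to $(\mathcal{O}_{K})_{\Prism}$ with that attached to the restricted site $(\mathcal{O}_{K})_{\Prism}^{\prime}$. The first thing I would record is the following closure property: if $(A,I)\in (\mathcal{O}_{K})_{\Prism}^{\prime}$ and $(A,I)\to (B,IB)$ is any morphism in $(\mathcal{O}_{K})_{\Prism}$, then $(B,IB)\in (\mathcal{O}_{K})_{\Prism}^{\prime}$, since it receives the composite map from $(\mathfrak{S},(E))$. In particular coverings of objects of $(\mathcal{O}_{K})_{\Prism}^{\prime}$ again lie in $(\mathcal{O}_{K})_{\Prism}^{\prime}$, so the inclusion $\iota\colon (\mathcal{O}_{K})_{\Prism}^{\prime}\hookrightarrow (\mathcal{O}_{K})_{\Prism}$ is fully faithful, continuous and cocontinuous for the topology on the source induced from the target; moreover, by the construction already used in the proof of \cref{algebraic description of crystal} — for any $(A,I)$ the fibre product $(A,I)\times_{(\mathcal{O}_{K})_{\Prism}}(\mathfrak{S},(E))$ exists by \cite[Corollary 3.14]{BS19}, is a $(p,I)$-completely faithfully flat cover of $(A,I)$, and lies over $(\mathfrak{S},(E))$ — every object of $(\mathcal{O}_{K})_{\Prism}$ admits a covering by an object of $(\mathcal{O}_{K})_{\Prism}^{\prime}$. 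By the comparison lemma for sites (this is exactly the point of \cite[Section 3.4]{MW21}), restriction along $\iota$ is then an equivalence of topoi $\Sh((\mathcal{O}_{K})_{\Prism})\xrightarrow{\sim}\Sh((\mathcal{O}_{K})_{\Prism}^{\prime})$. Under this equivalence $\mathcal{M}$ is carried to its restriction, and the ideal sheaf $\mathcal{I}^{n}$ is carried to $E^{n}$ — here I use that every prism in $(\mathcal{O}_{K})_{\Prism}^{\prime}$ is oriented by rigidity \cite[Proposition 1.5]{BS19}, so $\mathcal{I}^{n}$ literally becomes the ideal sheaf $E^{n}$ — hence $\mathcal{M}/\mathcal{I}^{n}$ is carried to $\mathcal{M}/E^{n}$. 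Since an equivalence of topoi preserves the cohomology of every object,
\[R\Gamma((\mathcal{O}_{K})_{\Prism}^{\prime},\mathcal{M})=R\Gamma((\mathcal{O}_{K})_{\Prism},\mathcal{M}),\qquad R\Gamma((\mathcal{O}_{K})_{\Prism}^{\prime},\mathcal{M}/E^{n})=R\Gamma((\mathcal{O}_{K})_{\Prism},\mathcal{M}/\mathcal{I}^{n}).\]

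Next I would invoke \cref{alex}, which identifies $R\Gamma((\mathcal{O}_{K})_{\Prism},\mathcal{M})$ with the \v{C}ech--Alexander complex $\text{\v{C}A}(\mathcal{M})$ and $R\Gamma((\mathcal{O}_{K})_{\Prism},\mathcal{M}/\mathcal{I}^{n})$ with $\text{\v{C}A}(\mathcal{M}/\mathcal{I}^{n})$; chaining this with the previous display yields all four equalities. I would also point out that $\text{\v{C}A}(\mathcal{M})$ as an explicit complex only involves the values of $\mathcal{M}$ on the prisms $\mathfrak{S}^{\bullet}$, each of which carries the structure map $p_{0}$ from $(\mathfrak{S},(E))$ and so already lies in $(\mathcal{O}_{K})_{\Prism}^{\prime}$; thus the \v{C}ech--Alexander complexes computed over the two sites coincide on the nose, and there is no ambiguity in the symbol $\text{\v{C}A}(\mathcal{M})$.

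The only genuinely non-formal ingredient — and the thing I would be most careful about — is the verification that the hypotheses of the comparison lemma are met: that $\iota$ is continuous and cocontinuous for the induced topology (which rests on the closure property above), and that the \v{C}ech nerve of the weakly final object $(\mathfrak{S},(E))$ in $(\mathcal{O}_{K})_{\Prism}^{\prime}$ is still $\mathfrak{S}^{\bullet}$. On this last point one must not confuse the finite self-products of $(\mathfrak{S},(E))$ in $(\mathcal{O}_{K})_{\Prism}^{\prime}$ with products computed in the relative site $(\mathcal{O}_{K}/(\mathfrak{S},(E)))_{\Prism}$ (as warned just above): however, since each $\mathfrak{S}^{n}$ already belongs to $(\mathcal{O}_{K})_{\Prism}^{\prime}$ and is the corresponding self-product in $(\mathcal{O}_{K})_{\Prism}$, it remains the self-product there as well, so no discrepancy arises. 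Once these site-theoretic checks are in place, the two displayed inputs — the topos equivalence and \cref{alex} — do all the work and the rest is bookkeeping.
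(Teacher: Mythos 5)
Your proposal is correct, but it takes a different route from the paper. The paper's own proof does not pass through a topos equivalence at all: it simply observes that $(\mathfrak{S},(E))$ is also a weakly final object of $(\mathcal{O}_{K})_{\Prism}^{\prime}$ whose \v{C}ech nerve there is again $\mathfrak{S}^{\bullet}$, and then reruns the argument of \cref{alex} (acyclicity on objects from \cref{vanishing of prisms} plus totalization over the \v{C}ech nerve) verbatim on the restricted site, so that both $R\Gamma$'s are computed by the identical complex $\text{\v{C}A}(\mathcal{M})$ and are therefore equal. You instead invoke the comparison lemma for the fully faithful, continuous and cocontinuous inclusion $\iota$, using the closure of $(\mathcal{O}_{K})_{\Prism}^{\prime}$ under morphisms out of its objects and the fact that $(A,I)\times_{(\mathcal{O}_{K})_{\Prism}}(\mathfrak{S},(E))$ covers any $(A,I)$ — both of which the paper itself uses elsewhere — to get an equivalence of topoi, and then apply \cref{alex} once. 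Your version is more robust (it gives $R\Gamma((\mathcal{O}_{K})_{\Prism}^{\prime},\mathcal{F})=R\Gamma((\mathcal{O}_{K})_{\Prism},\mathcal{F})$ for an arbitrary abelian sheaf $\mathcal{F}$, not just for crystals whose \v{C}ech–Alexander complexes happen to agree), at the cost of importing the site-comparison machinery; the paper's version is more elementary and self-contained. Your side remarks — that $\mathfrak{S}^{n}$ remains the self-product in the full subcategory and must not be confused with the relative-site product, and that orientedness via rigidity identifies $\mathcal{I}^{n}$ with $E^{n}$ on the restricted site — are exactly the right points to be careful about, and both check out.
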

\begin{proof}
For the first part, in \cref{alex}, we already prove $R\Gamma((\mathcal{O}_{K})_{\Prism},\mathcal{M})=\text{\v{C}A}(\mathcal{M})$, then a similar argument shows that $R\Gamma((\mathcal{O}_{K})_{\Prism}^{\prime},\mathcal{M})=\text{\v{C}A}(\mathcal{M})$ as $\text{\v{C}A}(\mathcal{M})$ is also the \v{C}ech Nerve of the weakly final object $(\mathfrak{S},(E))$ in $(\mathcal{O}_{K})_{\Prism}^{\prime}$.

The second part follows from a similar argument.
\end{proof}

\begin{lemma}\label{induct on n for cohomology}
$H^i((\mathcal{O}_{K})_{\Prism}^{\prime}, \mathcal{M}/\mathcal{I}^n)=0$ for any $i>1$, $n\geq 1$.
\end{lemma}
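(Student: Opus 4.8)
The plan is to induct on $n$, the base case $n=1$ being the statement that the absolute prismatic cohomology of a rational Hodge--Tate crystal over $\mathcal{O}_{K}$ is concentrated in degrees $0$ and $1$.

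First I would settle the case $n=1$. Here $\mathcal{M}/\mathcal{I}=\mathbb{M}$ is the rational Hodge--Tate crystal associated to $\mathcal{M}$ in \cref{specializing Hodge Tate}, and by \cref{restriction to} (applied with $n=1$) its cohomology over $(\mathcal{O}_{K})_{\Prism}^{\prime}$ is computed by $\text{\v{C}A}(\mathbb{M})=R\Gamma((\mathcal{O}_{K})_{\Prism},\mathbb{M})$. The latter is quasi-isomorphic to a two-term (``Sen''-type) complex by the computation of Hodge--Tate crystal cohomology in \cite{MW21}, hence concentrated in degrees $0$ and $1$; in particular $H^{i}((\mathcal{O}_{K})_{\Prism}^{\prime},\mathbb{M})=0$ for $i>1$.

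For the inductive step, suppose the claim holds for some $n\geq 1$. On the restricted site $\mathcal{I}$ is the oriented ideal $(E)$ and $E$ is a non-zero-divisor on every $\bdr(A)$, so multiplication by $E^{n}$ identifies $\mathcal{I}^{n}/\mathcal{I}^{n+1}$ with $\overline{\mathcal{O}}_{\Prism}[\frac{1}{p}]$ as sheaves on $(\mathcal{O}_{K})_{\Prism}^{\prime}$. Tensoring the tautological short exact sequence $0\to\mathcal{I}^{n}/\mathcal{I}^{n+1}\to\mathcal{O}_{\Prism}/\mathcal{I}^{n+1}\to\mathcal{O}_{\Prism}/\mathcal{I}^{n}\to 0$ with the finite projective $(\mathcal{O}_{\Prism}[\frac{1}{p}])^{\wedge}_{\mathcal{I}}$-module $\mathcal{M}$ — which is exact on each object since $\mathcal{M}(A,I)$ is flat over $\bdr(A)$, and yields a sequence of sheaves by \cref{mod n analogue} and \cref{specializing Hodge Tate} — produces
\[
0\longrightarrow\mathbb{M}\xrightarrow{\ \cdot E^{n}\ }\mathcal{M}/\mathcal{I}^{n+1}\longrightarrow\mathcal{M}/\mathcal{I}^{n}\longrightarrow 0 .
\]
The long exact cohomology sequence, together with the vanishing of $H^{i}((\mathcal{O}_{K})_{\Prism}^{\prime},\mathbb{M})$ for $i>1$ (base case) and of $H^{i}((\mathcal{O}_{K})_{\Prism}^{\prime},\mathcal{M}/\mathcal{I}^{n})$ for $i>1$ (inductive hypothesis), forces $H^{i}((\mathcal{O}_{K})_{\Prism}^{\prime},\mathcal{M}/\mathcal{I}^{n+1})=0$ for $i>1$, closing the induction.

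\textbf{Main obstacle.} The only real subtlety is the construction of the displayed short exact sequence: over the full site $(\mathcal{O}_{K})_{\Prism}$ the graded piece $\mathcal{I}^{n}/\mathcal{I}^{n+1}$ is a nontrivial Breuil--Kisin twist of $\overline{\mathcal{O}}_{\Prism}[\frac{1}{p}]$, so one would be forced to carry twists through the d\'evissage. It is precisely the passage to $(\mathcal{O}_{K})_{\Prism}^{\prime}$ — where the orientation $\mathcal{I}=(E)$ trivializes these twists so that $\mathcal{M}\otimes\mathcal{I}^{n}/\mathcal{I}^{n+1}\cong\mathbb{M}$ — that makes the d\'evissage clean and reduces everything to the $n=1$ case imported from \cite{MW21}.
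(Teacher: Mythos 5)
Your proof is correct and follows essentially the same route as the paper: the base case is imported from the Hodge--Tate computation of \cite{MW21}, and the inductive step is a d\'evissage via a short exact sequence with graded piece $\mathcal{M}/\mathcal{I}$ on the restricted site where $\mathcal{I}=(E)$ is oriented. The only (immaterial) difference is that you realize $\mathbb{M}$ as the subobject $E^{n}\mathcal{M}/E^{n+1}\mathcal{M}$ of $\mathcal{M}/\mathcal{I}^{n+1}$, whereas the paper's proof of this lemma quotients onto $\mathcal{M}/\mathcal{I}$ instead — and in fact the paper uses your exact sequence later in the proof of \cref{main theorem cohomology}.
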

\begin{proof}
We proceed by induction on $n$. When $n=1$, as $\mathcal{M}/\mathcal{I}$ is a rational Hodge-Tate crystal by \cref{specializing Hodge Tate}, hence $H^i((\mathcal{O}_{K})_{\Prism}^{\prime}, \mathcal{M}/\mathcal{I})=0$ for any $i>1$ by \cite[Thm 3.20]{MW21} and \cite[Lem 3.26]{MW21}. 

For the induction process, consider the short exact sequence of abelian sheeaves 
\begin{equation*}
    \mathcal{M}/\mathcal{I}^{n-1}\stackrel{\times E}{\longrightarrow} \mathcal{M}/\mathcal{I}^{n} \longrightarrow \mathcal{M}/\mathcal{I},
\end{equation*}
which induces a long exact sequence 
\begin{equation*}
    \cdots \longrightarrow H^i((\mathcal{O}_{K})_{\Prism}^{\prime}, \mathcal{M}/\mathcal{I}^{n-1})\longrightarrow H^i((\mathcal{O}_{K})_{\Prism}^{\prime}, \mathcal{M}/\mathcal{I}^{n}) \longrightarrow H^i((\mathcal{O}_{K})_{\Prism}^{\prime}, \mathcal{M}/\mathcal{I})\longrightarrow \cdots.
\end{equation*}
Hence by induction $H^i((\mathcal{O}_{K})_{\Prism}^{\prime}, \mathcal{M}/\mathcal{I}^n)=0$ for any $i>1$, we are done.
\end{proof}

\begin{lemma}
For any $\mathcal{M} \in \operatorname{Vect}((\mathcal{O}_{K})_{\Prism}^{\prime}, (\mathcal{O}_{\Prism}[\frac{1}{p}])_{\mathcal{I}}^{\wedge})$, $\mathcal{M}=R\lim \mathcal{M}/\mathcal{I}^n$.
\end{lemma}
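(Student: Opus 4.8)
The plan is to show that the reduction maps $\mathcal{M}\to\mathcal{M}/\mathcal{I}^n$, which are compatible with the transition maps $\mathcal{M}/\mathcal{I}^{n+1}\to\mathcal{M}/\mathcal{I}^n$, assemble into an equivalence $\mathcal{M}\xrightarrow{\ \sim\ }R\lim_n\mathcal{M}/\mathcal{I}^n$ in the derived category of abelian sheaves on $(\mathcal{O}_{K})_{\Prism}^{\prime}$. Since the objects $(A,I)\in(\mathcal{O}_{K})_{\Prism}^{\prime}$ form a generating family, it is enough to verify this after applying $\RG((A,I),-)$ for each such $(A,I)$, i.e.\ to show that $\RG((A,I),\mathcal{M})\to\RG\big((A,I),R\lim_n\mathcal{M}/\mathcal{I}^n\big)$ is an equivalence.

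First I would use that $\RG((A,I),-)$ is a right adjoint, hence preserves homotopy limits, to rewrite $\RG\big((A,I),R\lim_n\mathcal{M}/\mathcal{I}^n\big)\simeq R\lim_n\RG\big((A,I),\mathcal{M}/\mathcal{I}^n\big)$. Then by \cref{vanishing of prisms} — whose \v{C}ech/derived-Nakayama argument applies verbatim on the restricted site, cf.\ \cref{restriction to} — each $\RG((A,I),\mathcal{M}/\mathcal{I}^n)$ is concentrated in degree $0$, and by \cref{mod n analogue} it equals $\mathcal{M}(A,I)/I^n\mathcal{M}(A,I)$. So the problem reduces to computing $R\lim_n\big(\mathcal{M}(A,I)/I^n\mathcal{M}(A,I)\big)$ and comparing it with $\mathcal{M}(A,I)$.

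Next I would exploit that, because $(A,I)$ lies in the restricted site, $I=EA$ is principal by rigidity of morphisms of prisms, so $\bdr(A)=(A[\tfrac{1}{p}])^{\wedge}_{(E)}$ is classically $E$-adically complete (and $E$-torsion free), and the finite projective $\bdr(A)$-module $\mathcal{M}(A,I)$ is consequently $I$-adically complete. The tower $\{\mathcal{M}(A,I)/I^n\mathcal{M}(A,I)\}_n$ has surjective transition maps, so it is Mittag--Leffler: $R^1\lim$ vanishes and $R\lim_n\mathcal{M}(A,I)/I^n\mathcal{M}(A,I)=\lim_n\mathcal{M}(A,I)/I^n\mathcal{M}(A,I)=\mathcal{M}(A,I)$, placed in degree $0$. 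Unwinding the identifications, the composite $\mathcal{M}(A,I)=\RG((A,I),\mathcal{M})\to R\lim_n\RG((A,I),\mathcal{M}/\mathcal{I}^n)$ is exactly this completion isomorphism; since $(A,I)$ was arbitrary, this completes the proof.

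I do not expect a serious obstacle here: the only inputs beyond formal nonsense are the degree-$0$ concentration from \cref{vanishing of prisms} (applied on the restricted site, which is legitimate since its proof only needs a cover of $(A,I)$ and derived Nakayama) and the $R^1\lim$-vanishing; the genuine content is just the $I$-adic completeness of $\bdr(A)$, which is where principality of $I$ on $(\mathcal{O}_{K})_{\Prism}^{\prime}$ is used. The one point to state carefully is the interchange of $\RG((A,I),-)$ with $R\lim$.
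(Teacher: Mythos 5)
Your proof is correct, but it follows a genuinely different route from the paper. The paper's argument is topos-theoretic and very short: it invokes repleteness of $\Shv((\mathcal{O}_{K})_{\Prism}^{\prime})$ together with the surjectivity of the transition maps $\mathcal{M}/\mathcal{I}^{n+1}\to\mathcal{M}/\mathcal{I}^{n}$ to get $R\lim\mathcal{M}/\mathcal{I}^n\cong\lim\mathcal{M}/\mathcal{I}^n$ via \cite[Prop.\ 3.1.10]{BS15}, and then concludes from the $\mathcal{I}$-adic completeness of $\mathcal{M}$ as a sheaf. You instead check the equivalence objectwise: you commute $\RG((A,I),-)$ past $R\lim$, use the degree-$0$ concentration of \cref{vanishing of prisms} and the identification of \cref{mod n analogue} to reduce to $R\lim_n\big(\mathcal{M}(A,I)/I^n\big)$, and finish with Mittag--Leffler plus $I$-adic completeness of the finite projective module $\mathcal{M}(A,I)$. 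This is legitimate: both source and target lie in $D^{\geq 0}$, so a map inducing equivalences on $\RG((A,I),-)$ for every object of the site is an equivalence (bounded-below hypercohomology spectral sequence), and your observation that \cref{vanishing of prisms} transfers to the restricted site is exactly what the paper also needs in \cref{restriction to}, since covers of a fixed $(A,I)$ are the same in both sites. The trade-off: the paper's proof is shorter but imports repleteness of the prismatic topos as an external input; yours avoids repleteness entirely at the cost of re-using the acyclicity lemma and being explicit about the interchange of $\RG$ with $R\lim$ and about checking equivalences sectionwise. Note also that the paper's one-line appeal to ``$\mathcal{M}$ is $\mathcal{I}$-adically complete'' as a sheaf is itself secretly the objectwise completeness you use, packaged via \cref{mod n analogue}, so your version arguably makes the content more transparent.
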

\begin{proof}
Notice that the topos $\Shv((\mathcal{O}_{K})_{\Prism}^{\prime})$ is replete and that  $\mathcal{M}/\mathcal{I}^{n+1}\rightarrow \mathcal{M}/\mathcal{I}^n$ is surjective for each $n$, hence $R\lim \mathcal{M}/\mathcal{I}^n\cong \lim \mathcal{M}/\mathcal{I}^n$ by \cite[Prop.3.1.10]{BS15}. Then the result follows as $\mathcal{M}$ is $\mathcal{I}$-adically complete. 
\end{proof}

\begin{proof}[Proof of \cref{main theorem cohomology}]
Notice that  $$R\Gamma((\mathcal{O}_{K})_{\Prism}^{\prime},\mathcal{M})=R\lim R\Gamma((\mathcal{O}_{K})_{\Prism}^{\prime},\mathcal{M}/\mathcal{I}^n)$$
as $R\Gamma$ commutes with taking derived inverse limit.
Also, the cohomology of inverse limits can be calculated via the following short exact sequence (See \cite[\href{https://stacks.math.columbia.edu/tag/07KY}{Tag 07KY}]{SP22})
\begin{equation*}
     0 \rightarrow R^{1}\lim_n H^{j-1}((\mathcal{O}_{K})_{\Prism}^{\prime},\mathcal{M}/\mathcal{I}^n) \rightarrow H^{j}((\mathcal{O}_{K})_{\Prism}^{\prime},\mathcal{M})\rightarrow \lim_n H^{j}((\mathcal{O}_{K})_{\Prism}^{\prime},\mathcal{M}/\mathcal{I}^n) \rightarrow 0.
\end{equation*}
As a result, $H^{j}((\mathcal{O}_{K})_{\Prism}^{\prime},\mathcal{M})=0$ for $j\geq 3$ by applying \cref{induct on n for cohomology}. For $j=2$, we still need to show $R^{1}\lim_n H^{1}((\mathcal{O}_{K})_{\Prism}^{\prime},\mathcal{M}/\mathcal{I}^n)=0$ to guarantee that $H^{2}((\mathcal{O}_{K})_{\Prism}^{\prime},\mathcal{M})=0$. To do this,  it suffices to show that for any $n$,
\begin{equation*}
    H^{1}((\mathcal{O}_{K})_{\Prism}^{\prime},\mathcal{M}/\mathcal{I}^{n+1})\longrightarrow H^{1}((\mathcal{O}_{K})_{\Prism}^{\prime},\mathcal{M}/\mathcal{I}^n)
\end{equation*}
is surjective. However, this can be seen via the long exact sequence associated to the short exact sequence
\begin{equation*}
    0\longrightarrow \mathcal{M}/\mathcal{I}\stackrel{\times E^n}{\longrightarrow} \mathcal{M}/\mathcal{I}^{n+1}\longrightarrow \mathcal{M}/\mathcal{I}^{n}\longrightarrow 0.
\end{equation*}
Now we conclude that $H^{j}((\mathcal{O}_{K})_{\Prism}^{\prime},\mathcal{M})=0$ for $j\geq 2$. Hence $H^{j}((\mathcal{O}_{K})_{\Prism},\mathcal{M})=0$ for $j\geq 2$ thanks to \cref{restriction to} again. 
\end{proof}

Next we would like to give a detailed study of $H^i((\mathcal{O}_{K})_{\Prism},\mathcal{M})$ based on our stratification data.

To give a rank $l$ de Rham crystal $\mathcal{M} \in \operatorname{Vect}((\mathcal{O}_{K})_{\Prism}, (\mathcal{O}_{\Prism}[\frac{1}{p}])_{\mathcal{I}}^{\wedge})$, by \cref{algebraic description of crystal}, it is equivalent to specify a finite free $\bdr(\mathfrak{S})$-module $M$ of rank $l$ equipped with a stratification $\varepsilon$ satisfying cocyle condition. 

Notice that every element $v\in M$ can be written as $v=\underline e \Vec{f}$ for some $\Vec{f}\in M_{l\times 1}(K[[T]])$, further denote $\Vec{f}=\sum_{i=0}^{\infty}B_it^i$ with $B_i\in M_{l\times 1}(K)$, then unwinding the definition of $\delta_{0}^1$ and use the notation in Section 2, wee see that
\begin{equation*}
    \begin{split}
        \delta_{0}^1(v)=\delta_{0}^1(\underline e \Vec{f})=\delta_{0}^1(\underline e (\sum_{i=0}^{\infty}B_it^i))&=\varepsilon(\underline e)(\sum_{i=0}^{\infty}B_i(\alpha t)^i)
        \\&=\underline e\cdot (\sum_{m\geq0}(\sum_{n\geq 0}A_{m,n}X^{[n]})t^m)(\sum_{i=0}^{\infty}B_i(\alpha t)^i)
        \\&=\underline e\cdot (\sum_{m\geq0}(\sum_{n\geq 0}A_{m,n}X^{[n]})t^m)(\sum_{i=0}^{\infty}B_i t^i(\sum_{s=0}^{\infty}c_{i,s}t^s))
        \\&=\underline e\cdot (\sum_{m\geq0}(\sum_{n\geq 0}A_{m,n}X^{[n]})t^m)(\sum_{j=0}^{\infty}t^j(\sum_{i=0}^{j}B_i c_{i,j-i}))
        \\&=\underline e\sum_{m=0}^{\infty}t^m(\sum_{\substack{i+j=m\\0\leq i,j\leq m}}(\sum_{s=0}^{\infty}A_{i,s}X_{1}^{[s]})(\sum_{p=0}^{j}B_p c_{p,j-p})).
    \end{split}
\end{equation*}
Hence by \cref{alex},  $v\in H^{0}_{\Prism}(\mathcal{O}_{K},\mathcal{M})$, which is equivalent to that $0=d_0(v)=\delta_{0}^1(v)-\delta_{1}^1(v)=\delta_{0}^1(v)-v$, if and only if the following holds:
\begin{equation*}
    \begin{split}
        \underline e\sum_{m=0}^{\infty}t^m(\sum_{\substack{i+j=m\\0\leq i,j\leq m}}(\sum_{s=0}^{\infty}A_{i,s}X_{1}^{[s]})(\sum_{p=0}^{j}B_p c_{p,j-p}))=\underline e\sum_{i=0}^{\infty}B_it^i.
    \end{split}
\end{equation*}
Compare the coefficients of $B_m$, we see this is equivalent to that
\begin{equation}\label{global section of crystal}
    \begin{split}
        B_m=\sum_{\substack{i+j=m\\0\leq i,j\leq m}}(\sum_{s=0}^{\infty}A_{i,s}X_{1}^{[s]})(\sum_{p=0}^{j}B_p c_{p,j-p})
    \end{split}.
\end{equation}
We explore the conditions on $B_m$ to guarantee that \cref{global section of crystal} holds.

First compare the constant term on the both sides of \cref{global section of crystal}. On the right hand side of \cref{global section of crystal}, we get 
\[A_{0,0}\sum_{p=0}^m B_p d_{p,m-p,0}=B_m c_{m,0}=B_m,\]
here we use the fact that $A_{j,0}=0$ for $j>0$ by \cref{Main theorem I} and that $d_{p,s,0}=0$ for $s>0$ by unwinding definition of $d_{p,s,j}$.

Hence we obtain nothing new when considering the constant term.

Next we compare the coefficent of $X_1$ on the both sides of \cref{global section of crystal}. We compute the coefficient of $X_1$ on the right hand side, which is
\begin{equation}
    \begin{split}
        A_{0,0}(\sum_{p=0}^{m}B_p d_{p,m-p,1})+\sum_{i+j=m} A_{i,1}B_{j}=\sum_{p=0}^m(-p\theta_{1,m-p}B_p)+\sum_{i+j=m} A_{i,1}B_{j}.
    \end{split}
\end{equation}
Here we have used \cref{calculate c}. 
This calculation implies the following:
\begin{prop}\label{prismatic invariant}
Given $v=\underline e \Vec{f}$ where $\Vec{f}=\sum_{i=0}^{\infty}B_it^i$ with $B_i\in M_{l\times 1}(K)$, then $v\in H^{0}_{\Prism}(\mathcal{O}_{K},\mathcal{M})$ implies that 
\[(A_{0,1}-m\beta)B_m=\sum_{p=0}^{m-1}(p\theta_{1,m-p}-A_{m-p,1})B_p\]
for any non-negative integers. In particular, by taking $m=0$, we have that $A_{0,1}B_0=0$
\end{prop}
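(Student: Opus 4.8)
The plan is to read the claim off directly from the coefficient computation carried out just above the statement. Recall from \cref{global section of crystal} that $v = \underline e\,\vec f$ with $\vec f = \sum_{i\ge 0}B_i t^i$ lies in $H^0_\Prism(\mathcal{O}_K,\mathcal{M})$ precisely when, for every $m\ge 0$,
\[B_m = \sum_{\substack{i+j=m\\0\le i,j\le m}}\Big(\sum_{s\ge 0}A_{i,s}X_1^{[s]}\Big)\Big(\sum_{p=0}^{j}B_p\,c_{p,j-p}\Big)\]
holds in $M_{l\times 1}(K\{X_1\}^{\wedge}_{\rm pd})$. First I would observe that the left-hand side is the \emph{constant} vector $B_m$, so its coefficient of $X_1 = X_1^{[1]}$ is zero; hence the Proposition will follow by extracting the coefficient of $X_1$ on the right-hand side and setting it equal to $0$.

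For that extraction I would use the divided-power product rule $X_1^{[a]}X_1^{[b]} = \binom{a+b}{a}X_1^{[a+b]}$, which shows that only the exponent pairs $(s,i')\in\{(0,1),(1,0)\}$ coming from $\sum_s A_{i,s}X_1^{[s]}$ and $c_{p,j-p} = \sum_{i'}d_{p,j-p,i'}X_1^{[i']}$ contribute to $X_1^{[1]}$, each with multiplicity $1$. Feeding in $A_{i,0}=0$ for $i>0$ and $A_{0,0}=I$ from \cref{Main theorem I}, the elementary identities $d_{p,0,0}=1$ and $d_{p,s,0}=0$ for $s>0$, and $d_{p,s,1}=-p\theta_{1,s}$ from \cref{calculate c}, the $X_1$-coefficient of the right-hand side collapses to $\sum_{p=0}^m(-p\theta_{1,m-p})B_p + \sum_{i+j=m}A_{i,1}B_j = \sum_{p=0}^m\big(A_{m-p,1}-p\theta_{1,m-p}\big)B_p$; this is exactly the expression already computed in the paragraph preceding the statement, so in practice this step amounts to citing that computation.

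It then remains to set this equal to $0$ and isolate the $p=m$ summand, which is $\big(A_{0,1}-m\theta_{1,0}\big)B_m = \big(A_{0,1}-m\beta\big)B_m$ since $\theta_{1,0}=E'(\pi)=\beta$ by \cref{image of t}; moving the remaining terms across yields $(A_{0,1}-m\beta)B_m = \sum_{p=0}^{m-1}\big(p\theta_{1,m-p}-A_{m-p,1}\big)B_p$, and specializing to $m=0$ leaves an empty sum on the right, so $A_{0,1}B_0=0$. There is no substantive obstacle here; the only point demanding any care is the divided-power bookkeeping in the coefficient extraction, and that has essentially already been done in the discussion leading up to the Proposition.
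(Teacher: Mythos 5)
Your proposal is correct and follows exactly the paper's argument: the paper likewise starts from \cref{global section of crystal}, extracts the coefficient of $X_1$ on the right-hand side using $A_{i,0}=0$ for $i>0$, $d_{p,s,0}=0$ for $s>0$, and $d_{p,s,1}=-p\theta_{1,s}$ from \cref{calculate c}, and sets it equal to zero to obtain the stated recursion. The divided-power bookkeeping you describe matches the computation in the paragraph preceding the Proposition.
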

\begin{proof}
By assumption, \cref{global section of crystal} holds, hence by comparing the coefficient of $X_1$ on the both sides, we have that
\[0=\sum_{p=0}^m(-p\theta_{1,m-p}B_p)+\sum_{i+j=m} A_{i,1}B_{j},\]
which implies the desired results.
\end{proof}
\begin{remark}
In \cite[Theorem 3.20]{MW21}, they show that for $w$ in the rational Hodge-Tate crystal $\mathcal{M}/\mathcal{I}$, $w\in H^0((\mathcal{O}_{K})_{\Prism},\mathcal{M}/\mathcal{I})$ if and only if $A_{0,1}w=0$, hence our results are compatible with theirs.
\end{remark}

\begin{cor}
Suppose $\mathcal{M} \in \operatorname{Vect}((\mathcal{O}_{K})_{\Prism}, (\mathcal{O}_{\Prism}[\frac{1}{p}])_{\mathcal{I}}^{\wedge})$. Further assume that that none of the Sen weights of the rational Hodge-Tate crystal $\mathcal{M}/\mathcal{I}$ are non-positive integers, then $H^0((\mathcal{O}_{K})_{\Prism},\mathcal{M})=0$.
\end{cor}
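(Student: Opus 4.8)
The plan is to read off $H^0$ from the \v{C}ech--Alexander complex and then run the recursion of \cref{prismatic invariant}. First I would invoke \cref{alex}: since $R\Gamma_{\Prism}(\mathcal{O}_K,\mathcal{M})$ is computed by $\mathcal{M}(\mathfrak{S},(E))\to\mathcal{M}(\mathfrak{S}^1,(E))\to\cdots$, the group $H^0((\mathcal{O}_K)_{\Prism},\mathcal{M})$ is precisely $\ker(d_0)$ inside $M=\mathcal{M}(\mathfrak{S},(E))$. Because $M$ is finite free over $\bdr(\mathfrak{S})\cong K[[t]]$ with basis $\underline e$, any class $v$ there is of the form $v=\underline e\vec f$ with $\vec f=\sum_{i\geq 0}B_it^i$ and $B_i\in M_{l\times 1}(K)$, so \cref{prismatic invariant} applies and yields
\[(A_{0,1}-m\beta)B_m=\sum_{p=0}^{m-1}(p\theta_{1,m-p}-A_{m-p,1})B_p \qquad\text{for every }m\geq 0.\]

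Next I would translate the hypothesis on Sen weights into the statement that $A_{0,1}-m\beta\in M_l(K)$ is invertible for all $m\in\mathbb{Z}_{\geq 0}$. By \cref{specializing Hodge Tate} (and \cref{remark with MW relation}) the specialization $\mathcal{M}/\mathcal{I}$ is the rational Hodge--Tate crystal whose Min--Wang matrix is $A_{0,1}$, and (the Min--Wang conjecture, proved by Gao; cf.\ \cref{Gao theorem} and the surrounding discussion) the Sen operator of the attached $\mathbb{C}_p$-representation is $-\beta^{-1}A_{0,1}$. Thus the Sen weights of $\mathcal{M}/\mathcal{I}$ are the eigenvalues of $-\beta^{-1}A_{0,1}$, and the assumption that none of them is a non-positive integer is exactly the assertion that no non-negative integer is an eigenvalue of $\beta^{-1}A_{0,1}$, equivalently that $A_{0,1}-m\beta$ is invertible for every $m\geq 0$.

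Finally, I would induct on $m$ to conclude $\vec f=0$. Taking $m=0$ in the recursion gives $A_{0,1}B_0=0$, hence $B_0=0$ by invertibility of $A_{0,1}$; assuming $B_0=\dots=B_{m-1}=0$, the right-hand side of the recursion vanishes, so $(A_{0,1}-m\beta)B_m=0$ and therefore $B_m=0$. Thus $v=0$ and $H^0((\mathcal{O}_K)_{\Prism},\mathcal{M})=0$. The step I expect to need the most care is the middle one: one has to pin down the sign convention for ``Sen weight'' used in the statement against Gao's normalization of the Sen operator, so that ``no non-positive integer Sen weight'' really does translate to invertibility of $A_{0,1}-m\beta$ for $m\geq 0$ and not for $m\leq 0$; once that is settled, the cohomological input from \cref{alex} and \cref{prismatic invariant} makes the argument essentially formal.
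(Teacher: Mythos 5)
Your proposal is correct and follows essentially the same route as the paper: identify $H^0$ with the kernel of $d_0$ in the \v{C}ech--Alexander complex, feed the resulting recursion from \cref{prismatic invariant} into the invertibility of $A_{0,1}-m\beta$ (which, via Gao's identification of the Sen operator as $-\beta^{-1}A_{0,1}$, is exactly the hypothesis on Sen weights), and induct on $m$ to kill each $B_m$. Your sign check in the middle step is also the correct one.
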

\begin{proof}
Under assumption suppose $v=\underline e \Vec{f}$ where $\Vec{f}=\sum_{i=0}^{\infty}B_it^i$ with $B_i\in M_{l\times 1}(K)$ is an element of  $H^i((\mathcal{O}_{K})_{\Prism},\mathcal{M})$, then by \cref{prismatic invariant},
\begin{equation}\label{sen weights}
    (A_{0,1}-m\beta)B_m=\sum_{p=0}^{m-1}(p\theta_{1,m-p}-A_{m-p,1})B_p.
\end{equation}
Recall that the Sen weights of $\mathcal{M}/\mathcal{I}$ are eigenvalues of the Sen operator $-\frac{A_{0,1}}{\beta}$ by \cite[Theorem 1.1.7]{Gao22}. On the other hand, by assumption none of them are non-positive integers, hence the eigenvalues of $A_{0,1}-m\beta$ are non-zero for any $m\in \mathbb{N}^{\geq 0}$. In other words, all $A_{0,1}-m\beta$ are invertible. We claim that this implies that $B_i$ all vanish. We proceed by induction on $i$.

First take $m=0$ in \cref{sen weights}, hence $A_{0,1}B_0=0$, which implies that $B_0=0$ as $A_{0,1}$ is invertible. 

For the induction process, suppose we have proven that $B_{i}=0$ for all $i\leq m-1$ ($m\geq 2$), then \cref{sen weights} guarantees that $B_{m}$ also vanishes as $A_{0,1}-m\beta$ is invertible.

As a result, $v=0$. This implies that there are no non-zero elements in  $H^i((\mathcal{O}_{K})_{\Prism},\mathcal{M})$, hence $H^i((\mathcal{O}_{K})_{\Prism},\mathcal{M})=0$.
\end{proof}
\begin{cor}
Let $\mathcal{M} \in \operatorname{Vect}((\mathcal{O}_{K})_{\Prism}, (\mathcal{O}_{\Prism}[\frac{1}{p}])_{\mathcal{I}}^{\wedge})$ be a rank $l$ de Rham crystal. Define $q$ to be the number of non-positive integers (with multiplicity) among Sen weights of the rational Hodge-Tate crystal $\mathcal{M}/\mathcal{I}$, then $H^0((\mathcal{O}_{K})_{\Prism},\mathcal{M})$ is a $K$-vector space of dimension at most $q$. In particular, $\dim_K H^0((\mathcal{O}_{K})_{\Prism},\mathcal{M})\leq l$.
\end{cor}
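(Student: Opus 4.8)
The plan is to combine \cref{prismatic invariant} with the identification (used in the preceding corollary, via \cite[Theorem 1.1.7]{Gao22}) of the Sen weights of $\mathcal{M}/\mathcal{I}$ with the eigenvalues of $-A_{0,1}/\beta$, and then run a triangular linear-algebra estimate. By \cref{alex} and the computation leading to \cref{prismatic invariant}, any $v=\underline e\cdot\vec f$ with $\vec f=\sum_{i\ge 0}B_i t^i$, $B_i\in M_{l\times 1}(K)$, that lies in $H^0((\mathcal{O}_K)_{\Prism},\mathcal{M})$ must satisfy the recursion
\[(A_{0,1}-m\beta)B_m=\sum_{p=0}^{m-1}(p\theta_{1,m-p}-A_{m-p,1})B_p\qquad(m\ge 0).\]
Since $v$ is recovered from the sequence $(B_m)_{m\ge 0}$, it is enough to bound the dimension of the $K$-vector space $V$ of all sequences $(B_m)_{m\ge 0}$ in $\prod_{m\ge 0}M_{l\times 1}(K)$ obeying this recursion; note that $V$ is a linear subspace and $0\in V$.

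First I would isolate the finite set $S=\{m\ge 0:\ A_{0,1}-m\beta\ \text{is not invertible}\}$; by the eigenvalue description above, $m\in S$ precisely when $-m$ occurs among the Sen weights of $\mathcal{M}/\mathcal{I}$. For $m\notin S$ the recursion exhibits $B_m$ as an explicit $K$-linear function of $B_0,\dots,B_{m-1}$. Enumerate $S=\{m_1<\dots<m_k\}$ and let $V_j$ be the image of $V$ under projection onto the coordinates $m_1,\dots,m_j$ (so $V_0$ is a point). The key observations are: (i) the projection $V\to V_k$ is injective, because if every $B_{m_j}$ vanishes then an induction on $m$ forces all $B_m=0$ (for $m\in S$ it is one of the $B_{m_j}$, for $m\notin S$ it is a linear function of smaller $B_p$, all zero by induction); and (ii) the surjection $V_j\to V_{j-1}$ has kernel inside $\ker(A_{0,1}-m_j\beta)$, since an element of that kernel has all coordinates $<m_j$ equal to zero and hence $(A_{0,1}-m_j\beta)B_{m_j}=0$. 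Thus $\dim_K V_j\le\dim_K V_{j-1}+\dim_K\ker(A_{0,1}-m_j\beta)$, and summing over $j$,
\[\dim_K H^0((\mathcal{O}_K)_{\Prism},\mathcal{M})\ \le\ \dim_K V\ =\ \dim_K V_k\ \le\ \sum_{j=1}^{k}\dim_K\ker(A_{0,1}-m_j\beta)\ \le\ q,\]
where the last step uses that the geometric multiplicity of the eigenvalue $m_j\beta$ of $A_{0,1}$ is at most its algebraic multiplicity, i.e.\ its multiplicity as a Sen weight of $\mathcal{M}/\mathcal{I}$. The final assertion is then immediate: the characteristic polynomial of $-A_{0,1}/\beta$ has degree $l=\rk M$, so at most $l$ of its roots, counted with multiplicity, can be non-positive integers, whence $q\le l$.

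There is no genuine obstacle here; the argument is elementary once \cref{prismatic invariant} is in hand. The only point requiring care is the bookkeeping around $S$ — in particular when $0\in S$ (then $m_1=0$, the sum on the right of the recursion at $m=0$ is empty, and the scheme above simply reads off $B_0\in\ker A_{0,1}$) — together with remembering that we invoke \cref{prismatic invariant} only as a \emph{necessary} condition for membership in $H^0$, which is all that an upper bound on the dimension requires.
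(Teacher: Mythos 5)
Your proposal is correct and takes essentially the same route as the paper's own proof: both apply \cref{prismatic invariant} as a necessary condition, identify the non-invertible $A_{0,1}-m\beta$ via the Sen weights of $\mathcal{M}/\mathcal{I}$ using \cite[Theorem 1.1.7]{Gao22}, and bound the ambiguity at each bad index by the multiplicity of the corresponding weight. Your version merely makes the paper's terse dimension count explicit via the projections $V_j$, which is a faithful elaboration rather than a different argument.
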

\begin{proof}
Clearly $H^0((\mathcal{O}_{K})_{\Prism},\mathcal{M})$ is a $K$-vector space as $d_0$ is $K$-linear. Suppose $s_1,\cdots,s_k$ are non-positive integers which show up in the set of Sen weights the rational Hodge-Tate crystal $\mathcal{M}/\mathcal{I}$ with multiplicity $\lambda_1,\cdots,\lambda_k$, then $\lambda_1+\cdots+\lambda_k=q$ by assumption. As the Sen weights of $\mathcal{M}/\mathcal{I}$ are precisely eigenvalues of the Sen operator $-\frac{A_{0,1}}{\beta}$ by \cite[Theorem 1.1.7]{Gao22}, hence $A_{0,1}-m\beta$ are invertible unless $m\in\{s_1,\cdots,s_k\}$. On the other hand, the solution space of $(A_{0,1}-s_i\beta)X=Y$ is of dimension at most $\lambda_i$ for any $X,Y\in M_{l\times 1}(K)$. By \cref{prismatic invariant}, $H^0((\mathcal{O}_{K})_{\Prism},\mathcal{M})$ is of dimension at most  $\lambda_1+\cdots+\lambda_k=q$ as a $K$-vector space.
\end{proof}

\section{Locally analytic vectors and Decompletion theorem}
\subsection{First descent step}
We aim to prove \cref{de rham almost purity} in this subsection, which could be thought as a purity for de Rham representations. This result should be familiar to experts, but we decide to write it down for completeness. Our strategy is to imitate the proof written in \cite[Section 15.2]{BC09} treating the cyclotomic tower.
\begin{lemma}\label{compatibility I}
$L_{\dR}^{+}:=(B_{\dR}^+)^{G_L}$ is a closed $L$-subalgebra of $B_{\dR}^+$ that is a complete discrete valuation ring with uniformizer $t$ and residue field $\hat{L}$. Moreover, the topological ring $L_{\dR}^{+}$ is separated and complete for its subspace topology from $B_{\dR}^+$ for which the multiplication map $t: L_{\dR}^{+}\rightarrow L_{\dR}^{+}$ defines a closed embedding.
\end{lemma}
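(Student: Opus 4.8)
The plan is to exploit that the uniformizer $t=E([\pi^{\flat}])$ of $B_{\dR}^+$ is $G_L$-invariant. Every $g\in G_L$ fixes all the $\pi_n$, hence fixes $\pi^{\flat}$, hence $g(t)=t$; and since $E$ is Eisenstein, $t$ is distinguished, so $\Fil^iB_{\dR}^+=t^iB_{\dR}^+$ with $\gr^iB_{\dR}^+=\mathbb{C}_p\cdot t^i$, a $G_L$-stable filtration on which $g$ acts on $\gr^i$ exactly as on $\mathbb{C}_p$ (because $g(t^i)=t^i$). I would also record at the start that $[\pi^{\flat}],[\varepsilon]\in A_L$, so $t\in B_{\dR,L}^+$ is a uniformizer of the complete DVR $B_{\dR,L}^+$ (residue field $\hat L$, via the surjection $\theta_L\colon B_{\dR,L}^+\twoheadrightarrow\hat L$), and that the inclusion $\mathcal{O}_{\hat L}^{\flat}\hookrightarrow\mathcal{O}_{\mathbb{C}_p}^{\flat}$ induces a $G_L$-equivariant map $\iota\colon B_{\dR,L}^+\to B_{\dR}^+$ with $\iota(t)=t$ and $\theta\circ\iota=\theta_L$; since $G_L$ acts trivially on the source, $\iota(B_{\dR,L}^+)\subseteq L_{\dR}^+$.

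First I would check that $L_{\dR}^+$ is a closed $L$-subalgebra and identify its filtration. As $G_L$ acts continuously on the Hausdorff ring $B_{\dR}^+$, the fixed ring $L_{\dR}^+=\bigcap_{g\in G_L}\ker(g-1)$ is closed, and it contains $\overline{K}^{\,G_L}=L$. Because $t$ is $G_L$-fixed and $B_{\dR}^+$ is $t$-torsion free, multiplication by $t^i$ is a $G_L$-equivariant isomorphism $B_{\dR}^+\xrightarrow{\sim}\Fil^iB_{\dR}^+$, so taking invariants gives $\Fil^iB_{\dR}^+\cap L_{\dR}^+=t^iL_{\dR}^+$. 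Hence the subspace topology on $L_{\dR}^+$ is its $t$-adic topology, and being closed in the complete separated ring $B_{\dR}^+$ it is itself $t$-adically separated and complete; in particular $\bigcap_i t^iL_{\dR}^+=0$. The crux is then to compute $\gr^\bullet L_{\dR}^+$: taking $G_L$-invariants of $0\to\Fil^{i+1}B_{\dR}^+\to\Fil^iB_{\dR}^+\to\mathbb{C}_p\,t^i\to0$ and inserting the identifications above yields an injection $t^iL_{\dR}^+/t^{i+1}L_{\dR}^+\hookrightarrow(\mathbb{C}_p\,t^i)^{G_L}=\mathbb{C}_p^{\,G_L}\cdot t^i=\hat L\cdot t^i$, using the Ax--Sen--Tate theorem $\mathbb{C}_p^{\,G_L}=\widehat{\overline K^{\,G_L}}=\hat L$. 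For surjectivity I would invoke $\iota$: from $\theta\circ\iota=\theta_L$ (onto $\hat L$) we get $\theta(L_{\dR}^+)=\hat L$, so choosing $b\in L_{\dR}^+$ with $\theta(b)=a$ shows $at^i$ lies in the image of $t^iL_{\dR}^+$. Thus $t^iL_{\dR}^+/t^{i+1}L_{\dR}^+\xrightarrow{\sim}\hat L\cdot t^i$ for all $i\ge0$; in particular $L_{\dR}^+/tL_{\dR}^+\cong\hat L$.

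Finally I would conclude: $L_{\dR}^+$ is $t$-torsion free, $t$-adically separated and complete, with $L_{\dR}^+/tL_{\dR}^+$ a field, which forces it to be a complete DVR with uniformizer $t$ and residue field $\hat L$ (write any nonzero $x$ as $t^nu$ with $n$ its $t$-adic order and $u$ a unit, invertible by completeness). The remaining topological assertions follow from $\Fil^iB_{\dR}^+\cap L_{\dR}^+=t^iL_{\dR}^+$: the subspace topology is the $t$-adic one, for which $L_{\dR}^+$ is separated and complete, and $t\colon L_{\dR}^+\to L_{\dR}^+$ is injective with image the closed maximal ideal $tL_{\dR}^+=\ker(\theta|_{L_{\dR}^+})$ and carries $t^nL_{\dR}^+$ onto $t^{n+1}L_{\dR}^+$, hence is a homeomorphism onto its image, i.e.\ a closed embedding. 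The step I expect to be the real obstacle is the surjectivity in the graded computation, i.e.\ that passing to $G_L$-invariants loses nothing on the associated graded; the clean remedy is the auxiliary sub-DVR $B_{\dR,L}^+$, which in fact shows more, namely that $\iota\colon B_{\dR,L}^+\to L_{\dR}^+$ is an isomorphism (a filtered map between $t$-adically complete separated rings inducing an isomorphism on graded rings), matching the approach of \cite[Section 15.2]{BC09} while replacing its $H^1$-computations by this explicit section.
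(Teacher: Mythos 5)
Your argument is correct in substance but takes a genuinely different route from the paper at the one nontrivial step. The paper simply invokes \cite[Lemma 15.2.1]{BC09} with $K_{p^\infty}$ replaced by $L$; there the surjectivity of $(\Fil^iB_{\dR}^+)^{G_L}\to(\gr^iB_{\dR}^+)^{G_L}=\hat L\cdot t^i$ is obtained from the vanishing of continuous cohomology $H^1(G_L,\mathbb{C}_p)$ (Tate--Sen/almost purity) applied degreewise. You replace that cohomological input by the explicit $G_L$-equivariant section $\iota\colon B_{\dR,L}^+\to B_{\dR}^+$ coming from $\mathcal{O}_{\hat L}^\flat\hookrightarrow\mathcal{O}_{\mathbb{C}_p}^\flat$, which hits every class in $\hat L\cdot t^i$ because $\theta_L$ is onto $\hat L$. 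This is a legitimate simplification in the present setting (it uses only Ax--Sen--Tate for the injectivity/identification of the graded pieces, not $H^1$-vanishing), and it has the bonus of proving directly that $\iota\colon B_{\dR,L}^+\to L_{\dR}^+$ is an isomorphism, which is exactly the identification recorded in \cref{two L identification} and otherwise requires a separate argument.

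One assertion in your write-up is false and should be repaired, although the conclusions you draw from it survive: the subspace topology on $L_{\dR}^{+}$ induced from $B_{\dR}^+$ is \emph{not} the $t$-adic topology. The topology on $B_{\dR}^+$ is the inverse limit of the Banach topologies on the quotients $B_{\dR}^+/\Fil^i$, so $\Fil^iB_{\dR}^+$ is closed but not open, and correspondingly $t^iL_{\dR}^+=\Fil^iB_{\dR}^+\cap L_{\dR}^{+}$ is closed but not open in $L_{\dR}^{+}$; the residue field $\hat L$ inherits its valuation topology, not the discrete one (this is the whole point of the second sentence of the lemma, and of \cref{topology compatibility}). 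Consequently your justification that $t\colon L_{\dR}^{+}\to L_{\dR}^{+}$ is a closed embedding (``it carries $t^nL_{\dR}^+$ onto $t^{n+1}L_{\dR}^+$'') proves this only for the $t$-adic topology. The correct fix is standard: multiplication by $t$ on $B_{\dR}^+$ itself is a closed embedding (injective, with closed image $\Fil^1$, and a homeomorphism onto that image), and restricting a closed embedding to the closed $t$-stable subspace $L_{\dR}^{+}$, whose image is $\Fil^1B_{\dR}^+\cap L_{\dR}^{+}=tL_{\dR}^{+}$, again yields a closed embedding. Similarly, separatedness and completeness of $L_{\dR}^{+}$ for the subspace topology follow simply from its being closed in the complete Hausdorff group $B_{\dR}^+$, with no need to identify that topology with the $t$-adic one; the $t$-adic separatedness $\bigcap_i t^iL_{\dR}^+=0$, which you do need for the DVR structure, follows from $\bigcap_i\Fil^iB_{\dR}^+=0$ as you say. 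With these adjustments the proof is complete.
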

\begin{proof}
The proof is verbatim as in \cite[Lemma 15.2.1]{BC09} by replacing $K_{p^{\infty}}$ (which is denoted as $K_{\infty}$ in loc. cit.) with $L$ and $G_{K_{p^{\infty}}}$ (which is denoted as $H_K$ in loc. cit.) with $G_L$.
\end{proof}
\begin{remark}\label{two L identification}
As a consequence, from now on we can identify $L_{\dR}^{+}$ with $B_{\dR,L}^+:=\bdr(\Ainf(\mathcal{O}_{\hat{L}}))$.
\end{remark}
Notice that both $L_{\dR}^{+}$ and $B_{\dR}^+$ are topologized discrete valuation rings equipped with the finer topologies (finer than the $t$-adic topology) to make their residue fields acquire the valuation topology rather than the discrete topology as the quotient topology. As a result, finitely generated modules over these rings admits a functorial topological structure, we show this is compatible with inverse limits:
\begin{lemma}\label{topology compatibility}
Let $N$ be a finitely generated module over the topologized discrete valuation ring $\ast \in \{L_{\dR}^{+}, B_{\dR}^+\}$ endowed with natural topology, then the linear continuous bijection $N\rightarrow N/t^iN$ is a homeomorphism. Moreover, the quotient topology on $L_{\dR}^{+}/t^n$ coincides with its subspace topology from $B_{\dR}^+/t^n$ for any $n\in \mathbb{N}$.
\end{lemma}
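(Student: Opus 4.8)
The plan is to first record the basic topological properties of finitely generated modules over $\ast$, then prove the comparison assertion (quotient versus subspace topology modulo $t^{n}$) by d\'evissage, and finally deduce the completeness statement from it together with \cref{compatibility I}. As preliminaries, recall that $\ast$ is a complete discrete valuation ring with uniformizer $t$, carrying a ring topology finer than the $t$-adic one for which the residue field (namely $\mathbb{C}_{p}$ when $\ast=\bdr$, and $\hat{L}$ when $\ast=L_{\dR}^{+}$) receives its valuation topology as the quotient topology; on a finitely generated $\ast$-module $N$ the natural topology is built, via a decomposition $N\cong\ast^{r}\oplus\bigoplus_{j}\ast/t^{a_{j}}$, from the product topology on $\ast^{r}$ and the quotient topologies on the $\ast/t^{a_{j}}$. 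The first point I would establish is the standard fact that over such a ring this topology is independent of the decomposition and functorial: every $\ast$-linear map of finitely generated modules is continuous and every surjection is open (strict). In particular, for finitely generated $N$ the quotient topology on $N/t^{i}N$ coincides with its intrinsic natural topology, which I then use freely.

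For the comparison assertion I would argue by d\'evissage in $n$, using the short exact sequences
\begin{equation*}
0\longrightarrow t^{n-1}L_{\dR}^{+}/t^{n}L_{\dR}^{+}\longrightarrow L_{\dR}^{+}/t^{n}\longrightarrow L_{\dR}^{+}/t^{n-1}\longrightarrow 0
\end{equation*}
together with the analogous one for $\bdr$, which are compatible under $L_{\dR}^{+}\hookrightarrow\bdr$. In the base case $n=1$ one has $L_{\dR}^{+}/t=\hat{L}\hookrightarrow\mathbb{C}_{p}=\bdr/t$; the quotient topology on $\hat{L}$ is its valuation topology by the defining property of the topology on $L_{\dR}^{+}$, and this is also the subspace topology induced from the valuation topology on $\mathbb{C}_{p}$, so the two agree. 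For the inductive step, multiplication by $t^{n-1}$ induces homeomorphisms $\ast/t\xrightarrow{\sim}t^{n-1}\ast/t^{n}\ast$ for each $\ast\in\{L_{\dR}^{+},\bdr\}$, compatibly with the inclusions (this uses that multiplication by $t$ is a closed embedding on $L_{\dR}^{+}$ by \cref{compatibility I}, and on $\bdr$ by construction of its canonical topology), so in the commutative diagram of the two exact sequences the left vertical map is a topological embedding by the base case and the right vertical map is one by the inductive hypothesis. Since both horizontal sequences are strict exact (the sub carries the subspace topology and the quotient the quotient topology, again by the construction of the natural topologies), the middle vertical map $L_{\dR}^{+}/t^{n}\hookrightarrow\bdr/t^{n}$ is a topological embedding as well, which is the assertion.

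For the completeness statement I would show that the canonical map $N\to\varprojlim_{i}N/t^{i}N$ is a homeomorphism, the system carrying the natural (equivalently quotient) topologies and the inverse limit the inverse-limit topology. By functoriality and compatibility with finite direct sums it suffices to treat $N=\ast/t^{a}$, where the transition system $(N/t^{i}N)_{i}$ is eventually constant equal to $N$ and there is nothing to prove, and $N=\ast$, where the claim is that $\ast$ is separated and complete for its topology and that the latter coincides with the inverse-limit topology from the natural topologies on the $\ast/t^{i}$. For $\ast=\bdr$ this is exactly how the canonical topology on $\bdr$ is defined; for $\ast=L_{\dR}^{+}$ it follows from the last clause of \cref{compatibility I}, once we use the comparison assertion just proved to identify the natural topology on $L_{\dR}^{+}/t^{i}$ with the subspace topology from $\bdr/t^{i}$ (together with the elementary identity $L_{\dR}^{+}\cap t^{i}\bdr=t^{i}L_{\dR}^{+}$, which holds because $t^{i}\bdr$ is $G_{L}$-stable and $t^{i}$ is a nonzerodivisor).

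The step I expect to be the main obstacle is not any single deep input but the careful bookkeeping of topologies: one must ensure that the natural topology on finitely generated $\ast$-modules is genuinely functorial (so that the two candidate topologies on $N/t^{i}N$ agree and the d\'evissage sequences are strict) and that the identifications $\varprojlim_{i}\ast/t^{i}\cong\ast$ are topological. The latter, for $\ast=L_{\dR}^{+}$, is precisely what is imported from \cref{compatibility I}; once it and the functoriality are in place, both assertions follow from the reductions indicated above.
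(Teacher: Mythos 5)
Your proposal is correct in substance, and it is worth noting that the paper itself gives no argument at all here --- its ``proof'' is the single sentence that the proof of \cite[Lemma 15.2.2]{BC09} carries over. So what you have written is a genuine self-contained reconstruction. Two remarks. First, the statement as printed contains a typo: the map $N\to N/t^iN$ is of course not a bijection; the intended assertion (as in \cite[Lemma 15.2.2]{BC09}) is that $N\to\varprojlim_i N/t^iN$ is a homeomorphism, and you correctly read it that way. Second, your d\'evissage is sound: the identification of $t^{n-1}\ast/t^n\ast$ (subspace topology from $\ast/t^n$) with $\ast/t$ (quotient topology) does follow from $t^{n-1}$ being a closed embedding together with the elementary fact that for closed subgroups $N\subseteq H\subseteq G$ the subspace topology on $H/N\subseteq G/N$ agrees with the quotient topology from $H$; and the intersection identities $L_{\dR}^+\cap t^iB_{\dR}^+=t^iL_{\dR}^+$ are exactly as you say.

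The one step you assert without proof, and which is the real crux of the induction, is the ``five lemma for embeddings'': given a map of strict short exact sequences of topological groups in which the outer vertical maps are topological embeddings, the middle one is too. This is true but not a formal triviality, and you should either prove it or route around it. A correct proof: given an open $U\ni 0$ in $B_1$, pick symmetric $U_1$ with $U_1+U_1\subseteq U$; use that $a$ is an embedding and that $A_2$ carries the subspace topology to find open $W'\ni 0$ in $B_2$ with $W'\cap A_1\subseteq U_1$; pick symmetric $W''$ with $W''+W''\subseteq W'$; use openness of $\pi_1$ applied to the \emph{smaller} neighborhood $U_1\cap b^{-1}(W'')$ and the fact that $c$ is an embedding to find open $Z\ni 0$ in $C_2$ with $Z\cap C_1\subseteq\pi_1(U_1\cap b^{-1}(W''))$; then $V=\pi_2^{-1}(Z)\cap W''$ satisfies $V\cap B_1\subseteq U$, since any $x\in V\cap B_1$ differs from some $u\in U_1\cap b^{-1}(W'')$ by an element of $A_1\cap(W''+W'')\subseteq A_1\cap W'\subseteq U_1$. (The point is to shrink $U_1$ by $b^{-1}(W'')$ \emph{before} invoking openness of $\pi_1$; without this the argument does not close.) Alternatively, since every space in sight is a $\mathbb{Q}_p$-Fr\'echet space once one knows $t^nL_{\dR}^+$ is closed in $L_{\dR}^+$ (which is \cref{compatibility I}), one can replace the whole inductive step by the open mapping theorem, which is closer in spirit to how \cite{BC09} argues; but then one must separately verify that the image of $L_{\dR}^+/t^n$ in $B_{\dR}^+/t^n$ is closed, which your d\'evissage gets for free.
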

\begin{proof}
The proof of \cite[Lemma 15.2.2]{BC09} still works here.
\end{proof}

\begin{prop}\label{descend along perfectoid L}
Given $W\in \rb$, the $L_{\dR}^{+}$-module $W^{G_L}$ is finitely generated with a continuous $\hat{G}$-action for its natural topology as a finitely generated $L_{\dR}^{+}$-module, and the natural $B_{\dR}^+$-linear map 
\begin{equation}
    \alpha_W: B_{\dR}^+\otimes_{L_{\dR}^{+}} W^{G_L}\rightarrow W
\end{equation}
is an isomorphism. In particular, the rank and invariant factors of $W^{G_L}$ over $L_{\dR}^{+}$ coincide with those of $W$ over $B_{\dR}^+$.
\end{prop}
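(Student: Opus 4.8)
The plan is to follow the classical Sen-style descent argument as in \cite[Section 15.2]{BC09}, adapted from the cyclotomic tower to the extension $L/K$. First I would reduce to a statement about $t$-adic quotients. Since both $L_{\dR}^{+}$ and $B_{\dR}^+$ are complete for their natural topologies and $W$ is $t$-adically complete, it suffices to prove the analogous statement modulo $t^n$ for every $n$ and then pass to the inverse limit (using \cref{topology compatibility} to control topologies). Concretely, set $W_n = W/t^nW$, a finite free $B_{\dR}^+/t^n$-module with semilinear $G_K$-action; I want to show $(W_n)^{G_L}$ is finitely generated over $L_{\dR}^{+}/t^n = B_{\dR,L}^+/t^n$ and that base change to $B_{\dR}^+/t^n$ recovers $W_n$.

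The key input is the case $n=1$: here $W_1 = W/tW$ is a finite-dimensional $\mathbb{C}_p$-semilinear representation of $G_K$, and $L_{\dR}^{+}/t = \hat L$. The statement that $(W/tW)^{G_L} \otimes_{\hat L} \mathbb{C}_p \xrightarrow{\sim} W/tW$ with the invariant module finite-dimensional over $\hat L$ of the same rank is precisely the Ax--Sen--Tate / Faltings-type almost purity statement for $\mathbb{C}_p$-representations restricted along $\hat L$; since $\hat L$ is the completion of the perfectoid field $L$, $\widehat{\overline K}/\hat L$ is (almost) \'etale and the descent holds. This should be \cref{de rham almost purity} invoked at the residue field level, or a direct consequence of it. Then for the inductive step, I would use the short exact sequences $0 \to t^{n}W/t^{n+1}W \to W_{n+1} \to W_n \to 0$ of $G_L$-modules, where $t^nW/t^{n+1}W \cong W/tW$ as $G_L$-modules up to a Tate twist by $t^n$ (which is trivial on $G_L$ since $t$ is $G_L$-fixed up to units in $L_{\dR}^+$, indeed $L_{\dR}^+$ contains $t$). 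Taking $G_L$-cohomology and using that $H^1(G_L, W/tW)$ behaves well (vanishing of the relevant higher cohomology, again from almost purity over the perfectoid $\hat L$), I get that $(W_{n+1})^{G_L} \to (W_n)^{G_L}$ is surjective and $(W_{n+1})^{G_L}$ is an extension of finitely generated $L_{\dR}^+/t^{n+1}$-modules; a Nakayama argument over the (complete, local) ring $L_{\dR}^+/t^{n+1}$ with maximal ideal generated by $t$ upgrades the mod-$t$ isomorphism to an isomorphism mod $t^{n+1}$.

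Having established the isomorphism $\alpha_W$ modulo each $t^n$, I would take the inverse limit: $W^{G_L} = \varprojlim (W_n)^{G_L}$ (using left-exactness of invariants and the Mittag-Leffler/surjectivity just proved so that $R^1\varprojlim$ vanishes), it is finitely generated over $L_{\dR}^+ = \varprojlim L_{\dR}^+/t^n$ because the transition maps are surjective and the mod-$t$ reduction is finite-dimensional, and $\alpha_W$ is an isomorphism since it is so modulo every power of $t$ and all modules in sight are $t$-adically complete and separated. The continuity of the $\hat G = \Gal(L/K)$-action on $W^{G_L}$ for its natural topology follows from continuity of the $G_K$-action on $W$ together with \cref{topology compatibility}, which identifies the natural topology on the finitely generated $L_{\dR}^+$-module $W^{G_L}$ with the subspace topology inherited from $W$. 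Finally, the claim about ranks and invariant factors is immediate from faithful flatness of $L_{\dR}^+ \to B_{\dR}^+$ (both are complete DVRs with the same uniformizer $t$) applied to the isomorphism $\alpha_W$. The main obstacle I anticipate is the $n=1$ case — ensuring the almost purity / Galois cohomology vanishing over the perfectoid field $\hat L$ is correctly set up, i.e. that $H^1(G_L, \mathbb{C}_p(j))$ and more generally $H^1(G_L, W/tW)$ are controlled; everything else is formal dévissage and completion bookkeeping, and this is exactly where \cref{de rham almost purity} (the "purity for de Rham representations" proved just before) does the real work.
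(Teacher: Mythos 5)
Your proposal follows essentially the same route as the paper: reduce to the $t$-power-torsion case, where the base case is Faltings almost purity over the perfectoid field $\hat L$ together with the vanishing of $H^1(G_L,\cdot)$ on $\mathbb{C}_p$-representations, run a d\'evissage along short exact sequences with the five lemma, and then recover the general case by passing to the inverse limit over $n$ with a Mittag-Leffler/Nakayama argument. Two small corrections: first, \cref{de rham almost purity} is the corollary deduced \emph{from} this proposition, not an available input, so citing it for the $n=1$ case is circular — the actual inputs are Faltings almost purity and the $H^1$-vanishing over $\hat L$ (from \cite{KL16} and \cite{Sch13}), which you do also name, so no mathematical content is missing. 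Second, $\rb$ contains torsion modules (hence the mention of invariant factors), so $W/t^nW$ need not be finite free and $t^nW/t^{n+1}W$ need not be isomorphic to $W/tW$; your induction survives because you only need these pieces to be killed by a smaller power of $t$, which is exactly how the paper phrases its d\'evissage via $0\to tW\to W\to W/tW\to 0$.
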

\begin{proof}
First we treat the case when $W$ is killed by a power of $t$ by induction on the power of $t$ killing $W$ such that $\alpha_W$ is an isomorphism and $H^1(G_L,W)$ vanishes. 
We start from the case that $tW=0$, i.e. $W\in \Rep_{G_K}(\mathbb{C}_{p})$ as the quotient topology on $\mathbb{C}_{p}\simeq B_{\dR}^+/(\xi)$ induced from that on $B_{\dR}^+$ coincides with its natural topology. Then by Faltings's almost purity theorem
\[\alpha_W: \mathbb{C}_{p}\otimes_{\hat{L}}W^{G_L} \stackrel{\simeq}{\longrightarrow} W.\]
Also, we have that $H^{1}(G_L,W)=0$ by \cite[Theorem 3.5.8]{KL16} and \cite[Theorem 6.5]{Sch13} as $\hat{L}$ is a perfectoid field.

Suppose we have proven that $\alpha_W$ is an isomorphism and $H^{1}(G_L,W)=0$ provided that $W$ is killed by $t^n$, here $n\geq 1$. Now given a $W\in \rb$ which is killed by $t^{n+1}$, we proceed by utilizing the following topologically exact sequence (as $tW$ is equipped with the subspace topology induced from $W$) in $\rb$:
\begin{equation}\label{exact W}
    0\rightarrow tW\rightarrow W\rightarrow W/tW\rightarrow 0.
\end{equation}
Observe that both $tW$ and $W/tW$ are killed by $t^n$, then by induction $H^{1}(G_L,tW)=0$, which implies the exactness of the following $\hat{G}$-equivariant sequence of $L_{\dR}^+$-modules:
\begin{equation*}
    0\rightarrow (tW)^{G_L}\rightarrow W^{G_L}\rightarrow (W/tW)^{G_L}\rightarrow 0.
\end{equation*}
As an upshot, we see that $W^{G_L}$ is finitely generated over $L_{\dR}^+$. This leads to the following $B_{\dR}^+$-linear diagram whose rows are all exact:
\[\xymatrix{0\ar[r] &B_{\dR}^+\otimes_{L_{\dR}^+}(tW)^{G_L}\ar[d]^{\alpha_{tW}}\ar[r]&B_{\dR}^+\otimes_{L_{\dR}^+}W^{G_L}\ar[d]^{\alpha_W}\ar[r]&B_{\dR}^+\otimes_{L_{\dR}^+}(W/tW)^{G_L}\ar[d]^{\alpha_{W/tW}}\ar[r]&0
\\0\ar[r]& tW \ar[r]& W \ar[r] & W/tW \ar[r] &0.
}\]
Here the top row is exact as the scalar extension of injective discrete valuation rings $L_{\dR}^+\rightarrow B_{\dR}^+$ is flat.

Then five lemma implies that $\alpha_W$ is an isomorphism as so are for $\alpha_{tW}$ and $\alpha_{W/tW}$ by induction. Moreover, thanks to \cite[Exercise 2.5.3]{BC09} again, \cref{exact W} induces the exact sequence
\begin{equation*}
    H^{1}(G_L,tW)\rightarrow H^{1}(G_L,W)\rightarrow H^{1}(G_L,W/tW).
\end{equation*}
Hence $H^{1}(G_L,W)=0$ as $H^{1}(G_L,tW)=H^{1}(G_L,W/tW)=0$ by induction.

Now we have finished the proof for those $W$ killed by a power of $t$. One of the byproduct is that the functor $W\mapsto W^{G_L}$ is exact when restricted to torsion $W\in \rb$ as $H^1(G_L, \cdot)$ vanishes on such $W$.

For non-torsion $W\in \rb$, $W=\varprojlim W/t^m$. For a fixed $m$, consider the $G_K$-equivariant right exact sequence 
\[W/t^m \stackrel{t^m}{\rightarrow} W/t^m\rightarrow W/t^n\rightarrow 0.\]
Thanks to the vanishing of $H^1$ on torsion objects, it is still right exact after taking $G_L$-invariants
\[(W/t^m)^{G_L} \stackrel{t^n}{\rightarrow} (W/t^m)^{G_L}\rightarrow (W/t^n)^{G_L}\rightarrow 0.\]
Then passing to inverse limit with respect to $m$ preserves right exactness as it forms a Mittag-Leffler system (we have shown that all the torsion terms involved are finitely generated):
\[W^{G_L}\stackrel{t^n}{\rightarrow} W^{G_L}\rightarrow (W/t^n)^{G_L}\rightarrow 0.\]
This implies that $W^{G_L}/t^n\cong (W/t^n)^{G_L}$ for all $n\geq 1$.

In particular, take $n=1$, we have that $W^{G_L}/t\cong (W/t)^{G_L}$ is a finitely generated $\hat{L}$-vector space based on our study of $t$-torsion case. But $W^{G_L}$ is a closed $L_{\dR}^+$-submodule of a finitely generated $B_{\dR}^+$-submodule$W$, hence $W^{G_L}$ is $t$-adically complete and separated. Then Nakayama's lemma implies that $W^{G_L}$ is finitely generated over $L_{\dR}^+$. 

Finally consider the natural map 
\[\alpha_W: B_{\dR}^+\otimes_{L_{\dR}^{+}} W^{G_L}\rightarrow W.\]
Both sides are finitely generated over $B_{\dR}^+$, hence $t$-adically complete and separated, by derived Nakayama lemma, to show $\alpha_W$ is an isomorphism, it suffices to show $\alpha_W/t$ (derived sense)  is an isomorphism. As $W$ is non-torsion, $\alpha_W/t$ is the usual quotient, then the result follows as $W^{G_L}/t\cong (W/t)^{G_L}$ and we have already shown $\alpha_{W/t}$ is an isomorphism.
\end{proof}
\begin{cor}\label{de rham almost purity}
The functor 
\begin{equation*}
    \begin{split}
        \rb &\longrightarrow \Rep_{\hat{G}}(B_{\dR,L}^+)
        \\W&\longmapsto W^{G_L}
    \end{split}
\end{equation*}
is well defined and induces an equivalence of categories. A quasi-inverse can be chosen as $X\mapsto B_{\dR}^+\otimes_{B_{\dR,L}^+}X$.
\end{cor}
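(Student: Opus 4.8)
The strategy is to upgrade Proposition~\ref{descend along perfectoid L} to an equivalence by checking that $S\colon X\mapsto B_{\dR}^+\otimes_{B_{\dR,L}^+}X$ is a quasi-inverse of $T\colon W\mapsto W^{G_L}$. First one checks that $T$ is well defined: by Proposition~\ref{descend along perfectoid L} the module $W^{G_L}$ is finitely generated over $L_{\dR}^+=B_{\dR,L}^+$ (Remark~\ref{two L identification}), and since its invariant factors agree with those of the free module $W$ it is finite free over the discrete valuation ring $B_{\dR,L}^+$; moreover $G_L\trianglelefteq G_K$, so $W^{G_L}\subseteq W$ inherits a semilinear action of $G_K/G_L=\hat G$, continuous for the natural topology by loc.\ cit. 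In the other direction, $S(X)=B_{\dR}^+\otimes_{B_{\dR,L}^+}X$ is finite free over $B_{\dR}^+$ and carries the diagonal semilinear $G_K$-action (through the natural action on $B_{\dR}^+$ and the $\hat G$-action on $X$), which is continuous; so $S$ takes values in $\rb$.

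Next one produces the two natural isomorphisms. The map $\alpha_W\colon S(T(W))=B_{\dR}^+\otimes_{B_{\dR,L}^+}W^{G_L}\to W$ of Proposition~\ref{descend along perfectoid L} is $G_K$-equivariant and an isomorphism, and is visibly natural in $W$, giving $S\circ T\cong\id$. Conversely, for $X\in\Rep_{\hat G}(B_{\dR,L}^+)$ there is a canonical $\hat G$-equivariant $B_{\dR,L}^+$-linear map $X\to T(S(X))=(B_{\dR}^+\otimes_{B_{\dR,L}^+}X)^{G_L}$, $x\mapsto 1\otimes x$, using $(B_{\dR}^+)^{G_L}=B_{\dR,L}^+$ (Lemma~\ref{compatibility I}). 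Since the image of $G_L$ in $\hat G$ is trivial, $G_L$ acts on $B_{\dR}^+\otimes_{B_{\dR,L}^+}X$ only through the first factor; choosing a basis to write $X\cong(B_{\dR,L}^+)^{\oplus d}$, the $G_L$-invariants of $B_{\dR}^+\otimes_{B_{\dR,L}^+}X\cong(B_{\dR}^+)^{\oplus d}$ are $(B_{\dR,L}^+)^{\oplus d}=X$, and the above map is the identity. Hence $\id\cong T\circ S$, naturally in $X$, and together with the first isomorphism this proves that $T$ and $S$ are mutually quasi-inverse equivalences.

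These verifications are essentially formal once Proposition~\ref{descend along perfectoid L} is in hand, so there is no substantial obstacle; the only point needing care is the computation $T(S(X))\cong X$, where one must use that $X$ is finite free over $B_{\dR,L}^+$ so that forming $G_L$-invariants commutes with the splitting of $B_{\dR}^+\otimes_{B_{\dR,L}^+}X$ into copies of $B_{\dR}^+$, together with being consistent about which topology underlies ``continuous semilinear action'' throughout. Alternatively, one could prove full faithfulness of $T$ directly: a $G_K$-equivariant map restricts to a $\hat G$-equivariant map on $G_L$-invariants, and conversely any $\hat G$-equivariant $g\colon W_1^{G_L}\to W_2^{G_L}$ extends along $\alpha_{W_1},\alpha_{W_2}$ to the $G_K$-equivariant map $\id_{B_{\dR}^+}\otimes g$; bijectivity on $\Hom$-sets then follows from Proposition~\ref{descend along perfectoid L}, and essential surjectivity from $T(S(X))\cong X$.
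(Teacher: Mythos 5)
Your argument is correct and follows essentially the same route as the paper: well-definedness via Proposition~\ref{descend along perfectoid L} (with the topological compatibilities of Lemmas~\ref{compatibility I} and~\ref{topology compatibility}), $S\circ T\cong\id$ via the natural isomorphism $\alpha_W$, and $T\circ S\cong\id$ by computing $(B_{\dR}^+\otimes_{B_{\dR,L}^+}X)^{G_L}$ directly, using that $G_L$ acts only through $B_{\dR}^+$ and that $(B_{\dR}^+)^{G_L}=B_{\dR,L}^+$. The one caveat is that $\rb$ and $\Rep_{\hat{G}}(B_{\dR,L}^+)$ here also contain $t$-power-torsion objects (cf.\ the torsion case in the proof of Proposition~\ref{descend along perfectoid L}), so your reduction of $T(S(X))\cong X$ to free $X$ should be supplemented by the cyclic case $X=B_{\dR,L}^+/t^n$, where $(B_{\dR}^+/t^n)^{G_L}=B_{\dR,L}^+/t^n$ follows from the vanishing of $H^1(G_L,-)$ on torsion objects established in that proposition's proof.
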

\begin{proof}
Let us first explain why it is well defined. By \cref{two L identification} and \cref{descend along perfectoid L}, $W^{G_L}$ is a finitely generated $B_{\dR,L}^+$-module and 
\begin{equation}
    \alpha_W: B_{\dR}^+\otimes_{B_{\dR,L}^+} W^{G_L}\rightarrow W
\end{equation}
is an isomorphism. Passing to the case of cyclic modules using \cref{topology compatibility} and then apply \cref{compatibility I} as well, the natural continuous injective map $M\rightarrow B_{\dR}^+\otimes_{B_{\dR,L}^+}M$ is a homeomorphism onto its image for any finitely generated $B_{\dR,L}^+$-module $M$. Apply it to $M=W^{G_L}$, we see that the natural topology on $W^{G_L}$ as a finitely generated $B_{\dR,L}^+$-module coincides with the subspace topology induced from $W$ via $\alpha_W$. Then the continuity of the $\hat{G}$-action on $W^{G_L}$ follows from the continuity of the $G_K$-action on $W$, hence $W^{G_L}\in \Rep_{\hat{G}}(B_{\dR,L}^+)$. 

On the other hand, given $X\in \Rep_{\hat{G}}(B_{\dR,L}^+)$, then $V:=B_{\dR}^+\otimes_{B_{\dR,L}^+}X$ is equipped with a continuous $G_K$-action with its natural topology as a finitely generated $B_{\dR}^+$-module as $G_K$ acts continuously on both $B_{\dR}^+$ and $X$. As a result, $V\in\rb$. Then one can check $V^{G_L}=X$ and $V(W^{G_L})\cong W$ via $\alpha_W$ for $W\in \rb$. Hence the functors are quasi-inverses.
\end{proof}

\subsection{Sen theory over $B_{\dR,L}^+$: decompletion along Kummer tower}
\begin{notation}\label{notation for t}
Recall that we embed the Breuil-Kisin prism $(W(k)[[u]], E(u)) $ into $(\Ainf_L, \Ker(\theta))$ by sending $u$ to $[\pi^{\flat}]$. Hence the induced embedding $\bdr(\mathfrak{S})\xhookrightarrow{} B_{\dR,L}^+$ sends $t=E(u)$ to $E([\pi^{\flat}])$, which will be our chosen uniformizer in the complete discrete ring $B_{\dR,L}^+$. Define $B_{\dR,k,L}^+$ to be $B_{\dR,L}^+/t^k$, which is just $\Ainf(\mathcal{O}_{\hat{L}})[\frac{1}{p}]/t^k)$. And we use $\theta_k$ to denote the specialzation map $B_{\dR,L}^+\rightarrow B_{\dR,k,L}^+$. We will abuse notation by also writing $\theta_k$ for $\Ainf(\mathcal{O}_{\hat{L}})[\frac{1}{p}]\rightarrow \Ainf(\mathcal{O}_{\hat{L}})[\frac{1}{p}]/(t^k)$. 
When $k=1$, we will denote $\theta_k$ as $\theta$.
\end{notation}
    We give a quick review of locally analytic vectors. We refer the readers to \cite[Section 2]{BC16} \cite[Section 2]{Ber16} for details.
Recall given a $p$-adic Lie group $G$, and let $V$ be a $\mathbb{Q}_p$-Banach representation of $G$ with norm $\|\cdot \|$.
Let $H$ be an open subgroup of $G$ such that there exist coordinates $c_1,\ldots,c_d : H \to \mathbb{Z}_p$ giving rise to an analytic bijection $\mathbf{c} : H \to \mathbb{Z}_p^d$. Then 
\begin{defi}\label{de rham analytic}
\begin{itemize}
    \item An element $w \in V$ is \emph{$H$-analytic} if there exists a sequence $\{w_k\}_{k \in \mathbb{N}^d}$ with $w_k \to 0$ in $W$, such that
    \begin{equation*}\label{local analytic action}
        g(w) = \sum_{k \in \mathbb{N}^d} \mathbf{c}(g)^k w_k, \quad \forall g \in H.
    \end{equation*}
    The space of such vectors is denoted as $V^{H-an}$.
    \item A vector $w \in V$ is \emph{locally analytic} if there exists an open subgroup $H$ as above such that $w \in V^{H-an}$. The space of such vectors is denoted as $V^{G-la}$.
\end{itemize}
\end{defi}
\begin{defi}[Monodromy operators]\label{monodromy operators}
Still assume $G=\hat{G}$, $V$ a $\mathbb{Q}_p$-Banach representation of $\hat{G}$, one can define two monodromy operators $\nabla_{\gamma}$ and $\nabla_{\tau}$ on $V^{\hat{G}-la}$ as follows:
\begin{itemize}
    \item $\nabla_{\gamma}:=\frac{\log g}{\log\chi(g)}$ for $g\in \Gal(L/K_{\infty})$ enough close to $1$, where as usual $\chi$ is the $p$-adic cyclotomic character and $\log g$ is just the formal power series $\sum_{n=1}^{\infty} \frac{(-1)^{n-1}(g-1)^n}{n}$.
    \item $\nabla_{\tau}:=\frac{\log (\tau^{p^n})}{p^n}$ for $n$ large enough.
\end{itemize}
\end{defi}
\begin{remark}
A useful result is that for $x\in V^{\hat{G}-la}$,
\begin{equation*}
    \nabla_{\tau}(x)=\lim_{n\rightarrow \infty}\frac{\tau^{p^n}(x)-x}{p^n},
\end{equation*}
as on $V^{\hat{G}-la}$, $\tau=\sum_{i=0}^{\infty}\frac{\nabla_{\tau}^i}{i!}$.
\end{remark}
\begin{defi}\label{de rham analytic vectors}
Suppose $G=\hat{G}, W\in \Rep_{B_{\dR,L}^+}^{}(\hat{G})$, define
\begin{equation*}
    \begin{split}
        W^{\hat{G}-la}=\varprojlim (W/t^nW)^{{\hat{G}}-la},\quad W^{\hat{G}-la,\gamma=1}=\varprojlim (W/t^nW)^{{\hat{G}}-la,\gamma=1},
        \\ W^{\hat{G}-la,\tau=1}=\varprojlim (W/t^nW)^{{\hat{G}}-la,\tau=1}, \quad W^{\hat{G}-la,\nabla_{\gamma}=0}=\varprojlim (W/t^nW)^{{\hat{G}}-la,\nabla_{\gamma}=0},
    \end{split}
\end{equation*}
    and equip them with subspace topology induced from that on $W=\varprojlim X/t^m W$. Here for $V$ a $\mathbb{Q}_p$-Banach representation of $\hat{G}$, $$V^{{\hat{G}}-la,\gamma=1}:=V^{{\hat{G}}-la,\Gal(L/K_{\infty})=1}, \quad V^{{\hat{G}}-la,\tau=1}:=V^{{\hat{G}}-la,\Gal(L/K_{p^\infty})=1}.$$
\end{defi}
\vspace{0.2cm}
Also, given $V$ a $\mathbb{Q}_p$-Banach representation of $G$ with norm $\|\cdot \|$. then we define $V\{\{T\}\}_n$ to be the vector space of series $\sum_{k \geqslant 0} v_{k} T^{k}$ such that $p^{nk}v_k\rightarrow 0$ as $k$ tends to $\infty$, here $T$ is a variable. For $h\in V$ such that $\|v \|\leq p^{-n}$, $V\{\{h\}\}_n$ is defined to be the evaluation of $V\{\{T\}\}_n$ at $T=h$.

The story starts from the $t$-torsion case.

Let $\beta_n\in L$ such that  $\|\theta(\fkt)- \beta_n\|\leq p^{-n}$, then as in \cite[Section 4.2]{BC16} and \cite[Construction 3.2.5]{Gao22}, there is an increasing sequence $\{r_n\}$ such that if $m\geq \{r_n\}$, then $\|\theta(\fkt)- \beta_n\|_{\hat{G}_m}=\|\theta(\fkt)- \beta_n\|$ and $\theta(\fkt)- \beta_n\in \hat{L}^{\hat{G}_m-an}$, where $\hat{G}_m$ is topologically generated by $\tau^n$ and $\gamma^n$, $\|w\|_{\hat{G}_m}=\sup _{k \in \mathbb{N}^{d}}\left\|w_{k}\right\|$ ($w_{k}$ is defined as in \cref{local analytic action}).

Then the following result is essentially calculated by Gao and Poyeton:
\begin{lemma}\label{torsion analytic vector}
\begin{itemize}
    \item $\hat{L}^{\hat{G}-la} =\cup_{n \geq 1} K({\mu_{r(n)}, \pi_{r(n)}})\{\{ \theta(\fkt)-\beta_n \}\}_n.$
\item $\hat{L}^{\hat{G}-la, \nabla_\gamma=0} = L.$
\item $\hat{L}^{\tau-la, \gamma=1} = {K_{\infty}}.$ 
\end{itemize}
Here $\fkt=\frac{\log([\varepsilon])}{p\lambda}$.
\end{lemma}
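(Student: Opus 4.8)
The plan is to read all three statements as Tate--Sen-type computations of locally analytic vectors, adapting the methods of Berger--Colmez and of Gao and Poyeton; since every assertion concerns $\widehat L = B_{\dR,L}^+/t$, there is nothing to reduce modulo higher powers of $t$. The first step is to pin down the relevant ``Sen parameter.'' Because the chosen lift $\gamma$ of a generator of $\Gamma_K$ fixes $[\pi^\flat]$ (hence $\lambda$) while $\gamma(\log[\varepsilon]) = \chi(\gamma)\log[\varepsilon]$, one gets $\gamma(\theta(\fkt)) = \chi(\gamma)\,\theta(\fkt)$; thus $\theta(\fkt)$ is $\hat G$-locally analytic and the monodromy operator $\nabla_\gamma$ of \cref{monodromy operators} satisfies $\nabla_\gamma(\theta(\fkt)) = \theta(\fkt)$. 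By construction the $\beta_n$ are approximations of $\theta(\fkt)$ of radius $\le p^{-n}$ lying in the finite layer $L_{r(n)} := K(\mu_{r(n)},\pi_{r(n)})$ of $L/K$, and $\theta(\fkt)$ is transcendental over $L$ (a fact of the same nature as the transcendence of $\theta(X_1(\tau))$ recalled in the introduction).

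For the first identity the inclusion $\supseteq$ is routine: $\theta(\fkt)$ being $\hat G$-locally analytic and each $L_{r(n)}$ being fixed by a small enough open subgroup of $\hat G$, one checks from the formula in \cref{de rham analytic} that every element of $L_{r(n)}\{\{\theta(\fkt)-\beta_n\}\}_n$ is $\hat G_{r(n)}$-analytic, and one takes the union over $n$. The inclusion $\subseteq$ is the substance. Given $w$ that is $\hat G_m$-analytic for some $m$, one runs the classical decompletion by successive approximation: $\widehat L$ is the completion of the tower $L/K$ and satisfies the Tate--Sen axioms relative to the layers $L_m$ and Tate's normalized traces $R_{\widehat L/L_m}$, and applying these repeatedly expresses $w$ as a convergent series in the variable $\theta(\fkt)-\beta_m$ with coefficients in a finite layer. \textbf{This is where the main difficulty lies:} $L/K$ is not arithmetically profinite, so one cannot run the argument inside a single $\Z_p$-extension; following Gao and Poyeton one works ``$\hat G$-analytically,'' treating the Kummer direction (generated by $\tau$, with period $\fkt$) and the cyclotomic direction (generated by $\gamma$) simultaneously, which forces careful control of both the radii $r_n$ and the constants appearing in the Tate--Sen axioms for $\widehat L$.

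The remaining two identities follow from the first. For $\widehat L^{\hat G-\mathrm{la},\nabla_\gamma=0}$: on each ring $L_{r(n)}\{\{\theta(\fkt)-\beta_n\}\}_n$ the operator $\nabla_\gamma$ kills $L_{r(n)}$ (a small open subgroup of $\Gamma_K$ acts trivially there) and sends $\theta(\fkt)-\beta_n$ to $\theta(\fkt) = (\theta(\fkt)-\beta_n)+\beta_n$, which is a unit of that ring; hence $\nabla_\gamma$ acts as the derivation $\theta(\fkt)\cdot d/dT$ in the variable $T = \theta(\fkt)-\beta_n$, whose kernel is exactly the constants $L_{r(n)}$, and $\bigcup_n L_{r(n)} = L$. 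For $\widehat L^{\tau-\mathrm{la},\gamma=1}$: any $\gamma$-invariant vector lies in $\widehat L^{\Gal(L/K_\infty)} = \widehat{K_\infty}$ by Ax--Sen--Tate, and imposing $\tau$-local analyticity forces descent, by the same successive-approximation argument restricted to the $\tau$-direction (the associated $\tau$-parameter now being forced to vanish), to $K_\infty$; the reverse inclusion $K_\infty \subseteq \widehat L^{\tau-\mathrm{la},\gamma=1}$ is immediate because $\tau^{p^N}$ fixes $K(\pi_N)$. Alternatively one may simply invoke the corresponding statements of Gao and Poyeton, from which the lemma is a direct extraction.
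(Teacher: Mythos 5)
Your proposal is correct and, in substance, coincides with the paper's proof: the paper disposes of this lemma entirely by citation, attributing the last two identities to Gao--Poyeton (Proposition 3.3.2 of \cite{GP21}) and the first to Gao (Proposition 3.2.8 of \cite{Gao22}), which is exactly the "alternative" you offer in your closing sentence. The preceding sketch of the Tate--Sen/locally-analytic decompletion underlying those references (the relation $\nabla_\gamma(\theta(\fkt))=\theta(\fkt)$, the identification of $\nabla_\gamma$ with the derivation $\theta(\fkt)\,d/dT$, and the two-dimensional difficulty of $\hat G$) is a faithful outline of how the cited results are actually proved, so no gap to report.
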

\begin{proof}
The last two is \cite[Proposition 3.3.2]{GP21} while the first identification is \cite[Proposition 3.2.8]{Gao22}.
\end{proof}
\begin{remark}
By \cite[Lemma 2.5]{BC16}, $\hat{L}^{\hat{G}-la}$ is a field. It strictly contains $L$.
\end{remark}

\begin{theorem}\label{calculation de rham analytic}
Suppose $k\geq 1$, then the map $$\bigoplus_{i=0}^{k-1}(\cup_{n \geq 1} K\left(\mu_{r(n)}, \pi_{r(n)}\right)\left\{\left\{\theta_k(\fkt)-\beta_n \right\}\right\}_{n})\cdot t^i\rightarrow (B_{\dR,k,L}^+)^{\hat{G}-la}$$ is an isomorphism.
\end{theorem}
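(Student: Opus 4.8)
The goal is to show that the natural map
\[
\bigoplus_{i=0}^{k-1}\bigl(\bigcup_{n\geq 1} K(\mu_{r(n)},\pi_{r(n)})\{\{\theta_k(\fkt)-\beta_n\}\}_n\bigr)\cdot t^i \;\longrightarrow\; (B_{\dR,k,L}^+)^{\hat{G}-la}
\]
is an isomorphism, where $B_{\dR,k,L}^+=B_{\dR,L}^+/t^k$. The plan is to argue by induction on $k$, using the short exact sequence of $\hat G$-representations
\[
0\longrightarrow t\cdot B_{\dR,k,L}^+\longrightarrow B_{\dR,k,L}^+\stackrel{\theta}{\longrightarrow}\hat{L}\longrightarrow 0,
\]
where $t\cdot B_{\dR,k,L}^+\cong B_{\dR,k-1,L}^+$ (twisted by the line $t$, but since $t$ is $\hat G$-fixed up to the unit $\alpha$-type factors coming from the crystal structure — actually here $t=E([\pi^\flat])$ transforms by a locally analytic unit, which is the point to be careful about). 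The base case $k=1$ is exactly the first identification in \cref{torsion analytic vector}, i.e. $\hat{L}^{\hat{G}-la}=\bigcup_{n\geq1}K(\mu_{r(n)},\pi_{r(n)})\{\{\theta(\fkt)-\beta_n\}\}_n$.

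For the inductive step, I would first verify that taking $\hat{G}$-locally analytic vectors is exact on the relevant torsion representations. Since the ambient groups are $p$-adic Lie groups and the modules are finite over $B_{\dR,k,L}^+$, one has the general principle (e.g. \cite[Section 2]{Ber16}) that $V\mapsto V^{\hat{G}-la}$ is exact on Banach representations once one knows the relevant $H^1$ of locally analytic cohomology vanishes, or more concretely by lifting locally analytic vectors through the surjection $\theta$: given $\bar w\in\hat{L}^{\hat{G}-la}$, choose any set-theoretic lift and correct it using that $t\cdot B_{\dR,k,L}^+$ also has enough locally analytic vectors by the lower-step inductive hypothesis. This gives the exact sequence
\[
0\longrightarrow (t B_{\dR,k,L}^+)^{\hat{G}-la}\longrightarrow (B_{\dR,k,L}^+)^{\hat{G}-la}\stackrel{\theta}{\longrightarrow}\hat{L}^{\hat{G}-la}\longrightarrow 0.
\]
Identifying the left term with $t\cdot(B_{\dR,k-1,L}^+)^{\hat{G}-la}$ via the inductive hypothesis and splitting the sequence using the explicit lift $\theta(\fkt)-\beta_n\mapsto \fkt-\beta_n$ (which visibly lands in $(B_{\dR,k,L}^+)^{\hat{G}-la}$ since $\fkt=\frac{\log[\varepsilon]}{p\lambda}$ is itself locally analytic) then yields the direct sum decomposition with the extra $t^{k-1}$ summand, completing the induction.

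The main obstacle I anticipate is the precise control of the $\hat G$-action on the uniformizer $t$ and hence the compatibility of the filtration pieces $t^i B_{\dR,k,L}^+$ with locally analytic vectors: $t=E([\pi^\flat])$ is not $\hat G$-invariant, and $g(t)$ differs from $t$ by a unit in $B_{\dR,L}^+$ which one must check is locally analytic (this is essentially the computation behind $N_\nabla(t)=E'(u)\lambda u$ appearing in \cref{Restate Kummer Sen operator}); without this the short exact sequence above is not one of locally analytic representations in the naive way, and the identification $(tB_{\dR,k,L}^+)^{\hat G-la}=t\cdot(B_{\dR,k-1,L}^+)^{\hat G-la}$ could fail. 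I would handle this by noting that the unit relating $g(t)$ and $t$ lies in $(B_{\dR,L}^+)^{\times}\cap (B_{\dR,L}^+)^{\hat G-la}$ — indeed it is a product of terms built from $[\varepsilon]$, $[\pi^\flat]$ and $\lambda$, all of which are locally analytic — so multiplication by $t$ does preserve the locally analytic subspaces and the inductive bookkeeping goes through. A secondary routine point is checking convergence/closedness so that the inverse-limit descriptions of $W^{\hat G-la}$ in \cref{de rham analytic vectors} are compatible with these finite-length computations, which follows from \cref{topology compatibility}.
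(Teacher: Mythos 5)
Your proposal follows essentially the same route as the paper: induction on $k$ with base case \cref{torsion analytic vector}, verifying that $t$ (via $[\pi^\flat]$) is $\hat G$-locally analytic so that multiplication by $t$ identifies $(tB_{\dR,k,L}^+)^{\hat G-la}$ with the lower-level locally analytic vectors, and lifting an element of $\hat L^{\hat G-la}$ explicitly as a convergent series $f(\theta_k(\fkt)-\beta_n)$ before peeling off the $tB_{\dR,k,L}^+$ piece. The only point you underestimate is that $\theta_k(\fkt)$ being locally analytic is not "visible" but requires a separate argument (\cref{the image of t}, using $\varphi(\fkt)=\tfrac{pE(u)}{E(0)}\fkt$ and a result of Gao--Poyeton), and the convergence of $f(\theta_k(\fkt)-\beta_n)$ in $B_{\dR,k,L}^+$ is checked via \cref{check convergence after modulo}; otherwise the argument matches.
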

\begin{proof}
First notice that for a fixed $k$, $t\in (B_{\dR,k,L}^+)^{\hat{G}-la}$. Actually, when $k=1$, $t$ is $0$ in $B_{\dR,1,L}^+\cong \hat{L}$. To treat the general $k\geq 2$, it suffices to show $$t\in (B_{\dR,k,L}^+)^{\tau-la,\gamma=1}$$
thanks to \cite[Lemma 3.2.4]{GP21}. By \cite[Lemma 2.5(i)]{BC16}, $(B_{\dR,k,L}^+)^{\tau-la,\gamma=1}$ is a ring, hence it suffices to show $[\pi^\flat]\in (B_{\dR,k,L}^+)^{\tau-la,\gamma=1}$.

However, $\Gal(L/K_{\infty})$ acts trivially on $[\pi^\flat]$ by the definition of $[\pi^\flat]$ and $\tau$ acts on $[\pi^\flat]$ via
\begin{equation*}
    \begin{split}
        \tau ([\pi^\flat])=[\varepsilon][\pi^\flat]=([\varepsilon]-1)[\pi^\flat]
    \end{split}
\end{equation*}
then $(\tau-1)([\pi^\flat])=([\varepsilon]-1)[\pi^\flat] \in t^1B_{\dR,k,L}^+$. Then by induction we see that $(\tau-1)^{i}([\pi^\flat])=([\varepsilon]-1)^i[\pi^\flat] \in t^iB_{\dR,k,L}^+$. As a consequence,$$(\log \tau)^i([\pi^\flat])\in t^iB_{\dR,k,L}^+,$$
hence $[\pi^\flat]\in (B_{\dR,k,L}^+)^{\tau-la}$ thanks to \cite[Lemma 3.1.7]{GP21}. As a result, $[\pi^\flat]\in (B_{\dR,k,L}^+)^{\tau-la,\gamma=1}$.

Moreover, this implies $t^j\in (B_{\dR,k,L}^+)^{\tau-la,\gamma=1}$ by \cite[Lemma 2.5(i)]{BC16} again.

Now we proceed to prove the result by induction on $k$. When $k=1$, this is \cref{torsion analytic vector}.
Suppose the lemma is proven for up to $k-1$, then given $y\in (B_{\dR,k,L}^+)^{\hat{G}-la}$, $\theta(y)\in \hat{L}^{\hat{G}-la}$, hence there exists $n$ large enough, such that $\theta(y)=f(\theta(\fkt)-\beta_n)$, where $f(T)\in K({\mu_{r(n)}, \pi_{r(n)}})\{\{ T \}\}_n$. Thanks to \cref{the image of t}, $\theta_k(\mathfrak{t})\in (B_{\dR,k,L}^+)^{\hat{G}-la}$. 
On the other hand, by \cref{check convergence after modulo}, $f(\theta_k(\fkt)-\beta_n)$ converges in $B_{\dR,k,L}^+$, hence $f(\theta_k(\fkt)-\beta_n)\in (B_{\dR,k,L}^+)^{\hat{G}-la}$. Also, $y-f(\theta_k(\fkt)-\beta_n)\in t\cdot B_{\dR,k,L}^+$, hence there exists $z\in B_{\dR,k-1,L}^+$, such that $y-f(\theta_k(\fkt)-\beta_n)=tz$.
Moreover, $z\in (B_{\dR,k-1,L}^+)^{\hat{G}-la}$ as we have shown $t$ is locally analytic. Then the desired result for $k$ follows from induction.
\end{proof}
The following lemma is used in the proof:
\begin{lemma}\label{check convergence after modulo}
Let $E$ be a finite extension of $\mathbb{Q}_p$ and $f(T)=\sum_{k\geq 0}a_kT^k\in E[[T]]$. Suppose $x\in B_{\dR,k,L}^+$ for some $k\geq 1$, then $f(x)$ converges in $B_{\dR,k,L}^+$ if and only if $f(\theta(x))$ converges in $\mathbb{C}_p$.
\end{lemma}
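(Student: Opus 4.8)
The plan is to reduce both convergence assertions to the statement that the general term of the series tends to zero, and then to compare the two conditions coefficient by coefficient along the $t$-adic filtration of $B_{\dR,k,L}^{+}$.

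The first step is structural. Using \cref{two L identification} and \cref{topology compatibility} (equivalently, the proof of \cite[Lemma 15.2.2]{BC09}), $B_{\dR,k,L}^{+}=B_{\dR,L}^{+}/t^{k}$ is, as a topological ring, a free $\hat{L}$-module of rank $k$: choosing a coefficient field $\hat{L}\hookrightarrow B_{\dR,L}^{+}$ — which exists because $B_{\dR,L}^{+}$ is an equicharacteristic-zero complete discrete valuation ring — identifies it with $\hat{L}[t]/(t^{k})$ carrying the sup-norm in the basis $1,t,\dots,t^{k-1}$. In particular $B_{\dR,k,L}^{+}$ is a complete non-archimedean normed $\mathbb{Q}_{p}$-vector space, so a series in it converges if and only if its general term tends to $0$; the same holds in $\mathbb{C}_{p}$. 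Hence it is enough to show that $a_{n}x^{n}\to 0$ in $B_{\dR,k,L}^{+}$ if and only if $a_{n}\theta(x)^{n}\to 0$ in $\mathbb{C}_{p}$.

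The implication ``$\Rightarrow$'' is immediate, since $\theta\colon B_{\dR,k,L}^{+}\to B_{\dR,1,L}^{+}=\hat{L}\subseteq\mathbb{C}_{p}$ is a continuous ring homomorphism carrying $a_{n}x^{n}$ to $a_{n}\theta(x)^{n}$. For the converse I would write, via the chosen coefficient field, $x=\sum_{i=0}^{k-1}x_{i}t^{i}$ with $x_{i}\in\hat{L}$ and $x_{0}=\theta(x)$. If $x_{0}=0$ then $x$ is nilpotent, both $f(x)$ and $f(\theta(x))=a_{0}$ are finite sums, and there is nothing to prove; so assume $x_{0}\neq 0$. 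Expanding $x^{n}$ in $\hat{L}[t]/(t^{k})$ and sorting the $n$ factors according to how many of them equal $x_{0}$, the coefficient of $t^{i}$ in $x^{n}$ is $\sum_{s=0}^{\min(i,n)}\binom{n}{s}x_{0}^{\,n-s}Q_{i,s}$, where $Q_{i,s}=\sum_{j_{1}+\dots+j_{s}=i,\ j_{\ell}\geq 1}x_{j_{1}}\cdots x_{j_{s}}\in\hat{L}$ depends only on $x_{1},\dots,x_{k-1}$. Thus the $t^{i}$-coefficient of $a_{n}x^{n}$ equals $\bigl(\sum_{s\le\min(i,n)}\binom{n}{s}x_{0}^{-s}Q_{i,s}\bigr)\cdot(a_{n}x_{0}^{\,n})$, and since $|\binom{n}{s}|\le 1$ and $\max_{0\le s\le i}|x_{0}^{-s}Q_{i,s}|<\infty$, its absolute value is bounded by a constant independent of $n$ times $|a_{n}\theta(x)^{n}|$. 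Therefore $a_{n}\theta(x)^{n}\to 0$ forces every $t$-adic coefficient of $a_{n}x^{n}$ to tend to $0$, hence $a_{n}x^{n}\to 0$ in $B_{\dR,k,L}^{+}$, which finishes the proof.

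I do not anticipate a real obstacle here: the only points requiring some care are pinning down that the natural topology on $B_{\dR,k,L}^{+}$ is that of a rank-$k$ free $\hat{L}$-module — so that coefficientwise convergence along the $t$-adic filtration is equivalent to convergence in $B_{\dR,k,L}^{+}$ — and the elementary bookkeeping in the binomial expansion of $x^{n}$; if one prefers to avoid invoking a coefficient field, the same argument can be organized as an induction on $k$ using the topologically exact sequence $0\to\hat{L}\,t^{k-1}\to B_{\dR,k,L}^{+}\to B_{\dR,k-1,L}^{+}\to 0$.
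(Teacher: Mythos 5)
Your reduction of both convergence statements to ``the general term tends to zero'', and the forward implication via continuity of $\theta$, are fine. The gap is the structural claim that a coefficient field $\hat{L}\hookrightarrow B_{\dR,L}^{+}$ identifies $B_{\dR,k,L}^{+}$ with $\hat{L}[t]/(t^{k})$ \emph{carrying the sup-norm in the basis} $1,t,\dots,t^{k-1}$. A Cohen coefficient field does exist algebraically, but it is never closed, and the resulting ring isomorphism is not a homeomorphism for the natural Banach topology on $B_{\dR,k,L}^{+}$. Indeed, any multiplicative section $s$ of $\theta$ must send a primitive $p^{n}$-th root of unity $\zeta_{p^{n}}$ to the unique $p^{n}$-th root of unity lifting it; already in $B_{\dR,2,L}^{+}$ one computes $s(\zeta_{p^{n}})=[\varepsilon^{1/p^{n}}](1+v_{n}\xi)$ with $\theta(v_{n})=-(\zeta_{p}-1)/p^{n}$, so $s$ carries the bounded set $\{\zeta_{p^{n}}\}_{n}$ to an unbounded set; were the coefficient field closed, the open mapping theorem would force $s$ to be bounded. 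Consequently your last step --- smallness of the $t$-adic coefficients in $\hat{L}$ implies $a_{n}x^{n}\to 0$ in $B_{\dR,k,L}^{+}$ --- is unjustified, and it fails precisely at the crux: the bound $\|s(\theta(x))^{m}\|\le C\,|\theta(x)|^{m}$ you are implicitly using for the single element $s(\theta(x))$ is essentially equivalent to the boundedness of powers that the lemma is really about, so the argument is circular there.

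The paper's proof (quoting \cite[Lemma 4.9]{BC16}) avoids coefficient fields entirely: after rescaling by an element of $E$ with the same valuation as $\theta(x)$ one reduces to $\theta(x)\in\mathcal{O}_{\hat{L}}$, and then one picks \emph{any} lift $x_{0}\in\Ainf(\mathcal{O}_{\hat{L}})$ of $\theta(x)$ and writes $x=x_{0}+ty+t^{k}z$. The binomial expansion of $(x_{0}+ty)^{n}$ modulo $t^{k}$ shows that all powers $x^{n}$ lie in the single bounded subset $\Ainf(\mathcal{O}_{\hat{L}})+y\Ainf(\mathcal{O}_{\hat{L}})+\cdots+y^{k-1}\Ainf(\mathcal{O}_{\hat{L}})+t^{k}B_{\dR,L}^{+}$, which gives the convergence. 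Your binomial bookkeeping is the right idea, but it has to be carried out with an integral lift of $\theta(x)$ in $\Ainf(\mathcal{O}_{\hat{L}})$, not with a (necessarily discontinuous) multiplicative splitting of $\theta$; the same remark applies to your proposed induction on $k$, which would again require either a multiplicative splitting or a controlled lift.
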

\begin{proof}
This is \cite[Lemma 4.9]{BC16}, we quickly review its proof here. Recall that $B_{\dR,k,L}^+$ is a $\mathbb{Q}_p$-Banach space with unit ball $\Ainf(\mathcal{O}_{\hat{L}})$. By enlarging $E$ if necessary we can assume that $E$ contains an element whose valuation is precisely that of $\theta(x)$, hence it suffices to show if $\theta(x)\in \mathcal{O}_{\hat{L}}$, then $\{x^n\}$ is bounded in $B_{\dR,k,L}^+$. Pick $x_0\in \Ainf(\mathcal{O}_{\hat{L}})$ such that $\theta(x_0)=\theta(x)$, then $x=x_0+ty+t^kz$ for some $y\in \Ainf(\mathcal{O}_{\hat{L}})[\frac{1}{p}]$, $z\in B_{\dR,L}^+$. This implies
\begin{equation*}
    x^n=x_0^n+\binom{n}{1} x_{0}^{n-1}\cdot ty+\cdots+ \binom{n}{k-1} x_{0}^{n-(k-1)}\cdot (ty)^{k-1}+t^kz_k
\end{equation*}
for some $z_k\in B_{\dR,L}^+$. Hence $x^n\in (\Ainf(\mathcal{O}_{\hat{L}})+y\Ainf(\mathcal{O}_{\hat{L}})+\cdots+y^{k-1}\Ainf(\mathcal{O}_{\hat{L}}))+t^kB_{\dR,L}^+$ is bounded for any $n\geq 0$.
\end{proof}

\begin{remark}
In the cyclotomic case, a similar result is \cite[Theorem 4.11]{BC16}. In our case, we use $t$ instead of $\log([\varepsilon])$ in the expression as this makes the calculation of "$\gamma$-invariant" elements quite transparent:
\end{remark}
\begin{cor}\label{calculate de rham analytic II}
The following are isomorphisms:
\begin{equation*}
    \begin{split}
        \bigoplus_{i=0}^{k-1}K_{\infty}\cdot t^i&\stackrel{\cong}{\longrightarrow} (B_{\dR,k,L}^+)^{\hat{G}-la,\gamma=1},
        \\\bigoplus_{i=0}^{k-1}L\cdot t^i&\stackrel{\cong}{\longrightarrow} (B_{\dR,k,L}^+)^{\hat{G}-la,\nabla_{\gamma}=0}.
    \end{split}
\end{equation*}
\end{cor}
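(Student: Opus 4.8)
The plan is to read both isomorphisms off \cref{calculation de rham analytic} together with \cref{torsion analytic vector}, by an induction on $k$ parallel to the one proving \cref{calculation de rham analytic}. The one extra input is that $t$ is not merely $\hat{G}$-locally analytic but is fixed by $\Gal(L/K_{\infty})$ and annihilated by $\nabla_{\gamma}$: indeed $\Gal(L/K_{\infty})$ fixes each $\pi_{n}$, hence fixes $\pi^{\flat}$, hence fixes $[\pi^{\flat}]$ and therefore $t=E([\pi^{\flat}])$; so $(g-1)(t)=0$ for $g\in\Gal(L/K_{\infty})$ and a fortiori $\nabla_{\gamma}(t)=0$ (this is already observed inside the proof of \cref{calculation de rham analytic}, where $t\in (B_{\dR,k,L}^{+})^{\tau-la,\gamma=1}$ is checked). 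This is exactly what makes the $t$-adic expansion of \cref{calculation de rham analytic} compatible with imposing $\gamma=1$ or $\nabla_{\gamma}=0$.

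For the base case $k=1$ we have $B_{\dR,1,L}^{+}\cong\hat{L}$. A $\hat{G}$-locally analytic vector is in particular $\tau$-locally analytic, so $\hat{L}^{\hat{G}-la,\gamma=1}\subseteq\hat{L}^{\tau-la,\gamma=1}=K_{\infty}$ by \cref{torsion analytic vector}, while the reverse inclusion is clear since each finite subextension of $K_{\infty}$ is fixed by an open subgroup of $\hat{G}$, hence $\hat{G}$-locally analytic, and is fixed by $\Gal(L/K_{\infty})$. This gives the first isomorphism for $k=1$; the second is precisely $\hat{L}^{\hat{G}-la,\nabla_{\gamma}=0}=L$ from \cref{torsion analytic vector}.

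For the inductive step, assume the statements for $k-1$ and take $y\in(B_{\dR,k,L}^{+})^{\hat{G}-la,\gamma=1}$. The $\hat{G}$-equivariant reduction $\theta\colon B_{\dR,k,L}^{+}\to\hat{L}$ gives $\theta(y)\in\hat{L}^{\hat{G}-la}$, fixed by $\Gal(L/K_{\infty})$, so $\theta(y)\in K_{\infty}$ by the case $k=1$. Viewing $K_{\infty}$ inside $B_{\dR,k,L}^{+}$ via the canonical ($\hat{G}$-equivariant) embedding used in the proof of \cref{calculation de rham analytic}, we get $y-\theta(y)=tz$ with $z\in B_{\dR,k-1,L}^{+}$, and \cref{calculation de rham analytic} at level $k-1$ lets us take $z\in(B_{\dR,k-1,L}^{+})^{\hat{G}-la}$. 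Since $g(t)=t$ for $g\in\Gal(L/K_{\infty})$ and multiplication by $t$ is injective from $B_{\dR,k-1,L}^{+}$ into $B_{\dR,k,L}^{+}$, the relation $t\cdot g(z)=g(tz)=tz$ forces $g(z)=z$; hence $z\in(B_{\dR,k-1,L}^{+})^{\hat{G}-la,\gamma=1}=\bigoplus_{i=0}^{k-2}K_{\infty}t^{i}$ by induction, so $y\in\bigoplus_{i=0}^{k-1}K_{\infty}t^{i}$. The reverse inclusion is immediate because $t$ and $K_{\infty}$ are $\hat{G}$-locally analytic and $\Gal(L/K_{\infty})$-fixed and locally analytic vectors form a ring. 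The case $\nabla_{\gamma}=0$ is identical, replacing "fixed by $\Gal(L/K_{\infty})$" by "killed by $\nabla_{\gamma}$": one uses $\nabla_{\gamma}(t)=0$ and the Leibniz rule to get $t\cdot\nabla_{\gamma}(z)=\nabla_{\gamma}(tz)=0$, hence $\nabla_{\gamma}(z)=0$, and one feeds in $\hat{L}^{\hat{G}-la,\nabla_{\gamma}=0}=L$ (together with the fact that $L$ embeds $\hat{G}$-equivariantly and its elements are $\nabla_{\gamma}$-annihilated) at the bottom of the induction.

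The only place that requires genuine care is this propagation of the extra condition from level $k-1$ to level $k$: that multiplication by $t\colon B_{\dR,k-1,L}^{+}\to B_{\dR,k,L}^{+}$ is injective and $\Gal(L/K_{\infty})$-equivariant (resp.\ commutes with $\nabla_{\gamma}$). This is precisely where $g(t)=t$ for $g\in\Gal(L/K_{\infty})$, and $\nabla_{\gamma}(t)=0$, are essential — and where the corresponding statement would fail for $\tau$, which is why $t$ rather than $\log([\varepsilon])$ is the right uniformizer to use here. Granting \cref{calculation de rham analytic}, everything else is bookkeeping along the $t$-adic filtration.
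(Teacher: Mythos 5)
Your proposal is correct and follows essentially the same route as the paper, whose proof of this corollary is literally "induct on $k$ as in \cref{calculation de rham analytic}, using \cref{torsion analytic vector} for the base case"; you have simply spelled out that induction, correctly isolating the key facts that $t$ is fixed by $\Gal(L/K_{\infty})$ and killed by $\nabla_{\gamma}$ so that the conditions $\gamma=1$ and $\nabla_{\gamma}=0$ propagate through the $t$-adic dévissage. The only cosmetic difference is that you lift $\theta(y)$ via the canonical embedding of $K_{\infty}$ (resp.\ $L$) instead of via the power series $f(\theta_k(\fkt)-\beta_n)$ used in the proof of \cref{calculation de rham analytic}, which is a harmless simplification made possible by the extra invariance hypothesis.
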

\begin{proof}
We can argue by induction on $k$ similarly as in \cref{calculation de rham analytic} using \cref{torsion analytic vector}.
\end{proof}
By taking inverse limits, we have that:
\begin{cor}\label{de rham analytic calculation}
\begin{equation*}
    \begin{split}
     &(B_{\dR,L}^+)^{\hat{G}-la}=(\cup_{n \geq 1} K\left(\mu_{r(n)}, \pi_{r(n)}\right)\left\{\left\{\fkt-\beta_n\right\}\right\}_{n})[[t]],
       \\ &(B_{\dR,L}^+)^{\hat{G}-la,\nabla_{\gamma}=0}= L[[t]],
        \\&(B_{\dR,L}^+)^{\hat{G}-la,\gamma=1}= K_{\infty}[[t]].
    \end{split}
\end{equation*}
\end{cor}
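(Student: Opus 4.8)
The statement is \cref{de rham analytic calculation}, which we obtain from \cref{calculation de rham analytic}, \cref{calculate de rham analytic II}, and \cref{torsion analytic vector} by passing to inverse limits along the tower $\{B_{\dR,k,L}^+\}_{k\geq 1}$. Recall that by \cref{de rham analytic vectors} we have, essentially by definition,
\[
(B_{\dR,L}^+)^{\hat G-la}=\varprojlim_k (B_{\dR,k,L}^+)^{\hat G-la},\qquad
(B_{\dR,L}^+)^{\hat G-la,\gamma=1}=\varprojlim_k (B_{\dR,k,L}^+)^{\hat G-la,\gamma=1},
\]
and similarly for $\nabla_\gamma=0$. So the task is to identify these inverse limits using the level-$k$ descriptions already established. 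The first step is to verify that the transition maps $\theta_{k}\colon (B_{\dR,k+1,L}^+)^{\hat G-la}\to (B_{\dR,k,L}^+)^{\hat G-la}$ are compatible with the presentations of \cref{calculation de rham analytic}: under the identification
\[
(B_{\dR,k,L}^+)^{\hat G-la}\cong \bigoplus_{i=0}^{k-1}\Big(\cup_{n\geq 1}K(\mu_{r(n)},\pi_{r(n)})\{\{\theta_k(\fkt)-\beta_n\}\}_n\Big)\, t^i,
\]
reduction modulo $t^k$ simply truncates the top degree term $i=k$ and sends $\theta_{k+1}(\fkt)$ to $\theta_k(\fkt)$, so the system is visibly surjective. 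Hence the inverse limit is the completed direct sum $\big(\cup_{n}K(\mu_{r(n)},\pi_{r(n)})\{\{\fkt-\beta_n\}\}_n\big)[[t]]$ with $t$ the uniformizer, which is the asserted formula for $(B_{\dR,L}^+)^{\hat G-la}$; here $\fkt$ is the preimage of the compatible system $\{\theta_k(\fkt)\}_k$, which exists and is $\hat G$-locally analytic by \cref{the image of t}.

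For the $\gamma=1$ and $\nabla_\gamma=0$ versions, I would argue identically: the transition maps in \cref{calculate de rham analytic II} are again truncations $\bigoplus_{i=0}^{k}K_\infty t^i\twoheadrightarrow\bigoplus_{i=0}^{k-1}K_\infty t^i$ (resp.\ with $K_\infty$ replaced by $L$), which form a surjective (hence Mittag--Leffler) inverse system, so the inverse limit is $K_\infty[[t]]$ (resp.\ $L[[t]]$). A minor point to record is that taking $\gamma$-invariants or $\ker\nabla_\gamma$ commutes with the inverse limit here because at each finite level these are the kernels of continuous linear operators on Banach spaces and the system of such kernels is again Mittag--Leffler, so no $\varprojlim^1$ obstruction appears; this is already implicit in \cref{de rham analytic vectors}, so I would just cite it.

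The only genuine subtlety — and the step I expect to require the most care — is the compatibility of the \emph{topologies} and of the choices $\{\beta_n\}$ and $\{r_n\}$ across levels: the $\beta_n$ and the radii $r_n$ in \cref{torsion analytic vector} are chosen with respect to the valuation on $\hat L$, and one must check that the same choices work uniformly in $k$ when approximating $\theta_k(\fkt)$ inside $B_{\dR,k,L}^+$, which is exactly what \cref{check convergence after modulo} is designed to supply (convergence in $B_{\dR,k,L}^+$ is detected after applying $\theta$). Granting that, the passage to the limit is formal: each $(B_{\dR,k,L}^+)^{\hat G-la}$ carries its subspace topology from $B_{\dR,k,L}^+$, the projections are continuous and open onto truncations, and the inverse limit of the displayed presentations is the $t$-adically complete ring obtained by allowing arbitrary power series in $t$ with coefficients in $\cup_n K(\mu_{r(n)},\pi_{r(n)})\{\{\fkt-\beta_n\}\}_n$ (and the constant-coefficient versions for the invariant subspaces). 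Assembling these three limits gives the three displayed identities, completing the proof.
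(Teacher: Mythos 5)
Your argument is correct and matches the paper's proof, which consists entirely of the remark ``By taking inverse limits'' applied to \cref{calculation de rham analytic} and \cref{calculate de rham analytic II} via the definitions in \cref{de rham analytic vectors}. You have merely spelled out the surjectivity of the truncation maps and the compatibility of the choices of $\beta_n$ and $r_n$, which the paper leaves implicit.
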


\begin{cor}\label{faithfully flat}
The map $K_{\infty}[[t]]\rightarrow (B_{\dR,L}^+)^{\hat{G}-la}$ is faithfully flat, here we equip $(B_{\dR,L}^+)^{\hat{G}-la}$ with a $K_{\infty}[[t]]$-module structure by sending $t$ to $E([\pi^{\flat}])$ as usual. 
\end{cor}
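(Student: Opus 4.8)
The plan is to deduce the corollary directly from \cref{de rham analytic calculation}, after which the only remaining content is a triviality about power series rings over fields. First I would use that corollary to identify $(B_{\dR,L}^{+})^{\hat{G}-la}$ with the formal power series ring $R[[t]]$, where $R := \hat{L}^{\hat{G}-la} = \bigcup_{n\geq 1} K(\mu_{r(n)},\pi_{r(n)})\{\{\fkt-\beta_n\}\}_n$ and the power-series variable $t$ is, by the standing convention of the paper, the element $E([\pi^{\flat}])$ — which is precisely the element used to equip $(B_{\dR,L}^{+})^{\hat{G}-la}$ with its $K_{\infty}[[t]]$-module structure in the statement. By the remark following \cref{torsion analytic vector}, $R$ is a field, and it contains $L$, hence contains $K_{\infty}$. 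Therefore the structure map of the corollary is nothing but the map $K_{\infty}[[t]]\to R[[t]]$ induced by the field inclusion $K_{\infty}\hookrightarrow R$, sending $t\mapsto t$.

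Next I would observe that $K_{\infty}[[t]]$ and $R[[t]]$ are both discrete valuation rings with uniformizer $t$, and that the above map is a local homomorphism, since it carries $(t)$ into $(t)$. It is injective, a nonzero power series over $K_{\infty}$ remaining nonzero over $R$; as $R[[t]]$ is an integral domain this shows that $R[[t]]$ is $t$-torsion-free as a module over the discrete valuation ring $K_{\infty}[[t]]$, hence flat (a module over a discrete valuation ring is flat if and only if it is torsion-free). Finally, a flat local homomorphism of local rings is faithfully flat — equivalently, $\mathfrak{m}_{K_{\infty}[[t]]}\cdot R[[t]]=(t)\subsetneq R[[t]]$ because the quotient is the nonzero ring $R$ — which gives the claim.

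I do not expect any genuine obstacle here: all the work is hidden in \cref{de rham analytic calculation} (ultimately in the locally analytic vector computations imported from Gao and Poyeton), and once that explicit description is available faithful flatness is purely formal. The one point deserving a line of care is checking that the $K_{\infty}[[t]]$-action defined through $t\mapsto E([\pi^{\flat}])$ agrees with the power-series variable appearing in \cref{de rham analytic calculation}; this holds by our convention $t = E([\pi^{\flat}])$.
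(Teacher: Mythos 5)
Your proof is correct and follows essentially the same route as the paper: both arguments reduce everything to the explicit description of $(B_{\dR,L}^+)^{\hat{G}-la}$ as a power series ring over the field $\hat{L}^{\hat{G}-la}$ from \cref{de rham analytic calculation}, deduce flatness by elementary commutative algebra (you via torsion-freeness over the discrete valuation ring $K_{\infty}[[t]]$, the paper via a Stacks Project flatness criterion), and then upgrade to faithful flatness using that a flat local homomorphism of local rings is faithfully flat. Your version simply spells out the details that the paper leaves to citations.
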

\begin{proof}
 Thanks to \cref{de rham analytic calculation}, the flatness follows using \cite[\href{https://stacks.math.columbia.edu/tag/00MK}{Tag 00MK}]{SP22}, which implies faithful flatness by \cite[\href{https://stacks.math.columbia.edu/tag/00HR}{Tag 00HR}]{SP22}.
\end{proof}
\begin{remark}
Given $X\in \db$, if $X$ is killed by $t^m$, then $X^{\hat{G}-la,\gamma=1}$ is a $(B_{\dR,m,L}^+)^{\hat{G}-la,\gamma=1}$-module, i.e. a $K_{\infty}[t]/t^m$-module thanks to \cref{calculate de rham analytic II}. Passing to inverse limit, we have that for general $X$, $X^{\hat{G}-la,\gamma=1}$ is a $K_{\infty}[[t]]$-module.
\end{remark}

\begin{theorem}\label{decend along locally analytic}
    For any $W\in \db$, $W^{\hat{G}-la}\in \Rep_{\hat{G}}((B_{\dR,L}^+)^{\hat{G}-la})$. Moreover, the natural map 
    \[\beta_W: B_{\dR,L}^+\otimes_{(B_{\dR,L}^+)^{\hat{G}-la}}W^{\hat{G}-la} \longrightarrow W\]
is an isomorphism.
\end{theorem}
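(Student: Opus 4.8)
The plan is to prove \cref{decend along locally analytic} by a $t$-adic dévissage that runs exactly parallel to the proof of \cref{descend along perfectoid L}, with $G_L$-invariants replaced by the functor $(-)^{\hat{G}-la}$ and with Faltings's almost purity replaced by Sen theory along the Kummer tower. First I would reduce to the torsion case. Since $W$ is finite free over the complete DVR $B_{\dR,L}^+$, one has $W=\varprojlim_n W/t^nW$, and by \cref{de rham analytic vectors} $W^{\hat{G}-la}=\varprojlim_n (W/t^nW)^{\hat{G}-la}$, with $\beta_W=\varprojlim_n \beta_{W/t^nW}$. So, granting the torsion case — namely that for every $n$ the module $(W/t^nW)^{\hat{G}-la}$ is finite free over $(B_{\dR,L}^+)^{\hat{G}-la}/t^n=(B_{\dR,n,L}^+)^{\hat{G}-la}$, of rank equal to that of $W$, compatibly in $n$, and that $\beta_{W/t^nW}$ is an isomorphism — the transition maps form a Mittag--Leffler system, so $W^{\hat{G}-la}$ is finite free of the same rank over $(B_{\dR,L}^+)^{\hat{G}-la}$; here I use \cref{de rham analytic calculation}, which identifies that ring with $\hat{L}^{\hat{G}-la}[[t]]$, a complete DVR with uniformizer $t$, so finite projective equals finite free. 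Then $\beta_W$ is an isomorphism because both sides are $t$-adically complete and it is an isomorphism modulo $t$.

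For the torsion case I would induct on $n$. When $n=1$, $W/tW$ is a finite-dimensional $\hat{L}$-semilinear representation of $\hat{G}$ (using $B_{\dR,1,L}^+\cong\hat{L}$), and the needed assertions — that $(W/tW)^{\hat{G}-la}$ is finite free over $\hat{L}^{\hat{G}-la}$ of the right rank, that $\beta_{W/tW}$ is an isomorphism, and that the relevant locally analytic $H^1$ vanishes — are exactly the Sen theory over the Kummer tower for $\mathbb{C}_p$-representations worked out by Gao, following Berger--Colmez and Gao--Poyeton, on the basis of the computations recalled in \cref{torsion analytic vector} and \cref{calculation de rham analytic}. For the inductive step I would use the topologically exact sequence $0\to tW\to W\to W/tW\to 0$ with $tW$ and $W/tW$ killed by $t^{n-1}$: the $H^1$-vanishing (propagated along the induction, as in \cref{descend along perfectoid L}) makes $(-)^{\hat{G}-la}$ exact on these objects, giving a short exact sequence of $(B_{\dR,n,L}^+)^{\hat{G}-la}$-modules $0\to (tW)^{\hat{G}-la}\to W^{\hat{G}-la}\to (W/tW)^{\hat{G}-la}\to 0$, hence finiteness of $W^{\hat{G}-la}$; a five-lemma argument in the diagram comparing this sequence with $0\to tW\to W\to W/tW\to 0$, together with flatness of $B_{\dR,L}^+$ over the complete DVR $(B_{\dR,L}^+)^{\hat{G}-la}$ (both with uniformizer $t$, the extension being a torsion-free module over a PID) and hence of the $\mathrm{mod}\ t^n$ quotients, shows $\beta_W$ is an isomorphism; and the $H^1$-vanishing for $W$ is recovered from the long exact cohomology sequence. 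Finally, continuity of the $\hat{G}$-action on $W^{\hat{G}-la}$ follows exactly as in the proof of \cref{de rham almost purity}: by \cref{calculation de rham analytic}, \cref{compatibility I} and \cref{topology compatibility} the natural topology of $W^{\hat{G}-la}$ as a finite module over $(B_{\dR,L}^+)^{\hat{G}-la}$ agrees with its subspace topology from $W$, on which $G_K\supseteq\hat{G}$ acts continuously.

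Alternatively, one can streamline the torsion case by invoking a Berger--Colmez-style decompletion theorem directly with $\widetilde{B}=B_{\dR,n,L}^+$ a $\mathbb{Q}_p$-Banach algebra, after checking the Tate--Sen axioms for the Kummer tower; but the content is the same. The main obstacle is precisely this torsion base case: establishing the Sen-type descent $\hat{L}\otimes_{\hat{L}^{\hat{G}-la}}(W/tW)^{\hat{G}-la}\xrightarrow{\sim}W/tW$ together with the accompanying vanishing of locally analytic $H^1$, which is where the Tate--Sen formalism over the Kummer tower (normalized traces, almost étale descent) does the real work. A secondary technical point, easy but unavoidable, is tracking the various topologies so that the $\hat{G}$-action on $W^{\hat{G}-la}$ stays continuous and the displayed short exact sequences are topologically exact.
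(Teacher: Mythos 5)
Your dévissage is a legitimate strategy, but it is genuinely different from what the paper does. The paper's proof contains no induction on the power of $t$ at all: it invokes the tensor-factorization $W^{\hat{G}-la}=(W^{\Gal(L/K_{p^\infty})})^{\Gamma_{K}-la}\otimes_{(B_{\dR,K_{p^\infty}}^+)^{\Gamma_{K}-la}}(B_{\dR,L}^+)^{\hat{G}-la}$ from Gao--Poyeton, which (combined with $W=W^{\Gal(L/K_{p^\infty})}\otimes_{B_{\dR,K_{p^\infty}}^+}B_{\dR,L}^+$) reduces $\beta_W$ being an isomorphism to descent along the one cyclotomic direction only; since $\Gamma_K$ is a one-dimensional $p$-adic Lie group, its locally analytic vectors are the classical $K$-finite vectors by Berger--Colmez, and the statement becomes exactly Fontaine's decompletion theorem for $B_{\dR}^+$-representations. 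Finite generation is then deduced from $\beta_W$ by faithfully flat descent along $(B_{\dR,L}^+)^{\hat{G}-la}\to B_{\dR,L}^+$. What the paper's route buys is that it never has to confront the two-dimensional group $\hat{G}$ directly; what your route buys is uniformity with the proofs of \cref{descend along perfectoid L} and \cref{Main theorem III}, at the cost of carrying the full Kummer-tower Tate--Sen machinery in the residue case.

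One caveat on your inductive step: the "vanishing of locally analytic $H^1$" that you propagate is not something you can simply quote, since $(-)^{\hat{G}-la}$ is only left exact in general and its higher derived functors need not vanish. You do not actually need it, though: once $\beta_{tW}$ and $\beta_{W/tW}$ are isomorphisms, left exactness of $(-)^{\hat{G}-la}$ plus a length count over the ring $\hat{L}^{\hat{G}-la}[t]/(t^n)$ (using that $(B_{\dR,L}^+)^{\hat{G}-la}\to B_{\dR,L}^+$ is faithfully flat, \cref{faithfully flat}, so lengths are preserved) forces $W^{\hat{G}-la}\to (W/tW)^{\hat{G}-la}$ to be surjective, and the five lemma then closes the induction. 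If you rephrase that step this way, and take the $n=1$ descent isomorphism over $\hat{L}$ from the Berger--Colmez/Gao--Poyeton Tate--Sen formalism for $\hat{G}$ as you indicate, your argument goes through.
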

\begin{proof}
As the map $(B_{\dR,L}^+)^{\hat{G}-la}\rightarrow (B_{\dR,L}^+)$ is faithfully flat, to show $W^{\hat{G}-la}$ is finitely generated over $(B_{\dR,L}^+)^{\hat{G}-la}$, it suffices to check this after scalar extension to $B_{\dR,L}^+$. Hence it reduces to show that $\beta_W$ is an isomorphism. For this, we notice that by \cite[Prop. 3.1.6]{GP21}
\begin{equation*}
    W^{\hat{G}-la}=(W^{\Gal(L/K_{p^\infty})})^{\Gamma_{K}-la}\otimes_{(B_{\dR,K_{p^\infty}}^+)^{\Gamma_{K}-la}} (B_{\dR,L}^+)^{\hat{G}-la}.
\end{equation*}
On the other hand, 
\begin{equation*}
    W=W^{\Gal(L/K_{p^\infty})}\otimes_{B_{\dR,K_{p^\infty}}^+} B_{\dR,L}^+.
\end{equation*}
Hence it suffices to show that 
\begin{equation*}
    (W^{\Gal(L/K_{p^\infty})})^{\Gamma_{K}-la}\otimes_{(B_{\dR,K_{p^\infty}}^+)^{\Gamma_{K}-la}} B_{\dR,K_{p^\infty}}^+ =W^{\Gal(L/K_{p^\infty})}.
\end{equation*}
But as $\Gamma_K$ is a $p$-adic Lie group of dimension $1$, $\Gamma_K$-locally analytic vectors coincide with classical $K$-finite vectors thanks to \cite[Thm. 3.2]{BC16}, hence 
\[(W^{\Gal(L/K_{p^\infty})})^{\Gamma_{K}-la}=(W^{\Gal(L/K_{p^\infty})})_f,\]
where the latter is defined in \cite[Section 3.3]{Fon04}, hence the desired result follows from \cite[Thm.3.6]{Fon04}.
\end{proof}
\begin{remark}\label{remark on mod n}
As a consequence, one can see that for $W\in \db$, $W^{\hat{G}-la}/t^n\cong (W/t^n)^{\hat{G}-la}$.
\end{remark}

Now we are ready to state our main theorem in this section, which should be viewed as a lifting of Sen's decompletion theory for Kummer tower developed by Gao in \cite{Gao22}. 
\begin{theorem}\label{Main theorem III}
    For any $W\in \db$,  $W^{\hat{G}-la,\gamma=1}$ is a finitely generated $K_{\infty}[[t]]$-module. Moreover, the natural map 
\[\alpha_W: (B_{\dR,L}^+)^{\hat{G}-la}\otimes_{K_{\infty}[[t]]}W^{\hat{G}-la,\gamma=1} \longrightarrow W^{\hat{G}-la}\]
is an isomorphism.
\end{theorem}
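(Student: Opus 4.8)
The plan is to reduce to the $t$-torsion case and then pass to the inverse limit, mimicking the structure of the proof of \cref{descend along perfectoid L}. Concretely, for each $n\geq 1$ set $W_n = W/t^nW$, so that $W = \varprojlim W_n$ and, by \cref{remark on mod n}, $W^{\hat G\text{-la}} = \varprojlim (W_n)^{\hat G\text{-la}}$ and likewise $W^{\hat G\text{-la},\gamma=1} = \varprojlim (W_n)^{\hat G\text{-la},\gamma=1}$. The key point is to prove the statement first for torsion objects: if $W$ is killed by a power of $t$, then $W^{\hat G\text{-la},\gamma=1}$ is a finitely generated module over $(B_{\dR,L}^+)^{\hat G\text{-la},\gamma=1} = K_\infty[[t]]/(t^m)$ (using \cref{de rham analytic calculation}), and the natural map
\[
\alpha_W : (B_{\dR,L}^+)^{\hat G\text{-la}} \otimes_{K_\infty[[t]]} W^{\hat G\text{-la},\gamma=1} \longrightarrow W^{\hat G\text{-la}}
\]
is an isomorphism. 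I would prove this by induction on the smallest $m$ with $t^mW=0$. The base case $tW=0$ is Sen's decompletion for the Kummer tower (Gao's theorem, \cite[Section 3]{Gao22}): $W\in\Rep_{\mathbb C_p}(G_K)$, and passing to $\hat G$-locally analytic $\gamma$-invariants recovers a finitely generated $K_\infty$-module with $\hat L^{\hat G\text{-la}}\otimes_{K_\infty}(\cdot)\cong W^{\hat G\text{-la}}$. The inductive step uses the topologically exact sequence $0\to tW\to W\to W/tW\to 0$: applying $(\cdot)^{\hat G\text{-la}}$ and then taking $\gamma$-invariants, one needs the vanishing of $H^1(\Gal(L/K_\infty), (tW)^{\hat G\text{-la}})$ — equivalently that $\gamma$-invariants are exact on torsion $\hat G$-la objects — which follows from the fact that, over $\hat L^{\hat G\text{-la}}$ (a field by \cite[Lemma 2.5]{BC16}), $\Gamma_K\cong\Gal(L/K_\infty)$-cohomology of finite-dimensional representations vanishes in positive degrees by Tate's computation together with \cite[Thm.~3.2]{BC16}. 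Then the five lemma applied to the diagram comparing the scalar-extended exact sequence of $\gamma$-invariants with $0\to (tW)^{\hat G\text{-la}}\to W^{\hat G\text{-la}}\to (W/tW)^{\hat G\text{-la}}\to 0$ gives that $\alpha_W$ is an isomorphism, and flatness of $K_\infty[[t]]/(t^m)\to (B_{\dR,L}^+)^{\hat G\text{-la}}$ (from \cref{faithfully flat}) upgrades finite generation after base change to finite generation of $W^{\hat G\text{-la},\gamma=1}$ itself.

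For general (non-torsion) $W$, I would run the Mittag--Leffler argument exactly as in the last part of the proof of \cref{descend along perfectoid L}. Since $\gamma$-invariants are exact on torsion objects, applying $(\cdot)^{\hat G\text{-la},\gamma=1}$ to $W_m \xrightarrow{t^n} W_m \to W_n \to 0$ stays right exact, and the torsion terms $(W_m)^{\hat G\text{-la},\gamma=1}$ are finitely generated over $K_\infty[[t]]/(t^m)$, so they form a Mittag--Leffler system; passing to $\varprojlim_m$ preserves right exactness and yields $W^{\hat G\text{-la},\gamma=1}/t^n \cong (W/t^nW)^{\hat G\text{-la},\gamma=1}$ for all $n$. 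Taking $n=1$: $W^{\hat G\text{-la},\gamma=1}/t \cong (W/tW)^{\hat G\text{-la},\gamma=1}$ is a finite-dimensional $K_\infty$-vector space by the torsion case. Since $W^{\hat G\text{-la},\gamma=1}$ is $t$-adically separated and complete (it is a closed submodule of the $t$-adically complete $W$, using \cref{topology compatibility} and that $K_\infty[[t]]$-completion is compatible with the $B_{\dR,L}^+$-structure), Nakayama's lemma gives that $W^{\hat G\text{-la},\gamma=1}$ is finitely generated over $K_\infty[[t]]$. Finally, both sides of $\alpha_W$ are finitely generated over $(B_{\dR,L}^+)^{\hat G\text{-la}}$, hence $t$-adically complete and separated, so by derived Nakayama it suffices to check $\alpha_W/t$ (in the derived sense) is an isomorphism; since $W^{\hat G\text{-la}}$ is $t$-torsion-free ($W$ is non-torsion and $(\cdot)^{\hat G\text{-la}}$ preserves this), $\alpha_W/t$ is the ordinary reduction, which is the torsion-case isomorphism for $W/tW$ combined with $W^{\hat G\text{-la},\gamma=1}/t\cong(W/tW)^{\hat G\text{-la},\gamma=1}$.

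The main obstacle I anticipate is the exactness of $\gamma$-invariants on torsion $\hat G$-locally analytic modules — i.e.\ the vanishing of $H^1(\Gal(L/K_\infty), (tW)^{\hat G\text{-la}})$. This is the analogue, for the Kummer/$\hat G$ tower, of the vanishing $H^1(G_L, \cdot)=0$ used in \cref{descend along perfectoid L}, but here one cannot invoke Faltings' almost purity directly; instead one must work inside the ring $(B_{\dR,L}^+)^{\hat G\text{-la}}$ and use that after decompletion the relevant cohomology is that of the $1$-dimensional $p$-adic Lie group $\Gal(L/K_\infty)$ acting on a finite free module, where \cite[Thm.~3.2]{BC16} reduces locally analytic vectors to $K$-finite vectors and the classical Sen/Tate vanishing applies. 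A secondary subtlety is checking that the natural topologies (as finitely generated $K_\infty[[t]]$-, $(B_{\dR,L}^+)^{\hat G\text{-la}}$-, and $B_{\dR,L}^+$-modules) are mutually compatible so that the inverse-limit and Nakayama arguments are legitimate — but this is routine given \cref{topology compatibility} and \cref{de rham almost purity}.
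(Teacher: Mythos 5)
Your overall architecture (settle the $t$-power-torsion case, then pass to the limit via a Mittag--Leffler argument and Nakayama) matches the paper's Step 3, and your base case $tW=0$ is indeed Gao's decompletion theorem. The gap is in the inductive step for torsion modules: you assert that $H^1(\Gal(L/K_\infty),(tW)^{\hat G\text{-la}})$ vanishes ``by Tate's computation,'' but Tate's computation gives the opposite conclusion in the critical case. Already for the trivial representation one has $H^1(\Gamma_K,\hat K_{p^\infty})\cong K\cdot\log\chi\neq 0$, and more generally the (locally analytic) $\Gamma$-cohomology of a decompleted Sen module is $\operatorname{coker}(\nabla_{\gamma})$, which is nonzero whenever $0$ occurs as a Sen weight. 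So ``$\gamma$-invariants are exact on torsion $\hat G$-la objects'' is false, the top row of your five-lemma diagram need not be right exact, and the five lemma cannot be applied. The surjectivity of $W^{\hat G\text{-la},\gamma=1}\to (W/tW)^{\hat G\text{-la},\gamma=1}$ that you need is \emph{true}, but it is a consequence of the theorem rather than an input available from general cohomological vanishing; I do not see how to obtain it within your framework without essentially reproving the torsion case directly.

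The paper's proof of the torsion case avoids this by splitting the descent into two steps neither of which is an $H^1$-vanishing for the $p$-adic Lie group $\Gal(L/K_\infty)$. First one passes from $(B_{\dR,k,L}^+)^{\hat G\text{-la}}$ down to the $\nabla_{\gamma}$-flat sections by explicitly trivializing the connection: with $\partial_{\gamma}=\frac{1}{\theta_k(\fkt)}\nabla_{\gamma}$ and $D_s$ the matrix of $\partial_{\gamma}^s$ on a generating set, the convergent series $H=\sum_{s\geq 0}(-1)^sD_s\frac{(\theta_k(\fkt)-\beta_n)^s}{s!}$ (for $n\gg 0$, convergence checked mod $\theta$ via \cref{check convergence after modulo}) satisfies $\partial_{\gamma}(H)+D_{\gamma}H=0$ and produces a $\nabla_{\gamma}$-horizontal basis. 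Only then does one take $\gamma$-invariants, and at that stage the coefficients form a finite-dimensional $L$-vector space with essentially discrete action, so ordinary \'etale Galois descent from $L$ to $K_\infty$ (Gao, Theorem 3.3.1, i.e.\ Hilbert 90) applies and the relevant $H^1$ genuinely vanishes. If you want to keep your inductive structure, you would need to replace the claimed cohomological vanishing by an order-by-order solution of the connection equation in the $t$-adic filtration --- which is exactly what the single series $H$ accomplishes.
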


In the following, for $W\in \db$, we write $D_{\mathrm{Sen},K_{\infty}[[t]]}(W):=W^{\hat{G}-la,\gamma=1}$.

Our strategy is to treat the torsion case first, imitating Gao's proof in \cite[Proposition 3.2.7]{Gao22} for the $t$-torsion case. In particular, this should be viewed as a higher level $\theta_k$-specialization of \cite[Remark 6.1.7]{GP21}. Hence we need several preliminaries.

\begin{lemma}\label{the image of t}
For $k\geq 1$, under the specialization map $\theta_k: \Ainf(\mathcal{O}_{\hat{L}})[\frac{1}{p}]\rightarrow \Ainf(\mathcal{O}_{\hat{L}})[\frac{1}{p}]/(\xi^k)$, $\theta_k(\mathfrak{t})\in (B_{\dR,k,L}^+)^{\hat{G}-la}$ and $\theta_k(\mathfrak{t})^{-1}\in (B_{\dR,k,L}^+)^{\hat{G}-la}$, here $\mathfrak{t}$ is defined in \cref{torsion analytic vector}.
\end{lemma}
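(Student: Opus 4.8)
The plan is to show that $\mathfrak{t}=\frac{\log([\varepsilon])}{p\lambda}$ and its inverse are $\hat{G}$-locally analytic in each truncation $B_{\dR,k,L}^+$, by reducing to the already-established $k=1$ case (part of \cref{torsion analytic vector}) together with the fact, proven inside \cref{calculation de rham analytic}, that $t=E([\pi^\flat])$ is itself locally analytic in $B_{\dR,k,L}^+$ (indeed $t\in (B_{\dR,k,L}^+)^{\tau-la,\gamma=1}$). The key algebraic input is that $\log([\varepsilon])=t_{\mathrm{cyc}}$, the usual cyclotomic period, lies in $B_{\dR,L}^+$ and that $\lambda=\prod_{n\geq 0}\varphi^n(E(u)/E(0))$ is a unit in $B_{\mathrm{cris}}^+\subseteq B_{\dR,L}^+$; so $\mathfrak t$ and $\mathfrak t^{-1}$ both make sense in $B_{\dR,L}^+$, and it suffices to check local analyticity of their images $\theta_k(\mathfrak t)$, $\theta_k(\mathfrak t)^{-1}$ in $B_{\dR,k,L}^+$.

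First I would treat $\log([\varepsilon])$. One has $(\tau-1)([\varepsilon])=([\varepsilon]-1)[\varepsilon]^{?}$; more precisely $\tau$ fixes $[\varepsilon]$ while $\gamma([\varepsilon])=[\varepsilon]^{\chi(\gamma)}$, so $\log([\varepsilon])$ transforms by $\gamma\mapsto \chi(\gamma)\log([\varepsilon])$ and is $\tau$-invariant. Since $[\varepsilon]-1$ is a generator of $\Ker\theta$ up to a unit, $(\gamma-1)^i$ applied to $\log[\varepsilon]$ lands in $t^i B_{\dR,k,L}^+$ after truncation, and by \cite[Lemma 3.1.7]{GP21} (the same criterion used for $[\pi^\flat]$ in \cref{calculation de rham analytic}) we get $\log([\varepsilon])\in (B_{\dR,k,L}^+)^{\hat G-la}$. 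For $\lambda$: since $E([\pi^\flat])=t$ and $\varphi^n(E(u)/E(0))$ involves only $\varphi^n([\pi^\flat])=[\pi^{\flat,p^n}]$, each Galois element acts on $\lambda$ through its action on these Teichmüller elements, which are $\tau$-la and $\gamma$-fixed by the argument for $[\pi^\flat]$; hence $\lambda$ (and, being a unit, $\lambda^{-1}$) lies in $(B_{\dR,k,L}^+)^{\hat G-la}$. Because $(B_{\dR,k,L}^+)^{\hat G-la}$ is a ring by \cite[Lemma 2.5]{BC16}, the product $\mathfrak t=\frac{1}{p}\log([\varepsilon])\cdot\lambda^{-1}$ lies in it. For the inverse: $\theta(\mathfrak t)=\frac{\log[\varepsilon]}{p\lambda}$ reduces to a nonzero element of $\hat L$ (in fact to $t_{\mathrm{cyc}}/(p\lambda_0)$-type expression, nonzero since $t_{\mathrm{cyc}}\notin \Ker\theta$... wait, $\log[\varepsilon]=t_{\mathrm{cyc}}\in\Ker\theta$), so one must instead observe that $\mathfrak t$ is a unit in $B_{\dR,k,L}^+$ because $t_{\mathrm{cyc}}=\log[\varepsilon]$ and $t=E([\pi^\flat])$ are both uniformizers of $B_{\dR,L}^+$, so $\log[\varepsilon]/t\in (B_{\dR,L}^+)^\times$; writing $\mathfrak t = (t/p\lambda)\cdot(\log[\varepsilon]/t)$ exhibits $\mathfrak t$ as a unit, and its inverse is again a product/quotient of locally analytic elements, hence locally analytic.

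The main obstacle I anticipate is the bookkeeping in the last step: being careful that $\mathfrak t$ really is a unit in $B_{\dR,k,L}^+$ (not merely in $B_{\dR,L}^+$) and that inverting a locally analytic unit stays within $(B_{\dR,k,L}^+)^{\hat G-la}$. For the latter, since $(B_{\dR,k,L}^+)^{\hat G-la}$ is closed under the ring operations and $\mathfrak t$ is a unit with $\theta_k(\mathfrak t)$ a unit in $B_{\dR,k,L}^+$, one can write $\mathfrak t^{-1}$ as a convergent series in $(\mathfrak t - \mathfrak t_0)$ for a suitable scalar approximation $\mathfrak t_0$ and invoke \cref{check convergence after modulo} exactly as in the proof of \cref{calculation de rham analytic} to guarantee convergence in $B_{\dR,k,L}^+$; each partial sum is locally analytic, and the limit is too since the space of $\hat G_m$-analytic vectors is complete. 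This closes the argument; the $k\geq 2$ cases follow by the same truncation-by-truncation induction as everywhere else in this subsection.
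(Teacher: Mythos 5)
Your decomposition $\mathfrak t=\frac{1}{p}\log([\varepsilon])\cdot\lambda^{-1}$ is a genuinely different route from the paper's, which instead uses the functional equation $\varphi(\mathfrak t)=\frac{pE(u)}{E(0)}\mathfrak t$ to write $\mathfrak t=\varphi^{-n}(\mathfrak t)\cdot\prod_{i=1}^n\varphi^{-i}\bigl(\frac{pE(u)}{E(0)}\bigr)$, imports the local analyticity of $\varphi^{-n}(\mathfrak t)$ in the ring $\Ainf(\mathcal{O}_{\hat L})\langle\frac{[\pi^\flat]^{ep^n}}{p},\frac{p}{[\pi^\flat]^{ep^n}}\rangle[\frac1p]$ from \cite[Lemma 5.1.1]{GP21}, and then only has to check local analyticity of the \emph{finitely many} correction factors $\theta_k(\varphi^{-i}(E(u)))$ by the $(\log\tau^i)^j(x)\in t^jB_{\dR,k,L}^+$ criterion. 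Your treatment of $\log([\varepsilon])$ is fine (the whole of $\hat G$ acts through $\chi$, so the orbit map is visibly analytic), and your observation that $\mathfrak t$ is a unit because $\log[\varepsilon]$ and $t$ are both uniformizers while $t/\lambda=\lambda_1^{-1}$ is a unit is correct and matches the paper's appeal to $\theta(\mathfrak t)\neq 0$.

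The genuine gap is the step for $\lambda$. First, a local error: $\lambda=\prod_{n\geq 0}\varphi^n(E(u)/E(0))$ is \emph{not} a unit in $B_{\dR,L}^+$ — its $n=0$ factor is $t/E(0)\in\Ker\theta$ — so the parenthetical ``(and, being a unit, $\lambda^{-1}$)'' is false; only $\lambda_1=\lambda/t$ is a unit, as your final rearrangement implicitly uses. More seriously, the assertion that $\lambda$ (or $\lambda_1$) lies in $(B_{\dR,k,L}^+)^{\hat G-la}$ ``because each Galois element acts through the Teichmüller elements, which are $\tau$-la'' does not follow: $(B_{\dR,k,L}^+)^{\hat G-la}$ is a ring, so it is closed under \emph{finite} products, but it is an increasing union $\bigcup_m(B_{\dR,k,L}^+)^{\hat G_m-an}$ and is not closed under limits or infinite products unless one controls the radius of analyticity uniformly in $n$ and checks convergence in the $\hat G_m$-analytic norm for a single $m$. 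Establishing exactly this kind of uniform estimate for $\lambda$-type infinite products is the substantive content of \cite[Section 5.1]{GP21}, which is precisely what the paper's proof invokes and what your argument silently assumes. Either supply the uniform bound (e.g.\ by showing that modulo $t^k$ each factor $\varphi^n(E(u)/E(0))$ is $\hat G_{m_0}$-analytic for an $m_0$ independent of $n$, with $\hat G_{m_0}$-analytic norms tending to $1$), or cite \cite[Lemma 5.1.1]{GP21} as the paper does; without one of these the proof is incomplete at its central point.
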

\begin{proof}
$\theta_k(\mathfrak{t})$ is nonzero as $\theta(\mathfrak{t})\neq 0$ by \cite[Lemma 3.2.1]{Gao22}.

To prove analyticity of $\theta_k(\mathfrak{t})$, we start from \cite[Lemma 5.1.1]{GP21}, which tells us that there exists $n=n(\fkt)$ such that $\fkt\in (\Ainf(\mathcal{O}_{\hat{L}})\langle\frac{[\pi^\flat]^{ep^n}}{p},\frac{p}{[\pi^\flat]^{ep^n}}\rangle[\frac{1}{p}])^{\hat{G}-la}$, hence implies the analyticity of $\fkt$ under 
\[\Ainf(\mathcal{O}_{\hat{L}})\langle\frac{[\pi^\flat]^{ep^n}}{p},\frac{p}{[\pi^\flat]^{ep^n}}\rangle[\frac{1}{p}]\stackrel{\varphi^{-n}}{\longrightarrow}B_{\dR,L}^+\stackrel{\theta_k}{\longrightarrow} B_{\dR,k,L}^+.\]
In other words, $\theta_k(\varphi^{-n}(\fkt))\in (B_{\dR,k,L}^+)^{\hat{G}-la}$. On the other hand, recall $\mathfrak{t}$ satisfies that $\varphi(\mathfrak{t})=\frac{pE(u)}{E(0)}\mathfrak{t}$, hence 
\[\fkt = \varphi^{-n}(\fkt) \cdot \prod_{i=1}^n \varphi^{-i}(\frac{pE(u)}{E(0)}),\]
so it suffices to check $\theta_{k}(\varphi^{-i}(\frac{pE(u)}{E(0)}))$ is locally analytic in $B_{\dR,k,L}^+$. For this, we use the same trick as in the proof of \cref{calculation de rham analytic}. Let $x=\varphi^{-i}(u)$, then $x$ is $\Gal_{L/K_{\infty}}$-invariant, and 
\[\tau^i(x)=[\varepsilon]x,\]
from which we can deduce that $(\log (\tau^i))^j(x)\in t^j B_{\dR,k,L}^+$ by induction on $j$. Then notice that $\tau^i$ is a topological generator of $\Gal_{L/K_{p^{\infty}}}$, we have that $x\in (B_{\dR,k,L}^+)^{\tau-la}$, which further implies that $x\in (B_{\dR,k,L}^+)^{\hat{G}-la}$.
Now we conclude that $\theta_k(\mathfrak{t})\in (B_{\dR,k,L}^+)^{\hat{G}-la}$. The analyticity of $\theta_k(\frac{1}{\mathfrak{t}})$ can be proven similarly.
\end{proof}

\begin{proof}[Proof of \cref{Main theorem III}]

Consider the torsion case first, i.e. suppose $W\in \db$ is killed by $t^k$ for some $k$. We proceed in two steps:
\begin{itemize}
    \item First we show $W^{\hat{G}-la,\nabla_{\gamma}=0}$ satisfies that
    \begin{equation}\label{step I isomorphism}
        W^{\hat{G}-la,\nabla_{\gamma}=0}\otimes_{(B_{\dR,k,L}^+)^{\hat{G}-la,\nabla_{\gamma}=0}} (B_{\dR,k,L}^+)^{\hat{G}-la}\stackrel{\cong}{\longrightarrow}W^{\hat{G}-la}.
    \end{equation}
    \item Secondly we show
    \begin{equation*}
        W^{\hat{G}-la,\gamma=1}\otimes_{(B_{\dR,k,L}^+)^{\hat{G}-la,\gamma=1}} (B_{\dR,k,L}^+)^{\hat{G}-la,\nabla_{\gamma}=0}\stackrel{\cong}{\longrightarrow} W^{\hat{G}-la,\nabla_{\gamma}=0}.
    \end{equation*}
\end{itemize}
Combine these two identities together we see $\alpha_W$ is an isomorphism (for $W$ killed by a power of $t$).

\textbf{Step 1} Define $\partial_{\gamma}$ to be the $\theta_k$-specialization of that in \cite[5.3.4]{GP21}, i.e. $\partial_{\gamma}=\frac{1}{\theta_k(\fkt)}\nabla_{\gamma}$. Then $\partial_{\gamma}(\theta_k(\fkt))=1$ as $\nabla_{\gamma}(\fkt)=\fkt$. Fix a minimal generating set of $\{x_1,\cdots,x_l\}$, on which $\partial_{\gamma}$ acts via $D_{\gamma}\in \mat((B_{\dR,k,L}^+)^{\hat{G}-la})$. Let $\underline e:=(x_1,\cdots,x_l)$. It suffices to show surjectivity of $M^{\nabla_{\gamma}=0} \otimes_{(B_{\dR,k,L}^+)^{\hat{G}-la,\nabla_{\gamma}=0}} (B_{\dR,k,L}^+)^{\hat{G}-la}\longrightarrow M$, which can be reduced to finding a matrix $H\in \GL_{l}((B_{\dR,k,L}^+)^{\hat{G}-la})$ such that 
\[\partial_{\gamma}(H)+D_{\gamma} H=0.\]
We claim that $H=\sum_{s\geq 0}(-1)^{n} D_{s} \frac{(\theta_k(\fkt)-\beta_{n})^{k}}{k !}$ works for some $n$ large enough, here $D_s$ is defined to be the matrix via which $\partial_{\gamma}^s$ acts on $\underline e$.
Actually,
\begin{equation*}
    \begin{split}
        \partial_{\gamma}(\underline e H)&=\partial_{\gamma}(\underline e)H+\underline e\partial_{\gamma}(H)
        \\&=\underline e \sum_{s\geq 0}(-1)^{s} D_{s+1} \frac{(\theta_k(\fkt)-\beta_n)^{s}}{s !}+\underline e\sum_{s\geq 1}(-1)^{s} D_{s} \frac{(\theta_k(\fkt)-\beta_n)^{s-1}}{(s-1) !}
        \\&=0.
    \end{split}
\end{equation*}
Also, for large enough $n$, $H$ converges as this can be checked after modulo $\theta$ thanks to \cref{check convergence after modulo}.

\textbf{Step 2} By step 1, $\underline e H$ forms a minimal generating set $\{y_1,\cdots,y_l\}$ for $(B_{\dR,k,L}^+)^{\hat{G}-la,\nabla_{\gamma}=0}$-module $W^{\hat{G}-la,\nabla_{\gamma}=0}$. Suppose $t^{m_l}$ is the proper annilator of $y_l$. Then if we view $W^{\hat{G}-la,\nabla_{\gamma}=0}$ as a $L$-vector space thanks to \cref{calculate de rham analytic II}, then $\{t^{n_l}y_l\}_{0\leq n_l \leq m_l-1}$ forms a $L$-basis. By \'etale descent as shown in \cite[Thm 3.3.1]{Gao22} , we have that $W^{\hat{G}-la,\nabla_{\gamma}=0,\gamma=1}$ is a $K_{\infty}$-space of dimension $\sum{m_l}$ and that $W^{\hat{G}-la,\nabla_{\gamma}=0,\gamma=1}\otimes_{K_{\infty}} L=W^{\hat{G}-la,\nabla_{\gamma}=0}$. Clearly $W^{\hat{G}-la,\nabla_{\gamma}=0,\gamma=1}=W^{\hat{G}-la,\gamma=1}$ is a $(B_{\dR,k,L}^+)^{\hat{G}-la,\gamma=1}$-module as $t$ is fixed by $\Gal({L/K_{\infty}})$. Thanks to \cref{calculate de rham analytic II} again, this implies that 
\begin{equation}\label{step II isomorphism}
    W^{\hat{G}-la,\gamma=1}\otimes_{(B_{\dR,k,L}^+)^{\hat{G}-la,\gamma=1}} (B_{\dR,k,L}^+)^{\hat{G}-la,\nabla_{\gamma}=0}\stackrel{\cong}{\longrightarrow} W^{\hat{G}-la,\nabla_{\gamma}=0}.
\end{equation}

\textbf{Step 3} For general $W$ (not necessarily $t$-power torsion), recall that by definition, 
\begin{equation*}
    W^{\hat{G}-la,\gamma=1}=\varprojlim (W/t^nW)^{\hat{G}-la,\gamma=1}.
\end{equation*}
Moreover, by our proof in step 1 and step 2, we see that each $(W/t^nW)^{\hat{G}-la,\gamma=1}$ is a finitely generated $K_{\infty}[[t]]/t^n$-module such that 
$$(W/t^{n+1}W)^{\hat{G}-la,\gamma=1}/t^{n}\stackrel{\cong}{\longrightarrow} (W/t^nW)^{\hat{G}-la,\gamma=1}.$$ 
Hence $W^{\hat{G}-la,\gamma=1}$ is a finitely generated $K_{\infty}[[t]]$-module and that $W^{\hat{G}-la,\gamma=1}/t^{n}\cong (W/t^nW)^{\hat{G}-la,\gamma=1}$ by \cite[\href{https://stacks.math.columbia.edu/tag/09B8}{Tag 09B8}]{SP22}. Togethor with \cref{remark on mod n}, we see that $\alpha_W$ mod $t^n$ is precisely $\alpha_{W/t^n}$, hence an isomorphism by combining \cref{step I isomorphism} and \cref{step II isomorphism}. Iterating all $n$ we conclude that $\alpha_W$ is an isomorphism.
\end{proof}
\begin{theorem}\label{combine three theorems}
Given $W\in \rb$, by abuse of notation we still define
\[D_{\mathrm{Sen},K_{\infty}[[t]]}(W):=(W^{G_L})^{\tau-la,\gamma=1}.\]
Then this is a finitely generated  $K_{\infty}[[t]]$-module such that 
\[D_{\mathrm{Sen},K_{\infty}[[t]]}(W)\otimes_{K_{\infty}[[t]]}B_{\dR}^+=W.\]
\end{theorem}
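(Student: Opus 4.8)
The plan is to combine the two descent steps already in place. First I would apply \cref{de rham almost purity}: for $W\in \rb$, the module $W^{G_L}$ is a finitely generated $B_{\dR,L}^+$-module with a continuous semilinear $\hat G$-action, and $B_{\dR}^+\otimes_{B_{\dR,L}^+}W^{G_L}\xrightarrow{\sim}W$. This reduces the statement to the analogous claim for $X:=W^{G_L}\in\db$, namely that $X^{\hat G-la,\gamma=1}$ is a finitely generated $K_\infty[[t]]$-module and $B_{\dR,L}^+\otimes_{K_\infty[[t]]}X^{\hat G-la,\gamma=1}\xrightarrow{\sim}X$. (Here one uses that $(W^{G_L})^{\tau-la,\gamma=1}=(W^{G_L})^{\hat G-la,\gamma=1}$, since a $\Gal(L/K_\infty)$-invariant vector is automatically $\hat G$-locally analytic precisely when it is $\tau$-locally analytic — this is how $D_{\mathrm{Sen},K_\infty[[t]]}$ is defined in \cref{de rham analytic vectors}, and it matches \cite[Lemma 3.2.4]{GP21}.)

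Next I would run the two theorems of this subsection in sequence on $X$. By \cref{decend along locally analytic}, $X^{\hat G-la}\in\Rep_{\hat G}((B_{\dR,L}^+)^{\hat G-la})$ and $\beta_X\colon B_{\dR,L}^+\otimes_{(B_{\dR,L}^+)^{\hat G-la}}X^{\hat G-la}\xrightarrow{\sim}X$. Then by \cref{Main theorem III}, $X^{\hat G-la,\gamma=1}=D_{\mathrm{Sen},K_\infty[[t]]}(X)$ is a finitely generated $K_\infty[[t]]$-module and $\alpha_X\colon (B_{\dR,L}^+)^{\hat G-la}\otimes_{K_\infty[[t]]}X^{\hat G-la,\gamma=1}\xrightarrow{\sim}X^{\hat G-la}$. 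Composing $\beta_X$ with the base change of $\alpha_X$ along $(B_{\dR,L}^+)^{\hat G-la}\to B_{\dR,L}^+$ yields
\[
B_{\dR,L}^+\otimes_{K_\infty[[t]]}X^{\hat G-la,\gamma=1}\;\xrightarrow{\ \sim\ }\;X=W^{G_L}.
\]
Finally, applying $B_{\dR}^+\otimes_{B_{\dR,L}^+}(-)$ and invoking the isomorphism $\alpha_W$ from \cref{de rham almost purity} gives
\[
B_{\dR}^+\otimes_{K_\infty[[t]]}D_{\mathrm{Sen},K_\infty[[t]]}(W)\;\xrightarrow{\ \sim\ }\;W,
\]
where the $K_\infty[[t]]$-module structure on $B_{\dR}^+$ is via $t\mapsto E([\pi^\flat])$, as in \cref{notation for t}.

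The only genuinely nontrivial point is bookkeeping: one must check that the $K_\infty[[t]]$-module $D_{\mathrm{Sen},K_\infty[[t]]}(W)$ computed via $W^{G_L}$ is the \emph{same} as the one appearing in \cref{Main theorem III}, i.e. that the operations "take $G_L$-invariants", "take $\hat G$-locally analytic vectors", and "take $\gamma=1$ part" compose correctly, and that the rank of $D_{\mathrm{Sen},K_\infty[[t]]}(W)$ over $K_\infty[[t]]$ equals $d=\rk_{B_{\dR}^+}W$. The rank statement in the theorem as stated in the introduction (rank exactly $d$, and finite projective) follows because after the faithfully flat base change $K_\infty[[t]]\to B_{\dR}^+$ the module becomes free of rank $d$; finite projectivity over $K_\infty[[t]]$ then descends by faithfully flat descent, using that $K_\infty[[t]]$ is a (coherent) base over which the relevant flat-descent formalism applies. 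I expect the main obstacle to be purely notational — tracking the three nested invariance/analyticity operations and verifying the module structures and topologies match at each stage — rather than any new geometric input; all the hard analysis is already contained in \cref{de rham almost purity}, \cref{decend along locally analytic}, and \cref{Main theorem III}.
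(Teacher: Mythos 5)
Your proposal is correct and matches the paper's proof, which is exactly the composition of \cref{descend along perfectoid L} (equivalently \cref{de rham almost purity}), \cref{decend along locally analytic}, and \cref{Main theorem III}; your extra remark identifying $(W^{G_L})^{\tau\text{-}la,\gamma=1}$ with $(W^{G_L})^{\hat G\text{-}la,\gamma=1}$ via \cite[Lemma 3.2.4]{GP21} is a point the paper leaves implicit but is the right justification.
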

\begin{proof}
This follows from combining \cref{descend along perfectoid L}, \cref{decend along locally analytic} and \cref{Main theorem III}.
\end{proof}
\subsection{Kummer-Sen operator}
Given $W\in \rb$, We can define a monodromy operator on $D_{\mathrm{Sen},K_{\infty}[[t]]}(W)$. Actually, for each $k\in \mathbb{N}$, since vectors in $(W^{G_L}/t^k)^{\hat{G}-la,\gamma=1}=(W^{G_L}/t^k)^{\tau-la,\gamma=1}$ by \cite[Lemma 3.2.4]{GP21} are all $\tau$-locally analytic, hence we can define 
\begin{equation*}
    N_{\nabla,k}:=\frac{1}{p\theta_k(\mathfrak{t})}\nabla_{\tau}: (W^{G_L}/t^k)^{\tau-la,\gamma=1}\longrightarrow (W^{G_L}/t^k)^{\hat{G}-la}.
\end{equation*}
Clearly $N_{\nabla,m}$ are compatible with $N_{\nabla,n}$, hence by passing to inverse limits, we get
\begin{equation}\label{Kummer Sen operator equation}
    N_{\nabla}=\frac{1}{p\mathfrak{t}}\nabla_{\tau}: (W^{G_L})^{\tau-la,\gamma=1}\longrightarrow (W^{G_L})^{\hat{G}-la}.
\end{equation}
\begin{theorem}\label{Kummer Sen operator}
    \cref{Kummer Sen operator equation} induces a $K_{\infty}$-linear operator
    \begin{equation*}
        N_{\nabla}: D_{\mathrm{Sen},K_{\infty}[[t]]}(W)\longrightarrow  D_{\mathrm{Sen},K_{\infty}[[t]]}(W)
    \end{equation*}
such that $N_{\nabla}$ satisfies Leibniz rule and that
\begin{equation*}
    N_{\nabla}(tv)=N_{\nabla}(t)v+tN_{\nabla}(v)=E^{\prime}(u)\lambda u \cdot v+tN_{\nabla}(v).
\end{equation*}
We will call $N_{\nabla}$ the Kummer Sen operator.
\end{theorem}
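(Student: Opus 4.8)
The plan is to verify all the assertions first on the $t$-power torsion quotients $W/t^kW$, where the relevant spaces are honest $\mathbb{Q}_p$-Banach representations of $\hat{G}$, and then pass to the inverse limit over $k$ exactly as in the construction of $N_\nabla$ preceding the statement. By that construction the map $N_{\nabla,k}=\frac{1}{p\theta_k(\fkt)}\nabla_\tau$ already carries $(W^{G_L}/t^k)^{\tau-la,\gamma=1}$ into $(W^{G_L}/t^k)^{\hat{G}-la}$, and the $N_{\nabla,k}$ are compatible with the transition maps; so the only substantive point for the closure claim is that the image is again $\gamma$-invariant. Once that is known, $N_{\nabla,k}(v)$ lies in $(W^{G_L}/t^k)^{\hat{G}-la,\gamma=1}=(W^{G_L}/t^k)^{\tau-la,\gamma=1}$ by \cite[Lemma 3.2.4]{GP21}, and taking $\varprojlim_k$ lands in $D_{\mathrm{Sen},K_{\infty}[[t]]}(W)$; the finite generation over $K_{\infty}[[t]]$ is part of \cref{combine three theorems}, so nothing new is needed there.

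The $\gamma$-invariance I would extract from two commutation relations. First, $\gamma\tau\gamma^{-1}=\tau^{\chi(\gamma)}$ gives $\gamma\circ\nabla_\tau\circ\gamma^{-1}=\chi(\gamma)\,\nabla_\tau$ on locally analytic vectors (take $\log$ of $\gamma\tau^{p^n}\gamma^{-1}=\tau^{\chi(\gamma)p^n}$ and divide by $p^n$). Second, since $\nabla_\gamma(\fkt)=\fkt$ (used already in the proof of \cref{Main theorem III}) and $\nabla_\gamma$ is a derivation, the identity $\gamma=\exp(\log\chi(\gamma)\,\nabla_\gamma)$ yields $\gamma(\fkt)=\chi(\gamma)\fkt$, hence $\gamma(\theta_k(\fkt))=\chi(\gamma)\theta_k(\fkt)$. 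Combining, for $v$ with $\gamma v=v$ one computes $\gamma(N_{\nabla,k}v)=\frac{1}{p\chi(\gamma)\theta_k(\fkt)}\cdot\chi(\gamma)\nabla_\tau v=N_{\nabla,k}v$, since $\gamma\nabla_\tau v=\chi(\gamma)\nabla_\tau(\gamma v)=\chi(\gamma)\nabla_\tau v$; this is exactly the desired invariance, and the needed fact that $\theta_k(\fkt)^{-1}$ (and the $t^j$) are $\hat{G}$-locally analytic is \cref{the image of t}.

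The algebraic identities are then formal. As $\nabla_\tau$ is a derivation of the $B_{\dR,L}^+$-module $W$, for $x\in\bdr(\mathfrak{S})$ and $v\in D_{\mathrm{Sen},K_{\infty}[[t]]}(W)$ we have $\nabla_\tau(xv)=\nabla_\tau(x)v+x\nabla_\tau(v)$, and dividing by $p\fkt$ gives the Leibniz rule $N_\nabla(xv)=N_\nabla(x)v+xN_\nabla(v)$ with $N_\nabla(x):=\frac{1}{p\fkt}\nabla_\tau(x)$; in particular $K_{\infty}$-linearity follows because $\tau$ fixes $K_{\infty}$, so $\nabla_\tau$ kills $K_{\infty}$. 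To compute $N_\nabla(t)$: from $\tau([\pi^\flat])=[\varepsilon][\pi^\flat]$ and $\tau([\varepsilon])=[\varepsilon]$ we get $\tau^{p^n}([\pi^\flat])=[\varepsilon]^{p^n}[\pi^\flat]$, whence $\nabla_\tau([\pi^\flat])=\lim_n\frac{[\varepsilon]^{p^n}-1}{p^n}[\pi^\flat]=\log[\varepsilon]\cdot[\pi^\flat]=p\lambda\fkt\,[\pi^\flat]$ (using $\fkt=\frac{\log[\varepsilon]}{p\lambda}$); applying the chain rule to $t=E([\pi^\flat])$ gives $\nabla_\tau(t)=E'([\pi^\flat])\,p\lambda\fkt\,[\pi^\flat]$, so $N_\nabla(t)=\frac{1}{p\fkt}\nabla_\tau(t)=E'(u)\lambda u$ under the identification $u=[\pi^\flat]$, which is the asserted formula.

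The main obstacle I anticipate is bookkeeping rather than conceptual: making sure every manipulation (the action of $\nabla_\tau$, division by $\fkt$, the exponential and logarithm series, the $\varprojlim_k$) is carried out on the correct pro-Banach space and that the mod-$t^k$ operators genuinely assemble into $N_\nabla$ on $D_{\mathrm{Sen},K_{\infty}[[t]]}(W)$. All the analytic input required — that $\theta_k(\fkt)^{\pm 1}$ and $t^j$ are $\hat{G}$-locally analytic, and the identification of $\hat{G}$-la, $\gamma$-invariant vectors with $\tau$-la, $\gamma$-invariant ones — has already been isolated in \cref{the image of t}, \cref{calculate de rham analytic II} and \cite[Lemma 3.2.4]{GP21}, so the argument should go through without new estimates.
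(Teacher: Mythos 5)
Your proposal is correct and follows essentially the same route as the paper: the commutation relations $\gamma\nabla_\tau\gamma^{-1}=\chi(\gamma)\nabla_\tau$ and $\gamma(\fkt)=\chi(\gamma)\fkt$ for the $\gamma$-stability, and the computation $\nabla_\tau([\pi^\flat])=\log[\varepsilon]\cdot[\pi^\flat]$ giving $N_\nabla(t)=E'(u)\lambda u$. The only difference is that you invoke the general fact that $\nabla_\tau$ is a derivation, whereas the paper verifies the single instance $N_\nabla(tv)$ by hand, which requires the small convergence check $\lim_m\tau^{p^m}(t)=t$ (done mod $t^k$ via $\theta_k(X_1(\tau^{p^m}))\to 0$) that your outline leaves implicit.
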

\begin{proof}
Clearly $N_{\nabla}$ is $K_{\infty}$-linear as $\nabla_{\tau}$ vanishes on $K_{\infty}$, hence it suffices to show that $N_{\nabla}$ is a operator from $D_{\mathrm{Sen},K_{\infty}[[t]]}(W)$ to itself, i.e. $N_{\nabla}$ preserves $\Gal(L/K_{\infty})$-invariant elements. For this, notice that $\gamma \tau \gamma^{-1}=\tau^{\chi(\gamma)} $, hence $\gamma \nabla_{\tau}=\chi(\gamma) \nabla_{\tau} \gamma$. Also, as $\gamma(\mathfrak{t})=\chi(\gamma)\mathfrak{t}$, we conclude that $$\gamma N_{\nabla}=\frac{1}{p\gamma(\mathfrak{t})} \gamma \nabla_{\tau}=\frac{1}{p\chi(\gamma)\mathfrak{t}} \chi(\gamma) \nabla_{\tau} \gamma=N_{\nabla}\gamma. $$
This implies that $N_{\nabla}$ preserves $D_{\mathrm{Sen},K_{\infty}[[t]]}(W)$.
To show that $N_{\nabla}$ satisfies Leibniz rule, it suffices to check $N_{\nabla}(tv)=N_{\nabla}(t)v+tN_{\nabla}(v)$. But by definition,
\begin{equation*}
\begin{split}
    N_{\nabla}(tv)&=\lim_{n\rightarrow \infty} \frac{1}{p\mathfrak{t}}\frac{\tau^{p^n}(tv)-(tv)}{p^n}
    \\&=\lim_{n\rightarrow \infty} \frac{1}{p\mathfrak{t}} \frac{(\tau^{p^n}(t)-t)v+\tau^{p^n}(t)(\tau^{p^n}(v)-v)}{p^n}
    \\&=N_{\nabla}(t)v+\lim_{n\rightarrow \infty} \tau^{p^n}(t)\frac{(\tau^{p^n}(v)-v)}{p^n}.
\end{split}
\end{equation*}
Hence it suffices to show that $N_{\nabla}(t)=E^{\prime}(u)\lambda u$ and that $\lim_{m\rightarrow \infty} \tau^{p^m}(t)=t$.

For the former, we just need to check that
\begin{equation*}
    N_{\nabla}(u)=\frac{1}{p\mathfrak{t}}\lim_{n\rightarrow \infty}\frac{\tau^{p^n}(u)}{p^n}=\frac{1}{p\mathfrak{t}}\lim_{n\rightarrow \infty}\frac{\log(\tau^{p^n})(u)}{p^n}=
    \frac{1}{p\mathfrak{t}}\lim_{n\rightarrow \infty}\frac{(\sum_{i=1}^{\infty}\frac{(-1)^{i-1}([\varepsilon]^{p^n}-1)^i}{i})u}{p^n}=\frac{1}{p\mathfrak{t}}\log([\varepsilon])u
    =\lambda u.
\end{equation*}
For the later, it suffices to check after modulo $t^k$ for any $k\in \mathbb{N}$. On the other hand, notice that by \cref{image of t}
\begin{equation*}
    \frac{\tau^{p^m}(t)}{t}=\alpha(\tau^{p^m})=1-\beta X_1(\tau^{p^m})+\sum_{i=1}^{\infty} t^i(\sum_{n=1}^e \theta_{n,i-(n-1)}\frac{(-1)^nX_1(\tau^{p^m})^n}{n!}).
\end{equation*}
For a fixed $k$, $\theta_k(\alpha(\tau^{p^m}))$ is a finite sum and that $\lim_{m\rightarrow \infty}\theta_k(X_1(\tau^{p^m}))=0$ by \cref{image of X under theta} and the proof of \cref{analytic of unit}, hence $\lim_{m\rightarrow \infty}\theta_k(\alpha(\tau^{p^m}))=1 $, which implies the desired result.
\end{proof}
Motivated by \cref{Kummer Sen operator}, we can introduce the following definition:
\begin{notation}
Define $\Mod_{K_{\infty}[[t]]}^{N_{\nabla}}$ to be the category consisting of objects are finite generated $K_{\infty}[[t]]$-modules $M$ equipped with a $K_{\infty}$-linear self map $N_{\nabla}: M\rightarrow M$ such that $N_{\nabla}$ satisfies Leibniz rule and that $ N_{\nabla}(tv)=E^{\prime}(u)\lambda u \cdot v+tN_{\nabla}(v)$. Morphisms in $\Mod_{K_{\infty}[[t]]}^{N_{\nabla}}$ are $K_{\infty}[[t]]$-module homomorphisms commuting with $N_{\nabla}$.
\end{notation}
\begin{remark}\label{t compatibility}
By the definition of $\lambda$, we see that as an element in $K_{\infty}[[t]]$, $\lambda=E(u)\lambda_1=t\lambda_1$ for some units $\lambda_1\in K[[t]]$. Hence given $\Mod_{K_{\infty}[[t]]}^{N_{\nabla}}$, for $v\in M$, $N_{\nabla}(tv)=E^{\prime}(u)\lambda_1 u tv+tN_{\nabla}(v) \in tM$. Then by induction we also see that $N_{\nabla}(t^r v)\subseteq t^rM$.
\end{remark}

Thanks to \cref{Kummer Sen operator}, we have a natural functor:
\begin{equation*}
    \begin{split}
        D: \rb &\longrightarrow \Mod_{K_{\infty}[[t]]}^{N_{\nabla}}
        \\ W&\longmapsto (D_{\mathrm{Sen},K_{\infty}[[t]]}(W), N_{\nabla})
    \end{split}
\end{equation*}
\begin{lemma}\label{D is exact}
$D$ is exact and faithful. Moreover, $D$ preserves tensor product and internal Hom such that for $W_1, W_2\in \rb$, $f\in \Hom_{B_{\dR}^+}(W_1,W_2)$,
\begin{equation*}
    \begin{split}
        N_{\nabla, W_1\otimes W_2}&=1\otimes N_{\nabla, W_2}+N_{\nabla, W_1}\otimes 1
        \\ N_{\nabla, \Hom_{B_{\dR}^+}(W_1,W_2)}(f)&=N_{\nabla, W_2}\circ f-f\circ N_{\nabla, W_1}.
    \end{split}
\end{equation*}
\end{lemma}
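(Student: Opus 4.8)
The plan is to reduce every assertion to the already-established structure theorems. For exactness and faithfulness, recall that by \cref{combine three theorems} the functor $W\mapsto D_{\mathrm{Sen},K_{\infty}[[t]]}(W)$ satisfies $D_{\mathrm{Sen},K_{\infty}[[t]]}(W)\otimes_{K_{\infty}[[t]]}B_{\dR}^+=W$, and by \cref{faithfully flat} the map $K_{\infty}[[t]]\to (B_{\dR,L}^+)^{\hat{G}-la}$ — hence also $K_{\infty}[[t]]\to B_{\dR}^+$ composing with $B_{\dR,L}^+=L_{\dR}^+\hookrightarrow B_{\dR}^+$ — is faithfully flat. Thus $D_{\mathrm{Sen},K_{\infty}[[t]]}(\cdot)$ is the inverse of a base change along a faithfully flat ring map on the level of the underlying modules, so it is exact and faithful; since the monodromy operator $N_{\nabla}$ is functorially attached (it is induced by $\nabla_\tau$, which commutes with all morphisms of $B_{\dR}^+$-representations by construction), $D$ inherits exactness and faithfulness into $\Mod_{K_{\infty}[[t]]}^{N_{\nabla}}$.

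For the compatibility with tensor products, I would first check at the level of underlying modules that $D_{\mathrm{Sen},K_{\infty}[[t]]}(W_1\otimes_{B_{\dR}^+} W_2)\cong D_{\mathrm{Sen},K_{\infty}[[t]]}(W_1)\otimes_{K_{\infty}[[t]]}D_{\mathrm{Sen},K_{\infty}[[t]]}(W_2)$. One way: apply $(-)^{G_L}$ (which by \cref{descend along perfectoid L} commutes with $\otimes$ up to the flat base change $L_{\dR}^+\to B_{\dR}^+$), then $(-)^{\hat{G}-la}$ (locally analytic vectors of a tensor product of Banach representations contain the tensor product of the locally analytic parts, and the two agree after the faithfully flat extension $(B_{\dR,L}^+)^{\hat{G}-la}\to B_{\dR,L}^+$ by \cref{decend along locally analytic}), then $(-)^{\gamma=1}$ similarly using \cref{Main theorem III} and \cref{faithfully flat}; uniqueness of the descended module (faithful flatness) forces the claimed identification. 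The formula for $N_{\nabla,W_1\otimes W_2}$ is then just the Leibniz rule for $\nabla_\tau$ on a tensor product together with the definition $N_{\nabla}=\tfrac{1}{p\mathfrak t}\nabla_\tau$: since $g(w_1\otimes w_2)=g(w_1)\otimes g(w_2)$ for $g=\tau^{p^n}$, passing to the limit in $\cref{Kummer Sen operator equation}$ gives $\nabla_\tau(w_1\otimes w_2)=\nabla_\tau(w_1)\otimes w_2+w_1\otimes\nabla_\tau(w_2)$, and dividing by $p\mathfrak t$ yields $N_{\nabla,W_1\otimes W_2}=N_{\nabla,W_1}\otimes 1+1\otimes N_{\nabla,W_2}$. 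The internal Hom statement is dual: $\Hom_{B_{\dR}^+}(W_1,W_2)\cong W_1^\vee\otimes_{B_{\dR}^+}W_2$ reduces it to the tensor case once one knows $N_{\nabla}$ on $W_1^\vee$ is $-N_{\nabla,W_1}^{\mathrm t}$, which follows from applying the tensor formula to the evaluation pairing $W_1^\vee\otimes W_1\to B_{\dR}^+$ and using $N_{\nabla}(1)=0$ on the trivial representation.

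The main obstacle I anticipate is the commutation of the three successive constructions $(-)^{G_L}$, $(-)^{\hat{G}-la}$, $(-)^{\gamma=1}$ with tensor product — in particular that forming locally analytic vectors does not a priori commute with $\otimes$ on the nose (only after faithfully flat base change to the full $B_{\dR}^+$), so one must argue via descent rather than by a direct manipulation of analytic vectors. I would handle this by working $t$-adically: reduce to $W_i$ killed by $t^k$ (so everything is a genuine Banach representation), invoke the isomorphisms in the proof of \cref{Main theorem III} to identify both sides after extension of scalars, and then descend using faithful flatness, exactly as in the proof of \cref{Main theorem III} itself. The Leibniz-type formulas for $N_{\nabla}$ are then formal once the module-level identifications are in place.
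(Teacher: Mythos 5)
Your proposal is correct and follows essentially the same route as the paper: exactness, faithfulness, and the tensor/Hom compatibilities are all obtained by constructing the natural maps and checking them after the faithfully flat base change $K_{\infty}[[t]]\to B_{\dR}^+$ via \cref{combine three theorems}, with the Leibniz formulas for $N_{\nabla}$ verified directly from $N_{\nabla}=\tfrac{1}{p\mathfrak t}\nabla_{\tau}$ and the internal Hom case reduced to the tensor case through $\Hom_{B_{\dR}^+}(W_1,W_2)=W_1^{\vee}\otimes_{B_{\dR}^+}W_2$. Your extra discussion of commuting the three constructions $(-)^{G_L}$, $(-)^{\hat G-la}$, $(-)^{\gamma=1}$ with $\otimes$ is superfluous once the descent argument is in place, which is exactly how the paper handles it.
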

\begin{proof}
To show it is exact, it suffices to show that given a short exact sequence $0\rightarrow W_1\rightarrow W_2\rightarrow W_3\rightarrow 0$ in $\rb$, it is still exact after applying $D_{\mathrm{Sen},K_{\infty}[[t]]}(\cdot)$ functor. But as $K_{\infty}[[t]]\rightarrow B_{\dR}^+$ is faithfully flat, it suffices to check the exactness of $$0\rightarrow D_{\mathrm{Sen},K_{\infty}[[t]]}(W_1)\otimes_{K_{\infty}[[t]]}B_{\dR}^+\rightarrow D_{\mathrm{Sen},K_{\infty}[[t]]}(W_2)\otimes_{K_{\infty}[[t]]}B_{\dR}^+\rightarrow D_{\mathrm{Sen},K_{\infty}[[t]]}(W_3)\otimes_{K_{\infty}[[t]]}B_{\dR}^+\rightarrow 0.$$
But this is just the exact sequence of underlying $B_{\dR}^+$-modules of the original exact sequence thanks to \cref{combine three theorems}.
The faithfulness can be proven similarly.

Finally we have a natural map $D_{\mathrm{Sen},K_{\infty}[[t]]}(W_1)\otimes D_{\mathrm{Sen},K_{\infty}[[t]]}(W_2)\rightarrow D_{\mathrm{Sen},K_{\infty}[[t]]}(W_1\otimes W_2)$, to show that it is an equality, it suffices to check that after the faithfully flat base change $K_{\infty}[[t]]\rightarrow B_{\dR}^+$, which follows from \cref{combine three theorems} again. The desired property for $N_{\nabla}$ can be calculated directly using $N_{\nabla}=\frac{1}{p\mathfrak{t}}\nabla_{\tau}$.

For internal Hom, the result follows from the tensor product case as $\Hom_{B_{\dR}^+}(W_1,W_2)=W_1^{\vee}\otimes_{B_{\dR}^+} W_2$ in $\rb$.
\end{proof}
\begin{remark}
However, $D$ is neither fully faithful nor essentially surjective. For the former, just notice that morphisms in $\Mod_{K_{\infty}[[t]]}^{N_{\nabla}}$ are $K_{\infty}$-linear, while morphisms in $\rb$ are only $K$-linear. 
However, we would like to show that $D$ at least preserves the information about isomorphism classes in the remaining of this section.
\end{remark}

Let us consider the $t$-torsion case first.  
\begin{prop}\label{kernel of monodromy}
For $W\in \Rep_{\mathbb{C}_p}(G_K)$, the $K_{\infty}$-linear operator $N_{\nabla}$ on the finite dimensional $K_{\infty}$-vector space $D_{\mathrm{Sen},K_{\infty}}(W)$ satisfies the following:
\begin{itemize}
     \item The kernel $\Ker(N_{\nabla})$ consists exactly of those $x\in W^{G_L}$ which are further fixed by $\Gal(L/K_{\infty})$ and whose $\hat{G}$-orbit is finite. 
    \item $\Ker(N_{\nabla})$ is equal to $W^{G_K}\otimes_K K_{\infty}$. In particular, $N_{\nabla}$ is an isomorphism if and only if $(W)^{G_K}=0$, $N_{\nabla}=0$ if and only if $W^{G_L}$ has discrete $\hat{G}$-action. Also, $\dim_{K} W^{G_K}\leq \dim_{K_{\infty}} D_{\mathrm{Sen},K_{\infty}}(W)$.
\end{itemize}
\end{prop}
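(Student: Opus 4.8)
The plan is to exploit the decompletion dictionary $D_{\mathrm{Sen},K_\infty}(W)=(W^{G_L})^{\tau\mathrm{-la},\gamma=1}$ together with the classical Sen theory for the cyclotomic (or rather the Kummer) tower, using that $N_\nabla=\tfrac1{p\mathfrak t}\nabla_\tau$ and that the Kummer-Sen operator is a $K_\infty$-linear endomorphism of $D_{\mathrm{Sen},K_\infty}(W)$ by \cref{kernel of monodromy}'s ambient setup (and \cref{Kummer Sen operator} in the torsion case $t=0$). First I would establish the first bullet: an element $x\in W^{G_L}$ that is additionally $\Gal(L/K_\infty)$-fixed lies in $D_{\mathrm{Sen},K_\infty}(W)$ once it is $\tau$-locally analytic, and among such $x$ the condition $N_\nabla(x)=0$ is equivalent to $\nabla_\tau(x)=0$ (since $\mathfrak t$ is a unit in the relevant analytic ring, by the $k=1$ case of \cref{the image of t}), which is equivalent to $\tau$ acting through a finite quotient on $x$, i.e.\ the $\hat G$-orbit of $x$ is finite (here one uses that $\gamma$ already fixes $x$ and $\hat G=\mathbb Z_p\tau\rtimes\Gamma_K$). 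So $\Ker(N_\nabla)=\{x\in W^{G_L}: \gamma x=x,\ \#\hat G\cdot x<\infty\}$.

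Next I would identify this kernel with $W^{G_K}\otimes_K K_\infty$. The inclusion $W^{G_K}\otimes_K K_\infty\subseteq\Ker(N_\nabla)$ is immediate since a $G_K$-invariant vector is $G_L$-invariant, $\Gal(L/K_\infty)$-fixed, and has trivial (hence finite) $\hat G$-orbit, and $K_\infty\subseteq D_{\mathrm{Sen},K_\infty}(W)$ is in the kernel because $\nabla_\tau$ kills $K_\infty$; linear independence over $K_\infty$ follows from the standard fact that a $K$-linearly independent family in $W^{G_K}$ stays $K_\infty$-linearly independent in $W^{G_L}$ (faithful flatness of $K\to K_\infty$, or a direct Artin-style argument). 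For the reverse inclusion: if $x\in\Ker(N_\nabla)$ then the $\hat G$-orbit of $x$ is finite and $\gamma$-fixed, so $x$ is fixed by an open subgroup of $\hat G$; the fixed field of that subgroup inside $L$ is a finite extension $K'/K$ contained in $K_\infty$, and one descends along the finite Galois extension $K'/K$ — this is exactly the Hilbert 90 / Speiser-type statement $W^{G_L}{}^{\hat G\text{-finite}} = W^{G_K}\otimes_K K_\infty$, which is the degree-zero shadow of \'etale descent (cf.\ \cite[Thm 3.3.1]{Gao22} used in the proof of \cref{Main theorem III}) combined with \cref{descend along perfectoid L} identifying $W^{G_L}$ for $W\in\Rep_{\mathbb C_p}(G_K)$. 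Thus $\Ker(N_\nabla)=W^{G_K}\otimes_K K_\infty$.

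From this equality the remaining assertions are formal: $N_\nabla$ is injective, hence (as $D_{\mathrm{Sen},K_\infty}(W)$ is a finite-dimensional $K_\infty$-vector space) an isomorphism, exactly when $\Ker(N_\nabla)=0$, i.e.\ $W^{G_K}=0$; $N_\nabla=0$ exactly when $\Ker(N_\nabla)$ is everything, i.e.\ $D_{\mathrm{Sen},K_\infty}(W)=W^{G_K}\otimes_K K_\infty$, which by \cref{descend along perfectoid L}/\cref{combine three theorems} translates into $W^{G_L}$ having discrete $\hat G$-action; and the dimension bound $\dim_K W^{G_K}\le\dim_{K_\infty}D_{\mathrm{Sen},K_\infty}(W)$ is immediate since $W^{G_K}\otimes_K K_\infty$ is a $K_\infty$-subspace of $D_{\mathrm{Sen},K_\infty}(W)$. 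I expect the main obstacle to be the second bullet's reverse inclusion, i.e.\ pinning down precisely that $\hat G$-finiteness together with $\Gamma_K$-invariance forces descent to $K$ rather than merely to some intermediate field of $K_\infty$: one must argue that the relevant open subgroup of $\hat G$ fixing $x$ cuts out a \emph{finite} subextension inside $K_\infty$ and then invoke Galois descent for finite-dimensional representations over the finite extension; everything else is bookkeeping with locally analytic vectors and the formula $N_\nabla=\tfrac1{p\mathfrak t}\nabla_\tau$.
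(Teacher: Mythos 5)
Your treatment of the first bullet is essentially identical to the paper's: $N_\nabla(x)=0$ iff $\nabla_\tau(x)=0$ iff $\tau^{p^n}x=x$ for $n\gg0$, combined with $\Gal(L/K_\infty)$-invariance. For the second bullet, however, you take a genuinely different route. The paper base-changes along the faithfully flat extension $K_\infty\to\hat L^{\hat G-la}$ and invokes Gao's comparison $N_\nabla\otimes\Id=\theta(u\lambda')(\nabla_\gamma\otimes\Id)$ with $\theta(u\lambda')$ a unit, so that $\Ker(N_\nabla)\otimes\hat L^{\hat G-la}=\Ker(\nabla_\gamma)\otimes\hat L^{\hat G-la}$, and then quotes the classical cyclotomic statement $\Ker(\nabla_\gamma)=W^{G_K}\otimes_K K_{p^\infty}$; this sidesteps any direct descent along the (non-Galois) Kummer tower. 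You instead argue intrinsically: a kernel element has open stabilizer $H\supseteq\Gal(L/K_\infty)$ in $\hat G$, hence lies in $W^{G_{K'}}$ for $K'=L^H$ a finite subextension of $K_\infty/K$, and you descend from $K'$ to $K$. This is a valid and arguably more elementary strategy, but as written it has one gap: you call $K'/K$ a "finite Galois extension," which it is not — no nontrivial subextension of $K_\infty/K$ is Galois, so Hilbert 90/Speiser does not apply directly to $K'/K$. The repair is standard: pass to the Galois closure $K''$ of $K'$ in $\overline K$ (which leaves $K_\infty$ but is still a finite Galois extension of $K$), apply Speiser's theorem to the semilinear $\Gal(K''/K)$-action on $W^{G_{K''}}$ to get $W^{G_{K''}}=W^{G_K}\otimes_K K''$, and then take $\Gal(K''/K')$-invariants to conclude $W^{G_{K'}}=W^{G_K}\otimes_K K'\subseteq W^{G_K}\otimes_K K_\infty$. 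With that fix your argument is complete; what the paper's route buys instead is that all the descent is concentrated in the already-established cyclotomic theory of Fontaine and in Gao's unit relation between the two Sen operators, whereas your route makes the finiteness/descent mechanism explicit and avoids the auxiliary ring $\hat L^{\hat G-la}$.
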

\begin{proof}
First given $x\in \Ker(N_{\nabla})$, by definition, we see that $x\in W^{G_L}$ is fixed by $\Gal(L/K_{\infty})$ and that $\log(\tau)(x)=0$, the later implies that for $n$ large enough, $\tau^{p^n}(x)=\exp(p^n\log(\tau))(x)=x$, hence the $\hat{G}$-orbit of $x$ is finite. On the other hand, if $x\in W^{G_L}$ is $\Gal(L/K_{\infty})$-invariant and that the $\hat{G}$-orbit of $x$ is finite, then for $n$ large enough, $\tau^{p^n}(x)=0$, hence $x\in (W^{G_L})^{\hat{G}-la}$ and that $\log(\tau)(x)=\lim_{n\rightarrow \infty}\frac{\tau^{p^n}(x)-x}{p^n}=0$.

Finally we have a natural map $K_{\infty}\otimes (W)^{G_K}\rightarrow \Ker(N_{\nabla})$, to show that it is an isomorphism, it suffices to check that after the faithfully flat field extension $K_{\infty}\rightarrow \hat{L}^{\hat{G}-la}$. Notice that due to the work of Gao (see \cite[Theorem 3.3.1, Theorem 3.3.2]{Gao22}), on $D_{\mathrm{Sen},K_{\infty}}(W)\otimes_{K_{\infty}}\hat{L}^{\hat{G}-la}=D_{\mathrm{Sen},K_{p^\infty}}(W)\otimes_{K_{p^\infty}}\hat{L}^{\hat{G}-la}$, there exist two linearly dependent monodromy operators, namely the $\hat{L}^{\hat{G}-la}$-linear extension of $N_{\nabla}$ on $D_{\mathrm{Sen},K_{\infty}}(W)$ (which will be denoted as $N_{\nabla}\otimes \Id$) and the $\hat{L}^{\hat{G}-la}$-linear extension of the classical Sen operator $\nabla_{\gamma}$ on $D_{\mathrm{Sen},K_{p^\infty}}(W)$ (which will be denoted as $\nabla_{\gamma}\otimes \Id$), and the exact relation between these two monodromy operator is that 
$N_{\nabla}\otimes \Id=\theta(u\lambda^{\prime}) (\nabla_{\gamma}\otimes \Id)$, where $\theta(u\lambda^{\prime})$ is a unit in $K$. As a result,
\begin{equation}\label{classical II}
    \Ker(N_{\nabla})\otimes \hat{L}^{\hat{G}-la}=\Ker(N_{\nabla}\otimes \Id)=\Ker(\nabla_{\gamma}\otimes \Id)=\Ker(\nabla_{\gamma})\otimes \hat{L}^{\hat{G}-la}.
\end{equation}
Here the first and last equality holds as any field extension is faithfully flat.

But by a similar argument as in the first paragraph, we see that $\Ker(\nabla_{\gamma})$ is precisely the $K_{p^{\infty}}$-subspace of $\Gamma_K$-discrete vectors inside $D_{\mathrm{Sen},K_{p^\infty}}(W)$, from which a classical Galois descent argument verifies that $\Ker(\nabla_{\gamma})=K_{p^{\infty}}\otimes W^{G_K}$, hence that
\begin{equation}\label{classical I}
    \Ker(\nabla_{\gamma})\otimes \hat{L}^{\hat{G}-la}=W^{G_K}\otimes \hat{L}^{\hat{G}-la}.
\end{equation}
Combining \cref{classical II} and \cref{classical I}, we see that $\Ker(N_{\nabla})\otimes \hat{L}^{\hat{G}-la}=W^{G_K}\otimes \hat{L}^{\hat{G}-la}$, hence the natural map $K_{\infty}\otimes (W)^{G_K}\rightarrow \Ker(N_{\nabla})$ is an equality. The other statements then follow.
\end{proof}

\begin{prop}\label{torsion isomorphic class detection}
For $W_1, W_2\in \Rep_{\mathbb{C}_p}(G_K)$, the natural map \begin{equation*}
    K_{\infty}\otimes_K \Hom_{\Rep_{\mathbb{C}_p}(G_K)}(W_1,W_2)\longrightarrow \Hom_{\Mod_{K_{\infty}[[t]]}^{N_{\nabla}}}((D_{\mathrm{Sen},K_{\infty}}(W_1), N_{\nabla}), (D_{\mathrm{Sen},K_{\infty}}(W_2), N_{\nabla}))
\end{equation*}
is an isomorphism.
\end{prop}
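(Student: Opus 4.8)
The plan is to identify both sides of the map with the kernel of the Kummer--Sen operator on the decompletion of the internal Hom, and then to quote \cref{kernel of monodromy}. On the source side, recall the tautology $\Hom_{\Rep_{\mathbb{C}_p}(G_K)}(W_1,W_2)=\Hom_{\mathbb{C}_p}(W_1,W_2)^{G_K}$, where $\Hom_{\mathbb{C}_p}(W_1,W_2)=W_1^{\vee}\otimes_{\mathbb{C}_p}W_2$ carries its usual structure of object of $\Rep_{\mathbb{C}_p}(G_K)$. On the target side, unwinding the definition of the category $\Mod_{K_{\infty}[[t]]}^{N_{\nabla}}$ (on torsion objects the underlying $K_{\infty}[[t]]$-modules are just $K_{\infty}$-modules), a morphism $(D_{\mathrm{Sen},K_{\infty}}(W_1),N_{\nabla})\to(D_{\mathrm{Sen},K_{\infty}}(W_2),N_{\nabla})$ is exactly a $K_{\infty}$-linear map $f$ with $N_{\nabla}\circ f=f\circ N_{\nabla}$, i.e. an element of $\Ker\big(N_{\nabla}\ \text{on}\ \Hom_{K_{\infty}}(D_{\mathrm{Sen},K_{\infty}}(W_1),D_{\mathrm{Sen},K_{\infty}}(W_2))\big)$, where $N_{\nabla}$ acts on the internal Hom by the commutator $f\mapsto N_{\nabla}\circ f-f\circ N_{\nabla}$ as in \cref{D is exact}.

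Next I would invoke the internal-Hom compatibility of the decompletion functor. The proof of \cref{D is exact} shows that $D$ commutes with internal Hom and that the induced operator on $D$ of an internal Hom is the commutator operator; the same argument (reducing to the tensor-product case via $W_1^{\vee}\otimes_{\mathbb{C}_p}W_2$ and checking the relevant natural map is an isomorphism after a faithfully flat base change, using \cref{combine three theorems}) applies verbatim to $t$-torsion objects. Hence $\Hom_{K_{\infty}}(D_{\mathrm{Sen},K_{\infty}}(W_1),D_{\mathrm{Sen},K_{\infty}}(W_2))\cong D_{\mathrm{Sen},K_{\infty}}(\Hom_{\mathbb{C}_p}(W_1,W_2))$ compatibly with $N_{\nabla}$, so that the right-hand side of the asserted isomorphism is canonically
\[
\Ker\big(N_{\nabla}\ \text{on}\ D_{\mathrm{Sen},K_{\infty}}(\Hom_{\mathbb{C}_p}(W_1,W_2))\big).
\]

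Finally I would apply \cref{kernel of monodromy} to $W:=\Hom_{\mathbb{C}_p}(W_1,W_2)\in\Rep_{\mathbb{C}_p}(G_K)$: it gives $\Ker(N_{\nabla})=W^{G_K}\otimes_K K_{\infty}=\Hom_{\Rep_{\mathbb{C}_p}(G_K)}(W_1,W_2)\otimes_K K_{\infty}$. Since every identification above is functorial in $W_1$ and $W_2$ and compatible with the obvious $K_{\infty}$-module structures, the composite is precisely the base-change map of the statement, which is therefore an isomorphism. There is no genuine obstacle here: the only point requiring (routine) care is making the internal-Hom compatibility of $D$ precise for torsion objects, and all the substance is already contained in \cref{kernel of monodromy}, which in turn rests on Gao's \'etale descent.
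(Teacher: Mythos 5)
Your proposal is correct and follows essentially the same route as the paper: set $W=\Hom_{\mathbb{C}_p}(W_1,W_2)$, use the internal-Hom compatibility of $D$ from \cref{D is exact} to identify the target with $\Ker(N_{\nabla,W})$, and conclude by \cref{kernel of monodromy}. The torsion caveat you raise is not really an issue, since $\Rep_{\mathbb{C}_p}(G_K)$ sits inside $\rb$ as the $t$-torsion objects and \cref{D is exact} is stated for all of $\rb$.
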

\begin{proof}
Let $W=\Hom_{\mathbb{C}_p}(W_1,W_2)$, then thanks to \cref{D is exact}, the desired identity is precisely $K_{\infty}\otimes_K W^{G_K}=\Ker(N_{\nabla,W})$,
which is precisely \cref{kernel of monodromy}.
\end{proof}

\begin{cor}\label{detect isomorphism torsion}
Two objects $W_1, W_2\in \Rep_{\mathbb{C}_p}(G_K)$ are isomorphic if and only if $(D_{\mathrm{Sen},K_{\infty}}(W_1), N_{\nabla})$ and $(D_{\mathrm{Sen},K_{\infty}}(W_2), N_{\nabla})$ are isomorphic as objects in $\Mod_{K_{\infty}[[t]]}^{N_{\nabla}}$.
\end{cor}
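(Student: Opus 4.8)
The plan is to deduce this corollary formally from \cref{torsion isomorphic class detection}. The "only if" direction is trivial: an isomorphism $W_1 \xrightarrow{\sim} W_2$ in $\Rep_{\mathbb{C}_p}(G_K)$ is in particular a morphism, so applying the functor $D$ produces an isomorphism $(D_{\mathrm{Sen},K_{\infty}}(W_1), N_{\nabla}) \xrightarrow{\sim} (D_{\mathrm{Sen},K_{\infty}}(W_2), N_{\nabla})$, with inverse the image of the inverse. The substance is in the "if" direction.

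**Key steps.** First I would fix an isomorphism $\phi \colon (D_{\mathrm{Sen},K_{\infty}}(W_1), N_{\nabla}) \xrightarrow{\sim} (D_{\mathrm{Sen},K_{\infty}}(W_2), N_{\nabla})$ in $\Mod_{K_{\infty}[[t]]}^{N_{\nabla}}$. By \cref{torsion isomorphic class detection} applied to the pair $(W_1, W_2)$, the element $\phi$ lies in the image of $K_{\infty} \otimes_K \Hom_{\Rep_{\mathbb{C}_p}(G_K)}(W_1, W_2)$; similarly $\phi^{-1}$ lies in the image of $K_{\infty} \otimes_K \Hom_{\Rep_{\mathbb{C}_p}(G_K)}(W_2, W_1)$. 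Write $\phi = \sum_i c_i \otimes f_i$ and $\phi^{-1} = \sum_j d_j \otimes g_j$ with $c_i, d_j \in K_{\infty}$ and $f_i, g_j$ genuine morphisms of $\mathbb{C}_p$-representations. The relations $\phi^{-1} \circ \phi = \id$ and $\phi \circ \phi^{-1} = \id$ hold in $\Hom_{\Mod_{K_{\infty}[[t]]}^{N_{\nabla}}}(D_{\mathrm{Sen},K_{\infty}}(W_1), D_{\mathrm{Sen},K_{\infty}}(W_1))$ and the analogous endomorphism space for $W_2$; using \cref{torsion isomorphic class detection} once more (now for the pairs $(W_1,W_1)$ and $(W_2,W_2)$, whose relevant $\Hom$-spaces it identifies), these relations can be transported back to identities in $K_{\infty} \otimes_K \End_{\Rep_{\mathbb{C}_p}(G_K)}(W_i)$. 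This exhibits the image of $\Hom_{\Rep_{\mathbb{C}_p}(G_K)}(W_1, W_2)$ in $K_{\infty} \otimes_K \Hom_{\Rep_{\mathbb{C}_p}(G_K)}(W_1,W_2)$ as containing an element which becomes invertible after the faithfully flat extension $K \to K_{\infty}$, and whose inverse also comes from $\Hom(W_2,W_1)$.

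**Concluding the descent.** From here I would invoke faithfully flat descent for the field extension $K \subseteq K_{\infty}$: if $\Hom_{\Rep_{\mathbb{C}_p}(G_K)}(W_1, W_2)$ contains an element that is an isomorphism after $-\otimes_K K_{\infty}$, then (since $W_1, W_2$ are finite-dimensional $\mathbb{C}_p$-vector spaces, hence the ranks of $W_1$ and $W_2$ agree by comparing $\dim_{K_\infty}$ of the two $D_{\mathrm{Sen}}$'s) that element is already an isomorphism in $\Rep_{\mathbb{C}_p}(G_K)$ — being an isomorphism of $\mathbb{C}_p$-modules compatible with $G_K$ can be checked on the underlying $\mathbb{C}_p$-linear map, and $\phi$ being $K_\infty[[t]]$-linearly bijective forces the determinant of the $\mathbb{C}_p$-matrix representing any lift to be a nonzero element of $\mathbb{C}_p$. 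Concretely, pick any $f \in \Hom_{\Rep_{\mathbb{C}_p}(G_K)}(W_1,W_2)$ mapping to $\phi$; then $f$ is $G_K$-equivariant and $\mathbb{C}_p$-linear, and $f \otimes_K K_\infty$ is bijective (as it equals $\phi$ under the identification of \cref{torsion isomorphic class detection}), so $f$ itself is bijective. Thus $f$ is the desired isomorphism $W_1 \cong W_2$.

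**Main obstacle.** The only real subtlety is bookkeeping: one must apply \cref{torsion isomorphic class detection} not just to $(W_1,W_2)$ but also to $(W_1,W_1)$ and $(W_2,W_2)$ in order to know that the equations $\phi^{-1}\phi = \id$, $\phi\phi^{-1} = \id$ descend to the level of $K$-linear combinations of honest representation morphisms — i.e. that the candidate inverse built from $\phi^{-1}$ genuinely witnesses invertibility of a single element of $K_\infty \otimes_K \Hom_{\Rep_{\mathbb{C}_p}(G_K)}(W_1,W_2)$. Once this is in place, the descent is immediate, so I expect the proof to be short, essentially a formal consequence of the previous proposition.
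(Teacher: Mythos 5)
Your ``only if'' direction and the overall setup (reducing to \cref{torsion isomorphic class detection}) match the paper, but the ``if'' direction has a genuine gap at the descent step. The hypothesis gives you an invertible element $\phi$ of $K_\infty\otimes_K\Hom_{\Rep_{\mathbb{C}_p}(G_K)}(W_1,W_2)$, i.e.\ a $K_\infty$-linear combination $\sum_i c_i\otimes f_i$ of honest morphisms. Such an element is in general \emph{not} in the image of the map $f\mapsto 1\otimes f$, so your instruction ``pick any $f\in\Hom_{\Rep_{\mathbb{C}_p}(G_K)}(W_1,W_2)$ mapping to $\phi$'' cannot be carried out: no such $f$ need exist, and the individual $f_i$ need not be isomorphisms. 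Faithfully flat descent is not the relevant tool here — it would let you check that a map already defined over $K$ is an isomorphism after base change to $K_\infty$, but the whole difficulty is to \emph{produce} a $K$-rational element of $\Hom_{\Rep_{\mathbb{C}_p}(G_K)}(W_1,W_2)$ that is invertible, given only that the $K_\infty$-span of this space contains an invertible element. Transporting the relations $\phi^{-1}\phi=\id$ and $\phi\phi^{-1}=\id$ into $K_\infty\otimes_K\End(W_i)$ does not close this gap.

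The missing ingredient is the polynomial/infinite-field argument of \cite[Lemma 2.7]{Fon04}, which is exactly what the paper invokes: choose a $K$-basis $f_1,\dots,f_n$ of the finite-dimensional space $\Hom_{\Rep_{\mathbb{C}_p}(G_K)}(W_1,W_2)$ (finite-dimensionality coming from \cref{torsion isomorphic class detection}); the function $(x_1,\dots,x_n)\mapsto\det(x_1f_1+\cdots+x_nf_n)$ is polynomial and does not vanish identically, since it is nonzero at the $K_\infty$-point corresponding to $\phi$; because $K$ is infinite, it is therefore nonzero at some $K$-point $(\mu_1,\dots,\mu_n)$, and $f=\sum_i\mu_if_i$ is then a $G_K$-equivariant $\mathbb{C}_p$-linear isomorphism. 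With this replacement your argument becomes correct and coincides with the paper's proof (which follows \cite[Proposition 2.6]{Fon04}); without it, the conclusion does not follow.
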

\begin{proof}
We imitate the proof for a similar result in the cyclotomic tower setting given in \cite[Proposition 2.6]{Fon04}. The only if direction follows from the functoriality of the functor $D_{\mathrm{Sen},K_{\infty}}(\cdot)$. For the other direction, assume that $(D_{\mathrm{Sen},K_{\infty}}(W_1), N_{\nabla})$ and $(D_{\mathrm{Sen},K_{\infty}}(W_2), N_{\nabla})$ are isomorphic as objects in $\Mod_{K_{\infty}[[t]]}^{N_{\nabla}}$. In particular, this implies that $\dim_{\mathbb{C}_p}W_1=\dim_{\mathbb{C}_p}W_2$ thanks to \cref{combine three theorems}.
let $W=\Hom_{\mathbb{C}_p}(W_1,W_2)\in \Rep_{\mathbb{C}_p}(G_K)$, then our assumption precisely says that $\Ker(N_{\nabla,W})$ contains an element of $W$ that is a linear isomorphism by \cref{D is exact}. We wish to further prove that $D^{G_K}$ contains an element that is a linear isomorphism, which exactly means that $W_1$ and $W_2$ are isomorphic as objects in $\Rep_{\mathbb{C}_p}(G_K)$. For this, one can apply \cite[Lemma 2.7]{Fon04} directly to conclude as $K_{\infty}/K$ is an algebraic extension with $K$ an infinite field.
\end{proof}

Now we are ready to treat the general case, which is motivated by the cyclotomic analogue given in \cite[Proposition 3.8]{Fon04}. We warn the reader that in the Kummer tower case we need to start with $W\in \rb$ as $D_{\mathrm{Sen},K_{\infty}[[t]]}(W)$ is not $G_K$-stable inside $W$.
\begin{prop}\label{general isomorphism class}
For any $W\in \rb$, the $K_{\infty}$-vector space $\Ker(N_{\nabla})$ (a subspace of $D_{\mathrm{Sen},K_{\infty}[[t]]}(W)$) is finite dimensional satisfying that 
\begin{equation}\label{general isomorphism class zero}
    W^{G_K}\otimes_K K_{\infty}=\Ker(N_{\nabla}).
\end{equation}
In particular, $\dim_K W^{G_K}$ is finite. Moreover, the following holds:
\begin{itemize}
    \item[i)] If $W\in \Rep_{B_{\dR}^+}^{\fp}(G_K)$, then $\dim_K(W^{G_K})\leq \dim_{K_{\infty}}(D_{\mathrm{Sen},K_{\infty}[[t]]}(W)/tD_{\mathrm{Sen},K_{\infty}[[t]]}(W))$.
    \item[ii)] For $W_1, W_2 \in \rb$,
    \begin{equation*}
        K_{\infty}\otimes_K \Hom_{\rb}(W_1,W_2)\stackrel{\cong}{\longrightarrow} \Hom_{\Mod_{K_{\infty}[[t]]}^{N_{\nabla}}}((D_{\mathrm{Sen},K_{\infty}[[t]]}(W_1), N_{\nabla}), (D_{\mathrm{Sen},K_{\infty}[[t]]}(W_2), N_{\nabla})).
    \end{equation*}
    Also, if we further assume that $W_1, W_2 \in \Rep_{B_{\dR}^+}^{\fp}(G_K)$, then 
    \begin{equation*}
        \dim_{K}(\Hom_{\rb}(W_1,W_2)\leq \dim_{K_{\infty}}(D_{\mathrm{Sen},K_{\infty}[[t]]}(W_1)/t)\dim_{K_{\infty}}(D_{\mathrm{Sen},K_{\infty}[[t]]}(W_2)/t).
    \end{equation*}
\end{itemize}
\end{prop}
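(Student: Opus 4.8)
The plan is to derive the whole statement from the single identity \eqref{general isomorphism class zero}, which I treat first. Granting \eqref{general isomorphism class zero}, part (ii) is formal: by \cref{D is exact} the functor $D$ carries $D_{\mathrm{Sen},K_{\infty}[[t]]}(\Hom_{B_{\dR}^+}(W_1,W_2))$ isomorphically onto $\Hom_{K_{\infty}[[t]]}(D_{\mathrm{Sen},K_{\infty}[[t]]}(W_1),D_{\mathrm{Sen},K_{\infty}[[t]]}(W_2))$, the Kummer--Sen operator on the source going over to $f\mapsto N_{\nabla,W_2}\circ f-f\circ N_{\nabla,W_1}$; hence a $K_{\infty}[[t]]$-linear map is a morphism in $\Mod_{K_{\infty}[[t]]}^{N_{\nabla}}$ precisely when it lies in $\Ker(N_{\nabla})$, so that $\Hom_{\Mod_{K_{\infty}[[t]]}^{N_{\nabla}}}(D(W_1),D(W_2))=\Ker\big(N_{\nabla}\text{ on }D_{\mathrm{Sen},K_{\infty}[[t]]}(\Hom_{B_{\dR}^+}(W_1,W_2))\big)$. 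Applying \eqref{general isomorphism class zero} to $X:=\Hom_{B_{\dR}^+}(W_1,W_2)\in\rb$ and using $X^{G_K}=\Hom_{\rb}(W_1,W_2)$ then yields the displayed isomorphism in (ii). For the dimension bounds I would feed part (i) into $W$ (respectively into $X$, which is finite free of rank $\mathrm{rk}(W_1)\mathrm{rk}(W_2)$ when $W_1,W_2$ are finite projective) and use $\dim_{K_{\infty}}(D_{\mathrm{Sen},K_{\infty}[[t]]}(\,\cdot\,)/t)=\mathrm{rk}(\,\cdot\,)$ together with $\dim_{K_{\infty}}(\Hom_{K_{\infty}[[t]]}(D(W_1),D(W_2))/t)=\dim_{K_{\infty}}(D(W_1)/t)\cdot\dim_{K_{\infty}}(D(W_2)/t)$.

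Now I turn to \eqref{general isomorphism class zero}. The inclusion $W^{G_K}\otimes_K K_{\infty}\subseteq\Ker(N_{\nabla})$ is immediate: a $G_K$-fixed vector lies in $W^{G_L}$, is $\hat{G}$-fixed --- hence $\hat{G}$-analytic and $\gamma$-invariant, so it lies in $D_{\mathrm{Sen},K_{\infty}[[t]]}(W)$ --- and is $\tau$-fixed, so $\nabla_{\tau}$ and therefore $N_{\nabla}$ annihilates it; and $\Ker(N_{\nabla})$ is a $K_{\infty}$-subspace. For the reverse inclusion together with finite dimensionality I would first dispose of the case $t^kW=0$: for $k=1$ this is \cref{kernel of monodromy}, and for general $k$ one repeats its base-change argument, extending scalars along the faithfully flat $K_{\infty}\hookrightarrow\hat{L}^{\hat{G}-la}$, using the $\theta_k$-specialized form of Gao's comparison of the Kummer monodromy operator with a unit multiple of the classical cyclotomic Sen operator $\nabla_{\gamma}$ (\cite[Theorems 3.3.1, 3.3.2]{Gao22}) to reduce to $\Ker(\nabla_{\gamma})=W^{G_K}\otimes_K K_{p^{\infty}}$ from Fontaine's cyclotomic Galois descent (\cite[Proposition 3.8]{Fon04}), and descending. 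For general $W\in\rb$ I would pass to the limit along $W=\varprojlim_n W/t^nW$: by \cref{Main theorem III}, \cref{de rham almost purity} and \cref{remark on mod n} one has $D_{\mathrm{Sen},K_{\infty}[[t]]}(W)/t^n\cong D_{\mathrm{Sen},K_{\infty}[[t]]}(W/t^nW)$ compatibly with $N_{\nabla}$ (note $N_{\nabla}$ preserves $t^nD_{\mathrm{Sen},K_{\infty}[[t]]}(W)$ by \cref{t compatibility}); each $(W/t^nW)^{G_K}$ is finite dimensional over $K$ by the torsion case, so $\{(W/t^nW)^{G_K}\}_n$ is a Mittag--Leffler system with inverse limit $W^{G_K}$, which is therefore finite dimensional; and since $D_{\mathrm{Sen},K_{\infty}[[t]]}(W)$ is $t$-adically complete and separated, $\Ker(N_{\nabla})=\varprojlim_n\Ker\big(N_{\nabla}\text{ on }D_{\mathrm{Sen},K_{\infty}[[t]]}(W)/t^n\big)=\varprojlim_n\big((W/t^nW)^{G_K}\otimes_K K_{\infty}\big)=W^{G_K}\otimes_K K_{\infty}$, the last equality because for a Mittag--Leffler system of finite dimensional $K$-spaces the inverse limit commutes with $-\otimes_K K_{\infty}$.

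For (i), let $W\in\Rep_{B_{\dR}^+}^{\fp}(G_K)$ be free of rank $d$. Then $D:=D_{\mathrm{Sen},K_{\infty}[[t]]}(W)$ is finite free of rank $d$ over the discrete valuation ring $K_{\infty}[[t]]$: by \cref{decend along locally analytic} and \cref{faithfully flat} its base change to $(B_{\dR,L}^+)^{\hat{G}-la}$ is the free module $W^{\hat{G}-la}$, and a finitely generated module over a discrete valuation ring that becomes free after a faithfully flat base change is itself free. In particular $D/tD$ has dimension $d$ over $K_{\infty}$. On a basis $\underline e$ one computes, using \cref{t compatibility}, that $N_{\nabla}(\underline e\,f)=\underline e\,(Gf+c\,tf')$ for $f\in K_{\infty}[[t]]^{\oplus d}$, where $G\in M_d(K_{\infty}[[t]])$ is the matrix of $N_{\nabla}$, $f'=df/dt$, and $c=E'(u)\lambda_1 u$ is a unit of $K_{\infty}[[t]]$ (since $\theta(u)=\pi$ and $\theta(E'(u))=\beta$ are nonzero). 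Solving $Gf+c\,tf'=0$ coefficient by coefficient in $t$, the coefficient $f_n$ is determined by $f_0,\dots,f_{n-1}$ unless $-n$ is an eigenvalue of $\overline{c^{-1}G}$ on $D/tD$, in which case it is pinned down only up to an element of $\Ker(nI+\overline{c^{-1}G})$; hence $\dim_{K_{\infty}}\Ker(N_{\nabla})\le\sum_{n\ge0}\dim_{K_{\infty}}\Ker(nI+\overline{c^{-1}G})\le\dim_{K_{\infty}}(D/tD)=d$, and by \eqref{general isomorphism class zero} this equals $\dim_K W^{G_K}$.

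The main obstacle is the reverse inclusion in \eqref{general isomorphism class zero}, that is, descending an $N_{\nabla}$-horizontal (equivalently, finite-$\tau$-orbit and $\gamma$-invariant) vector of $D_{\mathrm{Sen},K_{\infty}[[t]]}(W)$ all the way down to $W^{G_K}$: this is exactly where the full Kummer--Sen decompletion package (\cref{combine three theorems}, \cref{kernel of monodromy}) and Gao's identification of the Kummer monodromy operator with a unit times the classical cyclotomic Sen operator are indispensable, so as to bootstrap to Fontaine's cyclotomic descent. A secondary delicate point is arranging the $t$-power-torsion d\'evissage so that the Mittag--Leffler bookkeeping closes up, which works only because the torsion case already supplies the finite dimensionality of $(W/t^nW)^{G_K}$.
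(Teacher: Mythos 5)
Your formal reductions are fine: part (ii) from the internal--Hom computation of \cref{D is exact} together with \eqref{general isomorphism class zero} is exactly the paper's argument, and your coefficient-by-coefficient ``ODE'' bound $\dim_{K_\infty}\Ker(N_\nabla)\le\sum_{n\ge 0}\dim\Ker(nI+\overline{c^{-1}G})\le d$ is a clean alternative to the paper's route to part (i), which instead goes through the injectivity statement $(\ast\ast)$ of \cref{finite L dimension} over $L[[t]]$. But your route to \eqref{general isomorphism class zero} itself --- $t$-power-torsion d\'evissage followed by an inverse limit --- is genuinely different from the paper's, which never does a torsion-by-torsion reduction: it works directly with $(W^{G_L})^{\hat G-la,\,N_\nabla=0,\,\nabla_\gamma=0}$, uses \cref{finite L dimension} for finiteness, classical Galois descent for the algebraic extensions $L/K_\infty$ and $L/K_{p^\infty}$, and then applies Fontaine's cyclotomic $B_{\dR}^+$-descent \cite[Prop.\ 3.8]{Fon04} once to the whole $W$. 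Your route has two real gaps.

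First, the $t^k$-torsion case for $k>1$ is not a formal repetition of \cref{kernel of monodromy}: that proof hinges on Gao's identification $N_\nabla\otimes\Id=\theta(u\lambda')(\nabla_\gamma\otimes\Id)$ on $D_{\mathrm{Sen},K_\infty}(W)\otimes\hat L^{\hat G-la}=D_{\mathrm{Sen},K_{p^\infty}}(W)\otimes\hat L^{\hat G-la}$, a statement about $\mathbb{C}_p$-representations. A ``$\theta_k$-specialized form'' presupposes a cyclotomic decompletion over $K_{p^\infty}[t]/t^k$ and a comparison of the two monodromy operators modulo $t^k$; neither is in \cite[Thm.\ 3.3.1--3.3.2]{Gao22} nor developed anywhere in the paper (which only builds the Kummer-tower decompletion beyond the mod-$t$ level), so this step is an unproved intermediate theorem, not a citation. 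Second, the limit step is wrong as stated: an inverse limit of finite-dimensional spaces along a Mittag--Leffler system need not be finite dimensional, and $\varprojlim$ does not commute with $-\otimes_K K_\infty$ for such systems (take $V_n=K^n$ with the coordinate projections: $\varprojlim(V_n\otimes K_\infty)=\prod_{\mathbb N}K_\infty\ne(\prod_{\mathbb N}K)\otimes_K K_\infty$). To close this you need a bound on $\dim_K(W/t^nW)^{G_K}$ uniform in $n$ \emph{before} passing to the limit; your ODE argument could be adapted to supply it when $W$ is finite projective, but you invoke it only afterwards (and a general $W\in\rb$ is only finitely generated over the discrete valuation ring $B_{\dR}^+$, so $D_{\mathrm{Sen},K_\infty[[t]]}(W)$ need not be free and the argument needs the torsion summands handled separately). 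The paper's Lemma \ref{finite L dimension} is precisely the device that makes this finiteness available up front.
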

To prove this proposition, we need the following lemma:
\begin{lemma}\label{finite L dimension}
For any $W\in \rb$, $(W^{G_L})^{\hat{G}-la, N_{\nabla}=0, \nabla_{\gamma}=0}$ is a finite dimensional $L$-vector space.
\end{lemma}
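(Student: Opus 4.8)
The plan is to reduce the statement to a fact about $\hat{G}$-locally analytic vectors in the $t$-torsion quotients $W/t^kW$, where we can invoke the classical Sen-theoretic finiteness over the Kummer tower. First I would unwind the definition: $(W^{G_L})^{\hat{G}-la,N_\nabla=0,\nabla_\gamma=0} = \varprojlim_k \bigl((W/t^kW)^{G_L}\bigr)^{\hat{G}-la,N_\nabla=0,\nabla_\gamma=0}$ by \cref{decend along locally analytic} and \cref{remark on mod n}, and each transition map is compatible with $N_\nabla$ and $\nabla_\gamma$ since both operators commute with reduction mod $t^k$ (this is how $N_\nabla$ was constructed via the compatible family $N_{\nabla,k}$ in \cref{Kummer Sen operator equation}). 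So it suffices to bound $\dim_L$ of each torsion layer uniformly and then pass to the limit.

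For the torsion layers, I would argue by induction on $k$ using the short exact sequence $0 \to t^{k-1}W/t^kW \to W/t^kW \to W/t^{k-1}W \to 0$, both outer terms being $t$-torsion objects in $\rb$ (more precisely $\mathbb{C}_p$-representations after identifying $B_{\dR,1,L}^+\cong \hat{L}$). The functor $X \mapsto (X^{G_L})^{\hat{G}-la,N_\nabla=0,\nabla_\gamma=0}$ is left exact, so I get an injection into an extension of the corresponding spaces for the outer terms. For a $\mathbb{C}_p$-representation $V$, $(V^{G_L})^{\hat{G}-la,\nabla_\gamma=0}$ sits inside $D_{\mathrm{Sen},K_\infty}(V)\otimes_{K_\infty}L$ (using \cref{calculate de rham analytic II}, $\hat{L}^{\hat{G}-la,\nabla_\gamma=0}=L$) which is finite over $L$, and intersecting with $\Ker(N_\nabla)$ keeps it finite; indeed by \cref{kernel of monodromy} this kernel is $V^{G_K}\otimes_K$ (something finite). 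This gives a uniform bound on the $L$-dimension of each layer — the key point being that the bound does not grow with $k$, because $\Ker(N_\nabla)$ on $D_{\mathrm{Sen},K_\infty[[t]]}(W)$ injects (after reduction) into the $k=1$ layer.

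The cleanest route to the uniformity is probably to observe that $\Ker(N_\nabla) \subseteq D_{\mathrm{Sen},K_\infty[[t]]}(W)$ is a $K_\infty[[t]]$-submodule on which, by \cref{t compatibility}, multiplication by $t$ interacts with $N_\nabla$ via $N_\nabla(tv)=E'(u)\lambda_1 u\, tv + tN_\nabla(v)$; so if $v\in\Ker(N_\nabla)$ then $N_\nabla(tv)=E'(u)\lambda_1 u\, tv$, meaning $t\cdot\Ker(N_\nabla)$ lands in a generalized eigenspace rather than the kernel. One then shows the reduction map $\Ker(N_\nabla) \to (W/tW)^{G_L,\hat{G}-la,N_\nabla=0,\nabla_\gamma=0}$ has "controlled" kernel, or more simply that $\Ker(N_\nabla)$ is a finitely generated torsion-free $K_\infty$-module that injects into its mod-$t$ reduction after a twist — whichever bookkeeping is shortest. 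Then $(W^{G_L})^{\hat{G}-la,N_\nabla=0,\nabla_\gamma=0}$, being a submodule of $\Ker(N_\nabla)$ cut out further by $\nabla_\gamma=0$, is finite over $K_\infty$, and the additional $\nabla_\gamma=0$ condition together with \cref{de rham analytic calculation} forces the coefficients into $L$ rather than $K_\infty[[t]]$, so the resulting space is a finite-dimensional $L$-vector space as claimed.

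\textbf{Main obstacle.} I expect the subtle point to be the uniformity in $k$: a priori $D_{\mathrm{Sen},K_\infty[[t]]}(W)$ has rank $d$ over $K_\infty[[t]]$, so its $t$-torsion quotients grow, and one must use the precise interaction of $N_\nabla$ with multiplication by $t$ (the nonzero "connection" term $E'(u)\lambda u$) to see that $\Ker(N_\nabla)$ itself does \emph{not} grow — it behaves like the kernel of an operator with a nontrivial $t$-derivative, hence is "rigid" in the $t$-direction. Making this precise, rather than the finiteness over $\hat{L}^{\hat{G}-la}$ (which is soft, following \cref{kernel of monodromy} and faithfully flat descent), is where the real work lies.
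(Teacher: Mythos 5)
Your reduction to the torsion layers and your diagnosis of where the difficulty lies are both correct: the naive induction on $k$ via the exact sequences only gives a bound growing linearly in $k$, and the whole content of the lemma is that the nonzero "connection" term $N_{\nabla}(t)=E'(u)\lambda u$ prevents $\Ker(N_{\nabla})$ from growing in the $t$-direction. But at exactly that point your argument stops being a proof. The one concrete mechanism you propose --- that $\Ker(N_{\nabla})$ "injects into its mod-$t$ reduction after a twist" --- is false as stated: take $M=L[[t]]e$ with $N_{\nabla}(e)=-E'(u)\lambda_1 u\, e$ (this is realized, e.g., by a Tate twist). Then $N_{\nabla}(te)=E'(u)\lambda u\, e - E'(u)\lambda_1 u\, te=0$ since $\lambda=t\lambda_1$, so $te$ is a nonzero horizontal vector lying in $tM$, and the reduction map $\Ker(N_{\nabla})\to M/tM$ is identically zero. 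More generally, if $v=t^k w\in\Ker(N_{\nabla})$ with $w\notin tM$, one only gets $N_{\nabla}(w)=-kE'(u)\lambda_1 u\,w$, i.e.\ $\bar w$ is an eigenvector of the induced operator on $M/tM$ with eigenvalue $-k\,\theta(E'(u)\lambda_1 u)$ --- not zero. Your sketch can be completed (grade $\Ker(N_{\nabla})$ by $t$-adic valuation, observe that the graded pieces inject into eigenspaces of the mod-$t$ operator for the pairwise distinct eigenvalues $-k\,\theta(E'(u)\lambda_1 u)$, and use $t$-adic separatedness), but that bookkeeping is precisely the missing proof, and the route you call "more simple" is the one that breaks.

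The paper avoids this entirely by working with $(W^{G_L})^{\hat G-la,\nabla_\gamma=0}$, which is finite free over $L[[t]]$ of rank $d$, and proving the classical "horizontal sections are linearly independent over the base" statement: for $(M,N_{\nabla})$ finite free in the $L[[t]]$-version of the category, the map $L[[t]]\otimes_L M^{N_{\nabla}=0}\to M$ is injective. This is done by taking a minimal-length relation $\sum f_i v_i=0$ with $f_1=1$, applying $N_{\nabla}$ and the Leibniz rule to force $N_{\nabla}(f_i)=0$ for all $i$, and then using that the constants of $N_{\nabla}$ in $L[[t]]$ are exactly $L$ (which is where $N_{\nabla}(t)=E'(u)\lambda_1 u\,t$ with $E'(u)\lambda_1 u$ a unit enters). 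That immediately gives $\dim_L M^{N_{\nabla}=0}\le d$ with no passage through the torsion layers. I would either adopt that argument or carry out the eigenvalue-grading computation in full; as written, your proposal has a genuine gap at its decisive step.
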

\begin{proof}
The proof of \cref{Main theorem III} implicitly implies that $(W^{G_L})^{\hat{G}-la, \nabla_{\gamma}=0}$ is a finite generated $L[[t]]$-module. If $W$ is $t$-power torsion, then there is nothing to prove as $(W^{G_L})^{\hat{G}-la, N_{\nabla}=0, \nabla_{\gamma}=0}$ is a $L$-subspace of $(W^{G_L})^{\hat{G}-la, \nabla_{\gamma}=0}$. From now now we assume that $W\in \Rep_{B_{\dR}^+}^{\fp}(G_K)$, hence $(W^{G_L})^{\hat{G}-la, \nabla_{\gamma}=0}$ is a finite free $L[[t]]$-module with the same rank of $W$ by the proof of  \cref{Main theorem III}. We aim to show that $\dim_L((W^{G_L})^{\hat{G}-la, N_{\nabla}=0, \nabla_{\gamma}=0})$ is bounded by that rank. Argue exactly as that in \cref{Kummer Sen operator}, we have that $N_{\nabla}$ maps $(W^{G_L})^{\hat{G}-la, \nabla_{\gamma}=0}$ to itself, in particular, $((W^{G_L})^{\hat{G}-la, \nabla_{\gamma}=0}, N_{\nabla})$ defines an object in $\Mod_{L[[t]]}^{N_{\nabla}}$. Hence it suffices to show the following:
\begin{itemize}
    \item[$(\ast\ast)$]  Suppose $(M,N_{\nabla})\in \Mod_{L[[t]]}^{N_{\nabla}}$ satisfies that $M$ is a finite free $L[[t]]$-module, then the natural map 
    \begin{equation*}
        L[[t]]\otimes_{L} M^{N_{\nabla}=0}\longrightarrow M
    \end{equation*}
    is injective.
\end{itemize}
To prove $(\ast\ast)$, we argue by contradiction, otherwise we can pick a nonzero element $x$ in the kernel with a minimal length expression $\sum f_i\otimes v_i$ in elementary tensors. In particular, all $f_i$ are nonzero and that $v_i$ are linearly independent over $L$. Suppose $f_i=t^{n_i}g_i$ such that $g_i(0)\neq 0$, i.e. $g(i)$ is a unit in $L[[t]]$. Then by dividing $t^{\min_{i}\{n_i\}}$ and multiplying a suitable unit, we can assume without of loss generality that $f_1=1$. Applying $N_{\nabla}$ to $0=\sum f_i v_i$ and using Leibniz rule, we have that
\begin{equation*}
    0=\sum (N_{\nabla}(f_i)v_i+f_i N_{\nabla}(v_i))=\sum N_{\nabla}(f_i)v_i
\end{equation*}
as $N_{\nabla}(v_i)=0$ by assumption. Notice that $N_{\nabla}(f_1)=0$, hence by minimal length assumption we must have that $N_{\nabla}(f_i)=0$ for arbitrary $i$. Using \cref{t compatibility} and that $N_{\nabla}(t)=E^{\prime}(u)\lambda_1 u t$ with $E^{\prime}(u)\lambda_1 u$ being a unit, one can easily verify that for $g\in L[[t]]$, $N_{\nabla}(g)=0$ if and only if $g\in L$, from which we conclude that $f_i\in K, \forall i$. This implies that $v_1=-\sum_{i>1}f_iv_i$, which gives a nontrivial linear dependence relation over $L$, a contradiction.
\end{proof}

Now we are ready to prove \cref{general isomorphism class}.
\begin{proof}[Proof of \cref{general isomorphism class}]
Clearly there is a natural map $W^{G_K}\otimes_K K_{\infty}\rightarrow \Ker(N_{\nabla})$, to show this an equality, if suffices to check that after scalar extension to $L$.

By \cref{finite L dimension}, $(W^{G_L})^{\hat{G}-la, N_{\nabla}=0, \nabla_{\gamma}=0}$ is a finite dimensional $L$-vector space. Also, argue as in the first paragraph in the proof of \cref{kernel of monodromy}, $(W^{G_L})^{\hat{G}-la, N_{\nabla}=0, \nabla_{\gamma}=0}$ consists precisely those elements in $W^{G_L}$ whose $\hat{G}$-orbit is finite. Then we can apply classical Galois descent to $L/K_{\infty}$ and $L/K_{p^\infty}$ separately to conclude that
\begin{equation}\label{general isomorphism class I}
    (W^{G_L})^{\hat{G}-la, N_{\nabla}=0, \nabla_{\gamma}=0}=(W^{G_L})^{\hat{G}-la, N_{\nabla}=0, \gamma=1}\otimes_{K_{\infty}} L=(W^{G_L})^{\hat{G}-la, \nabla_{\gamma}=0, \tau=1}\otimes_{K_{p^\infty}} L.
\end{equation}

Notice that $(W^{G_L})^{\hat{G}-la, \nabla_{\gamma}=0, \tau=1}=(W^{G_{K_{p^{\infty}}}})^{\Gamma_K-la,\nabla_{\gamma}=0}$. However, $\Gamma_K$ is a $p$-adic Lie group of dimension $1$, $\Gamma_K$-locally analytic vectors coincides with classical $K$-finite vectors thanks to \cite[Thm. 3.2]{BC16}, hence 
\[(W^{G_{K_{\infty}}})^{\Gamma_{K}-la}=(W^{G_{K_{\infty}}})_f,\]
where the latter denotes the subspace of $W^{G_{K_{\infty}}}$ consisting of those vectors whose $\Gamma_{K}$ orbits are finite,  see \cite[Section 3.3]{Fon04} for details. In particular, by \cite[Theorem 3.6]{Fon04}, $(W^{G_K})^{\Gamma_{K}-la}$ fits into the assumption in \cite[Proposition 3.8]{Fon04}, then the proof of \cite[Proposition 3.8]{Fon04} shows that \begin{equation}\label{general isomorphism class II}
    (W^{G_{K_{p^{\infty}}}})^{\Gamma_K-la,\nabla_{\gamma}=0}=W^{G_K}\otimes_{K} K_{p^{\infty}}.
\end{equation}
Combine \cref{general isomorphism class I} and \cref{general isomorphism class I}, we have that 
\begin{equation}\label{general isomorphism class III}
   \Ker(N_{\nabla})\otimes_{K_{\infty}} L= (W^{G_L})^{\hat{G}-la, N_{\nabla}=0, \gamma=1}\otimes_{K_{\infty}} L=(W^{G_L})^{\hat{G}-la, N_{\nabla}=0, \nabla_{\gamma}=0}=W^{G_K}\otimes_{K} L.
\end{equation}
Hence $W^{G_K}\otimes_K K_{\infty}\rightarrow \Ker(N_{\nabla})$ is an equality and that $\Ker(N_{\nabla})$ is a finite dimensional $K_{\infty}$-vector space as $(W^{G_L})^{\hat{G}-la, N_{\nabla}=0, \nabla_{\gamma}=0}$ is of finite dimension over $L$ by \cref{finite L dimension}.  

For $i)$, just notice that by \cref{general isomorphism class III}, $\dim_K W^{G_K}=\dim_L (W^{G_L})^{\hat{G}-la, N_{\nabla}=0, \nabla_{\gamma}=0} $, hence the proof of \cref{finite L dimension} implies that it is bounded by the rank of $D_{\mathrm{Sen},K_{\infty}[[t]]}(W)$, which is also $\dim_{K_{\infty}}(D_{\mathrm{Sen},K_{\infty}[[t]]}(W)/tD_{\mathrm{Sen},K_{\infty}[[t]]}(W))$.

For $ii)$, we proceed as in \cref{torsion isomorphic class detection}. Consider $W=\Hom_{B_{\dR}^+}(W_1,W_2)\in \rb$. By the proof of \cref{D is exact}, we see that $$\Hom_{K_{\infty}[[t]]}(D_{\mathrm{Sen},K_{\infty}[[t]]}(W_1),D_{\mathrm{Sen},K_{\infty}[[t]]}(W_2))=D_{\mathrm{Sen},K_{\infty}[[t]]}(W),$$
and that 
\begin{equation*}
    N_{\nabla, W}(f)=N_{\nabla, W_2}\circ f-f\circ N_{\nabla, W_1}.
\end{equation*}
Hence given $f\in D_{\mathrm{Sen},K_{\infty}[[t]]}(W)$, $f$ defines a morphism in $\Mod_{K_{\infty}[[t]]}^{N_{\nabla}}$ if and only if $f\in \Ker(N_{\nabla, W})$. In other words, 
\begin{equation*}
    \Hom_{\Mod_{K_{\infty}[[t]]}^{N_{\nabla}}}((D_{\mathrm{Sen},K_{\infty}}(W_1), N_{\nabla}), (D_{\mathrm{Sen},K_{\infty}}(W_2), N_{\nabla}))=\Ker(N_{\nabla, W}).
\end{equation*}
Then we can apply \cref{general isomorphism class zero} to conclude that
\begin{equation}\label{general isomorphism class V}
    K_{\infty}\otimes_K \Hom_{\rb}(W_1,W_2)\stackrel{\cong}{\longrightarrow} \Hom_{\Mod_{K_{\infty}[[t]]}^{N_{\nabla}}}((D_{\mathrm{Sen},K_{\infty}}(W_1), N_{\nabla}), (D_{\mathrm{Sen},K_{\infty}}(W_2), N_{\nabla})),
\end{equation}
from which we see that for $W_1, W_2 \in \Rep_{B_{\dR}^+}^{\fp}(G_K)$, 
    \begin{equation*}
    \begin{split}
        \dim_{K}(\Hom_{\rb}(W_1,W_2)&\leq 
        \dim_{K_{\infty}[[t]]}(D_{\mathrm{Sen},K_{\infty}[[t]]}(W_1))\dim_{K_{\infty}}(D_{\mathrm{Sen},K_{\infty}[[t]]}(W_2))
        \\&= \dim_{K_{\infty}}(D_{\mathrm{Sen},K_{\infty}[[t]]}(W_1)/t)\dim_{K_{\infty}}(D_{\mathrm{Sen},K_{\infty}[[t]]}(W_2)/t).
    \end{split}
    \end{equation*}\qedhere
\end{proof}
\begin{cor}\label{detect isomorphic class}
Two objects $W_1, W_2\in \rb$ are isomorphic if and only if $(D_{\mathrm{Sen},K_{\infty}[[t]]}(W_1), N_{\nabla})$ and $(D_{\mathrm{Sen},K_{\infty}[[t]]}(W_2), N_{\nabla})$ are isomorphic as objects in $\Mod_{K_{\infty}[[t]]}^{N_{\nabla}}$.
\end{cor}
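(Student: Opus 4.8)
The plan is to deduce \cref{detect isomorphic class} from \cref{general isomorphism class} by the standard trick of reducing the detection of isomorphism to the existence of a distinguished vector in an internal Hom object. The ``only if'' direction is immediate from the functoriality of the decompletion functor $D_{\mathrm{Sen},K_{\infty}[[t]]}(\cdot)$ together with the fact that it transports the monodromy operator $N_{\nabla}$ (this is part of \cref{Kummer Sen operator} and \cref{D is exact}): an isomorphism $W_1 \xrightarrow{\sim} W_2$ in $\rb$ induces an isomorphism of the associated objects in $\Mod_{K_{\infty}[[t]]}^{N_{\nabla}}$.

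For the ``if'' direction, suppose $(D_{\mathrm{Sen},K_{\infty}[[t]]}(W_1), N_{\nabla}) \cong (D_{\mathrm{Sen},K_{\infty}[[t]]}(W_2), N_{\nabla})$ in $\Mod_{K_{\infty}[[t]]}^{N_{\nabla}}$. First I would record that by \cref{combine three theorems} this forces $\operatorname{rk}_{B_{\dR}^+} W_1 = \operatorname{rk}_{B_{\dR}^+} W_2$ (both equal the rank of the common $K_{\infty}[[t]]$-module). Next, form $W := \Hom_{B_{\dR}^+}(W_1, W_2) \in \rb$. By \cref{D is exact}, $D_{\mathrm{Sen},K_{\infty}[[t]]}(W) = \Hom_{K_{\infty}[[t]]}(D_{\mathrm{Sen},K_{\infty}[[t]]}(W_1), D_{\mathrm{Sen},K_{\infty}[[t]]}(W_2))$ compatibly with monodromy, i.e. $N_{\nabla,W}(f) = N_{\nabla,W_2}\circ f - f\circ N_{\nabla,W_1}$; hence an element of $\Ker(N_{\nabla,W})$ is exactly a morphism in $\Mod_{K_{\infty}[[t]]}^{N_{\nabla}}$, and the hypothesized isomorphism provides an $f_0 \in \Ker(N_{\nabla,W})$ which is a $K_{\infty}[[t]]$-linear isomorphism. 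By \cref{general isomorphism class}, $\Ker(N_{\nabla,W}) = W^{G_K}\otimes_K K_{\infty}$, so this distinguished element lies in $W^{G_K}\otimes_K K_{\infty}$, i.e. in $\Hom_{\rb}(W_1,W_2)\otimes_K K_{\infty}$.

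It then remains to descend: from the existence of an element of $\Hom_{\rb}(W_1,W_2)\otimes_K K_{\infty}$ that becomes an isomorphism after base change to $B_{\dR}^+$, I want to produce an honest element of $\Hom_{\rb}(W_1,W_2)$ that is already an isomorphism of $B_{\dR}^+$-representations. The point is that ``$f$ is an isomorphism'' is an open (Zariski-dense) condition: picking a $K$-basis $g_1,\dots,g_r$ of $\Hom_{\rb}(W_1,W_2)$ and writing $f_0 = \sum c_i g_i$ with $c_i \in K_{\infty}$, the determinant of $f_0$ with respect to fixed bases of $W_1, W_2$ (after any faithfully flat localization that trivializes these modules, or directly using that $W_1,W_2$ are finite projective of the same rank) is a nonzero polynomial in the $c_i$ over (a localization of) $K$, and since $K$ is infinite and $K_{\infty}/K$ algebraic, a standard argument — exactly \cite[Lemma 2.7]{Fon04}, as invoked already in the proof of \cref{detect isomorphism torsion} — produces $c_i' \in K$ with $\sum c_i' g_i$ still invertible. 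This gives the required isomorphism $W_1 \cong W_2$ in $\rb$. The main obstacle is being careful that the determinantal criterion for invertibility makes sense for finite projective, not necessarily free, $B_{\dR}^+$-modules; this is handled either by passing to a faithfully flat cover where both become free and checking invertibility fppf-locally, or simply by noting $B_{\dR}^+$ is local so the modules are in fact free. Everything else is formal from the results already established, principally \cref{general isomorphism class} and \cref{D is exact}.
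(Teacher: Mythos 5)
Your proposal is correct and follows essentially the same route as the paper: both directions reduce to \cref{general isomorphism class} applied to the internal Hom $W=\Hom_{B_{\dR}^+}(W_1,W_2)$ via \cref{D is exact}, followed by the Fontaine-style argument that the locus of isomorphisms is Zariski-dense in $\Hom_{\rb}(W_1,W_2)\otimes_K K_\infty$ and that $K$ is infinite. One small imprecision: objects of $\rb$ are only finitely generated over $B_{\dR}^+$ and may have $t$-torsion, so neither projectivity nor freeness is available in general and your determinantal criterion cannot be applied directly on $W_1,W_2$; the paper instead reduces the candidate maps modulo $t$, takes the determinant of the resulting $d\times d$ matrices over $K_\infty$ (where $d$ is the common minimal number of generators), and upgrades an isomorphism mod $t$ to an isomorphism over $K_\infty[[t]]$ by Nakayama. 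With that adjustment your argument coincides with the paper's.
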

\begin{proof}
The "only if" is obvious by functoriality of $D_{\mathrm{Sen},K_{\infty}[[t]]}(\cdot)$. To show the other direction, we proceed as in \cite[Proposition 3.8]{Fon04}. Assume that there is an isomorphism $f_0$ in  $$\Hom_{\Mod_{K_{\infty}[[t]]}^{N_{\nabla}}}((D_{\mathrm{Sen},K_{\infty}[[t]]}(W_1), N_{\nabla}),(D_{\mathrm{Sen},K_{\infty}[[t]]}(W_2), N_{\nabla})),$$ and we wish to construct a $\Gamma_K$-equivariant isomorphism from $W_1$ to $W_2$ (see \cref{general isomorphism class V}). Let $\{x_1,\cdots,x_d\}$ and $\{y_1,\cdots,y_{d^{\prime}}\}$ be minimal $K_{\infty}[[t]]$-module generating sets for $D_{\mathrm{Sen},K_{\infty}[[t]]}(W_1)$ and $D_{\mathrm{Sen},K_{\infty}[[t]]}(W_2)$ separately. By assumption we naturally have that $D_{\mathrm{Sen},K_{\infty}[[t]]}(W_1)$ and $D_{\mathrm{Sen},K_{\infty}[[t]]}(W_2)$ are isomorphic as $K_{\infty}[[t]]$-modules via $f_0$, in particular, $d=d^{\prime}$.

Notice that $\Hom_{\rb}(W_1,W_2)$ is a finite dimensional $K$-vector space by \cref{general isomorphism class}, hence we can choose a $K$-basis $\{f_1,\cdots,f_n\}$ of it. Let 
\begin{equation*}
    \overline{f}_i\in \Hom_{K_{\infty}}(D_{\mathrm{Sen},K_{\infty}[[t]]}(W_1)/tD_{\mathrm{Sen},K_{\infty}[[t]]}(W_1),D_{\mathrm{Sen},K_{\infty}[[t]]}(W_2)/tD_{\mathrm{Sen},K_{\infty}[[t]]}(W_2))
\end{equation*}
be the reduction of $f_i$ modulo $t$. Concretely, each $\overline{f}_i$ can be described by a $d\times d$ matrix over $K_{\infty}$ utilizing the bases $\{\overline{x}_j\}$ and $\{\overline{x}_j\}$, which are just reductions of $\{x_j\}$ and $\{y_j\}$ modulo $t$. Now apply \cref{general isomorphism class} ii), we see that $\{f_1,\cdots,f_n\}$ forms a $K_{\infty}$-basis for 
\begin{equation*}
    \Hom_{\Mod_{K_{\infty}[[t]]}^{N_{\nabla}}}((D_{\mathrm{Sen},K_{\infty}[[t]]}(W_1), N_{\nabla}),(D_{\mathrm{Sen},K_{\infty}[[t]]}(W_2), N_{\nabla})),
\end{equation*}
which contains an isomorphism by our assumption. As a result, there exist $\lambda_1,\cdots,\lambda_n\in K_{\infty}$ such that $\det(\lambda_1\overline{f}_1+\cdots+\lambda_n\overline{f}_n)\neq 0$, which implies that the polynomial $\det(X_1\overline{f}_1+\cdots+X_n\overline{f}_n)\in K_{\infty}[X_1,\cdots,X_n]$ is non-zero. On the other hand, as $K$ is an infinite field, we can pick up $\mu_1,\cdots,\mu_n\in K$ such that $\det(\overline{f})\neq 0$ for $f=\mu_1f_1+\cdots \mu_nf_n$. Then $f$ defines a $K_{\infty}[[t]]$-linear isomorphism $(D_{\mathrm{Sen},K_{\infty}[[t]]}(W_1)\rightarrow (D_{\mathrm{Sen},K_{\infty}[[t]]}(W_2)$ as its reduction modulo $t$ is an isomorphism for determinant reasons.
But now $f\in \Hom_{\rb}(W_1,W_2)$, hence $f$ is an isomorphism in $\rb$, we are done.
\end{proof}

\section{Representations associated to de Rham prismatic crystals}
\subsection{De Rham crystals and $B_{\dR}^+$-representations}
We define $A_{L}$ to be $\Ainf(\mathcal{O}_{\hat{L}})$ and $A_{L,\perf}^{i}$ to be the $i$-th self products of $A_{L}$ in $(\mathcal{O}_K)_{\Prism}^{\perf}$. 

We denote by $\operatorname{Vect}((\mathcal{O}_{K})_{\Prism}^{\perf}, (\mathcal{O}_{\Prism}[\frac{1}{p}])_{\mathcal{I}}^{\wedge})$ the category of de Rham prismatic crystals on $(\mathcal{O}_K)_{\Prism}^{\perf}$.
\begin{cor}
The category of de Rham prismatic crystals on $(\mathcal{O}_K)_{\Prism}^{\perf}$ is equivalent to the category of finite free $B_{\dR,L}^+$-modules $\mathcal{M}$ on which there is stratification satisfying cocycle condition.
\end{cor}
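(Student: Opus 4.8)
The plan is to mimic the proof of \cref{algebraic description of crystal}, replacing the Breuil-Kisin prism by the perfect prism $(A_L,(\xi))=(\Ainf(\mathcal{O}_{\hat L}),\Ker\theta)$, which is a weakly final object in $(\mathcal{O}_K)_{\Prism}^{\perf}$; indeed any perfect prism over $\mathcal{O}_K$ receives a map from $A_L$ after a faithfully flat cover, since $\mathcal{O}_{\hat L}$ surjects onto the perfectoid quotient and one uses the tilting/Fontaine's map together with \cite[Corollary 3.14]{BS19} to produce the comparison. First I would set up the \v{C}ech nerve $A_{L,\perf}^{\bullet}$ and record that $\bdr(A_{L,\perf}^{0})=B_{\dR,L}^+$, so that a de Rham crystal on $(\mathcal{O}_K)_{\Prism}^{\perf}$ restricts to a finite free $B_{\dR,L}^+$-module $\mathcal{M}=\mathcal{M}(A_L,(\xi))$ together with a $\bdr(A_{L,\perf}^{1})$-linear isomorphism $\varepsilon$ coming from the crystal property, and the cocycle condition on $\bdr(A_{L,\perf}^{2})$ follows automatically.

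The substance is the reverse direction: from a pair $(\mathcal{M},\varepsilon)$ one must reconstruct a sheaf on $(\mathcal{O}_K)_{\Prism}^{\perf}$. For an arbitrary $(A,I)\in(\mathcal{O}_K)_{\Prism}^{\perf}$ I would choose a cover $(A,I)\to(B,J)$ with $(B,J)$ lying over $(A_L,(\xi))$ — concretely $(B,J)=(A,I)\times_{(\mathcal{O}_K)_{\Prism}^{\perf}}(A_L,(\xi))$, whose existence as a perfect prism uses that products of perfect prisms are perfect and \cite[Corollary 3.14]{BS19} — and define
\begin{equation*}
    \mathcal{M}(A,I)=\Eq\bigl(M\otimes_{B_{\dR,L}^+}\bdr(B)\rightrightarrows M\otimes_{B_{\dR,L}^+}\bdr(\tilde B)\bigr),
\end{equation*}
where $(\tilde B,\tilde J)=(B,J)\times_{(A,I)}(B,J)$ and the two maps are obtained by base changing $\varepsilon$ along the two structure maps $A_{L,\perf}^{1}\to\tilde B$, using the universal property of the self-product $A_{L,\perf}^{1}$ in the perfect prismatic site. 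To see this is finite projective over $\bdr(A)$ one invokes $(p,I)$-completely faithfully flat descent for de Rham crystals exactly as in the proof of \cite[Proposition 2.7]{BS21}; the cocycle condition on $\varepsilon$ guarantees the descent datum is effective, and functoriality in $(A,I)$ gives the crystal property. Finally one checks the two constructions are mutually inverse, which is formal once the evaluation at $A_L$ recovers $M$.

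The main obstacle I expect is verifying that $(A_L,(\xi))$ is genuinely weakly final in $(\mathcal{O}_K)_{\Prism}^{\perf}$ and that the relevant self-products are again \emph{perfect} prisms rather than merely prisms — this is where the argument departs from the Breuil-Kisin case, since $(A_L,(\xi))$ is not weakly final in the whole absolute site and one cannot simply quote the statement for $(\mathfrak{S},(E))$. Once that input is in place, everything else is a transcription of the proofs of \cref{algebraic description of crystal} and \cite[Proposition 2.7]{BS21}, with $\bdr(\mathfrak{S})$ replaced by $B_{\dR,L}^+$ and $\mathfrak{S}^{\bullet}$ replaced by $A_{L,\perf}^{\bullet}$.
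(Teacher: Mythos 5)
Your proposal is correct and follows exactly the route the paper takes: the paper's proof is the one-line observation that $(A_L,(\xi))$ is weakly final in $(\mathcal{O}_K)_{\Prism}^{\perf}$, after which one repeats the argument of the Breuil--Kisin case verbatim, which is precisely your plan. The extra care you take about products being computed in the perfect site (and being perfect) is a reasonable point to flag, but it is standard — the perfect prismatic site is equivalent to the category of perfectoid $\mathcal{O}_K$-algebras, where these (co)products exist, as the paper implicitly uses when identifying $\bdr(A_{L,\perf}^{\bullet})$ with $C(\hat{G}^{\bullet},B_{\dR,L}^+)$.
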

\begin{proof}
As $(A_L,E)$ is a weakly final object in $(\mathcal{O}_K)_{\Prism}^{\perf}$, one can proceed as in \cref{algebraic description of crystal}.
\end{proof}
\begin{prop}
There is a canonical isomorphism of cosimplicial rings
\[(\mathcal{O}_{\Prism}[\frac{1}{p}])_{\mathcal{I}}^{\wedge}(A_{L,\perf}^{\bullet})\cong C(\hat{G}^{\bullet},B_{\dR,L}^+).\]
\end{prop}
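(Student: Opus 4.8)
The plan is to identify the self-products $A_{L,\perf}^{\bullet}$ in $(\mathcal{O}_K)_{\Prism}^{\perf}$ explicitly and match them with the standard cosimplicial object computing continuous cochains of $\hat{G}$. First I would recall that $A_L = \Ainf(\mathcal{O}_{\hat L})$ is a perfect prism with $A_L/I \cong \mathcal{O}_{\hat L}$, and that $\mathcal{O}_{\hat L}$ is a perfectoid ring whose tilt is $\mathcal{O}_{\hat L}^\flat$. By the equivalence between perfect prisms and perfectoid rings (Bhatt--Scholze), products of perfect prisms over $(\mathcal{O}_K)_{\Prism}^{\perf}$ correspond to (completed, relative) tensor products of the associated perfectoid rings; concretely, $A_{L,\perf}^{n}$ is the perfect prism attached to the perfectoid ring $(\mathcal{O}_{\hat L})^{\widehat{\otimes}_{\mathcal{O}_K} (n+1)}$ (self-product taken in perfectoid rings over $\mathcal{O}_{\mathbb{C}_p}$, or equivalently the $(n+1)$-fold self product in the perfect prismatic site). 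The key input is that $\hat L$ is a Galois extension-type object: since $L/K$ is an (infinite, but pro-finite-by-$\mathbb{Z}_p$) Galois-like tower with group $\hat G = \Gal(L/K)$, one has $\mathcal{O}_{\hat L}\widehat{\otimes}_{\mathcal{O}_K}\mathcal{O}_{\hat L}\cong C(\hat G, \mathcal{O}_{\hat L})$ (continuous functions), exactly as in the classical descent-along-a-Galois-cover computation, and more generally the $(n+1)$-fold self product is $C(\hat G^{n}, \mathcal{O}_{\hat L})$. This is the perfectoid incarnation of the fact that $\Spa(\hat L)\to \Spa(K)$ is a $\hat G$-torsor on the relevant (pro-étale / $v$-) site.

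Next I would transport this through the functor $(A,I)\mapsto \bdr(A) = (A[\tfrac1p])^\wedge_I$. Applying $\bdr(-)$ to $A_{L,\perf}^{n}$ gives $B_{\dR,L}^+$ for $n=0$, and for general $n$ one gets the $I$-adic completion of $C(\hat G^n,\mathcal{O}_{\hat L})[\tfrac1p]$, which I claim is $C(\hat G^n, B_{\dR,L}^+)$: the point is that $\hat G$ is a compact topological group (a profinite-by-$\mathbb Z_p$ group, hence compact), so continuous functions valued in the $I$-adically complete, $p$-complete ring $B_{\dR,L}^+$ are the completion of those valued in $\mathcal{O}_{\hat L}[\tfrac1p]$. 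Concretely, $C(\hat G^n, B_{\dR,L}^+) = \varprojlim_k C(\hat G^n, B_{\dR,k,L}^+)$ and each $C(\hat G^n, B_{\dR,k,L}^+)$ is obtained from $C(\hat G^n,\mathcal{O}_{\hat L}^\flat$-theory$)$ by the corresponding finite-level base change; one checks $(\mathcal{O}_{\Prism}[\tfrac1p])^\wedge_{\mathcal I}(A_{L,\perf}^n)/\mathcal I^k$ is $C(\hat G^n, B_{\dR,k,L}^+)$ by reducing mod $\mathcal I$ (where it becomes $C(\hat G^n,\mathbb C_p)$, the classical statement that $\mathcal{O}_{\hat L}$ is a $\hat G$-Galois cover, after inverting $p$) and then inducting on $k$ using that $\mathcal I^k/\mathcal I^{k+1}$ is a free rank-one $\mathbb C_p$-module. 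Finally one verifies that the coface and codegeneracy maps on the left — which are the structure maps $\delta_i$ of the Čech nerve — correspond under this identification to the usual simplicial maps on the cochain complex $C(\hat G^\bullet, B_{\dR,L}^+)$ (inserting/deleting copies of the identity and multiplying adjacent arguments), which is a bookkeeping check parallel to \cref{Equ-structure} but in the perfect setting.

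The main obstacle I anticipate is making the identification $\mathcal{O}_{\hat L}\widehat{\otimes}_{\mathcal{O}_K}\mathcal{O}_{\hat L}\cong C(\hat G,\mathcal{O}_{\hat L})$ precise and functorial at the level of prisms, rather than just at the level of the generic fibres: one must know that the self-product in $(\mathcal{O}_K)_{\Prism}^{\perf}$ really is computed by the perfectoid tensor product (this follows from \cite[Corollary 3.14]{BS19} together with the perfect-prism/perfectoid dictionary, since perfect prisms are insensitive to $\delta$-structure issues that complicate the Breuil--Kisin case) and that $\hat G$ acting on $\mathcal{O}_{\hat L}$ makes it almost a Galois cover in the appropriate almost-mathematics sense — after inverting $p$ this "almost" ambiguity disappears, which is exactly why the statement is clean for $B_{\dR}^+$-coefficients. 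Once the $n=1$ case is nailed down, the general $n$ and the compatibility of the cosimplicial structure maps are routine, so I would spend most of the write-up carefully justifying the base case and the reduction-mod-$\mathcal I^k$ dévissage.
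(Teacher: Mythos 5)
Your proposal is correct and follows essentially the same route as the paper: reduce by $\mathcal{I}$-adic dévissage to the mod-$\mathcal{I}$ statement, which is the perfectoid Galois-cover identification $A_{L,\perf}^{n}[\frac{1}{p}]/\mathcal{I}\cong C(\hat{G}^{n},\hat{L})$ (the paper simply cites \cite[Proposition 3.10]{MW21} for this base case, whereas you sketch its proof via almost purity and the torsor property of $\Spa(\hat{L})\to\Spa(K)$). One small correction: the reduction mod $\mathcal{I}$ lands in $C(\hat{G}^{n},\hat{L})$, not $C(\hat{G}^{n},\mathbb{C}_p)$, and correspondingly $\mathcal{I}^{k}/\mathcal{I}^{k+1}$ evaluated on $A_{L,\perf}^{n}$ is free of rank one over $C(\hat{G}^{n},\hat{L})$ rather than over $\mathbb{C}_p$.
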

\begin{proof}
Unwinding definition, it suffices to show that 
\[\bdr(A_{L,\perf}^{\bullet})\cong C(\hat{G}^{\bullet},B_{\dR,L}^+).\]
Recall that $\bdr(A_{L,\perf}^{i})$ is equipped with the inverse limit topology induced from the quotient topology on each $\bdr(A_{L,\perf}^{i})/(\xi^j)=A_{L,\perf}^{i}[\frac{1}{p}]/(\xi^j)$, hence it suffices to show that
\[A_{L,\perf}^{i}[\frac{1}{p}]/(\xi)\cong C(\hat{G}^{i},\hat{L}).\]
Here we identify $C(\hat{G}^{\bullet},B_{\dR,L}^+)/(\xi)$ with $C(\hat{G}^{\bullet},\hat{L})$ as multiplication map by $\xi$ on $B_{\dR,L}^+$ is a closed embedding with respect to the topology on $B_{\dR,L}^+$.

But this is already proven in the proof of \cite[Proposition 3.10]{MW21}.
\end{proof}
As a consequence, by Galois descent, we have the following result:
\begin{theorem}\label{de Rham realization}
The category of de Rham prismatic crystals on $(\mathcal{O}_K)_{\Prism}^{\perf}$ is equivalent to the category of $B_{\dR,L}^+$-representation of $\hat{G}$.
\end{theorem}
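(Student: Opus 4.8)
The plan is to combine the two preceding results --- the algebraic description of de Rham crystals on $(\mathcal{O}_K)_{\Prism}^{\perf}$ via finite free $B_{\dR,L}^+$-modules with stratification, and the identification of the cosimplicial ring $(\mathcal{O}_{\Prism}[\frac{1}{p}])_{\mathcal{I}}^{\wedge}(A_{L,\perf}^{\bullet})$ with $C(\hat{G}^{\bullet},B_{\dR,L}^+)$ --- with faithfully flat descent along the cover $(A_L,(E))$ of the final object, together with the classical fact that, for a profinite group acting continuously on a ring, the category of modules equipped with descent data for the \v{C}ech nerve $C(\hat{G}^{\bullet},B_{\dR,L}^+)$ is equivalent to the category of semi-linear continuous $\hat{G}$-representations. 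I would phrase this last input as the statement that the functor $N \mapsto (N, \text{its stratification})$ from $\Rep_{\hat{G}}(B_{\dR,L}^+)$ to the category of $B_{\dR,L}^+$-modules with a stratification relative to the cover $A_{L,\perf}^{\bullet}$ is an equivalence, which is exactly the content of Galois descent once the cosimplicial identification above is in place.

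First I would spell out how a stratification on a finite free $B_{\dR,L}^+$-module $\mathcal{M}$ translates under the isomorphism $\bdr(A_{L,\perf}^1)\cong C(\hat{G},B_{\dR,L}^+)$ into a cocycle: the $\bdr(A_{L,\perf}^1)$-linear isomorphism $\varepsilon\colon \mathcal{M}\otimes_{p_0}\bdr(A_{L,\perf}^1)\xrightarrow{\sim}\mathcal{M}\otimes_{p_1}\bdr(A_{L,\perf}^1)$ becomes, after evaluating the continuous functions at each $g\in\hat{G}$, a collection of $B_{\dR,L}^+$-semilinear isomorphisms $\mathcal{M}\xrightarrow{\sim}\mathcal{M}$, $v\mapsto g\cdot v$; the cocycle condition $\delta_1^{2,*}\varepsilon = \delta_2^{2,*}\varepsilon\circ\delta_0^{2,*}\varepsilon$ on $\bdr(A_{L,\perf}^2)\cong C(\hat{G}^2,B_{\dR,L}^+)$ is precisely the relation $(gh)\cdot v = g\cdot(h\cdot v)$, so $\mathcal{M}$ acquires a semilinear $\hat{G}$-action, and continuity is read off from the fact that $\varepsilon$ lands in continuous functions. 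Conversely, starting from $W\in\Rep_{\hat{G}}(B_{\dR,L}^+)$, the collection of semilinear automorphisms assembles into a continuous $C(\hat{G},B_{\dR,L}^+)$-linear stratification, and one checks these two constructions are mutually inverse. Then I would invoke faithfully flat descent (as already used in the proof of the analogous statement \cref{algebraic description of crystal}, via \cite[Proposition 2.7]{BS21}) to pass between de Rham crystals on $(\mathcal{O}_K)_{\Prism}^{\perf}$ and $B_{\dR,L}^+$-modules with stratification relative to $A_{L,\perf}^{\bullet}$, so that the composite equivalence sends a crystal $\mathcal{M}$ to $\mathcal{M}(A_L,(E))$ with its induced $\hat{G}$-action.

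I expect the main obstacle to be purely bookkeeping of topologies: one must ensure that the equivalence between descent data and $\hat{G}$-representations genuinely matches the \emph{continuous} semilinear representations on the representation side with the \emph{continuous} (i.e. $C(\hat{G}^\bullet,-)$, not $\Map(\hat{G}^\bullet,-)$) cocycles on the crystal side, and that the finitely-generated/finite-free topology on $\mathcal{M}(A_L,(E))$ (using \cref{topology compatibility} and the fact that multiplication by $\xi$ on $B_{\dR,L}^+$ is a closed embedding) is the one making the $\hat{G}$-action continuous --- this is where the hypotheses recorded in the previous proposition about $C(\hat{G}^{\bullet},B_{\dR,L}^+)/(\xi)\cong C(\hat{G}^\bullet,\hat{L})$ are used. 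Everything else is a formal consequence of descent, so I would keep the argument short: state the dictionary between stratifications and $\hat{G}$-actions, cite the cosimplicial identification and faithfully flat descent, and note that the quasi-inverse is $W\mapsto$ (the crystal whose value at $(A,I)$ is computed by the equalizer against a cover factoring through $(A_L,(E))$, exactly as in \cref{algebraic description of crystal}).
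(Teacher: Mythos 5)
Your proposal is correct and follows exactly the paper's route: the paper deduces the theorem from the stratification description of crystals on the perfect site together with the cosimplicial identification $(\mathcal{O}_{\Prism}[\frac{1}{p}])_{\mathcal{I}}^{\wedge}(A_{L,\perf}^{\bullet})\cong C(\hat{G}^{\bullet},B_{\dR,L}^+)$, citing Galois descent, which is precisely the dictionary between stratifications and continuous semilinear $\hat{G}$-actions that you spell out. Your additional attention to matching continuous cocycles with continuous representations is a reasonable elaboration of the same argument.
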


By either using $\Ainf$ instead of $A_L$ in the proof of the previous theorem or utilizing \cref{de rham almost purity}, we have that:
\begin{theorem}\label{various representation equivalence}
We have the following commutative diagrams in which all the arrows induce equivalences of categories
    \[\begin{tikzcd}[cramped, sep=large]
\operatorname{Vect}((\mathcal{O}_{K})_{\Prism}^{\perf}, (\mathcal{O}_{\Prism}[\frac{1}{p}])_{\mathcal{I}}^{\wedge}) \arrow[rd, "T"] \arrow[r, "T_L"] & \Rep_{\hat{G}}(B_{\dR,L}^+) \arrow[d, "-\otimes_{B_{\dR,L}^+}B_{\dR}^+ "]\\
& \rb
\end{tikzcd}\]
Here given $\mathcal{M}\in \operatorname{Vect}((\mathcal{O}_{K})_{\Prism}^{\perf}, (\mathcal{O}_{\Prism}[\frac{1}{p}])_{\mathcal{I}}^{\wedge})$, $T(\mathcal{M})=\mathcal{M}(\Ainf)$ and $T(\mathcal{M})=\mathcal{M}(A_L)$.
\end{theorem}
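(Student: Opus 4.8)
The statement to prove is \cref{various representation equivalence}: that the two evaluation functors $T_L = \mathcal{M} \mapsto \mathcal{M}(A_L)$ and $T = \mathcal{M} \mapsto \mathcal{M}(\Ainf)$ out of $\operatorname{Vect}((\mathcal{O}_{K})_{\Prism}^{\perf}, (\mathcal{O}_{\Prism}[\frac{1}{p}])_{\mathcal{I}}^{\wedge})$ are equivalences, and that the triangle with the base-change functor $-\otimes_{B_{\dR,L}^+}B_{\dR}^+$ commutes. The two edges of the triangle have essentially already been supplied: $T_L$ is an equivalence by \cref{de Rham realization}, and the vertical functor $-\otimes_{B_{\dR,L}^+}B_{\dR}^+ : \Rep_{\hat{G}}(B_{\dR,L}^+) \to \rb$ is an equivalence by \cref{de rham almost purity}. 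So the content here is (i) commutativity of the triangle, i.e. a canonical identification $\mathcal{M}(\Ainf) \cong \mathcal{M}(A_L)\otimes_{B_{\dR,L}^+}B_{\dR}^+$ compatible with $\hat G$- and $G_K$-actions, and (ii) the claim that $T$, defined via $\Ainf$, is an equivalence; the latter then follows formally from the former once the triangle commutes, since a composite of two equivalences is an equivalence.

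For commutativity, I would argue exactly as suggested in the parenthetical remark of the statement: repeat the proof of \cref{de Rham realization} but with the weakly final object $(\Ainf,(\xi)) = (\Ainf(\mathcal{O}_{\mathbb C_p}),\Ker\theta)$ of $(\mathcal{O}_K)_{\Prism}^{\perf}$ in place of $(A_L,E)$. The key input is the cosimplicial-ring identification $\bdr(\Ainf^{\bullet}_{\perf}) \cong C(G_K^{\bullet}, B_{\dR}^+)$ — the analogue of the proposition just proved for $A_L$, where now the relevant fact is that the self-products of $\Ainf$ in the perfect prismatic site are computed via the profinite group $G_K$ rather than $\hat G$ (this is the standard computation, already used in \cite{MW21,Wu21,BS21}, of the Čech nerve of $\Ainf$ over $(\mathcal{O}_K)_{\Prism}^{\perf}$, reducing mod $\xi$ to $\Ainf^i_{\perf}[1/p]/(\xi) \cong C(G_K^i,\mathbb C_p)$ by Faltings's almost purity). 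Galois descent along this nerve then gives that evaluation at $\Ainf$ lands in $\rb$ and is an equivalence, which is the functor $T$. The compatibility $\mathcal{M}(\Ainf) = \mathcal{M}(A_L)\otimes_{B_{\dR,L}^+} B_{\dR}^+$ is then simply the crystal property of $\mathcal{M}$ applied to the map of prisms $(A_L, E) \to (\Ainf, \xi)$ in $(\mathcal{O}_K)_{\Prism}^{\perf}$ (noting $A_L = \Ainf(\mathcal O_{\hat L})$ maps to $\Ainf = \Ainf(\mathcal O_{\mathbb C_p})$ since $\hat L \hookrightarrow \mathbb C_p$), together with the identification of $\bdr(\Ainf)$ with $B_{\dR}^+$ and $\bdr(A_L)$ with $B_{\dR,L}^+$; the $G_L$-equivariance of this identification and the induced $\hat G = G_K/G_L$-action on $\mathcal{M}(A_L)$ match up by construction, so the triangle commutes. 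Alternatively, once \cref{de Rham realization} and \cref{de rham almost purity} are in hand, one can simply \emph{define} $T := (-\otimes_{B_{\dR,L}^+}B_{\dR}^+)\circ T_L$ and check it agrees with evaluation at $\Ainf$ via the crystal property — this is the cleanest route and avoids re-running the descent.

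The main obstacle — to the extent there is one — is bookkeeping with topologies: one must check that the identification $\bdr(\Ainf^{\bullet}_{\perf}) \cong C(G_K^{\bullet},B_{\dR}^+)$ respects the relevant inverse-limit/quotient topologies, exactly as in the $A_L$ case, so that Galois descent for continuous cocycles is legitimate and one does not accidentally land outside $\rb$. This is genuinely routine given that the $A_L$-version has just been established and that $B_{\dR}^+$ and $B_{\dR,L}^+$ carry compatible topologies (multiplication by $\xi$ being a closed embedding on both), but it is the only place where care is needed. I would present the argument in the "define $T$ by composition, then identify with evaluation" form, with the descent computation for $\Ain^{\bullet}_{\perf}$ cited to the literature, and close by noting that all functors in the triangle are now equivalences.
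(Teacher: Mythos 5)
Your proposal is correct and takes essentially the same approach as the paper, which itself only indicates the two routes you describe: either rerun the descent of \cref{de Rham realization} with $\Ainf$ in place of $A_L$ (identifying the \v{C}ech nerve with continuous cochains on $G_K$), or compose $T_L$ with the equivalence of \cref{de rham almost purity} and match the result with evaluation at $\Ainf$ via the crystal property. Your added remarks on the topological bookkeeping and on commutativity of the triangle are exactly the points one would need to spell out.
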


\begin{example}
Identify $B_{\dR,L}^+$ (resp. $\bdr(A_{L,\perf}^{1})$) with $C(\hat{G}^{0},B_{\dR,L}^+)$ (resp. $C(\hat{G}^{1},B_{\dR,L}^+)$), then for $x\in B_{\dR,L}^+$, $\delta_1^1(x)(g)=x$, $\delta_0^1(x)(g)=g(x)$. In particular, as $u_{0}(g)=\delta_1^1(u_0)(g)=[\pi^\flat],\quad
        u_{1}(g)=\delta_0^1(u_0)(g)=g[\pi^\flat]=[\varepsilon]^{c(g)}[\pi^\flat]$, we have that
\begin{equation*}
    X_1(g)=\frac{u_0-u_1}{E(u_0)}(g)=\frac{(1-[\varepsilon]^{c(g)})[\pi^{\flat}]}{E([\pi^{\flat}])}.
\end{equation*}
\end{example}

\subsection{Full faithfulness of the restriction functor} 
Consider the natural restriction functor 
\begin{equation*}
    V: \operatorname{Vect}((\mathcal{O}_{K})_{\Prism}, (\mathcal{O}_{\Prism}[\frac{1}{p}])_{\mathcal{I}}^{\wedge})\rightarrow \operatorname{Vect}((\mathcal{O}_{K})_{\Prism}^{\perf}, (\mathcal{O}_{\Prism}[\frac{1}{p}])_{\mathcal{I}}^{\wedge})\cong \Rep_{\hat{G}}(B_{\dR,L}^+)
\end{equation*}
by restricting a de Rham crystal to the the perfect prismatic site.

In this section, we aim to prove $V$ is fully faithful. 

To do this, we need several preliminaries.
First we would like to give an explicit description of the $\hat{G}$ representation $V(\mathcal{M})$ given $\mathcal{M} \in \operatorname{Vect}((\mathcal{O}_{K})_{\Prism}, (\mathcal{O}_{\Prism}[\frac{1}{p}])_{\mathcal{I}}^{\wedge})$. 

\begin{prop}\label{cocycle g action}
Let $\mathcal{M} \in \operatorname{Vect}((\mathcal{O}_{K})_{\Prism}, (\mathcal{O}_{\Prism}[\frac{1}{p}])_{\mathcal{I}}^{\wedge})$ be a de Rham prismatic crystal associated to a pair $(M,\varepsilon)$ as in \cref{algebraic description of crystal}. Choose a basis $\underline e$ in $M$ such that
\begin{equation*}
    \epsilon(\underline{e})=\underline e\cdot \sum_{m\geq0}(\sum_{n\geq 0}A_{m,n}X_1^{[n]})t^m.
\end{equation*}
 Then for $\Vec{v}=\underline e B$ a vector in  $V(\mathcal{M})=M\otimes_{\bdr(\mathfrak{S})}B_{\dR,L}^+$ with $B\in M_{l\times 1}(B_{\dR,L}^+)$, then for $g\in \hat{G}$, 
 \begin{equation*}
     g(\Vec{v})=\underline e U(g)g(B) \quad \text{where}\quad U(g)=\sum_{n\geq 0}A_{m,n}X_1(g)^{[n]})t^m.
 \end{equation*}
\end{prop}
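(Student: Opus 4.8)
The plan is to reduce the computation of the $\hat{G}$-action on $V(\mathcal{M})=M\otimes_{\bdr(\mathfrak{S})}B_{\dR,L}^+$ to the explicit formula for the stratification $\varepsilon$ together with the description of $\bdr(A_{L,\perf}^1)$ as $C(\hat{G}^1,B_{\dR,L}^+)$. Recall from \cref{various representation equivalence} (and the cosimplicial identification preceding it) that the $\hat{G}$-action on $V(\mathcal{M})$ is obtained from the stratification $\varepsilon$ by base change along the two structure maps $\delta_0^1,\delta_1^1\colon \bdr(\mathfrak{S})\to\bdr(\mathfrak{S}^1)$ after identifying $\bdr(\mathfrak{S}^1)$ with $C(\hat G^1,B_{\dR,L}^+)$; concretely, evaluating the cocycle $\varepsilon$ at a group element $g$ recovers the descent datum, so that $g$ acts on $\vec v=\underline e\,B$ by $g(\vec v)=\underline e\cdot\big(\text{``}\varepsilon\text{ evaluated at }g\text{''}\big)\cdot g(B)$.

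First I would make precise the identification $\bdr(\mathfrak{S}^1)\cong C(\hat G^1,B_{\dR,L}^+)$ restricted along $p_0,p_1$: under it, $\delta_1^1$ sends $x\in\bdr(\mathfrak{S})$ to the constant function $g\mapsto x$, while $\delta_0^1$ sends $x$ to $g\mapsto g(x)$, exactly as recorded in the \cref{image of t}-style example right before this proposition (where $u_0(g)=[\pi^\flat]$ and $u_1(g)=[\varepsilon]^{c(g)}[\pi^\flat]$). Consequently the variable $X_1=\frac{u_0-u_1}{E(u_0)}$ specializes at $g$ to $X_1(g)=\frac{(1-[\varepsilon]^{c(g)})[\pi^\flat]}{E([\pi^\flat])}$, and $t=E(u_0)$ specializes to the constant $E([\pi^\flat])=t$. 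Then I would substitute into $\varepsilon(\underline e)=\underline e\cdot\sum_{m\ge 0}\big(\sum_{n\ge 0}A_{m,n}X_1^{[n]}\big)t^m$: evaluating the $\bdr(\mathfrak{S}^1)$-valued matrix at $g$ replaces $X_1$ by $X_1(g)$ and leaves $t$ and the constant matrices $A_{m,n}\in M_l(K)$ untouched, yielding precisely $U(g)=\sum_{m\ge 0}\big(\sum_{n\ge 0}A_{m,n}X_1(g)^{[n]}\big)t^m$. (The apparent typo in the displayed statement, where the outer $\sum_m t^m$ is suppressed, should be corrected to this.) Finally, since $g$ acts $B_{\dR,L}^+$-semilinearly and the descent datum is $g(\underline e\otimes 1)=\underline e\cdot U(g)$ while $g(1\otimes b)=1\otimes g(b)$, semilinearity gives $g(\vec v)=g(\underline e B)=\underline e\,U(g)\,g(B)$.

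The routine parts are the bookkeeping of which structure map corresponds to which of the two projections and checking convergence of the series $U(g)$ in $B_{\dR,L}^+$ — the latter follows because $X_1(g)\in\xi\,B_{\dR,L}^+$ (as $1-[\varepsilon]^{c(g)}\in\ker\theta$ up to the usual comparison, so each $X_1(g)^{[n]}$ is divisibly small) and $\lim_n A_{m,n}=0$, so each inner sum converges $t$-adically and the outer sum converges in the $t$-adically complete ring $B_{\dR,L}^+$. The main conceptual point — hence the step I expect to need the most care — is the compatibility between the abstract cosimplicial identification $(\mathcal O_{\Prism}[\tfrac1p])_{\mathcal I}^{\wedge}(A_{L,\perf}^{\bullet})\cong C(\hat G^{\bullet},B_{\dR,L}^+)$ and the pair $(M,\varepsilon)$ attached to $\mathcal M$ via \cref{algebraic description of crystal}: one must verify that pulling back the crystal structure of $\mathcal M$ along $(\mathfrak S,(E))\to(A_L,(\xi))$ and then computing the self-product of $A_L$ in the perfect prismatic site reproduces exactly the base change of $\varepsilon$ along $\mathfrak S^1\to A_{L,\perf}^1$, so that ``$\varepsilon$ evaluated at $g$'' is literally $U(g)$. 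Once that compatibility is in hand, the formula drops out.
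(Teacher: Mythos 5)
Your proposal is correct and takes essentially the same approach as the paper: the paper's own proof is the one-line assertion that the formula ``follows from the way that the $\hat{G}$-action is constructed from stratification,'' and your argument simply spells out that mechanism (evaluating $\varepsilon$ at $g$ via the identification $\bdr(A_{L,\perf}^{1})\cong C(\hat{G}^{1},B_{\dR,L}^{+})$, with $\delta_1^1$ the constant embedding and $\delta_0^1$ the $g$-twist, plus semilinearity). The extra care you flag about convergence and about compatibility of the cosimplicial identification with $(M,\varepsilon)$ is reasonable diligence but not something the paper itself elaborates.
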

\begin{proof}
This follows from the way that $\hat{G}$-action is constructed from stratification. 
\end{proof}

\begin{lemma}\label{image of X under theta}
$\theta_k(X(g))=\theta_{k}(\frac{[\pi^{\flat}](1-[\varepsilon])}{E([\pi^{\flat}])})(\sum_{i=1}^{k}\frac{(-1)^{i-1}c(g)\cdots (c(g)-i+1)}{i!}(1-[\varepsilon])^{i-1})$.
\end{lemma}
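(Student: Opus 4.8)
The statement to prove is the formula for $\theta_k(X_1(g))$. The idea is a direct computation starting from the explicit formula $X_1(g)=\frac{(1-[\varepsilon]^{c(g)})[\pi^{\flat}]}{E([\pi^{\flat}])}$ obtained in the example just above \cref{image of X under theta}. First I would isolate the factor $[\pi^{\flat}]/E([\pi^{\flat}])$ (which is simply $[\pi^{\flat}]/t$), apply $\theta_k$ to it, and note that it is a genuine element of $B_{\dR,k,L}^+$ so the remaining task is purely to expand $\theta_k(1-[\varepsilon]^{c(g)})$. The key observation is that $[\varepsilon]-1$ lies in the maximal ideal generated by $t$ in $B_{\dR,L}^+$ (indeed $\theta([\varepsilon]-1)=0$), so modulo $t^k$ the binomial series for $[\varepsilon]^{c(g)}=(1+([\varepsilon]-1))^{c(g)}$ truncates.

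Concretely, I would write
\[
1-[\varepsilon]^{c(g)} = 1-(1+([\varepsilon]-1))^{c(g)} = -\sum_{i\geq 1}\binom{c(g)}{i}([\varepsilon]-1)^i,
\]
where $\binom{c(g)}{i}=\frac{c(g)(c(g)-1)\cdots(c(g)-i+1)}{i!}$ makes sense since $c(g)\in\mathbb{Z}_p$ and the binomial coefficients are $p$-integral; the series converges $t$-adically because $[\varepsilon]-1\in tB_{\dR,L}^+$. Applying $\theta_k$ kills all terms with $i\geq k+1$ (as $([\varepsilon]-1)^i\in t^iB_{\dR,L}^+\subseteq t^{k}B_{\dR,L}^+$ once $i\geq k$, so in fact one keeps $1\leq i\leq k$), giving
\[
\theta_k(1-[\varepsilon]^{c(g)}) = -\sum_{i=1}^{k}\binom{c(g)}{i}\theta_k([\varepsilon]-1)^i = \theta_k([\varepsilon]-1)\sum_{i=1}^{k}\frac{(-1)^{i-1}c(g)\cdots(c(g)-i+1)}{i!}\theta_k([\varepsilon]-1)^{i-1},
\]
after factoring out one copy of $([\varepsilon]-1)$ and recording the sign $(-1)^{i-1}$ from $-\binom{c(g)}{i}(\varepsilon-1)^i = \binom{c(g)}{i}(-1)^{i-1}(\varepsilon-1)\cdot(-(\varepsilon-1))^{i-1}$; one must be a touch careful here, but the sign bookkeeping is exactly what produces the factor $(1-[\varepsilon])^{i-1}$ in the claimed formula. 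Multiplying by $\theta_k\!\left(\frac{[\pi^{\flat}]}{E([\pi^{\flat}])}\right)$ and combining $\theta_k([\varepsilon]-1)\cdot\theta_k\!\left(\frac{[\pi^{\flat}]}{E([\pi^{\flat}])}\right)$ into $\theta_k\!\left(\frac{[\pi^{\flat}](1-[\varepsilon])}{E([\pi^{\flat}])}\right)$ (absorbing the overall sign) yields precisely the asserted identity.

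There is essentially no serious obstacle: the only point requiring mild care is the truncation bound — verifying that $([\varepsilon]-1)^i \in t^i B_{\dR,L}^+$, which follows since $\xi$ divides $[\varepsilon]-1$ in $\Ainf$ (both generate $\ker\theta$ up to units, or more precisely $[\varepsilon]-1 = ([\varepsilon]^{1/p}-1)\xi$) and $t$ and $\xi$ generate the same ideal in $B_{\dR,L}^+$ — together with the consistency of the binomial sign rewriting. Since all manipulations take place in the $t$-adically complete ring $B_{\dR,L}^+$ and the series involved converge there, the formal identity over $\mathbb{Z}_p[[S]]$ (expanding $(1+S)^{c(g)}$) specializes correctly. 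I would present this as a short direct calculation.
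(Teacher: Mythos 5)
Your proof is correct, but it takes a genuinely different route from the paper's. You expand $[\varepsilon]^{c(g)}=(1+([\varepsilon]-1))^{c(g)}$ as a $p$-adic binomial series, which converges $t$-adically because $[\varepsilon]-1=([\varepsilon]^{1/p}-1)\xi\in tB_{\dR,L}^+$ and $\binom{c(g)}{i}\in\mathbb{Z}_p$, and then truncate modulo $t^k$; the whole identity drops out in one line. The paper instead sets $Y(g)=\frac{1-[\varepsilon]^{c(g)}}{1-[\varepsilon]}$ and proves the formula for $\theta_k(Y(g))$ by induction on $k$, extracting the next coefficient at each step by applying $\theta_1$ to a quotient and evaluating it by "L'H\^opital's rule" (the limit comes out to $\frac{(-1)^{k}c(g)\cdots(c(g)-k)}{(k+1)!}$). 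Your argument is shorter and makes the coefficients' origin (binomial coefficients) transparent, at the cost of having to justify that the binomial series really computes $[\varepsilon^{c(g)}]$ for $c(g)\in\mathbb{Z}_p$ (a standard continuity argument, which you correctly flag); the paper's induction avoids invoking the binomial series but is more opaque. Two small slips in your write-up, neither fatal: the isolated factor $[\pi^{\flat}]/E([\pi^{\flat}])$ is \emph{not} itself an element of $B_{\dR,L}^+$ (it only becomes one after multiplying by the factor $1-[\varepsilon]$ you later recombine with it, which is also why the truncation keeps $i\le k$ rather than $i\le k-1$), and your displayed sign identity $-\binom{c(g)}{i}([\varepsilon]-1)^i=\binom{c(g)}{i}(-1)^{i-1}([\varepsilon]-1)\cdot(-([\varepsilon]-1))^{i-1}$ is off by a sign as written, though the final coefficient $(-1)^{i-1}\binom{c(g)}{i}(1-[\varepsilon])^{i}$ you arrive at is the correct one.
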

\begin{proof}
By definition, $X(g)=\frac{[\pi^{\flat}](1-[\varepsilon])}{E([\pi^{\flat}])}\cdot \frac{1-[\varepsilon]^{c(g)}}{1-[\varepsilon]}$. Let $Y(g):=\frac{1-[\varepsilon]^{c(g)}}{1-[\varepsilon]}$, hence it suffices to show
\begin{equation}\label{calculating Y}
    \theta_k(Y(g))=\sum_{i=1}^{k}\frac{(-1)^{i-1}c(g)\cdots (c(g)-i+1)}{i!}(1-[\varepsilon])^{i-1}.
\end{equation}
We prove it by induction on $k$. When $k=1$, $\theta_{1}(Y(g))=c(g)$ as $\theta_{1}([\varepsilon])=1$. Suppose \cref{calculating Y} is proven for up to $k$, then 
\begin{equation*}
    \begin{split}
        Y(g)&=(Y(g)-\sum_{i=0}^{k-1}\frac{(-1)^{i-1}c(g)\cdots (c(g)-i+1)}{i!}(1-[\varepsilon])^i)+\sum_{i=1}^{k}\frac{(-1)^{i-1}c(g)\cdots (c(g)-i+1)}{i!}(1-[\varepsilon])^{i-1}
        \\&=\frac{(Y(g)-\sum_{i=1}^{k}\frac{(-1)^{i-1}c(g)\cdots (c(g)-i+1)}{i!}(1-[\varepsilon])^{i-1})}{(1-[\varepsilon])^k}\cdot(1-[\varepsilon])^k+\sum_{i=1}^{k}\frac{(-1)^{i-1}c(g)\cdots (c(g)-i+1)}{i!}(1-[\varepsilon])^{i-1}.
    \end{split}
\end{equation*}
Then the desired result follows as one can use L'Hôpital's rule to calculate that 
\[\theta_{1}(\frac{(Y(g)-\sum_{i=1}^{k}\frac{(-1)^{i-1}c(g)\cdots (c(g)-i+1)}{i!}(1-[\varepsilon])^{i-1})}{(1-[\varepsilon])^k})=\frac{(-1)^{k}c(g)\cdots (c(g)-k)}{(k+1)!}.\]
\end{proof}
\begin{prop}\label{analytic of unit}
Suppose $\mathcal{M} \in \operatorname{Vect}((\mathcal{O}_{K})_{\Prism}, (\mathcal{O}_{\Prism}[\frac{1}{p}])_{\mathcal{I}}^{\wedge})$, then $\underline e \subseteq V(\mathcal{M})^{\hat{G}-la,\gamma=1}.$
\end{prop}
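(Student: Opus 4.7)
The plan is to verify separately the two conditions $\underline{e} \subseteq V(\mathcal{M})^{\gamma=1}$ and $\underline{e} \subseteq V(\mathcal{M})^{\hat{G}-la}$, using as the main input the explicit formula for the $\hat{G}$-action from \cref{cocycle g action} together with the structural constraints on $\{A_{m,n}\}$ from \cref{Main theorem I}.

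For invariance under $\Gal(L/K_\infty)$, the first step is to observe that any such $g$ fixes $\pi^\flat$, hence $c(g) = 0$. Substituting $c(g) = 0$ into the formula of \cref{image of X under theta} gives $\theta_k(X_1(g)) = 0$ for every $k \geq 1$, so $X_1(g) = 0$ in $B_{\dR,L}^+$. Then \cref{cocycle g action} simplifies $U(g)$ to $\sum_{m \geq 0} A_{m,0} t^m$, which equals the identity matrix since $A_{0,0} = I$ and $A_{m,0} = 0$ for $m \geq 1$ by \cref{Main theorem I}. Therefore $g(\underline{e}) = \underline{e}$ for every $g \in \Gal(L/K_\infty)$.

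For $\hat{G}$-local analyticity, I would reduce (using $V(\mathcal{M})^{\hat{G}-la} = \varprojlim_k (V(\mathcal{M})/t^k)^{\hat{G}-la}$ from \cref{de rham analytic vectors}) to proving that $\theta_k(\underline{e}) \in (V(\mathcal{M})/t^k)^{\hat{G}-la}$ for each $k \geq 1$. Parameterizing $\hat{G} \cong \mathbb{Z}_p \tau \rtimes \Gamma_K$ via $g = \tau^a \gamma^b$ with $(a,b) \in \mathbb{Z}_p^2$ makes $c(g) = a$, a coordinate function; in particular $c$ is globally analytic. By \cref{image of X under theta}, $\theta_k(X_1(g))$ is then a polynomial $Q_k(c(g))$ of degree at most $k$ in $c(g)$ with fixed coefficients in $B_{\dR,k,L}^+$ coming from powers of $(1-[\varepsilon])$. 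Combining this with \cref{cocycle g action} yields
\[
g(\theta_k(\underline{e})) = \theta_k(\underline{e}) \cdot \sum_{m=0}^{k-1} \theta_k(t)^m \sum_{n \geq 0} A_{m,n}\, Q_k(c(g))^{[n]},
\]
and the goal is to exhibit the right side as a convergent power series in $(a,b)$ on some open subgroup $\hat{G}_r$.

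The main obstacle is justifying the convergence and the analyticity (not just continuity) of the inner sum $\sum_{n \geq 0} A_{m,n}\, Q_k(c(g))^{[n]}$. My plan is to restrict to $\hat{G}_r$ with $r$ large enough that $c(g) \in p^r \mathbb{Z}_p$, which forces $Q_k(c(g))$ into a small $p$-adic disc inside $B_{\dR,k,L}^+$. Combining this bound with the decay of $A_{m,n}$ implied by the recursion in the third bullet of \cref{Main theorem I} together with the convergence property of $A_{0,1}$, one obtains that $A_{m,n}\, Q_k(c(g))^{[n]} \to 0$ uniformly on $\hat{G}_r$. Expanding each $Q_k(c(g))^n$ as a polynomial in $a$ with bounded coefficients in $B_{\dR,k,L}^+$ and reindexing then yields a convergent power series in $a$ (with trivial $b$-dependence), establishing the required $\hat{G}_r$-analyticity in the sense of \cref{de rham analytic}. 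Passing to the inverse limit over $k$ and combining with the $\gamma$-invariance from the first part concludes the proof.
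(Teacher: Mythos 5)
Your proposal is correct and follows essentially the same route as the paper: reduce modulo $t^k$, use \cref{cocycle g action} and \cref{image of X under theta} to express $U(g)$ through a polynomial in $c(g)$, and verify convergence of $\sum_n A_{m,n}X_1(g)^{[n]}$ — though the paper obtains the decay directly from the valuation estimate $v_p\bigl((1-\xi_p)^s\bigr)>v_p(s!)$ together with $\lim_s A_{m,s}=0$, without shrinking to a subgroup $\hat{G}_r$, and it justifies convergence in $B_{\dR,k,L}^+$ via \cref{check convergence after modulo}, a point your "small $p$-adic disc" step implicitly relies on since $\theta_k\bigl(\tfrac{[\pi^{\flat}](1-[\varepsilon])}{E([\pi^{\flat}])}\bigr)$ is not integral. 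Your explicit verification of $\Gal(L/K_{\infty})$-invariance via $c(g)=0$ is a detail the paper leaves implicit but uses later, so it is a welcome addition rather than a deviation.
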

\begin{proof}
Recall that for $g\in \hat{G}$, $g\cdot \underline e=\underline e U(g)$ and $U(g)=\sum_{m\geq 0}(\sum_{n\geq 0}A_{m,n}X(g)^{[n]})t^m$. By \cref{de rham analytic}, it suffices to show $\underline e \subseteq (V(\mathcal{M})/(t^k))^{\hat{G}-la,\gamma=1}$ for arbitrary $k$. Fix such a $k$, thanks to \cref{image of X under theta}, it suffices to show for $p\leq k-1$,
\begin{equation}\label{coefficient of c(g)}
    \begin{split}
        \lim_{s\rightarrow \infty} A_{p,s}\frac{\theta_{k}(\frac{[\pi^{\flat}](1-[\varepsilon])}{E([\pi^{\flat}])})^s}{s!}=0.
    \end{split}
\end{equation}
This can be checked after modulo $t$ by \cref{check convergence after modulo}. Then notice that 
\begin{equation*}
    \begin{split}
        \theta_{1}(\frac{[\pi^{\flat}](1-[\varepsilon])}{E([\pi^{\flat}])})=\pi(1-\xi_p)\theta_{1}(\frac{\xi}{E([\pi^{\flat}])}),
    \end{split}
\end{equation*}
and that $v_{p}((1-\xi_p)^s)=\frac{s}{p-1}>\frac{s-1}{p-1}\geq v_p(s!)$, $\lim_{s\rightarrow \infty}A_{p,s}=0$ by \cref{remark with MW relation}, hence \cref{coefficient of c(g)} holds, we win.
\end{proof}
\begin{theorem}\label{decompletion for de Rham crystals}
    Suppose $\mathcal{M}\in \operatorname{Vect}((\mathcal{O}_{K})_{\Prism}, (\mathcal{O}_{\Prism}[\frac{1}{p}])_{\mathcal{I}}^{\wedge})$ such that the associated stratification $\varepsilon$ is given by a sequence of commutative matrices. Let $M=\mathcal{M}(\mathfrak{S},(E(u)))$, and $V(\mathcal{M})=\mathcal{M}(\Ainf,(\xi))$, then
    \[D_{\mathrm{Sen},K_{\infty}[[t]]}(V(\mathcal{M}))=M\otimes_{K[[t]]} K_{\infty}[[t]],\]
here we have identified $\bdr(\mathfrak{S})$ with $K[[T]]$.
\end{theorem}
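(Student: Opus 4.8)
The plan is to combine \cref{combine three theorems}, \cref{analytic of unit}, and \cref{prismatic invariant}-style bookkeeping to pin down $D_{\mathrm{Sen},K_\infty[[t]]}(V(\mathcal M))$ exactly. First I would recall that $M$ is a finite free $\bdr(\mathfrak S)=K[[t]]$-module with basis $\underline e$, that $V(\mathcal M)=M\otimes_{K[[t]]}B_{\dR}^+$, and that by \cref{combine three theorems} the module $D_{\mathrm{Sen},K_\infty[[t]]}(V(\mathcal M))$ is a finite projective $K_\infty[[t]]$-module of rank $l=\rk M$ whose base change to $B_{\dR}^+$ recovers $V(\mathcal M)$. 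So the statement amounts to showing the containment $M\otimes_{K[[t]]}K_\infty[[t]]\subseteq D_{\mathrm{Sen},K_\infty[[t]]}(V(\mathcal M))$ inside $V(\mathcal M)$, together with an equality of ranks; faithful flatness of $K_\infty[[t]]\to B_{\dR}^+$ then upgrades the containment of two rank-$l$ projective $K_\infty[[t]]$-modules with the same generic fibre to an equality, exactly as in the uniqueness argument for strong nearly de Rham modules in the remark after \cref{strong}.

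The containment $\underline e\subseteq D_{\mathrm{Sen},K_\infty[[t]]}(V(\mathcal M))$ is precisely \cref{analytic of unit}: each basis vector $e_j$ is $\hat G$-locally analytic and $\gamma=1$ (i.e. $\Gal(L/K_\infty)$-invariant), because $\gamma$ fixes $[\pi^\flat]$ and hence fixes $X_1(g)$, $t$, and all the $A_{m,n}$, so $g\mapsto U(g)$ is trivial on $\Gal(L/K_\infty)$ and analytic in $c(g)$ by the convergence estimate $v_p((1-\xi_p)^s)>v_p(s!)$ together with $A_{m,n}\to 0$. Since $D_{\mathrm{Sen},K_\infty[[t]]}(V(\mathcal M))$ is a $K_\infty[[t]]$-module containing $\underline e$, it contains the $K_\infty[[t]]$-span $M\otimes_{K[[t]]}K_\infty[[t]]$. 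For the ranks: $D_{\mathrm{Sen},K_\infty[[t]]}(V(\mathcal M))$ has rank $l$ over $K_\infty[[t]]$ by \cref{combine three theorems}, while $M\otimes_{K[[t]]}K_\infty[[t]]$ is free of rank $l$; since $K_\infty[[t]]$ is a domain (a $t$-adically complete regular local ring) and the inclusion becomes an isomorphism after $\otimes_{K_\infty[[t]]}B_{\dR}^+$, which is faithfully flat, the inclusion is an isomorphism. I should note the hypothesis that $\varepsilon$ comes from a commuting sequence of matrices is only there so that \cref{analytic of unit} and the formulas of \cref{main theorem II} apply cleanly; in fact \cref{analytic of unit} as stated needs no commutativity, so that step is robust, but I will invoke the commutative case to stay within what has been proved.

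The main obstacle is making the rank/faithfully-flat-descent argument airtight: a priori $M\otimes_{K[[t]]}K_\infty[[t]]$ sits inside $D_{\mathrm{Sen},K_\infty[[t]]}(V(\mathcal M))$ as a $K_\infty[[t]]$-submodule of full rank, but one must rule out that the quotient has nonzero $t$-torsion — equivalently, that $\underline e$ remains a $K_\infty[[t]]$-basis of $D_{\mathrm{Sen},K_\infty[[t]]}(V(\mathcal M))$ and not merely a full-rank sublattice. This is exactly where $\otimes B_{\dR}^+$ enters: after that faithfully flat base change both sides become the same finite free $B_{\dR}^+$-module $V(\mathcal M)$ with the same basis $\underline e$, so the cokernel of the inclusion base-changes to zero, hence is zero. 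I would phrase this last step via derived Nakayama over $K_\infty[[t]]$ (the cokernel $C$ is $t$-adically complete, finitely generated, and $C/t=0$ since mod-$t$ the inclusion is the identity $M/tM\otimes_K K_\infty\hookrightarrow (V(\mathcal M)/t)^{\hat G\text{-}la,\gamma=1}$, whose image is the whole $K_\infty$-span of $\underline e$ by \cref{analytic of unit}), which avoids any delicate flatness bookkeeping and closes the proof. \qedhere
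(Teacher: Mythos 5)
Your proposal is correct and follows essentially the same route as the paper: establish $\underline e\subseteq V(\mathcal M)^{\hat G-la,\gamma=1}$ via \cref{analytic of unit} (with $\Gal(L/K_\infty)$-invariance coming from $\gamma$ fixing $[\pi^\flat]$), extend scalars to get the natural map $M\otimes_{K[[t]]}K_\infty[[t]]\to D_{\mathrm{Sen},K_\infty[[t]]}(V(\mathcal M))$, and conclude it is an isomorphism by checking after the faithfully flat base change $K_\infty[[t]]\to B_{\dR}^+$ using \cref{combine three theorems}. Your extra care about ruling out $t$-torsion in the cokernel is exactly what the faithfully-flat step in the paper is doing implicitly, so nothing further is needed.
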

\begin{proof}
First we check that $M\subseteq V(M)^{\hat{G}-la,\gamma=1}$. By our choice of embedding $(\mathfrak{S},(E))$ into $(\Ainf,(\xi))$ via sending $u$ to $[\pi^\flat]$, $M$ is invariant under $\Gal(L/K_{\infty})$-action. Hence it suffices to show that $M\subseteq V(M)^{\tau-la}$, which follows from \cref{analytic of unit}. By extending scalars along $K[[t]]\rightarrow (B_{\dR,L}^+)^{\hat{G}-la,\gamma=1}$, which is just $K_{\infty}[[t]]$ by \cref{de rham analytic calculation}, we get a natural map $$M\otimes_{K[[t]]} K_{\infty}[[t]]\longrightarrow D_{\mathrm{Sen},K_{\infty}[[t]]}(V(\mathcal{M})).$$
To show that it is an identity, it suffices to check that after the faithfully flat base change $K_{\infty}[[t]]\rightarrow B_{\dR}^+$, then the desired result follows from \cref{combine three theorems}.
\end{proof}
\begin{theorem}\label{fully faithful of the restriction functor}
    The restriction functor 
    \[V: \operatorname{Vect}((\mathcal{O}_{K})_{\Prism}, (\mathcal{O}_{\Prism}[\frac{1}{p}])_{\mathcal{I}}^{\wedge})\longrightarrow \operatorname{Vect}((\mathcal{O}_{K})_{\Prism}^{\perf}, (\mathcal{O}_{\Prism}[\frac{1}{p}])_{\mathcal{I}}^{\wedge})\]
is fully faithful.
\end{theorem}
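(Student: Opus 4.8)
The plan is to reduce the statement to the equality $H^0((\mathcal{O}_K)_{\Prism}, \mathcal{M}) \cong V(\mathcal{M})^{G_K}$, functorially in $\mathcal{M}$, combined with the internal-Hom formalism so that full faithfulness follows from the statement on global sections. Concretely, for $\mathcal{M}_1, \mathcal{M}_2 \in \operatorname{Vect}((\mathcal{O}_K)_{\Prism}, (\mathcal{O}_{\Prism}[\frac{1}{p}])_{\mathcal{I}}^{\wedge})$, the internal Hom $\underline{\Hom}(\mathcal{M}_1, \mathcal{M}_2)$ is again a de Rham crystal and $\Hom(\mathcal{M}_1, \mathcal{M}_2) = H^0((\mathcal{O}_K)_{\Prism}, \underline{\Hom}(\mathcal{M}_1, \mathcal{M}_2))$, while on the perfect site $\Hom_{\hat{G}}(V(\mathcal{M}_1), V(\mathcal{M}_2)) = (V(\underline{\Hom}(\mathcal{M}_1,\mathcal{M}_2)))^{G_K}$ since $V$ is compatible with internal Hom and $T$ is an equivalence by \cref{various representation equivalence}. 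So it suffices to prove: for any de Rham crystal $\mathcal{M}$, the natural map $H^0((\mathcal{O}_K)_{\Prism}, \mathcal{M}) \to V(\mathcal{M})^{G_K}$ is an isomorphism. Injectivity is clear since $V$ is (at least) faithful on global sections (the Breuil--Kisin prism maps to $\Ainf$), so the content is surjectivity.

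\textbf{Reduction of the surjectivity statement.} Take $\vec{v} \in V(\mathcal{M})^{G_K}$. Since $\vec{v}$ is $G_K$-invariant it is in particular $\hat{G}$-locally analytic and $\Gal(L/K_\infty)$-invariant, hence $\vec{v} \in D_{\mathrm{Sen},K_\infty[[t]]}(V(\mathcal{M}))$. By \cref{decompletion for de Rham crystals} we have $D_{\mathrm{Sen},K_\infty[[t]]}(V(\mathcal{M})) = M \otimes_{K[[t]]} K_\infty[[t]]$, so we may write $\vec{v} = \underline{e} \sum_{m \geq 0} D_m t^m$ with $D_m \in M_{l\times 1}(K_\infty)$ (here I am identifying $t$ with $E([\pi^\flat])$ and using the identification $\bdr(\mathfrak{S}) \cong K[[t]]$). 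The goal is to show two things: first that each $D_m$ actually lies in $M_{l\times 1}(K)$, and second that the sequence $\{D_m\}$ satisfies exactly the recursion of \cref{prismatic invariant} characterizing elements of $H^0((\mathcal{O}_K)_{\Prism},\mathcal{M})$ computed via the \v{C}ech--Alexander complex. Both will come out of imposing $(g - \Id)(\vec{v}) = 0$ for all $g \in \hat{G}$ and using \cref{cocycle g action}: expanding,
\[
0 = (g - \Id)\Big(\underline e \sum_{m\geq 0} D_m t^m\Big) = \underline e\Big(\big(\sum_{m\geq 0}(\sum_{n\geq 0} A_{m,n} X_1(g)^{[n]}) t^m\big)\big(\sum_{m\geq 0} g(D_m)(\alpha t)^m\big) - \sum_{m\geq 0} D_m t^m\Big).
\]
The strategy is to treat $X_1(g)$ — or rather $\theta(X_1(\tau))$ after specialization — as a formal "variable" and match coefficients.

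\textbf{The main obstacle and how to get around it.} The hard part is precisely that $X_1(g)$ is \emph{not} an honest variable: it is the specific element $\frac{(1-[\varepsilon]^{c(g)})[\pi^\flat]}{E([\pi^\flat])}$ of $B_{\dR,L}^+$, so one cannot literally compare coefficients of powers of $X_1(g)$. The key input to bypass this is that, by \cite[Lemma 2.4.4]{DL21}, $\theta(X_1(\tau))$ is transcendental over $K$; feeding this (together with the $K_\infty/K$ structure and the explicit form of $\theta_k(X_1(\tau))$ from \cref{image of X under theta}) into the modulo-$t^k$ truncation of the displayed identity forces the coefficient identities to hold identically, which yields both $D_m \in M_{l\times 1}(K)$ and the \cref{prismatic invariant} recursion. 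I expect the bookkeeping here — organizing the $t$-adic and $X_1(g)$-adic expansions simultaneously, handling the unit $\alpha$ and the coefficients $c_{p,s}$, and running the induction on $m$ — to be the genuinely delicate part; the transcendence input is what makes it work, and everything downstream (matching against the \v{C}ech--Alexander description of $H^0$ from \cref{prismatic invariant} and \cref{alex}, then assembling the internal-Hom argument) is formal. Once surjectivity on global sections is in hand, full faithfulness of $V$ follows immediately.
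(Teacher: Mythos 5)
Your proposal follows essentially the same route as the paper: reduce full faithfulness to the isomorphism $H^0((\mathcal{O}_K)_{\Prism},\mathcal{M})\cong V(\mathcal{M})^{G_K}$, place a $G_K$-invariant vector inside $D_{\mathrm{Sen},K_\infty[[t]]}(V(\mathcal{M}))=M\otimes_{K[[t]]}K_\infty[[t]]$ via \cref{decompletion for de Rham crystals}, and then use the transcendence of $\theta(X_1(\tau))$ from \cite[Lemma 2.4.4]{DL21} (together with a Weierstrass-preparation argument and induction on the $t$-adic truncation) to promote the evaluated identity to a formal one, matching the \v{C}ech--Alexander description of $H^0$. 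The bookkeeping you defer is exactly what the paper's proof carries out, so the plan is correct and matches the paper's argument.
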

\begin{proof}
It suffices to show that For $\mathcal{M}\in \operatorname{Vect}((\mathcal{O}_{K})_{\Prism}, (\mathcal{O}_{\Prism}[\frac{1}{p}])_{\mathcal{I}}^{\wedge})$, $H^0((\mathcal{O}_{K})_{\Prism},\mathcal{M})\cong \mathcal{M}(\Ainf)^{G_K}$. Suppose $\Vec{v}\in \mathcal{M}(\Ainf)^{G_K}$, then $\Vec{v}\in M\otimes_{K[[t]]} K_{\infty}[[t]]$ thanks to \cref{decompletion for de Rham crystals}. Hence it suffices to show that given
$\vec{v}=\underline e\sum_{m=0}^{\infty} D_mt^m \in \mathcal{M}(\Ainf)$ with $D_m\in M_{l\times 1}(K_\infty)$, $\Vec{v}\in \mathcal{M}(\Ainf)^{G_K}$ if and only if $D_{m}\in M_{l\times 1}(K)$ and that $D_m$ satisfies \cref{global section of crystal}, which is exactly the condition that $$\vec{v}\in H^0((\mathcal{O}_{K})_{\Prism},\mathcal{M}).$$
For this, first we calculate the $\hat{G}$-action on $\vec{v}$. Let $g\in \hat{G}$
\begin{equation*}
    \begin{split}
        g(\Vec{v})=g(\underline e (\sum_{i=0}^{\infty}D_it^i))&=\underline e(U(g)(\sum_{i=0}^{\infty}g(D_i)(\alpha(g) t)^i)
        \\&=\underline e\cdot (\sum_{m\geq0}(\sum_{n\geq 0}A_{m,n}X_1(g)^{[n]})t^m)(\sum_{i=0}^{\infty}g(D_i)(\alpha(g) t)^i)
        \\&=\underline e\cdot (\sum_{m\geq0}(\sum_{n\geq 0}A_{m,n}X_1(g)^{[n]})t^m)(\sum_{i=0}^{\infty}g(D_i) t^i(\sum_{s=0}^{\infty}c_{i,s}(X_1(g))t^s))
        \\&=\underline e\cdot (\sum_{m\geq0}(\sum_{n\geq 0}A_{m,n}X_1(g)^{[n]})t^m)(\sum_{j=0}^{\infty}t^j(\sum_{i=0}^{j}g(D_i) c_{i,j-i}(X_1(g))))
        \\&=\underline e\sum_{m=0}^{\infty}t^m(\sum_{\substack{i+j=m\\0\leq i,j\leq m}}(\sum_{s=0}^{\infty}A_{i,s}X_{1}(g)^{[s]})(\sum_{p=0}^{j}g(D_p) c_{p,j-p}(X_{1}(g)))).
    \end{split}
\end{equation*}
This implies that $\gamma \Vec{v}=\Vec{v}$ as $X_1(\gamma)=0$ and that $D_m\in M_{l\times 1}(K_\infty)$ is fixed by $\gamma$. Hence $\Vec{v}$ is $\hat{G}$-invariant if and only if $\Vec{v}$ is $\tau$-invariant, i.e.
\begin{equation}\label{g action}
    \underline e\sum_{m=0}^{\infty} D_mt^m=\underline e\sum_{m=0}^{\infty}t^m(\sum_{\substack{i+j=m\\0\leq i,j\leq m}}(\sum_{s=0}^{\infty}A_{i,s}X_{1}(\tau)^{[s]})(\sum_{p=0}^{j}\tau(D_p) c_{p,j-p}(X_{1}(\tau)))).
\end{equation}
By the $t$-adic completeness of the finite free $K_{\infty}[[t]]$-module $M\otimes_{K[[t]]} K_{\infty}[[t]]$, \cref{g action} is equivalent to that for $\forall k\geq 1$, 
\begin{equation}\label{g action mod k}
    \underline e\sum_{m=0}^{k-1} D_mt^m=\underline e\sum_{m=0}^{k-1}t^m(\sum_{\substack{i+j=m\\0\leq i,j\leq m}}(\sum_{s=0}^{\infty}A_{i,s}\theta_k(X_{1}(\tau))^{[s]})(\sum_{p=0}^{j}\tau(D_p) c_{p,j-p}(\theta_k(X_{1}(\tau))))) \text{~~~~~~after modulo~} t^k.
\end{equation}
For $k=1$, \cref{g action mod k} turns into
\begin{equation}\label{k=1}
    D_0=(\sum_{s=0}^{\infty}A_{0,s}\theta_1(X_{1}(\tau))^{[s]})\tau(D_0).
\end{equation}
We claim that this is equivalent to that
\begin{itemize}
    \item[$(\ast)$]  $f(Z):=(\sum_{s=0}^{\infty}A_{0,s}Z^{[s]})\tau(D_0)-D_0$ is always $0$, where $Z$ is a free variable. In particular, $\tau(D_0)=D_0$, hence $D_0\in  M_{l\times 1}(K)$.
\end{itemize}
Clearly $\ast$ implies \cref{k=1}. On the other hand, given \cref{k=1}, we can always find a finite extension $\tilde{K}$ over $K$ such that $(1-\xi_p)\in \tilde{K}$, $D_0\in  M_{l\times 1}(\tilde{K})$ and $\tau(D_0)\in  M_{l\times 1}(\tilde{K})$. By \cref{image of X under theta}, $$\theta_1(X(\tau))=\theta_{1}(\frac{[\pi^{\flat}](1-[\varepsilon])}{E([\pi^{\flat}])})c(\tau)=\pi\theta_{1}(\frac{(1-[\varepsilon])}{E([\pi^{\flat}])})=\pi(1-\xi_p)\theta_{1}(\frac{\xi}{E([\pi^{\flat}])}).$$ 
    This implies that $\omega_0:=\theta_{1}(\frac{\xi}{E([\pi^{\flat}])})$ is a root of $f(\pi(1-\xi_p)Z)$. Notice that $v_p((1-\xi_p)^s)=\frac{s}{p-1}>v_p(s!)$ and that $\lim_{s\rightarrow\infty} A_{0,s}=0$, hence $f(\pi(1-\xi_p)Z)\in M_{n}(\mathcal{O}_{\tilde{K}})\langle Z\rangle$   after multiplying a scalar. By Weierstrass preparation theorem, $\omega_0$ is algebraic over $\tilde{K}$ (hence also algebraic over $K$) unless $f=0$. However, \cite[Lem 2.4.4]{DL21} shows that $\omega_0$ is not algebraic over $K$, hence $f=0$.
    
    Suppose we have shown that 
    \begin{equation}\label{global invariant of crystal}
    \begin{split}
        D_m=\sum_{\substack{i+j=m\\0\leq i,j\leq m}}(\sum_{s=0}^{\infty}A_{i,s}Z^{[s]})(\sum_{p=0}^{j}\tau(D_p) c_{p,j-p}(Z))
    \end{split}
\end{equation}
for $m\leq n-1$. Then take $k=n+1$, \cref{g action mod k} turns into
\begin{equation*}
    \underline e\sum_{m=0}^{n} D_mt^m=\underline e\sum_{m=0}^{n}t^m(\sum_{\substack{i+j=m\\0\leq i,j\leq m}}(\sum_{s=0}^{\infty}A_{i,s}\theta_{n+1}(X_{1}(\tau))^{[s]})(\sum_{p=0}^{j}\tau(D_p) c_{p,j-p}(\theta_{n+1}(X_{1}(\tau)))))\text{~~~~~~after modulo~} t^{n+1}.
\end{equation*}
However, by induction (\cref{global invariant of crystal}), 
\begin{equation*}
    \underline e\sum_{m=0}^{n-1} D_mt^m=\underline e\sum_{m=0}^{n-1}t^m(\sum_{\substack{i+j=m\\0\leq i,j\leq m}}(\sum_{s=0}^{\infty}A_{i,s}\theta_{n+1}(X_{1}(\tau))^{[s]})(\sum_{p=0}^{j}\tau(D_p) c_{p,j-p}(\theta_{n+1}(X_{1}(\tau))))).
\end{equation*}
This implies that 
\begin{equation*}
     D_n=\sum_{\substack{i+j=n\\0\leq i,j\leq n}}(\sum_{s=0}^{\infty}A_{i,s}\theta_1(X_{1}(\tau))^{[s]})(\sum_{p=0}^{j}\tau(D_p) c_{p,j-p}(\theta_1(X_{1}(\tau)))).
\end{equation*}
Arguing exactly as the case $k=1$ (this makes sense as $c_{p,j-p}(Z)\in K[Z]$), we conclude that 
\begin{itemize}
    \item[$(\ast^n)$]  $f_n(Z):=\sum_{\substack{i+j=n\\0\leq i,j\leq n}}(\sum_{s=0}^{\infty}A_{i,s}Z^{[s]})(\sum_{p=0}^{j}\tau(D_p) c_{p,j-p}(Z)-D_n$ is always $0$, where $Z$ is a free variable. In particular, $\tau(D_n)=D_n$, hence $D_n\in  M_{l\times 1}(K)$.
\end{itemize}
Now by inudction we conclude that for all $m$, $D_m\in  M_{l\times 1}(K)$ and satisfies that \cref{global section of crystal}, hence $\Vec{v}\in H^0((\mathcal{O}_{K})_{\Prism},\mathcal{M})$, we are done.

\end{proof}

\subsection{Essential image of the restriction functor}
Recall that a de Rham crystal $\mathcal{M}\in \operatorname{Vect}((\mathcal{O}_{K})_{\Prism}, (\mathcal{O}_{\Prism}[\frac{1}{p}])_{\mathcal{I}}^{\wedge})$ is uniquely determined by a pari $(M,\varepsilon)$ where $M$ is a finite free $\bdr(\mathfrak{S})$-modules and $\varepsilon$ is a  stratification $$\varepsilon(\underline{e})=\underline e\cdot \sum_{m\geq0}(\sum_{n\geq 0}A_{m,n}X^{[n]})t^m.$$
On the other hand, thanks to \cref{Kummer Sen operator} and \cref{decompletion for de Rham crystals}, we can associate to $\mathcal{M}$ a pair $(D_{\mathrm{Sen},K_{\infty}[[t]]}(V(\mathcal{M})),N_{\nabla})$, where $D_{\mathrm{Sen},K_{\infty}[[t]]}(V(\mathcal{M}))=M\otimes_{K[[t]]} K_{\infty}[[t]]$ is a finite dimensional $K_{\infty}[[t]]$-space and  $N_{\nabla}$ is the Kummer-Sen operator on $D_{\mathrm{Sen},K_{\infty}[[t]]}(V(\mathcal{M}))$. Motivated by Gao's proof that one can extract the stratification information of a rational Hodge-Tate crystal from the Sen operator, we would like to read off the stratification of a de Rham crystal from the Kummer-Sen operator $N_{\nabla}$. The main result is the following:
\begin{theorem}\label{operator and stratification}
    Assume as before, then 
    \[N_{\nabla}(\underline e)=-\underline e \lambda_1 u(\sum_{m=0}^{\infty}A_{m,1}t^m),\]
    where $\lambda_1=\frac{\lambda}{E(u)}\in K[[t]]^{\times}$
\end{theorem}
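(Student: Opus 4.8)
The plan is to compute $N_{\nabla}(\underline e)$ directly from the formula $N_{\nabla}=\frac{1}{p\mathfrak{t}}\nabla_{\tau}$ together with the explicit $\hat{G}$-action on $\underline e$ recorded in \cref{cocycle g action}, namely $g(\underline e)=\underline e\,U(g)$ with $U(g)=\sum_{m\geq 0}(\sum_{n\geq 0}A_{m,n}X_1(g)^{[n]})t^m$. By \cref{analytic of unit} we already know $\underline e\subseteq V(\mathcal M)^{\hat G-la,\gamma=1}=D_{\mathrm{Sen},K_\infty[[t]]}(V(\mathcal M))$, so all the expressions below make sense, and by \cref{Kummer Sen operator} $N_\nabla$ does preserve this module, so it suffices to identify the value. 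Working modulo $t^k$ for each $k$ and then passing to the inverse limit, I would write $\nabla_\tau(\underline e)=\lim_{n\to\infty}\frac{\tau^{p^n}(\underline e)-\underline e}{p^n}=\underline e\cdot\lim_{n\to\infty}\frac{U(\tau^{p^n})-I}{p^n}$, so the whole problem reduces to computing $\lim_{n\to\infty}\frac{\theta_k(U(\tau^{p^n}))-I}{p^n}$.

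The key computation is therefore the asymptotics of $X_1(\tau^{p^n})$ as $n\to\infty$. From the example computing $X_1(g)=\frac{(1-[\varepsilon]^{c(g)})[\pi^\flat]}{E([\pi^\flat])}$ and \cref{image of X under theta}, one has $\theta_k(X_1(\tau^{p^n}))=\theta_k\!\big(\tfrac{[\pi^\flat](1-[\varepsilon])}{E([\pi^\flat])}\big)\big(\sum_{i=1}^{k}\tfrac{(-1)^{i-1}c(\tau^{p^n})\cdots(c(\tau^{p^n})-i+1)}{i!}(1-[\varepsilon])^{i-1}\big)$ with $c(\tau^{p^n})=p^n$ (since $\tau(\pi^\flat)=\varepsilon\pi^\flat$). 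The dominant term is $i=1$, giving $\theta_k(X_1(\tau^{p^n}))=p^n\theta_k\!\big(\tfrac{[\pi^\flat](1-[\varepsilon])}{E([\pi^\flat])}\big)+O(p^{2n}\cdot(\text{higher }t\text{-order and }(1-[\varepsilon])\text{-order terms}))$; dividing by $p^n$ and letting $n\to\infty$, $\frac{\theta_k(X_1(\tau^{p^n}))}{p^n}$ converges to $\theta_k\!\big(\tfrac{[\pi^\flat](1-[\varepsilon])}{E([\pi^\flat])}\big)$, while $\theta_k\big(\tfrac{X_1(\tau^{p^n})^{[s]}}{1}\big)/p^n\to 0$ for $s\geq 2$ (each extra factor of $X_1(\tau^{p^n})$ contributes another $p^n$ in the numerator but only a bounded denominator). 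Plugging this into $U$: only the $s=1$ terms survive in the limit, so $\lim_n\frac{U(\tau^{p^n})-I}{p^n}=(\sum_{m\geq 0}A_{m,1}t^m)\cdot\theta_k\!\big(\tfrac{[\pi^\flat](1-[\varepsilon])}{E([\pi^\flat])}\big)$. Hence $\nabla_\tau(\underline e)=\underline e\,(\sum_{m\geq 0}A_{m,1}t^m)\,\tfrac{[\pi^\flat](1-[\varepsilon])}{E([\pi^\flat])}$, and dividing by $p\mathfrak t$ gives $N_\nabla(\underline e)=\underline e\,(\sum_{m\geq 0}A_{m,1}t^m)\cdot\tfrac{1}{p\mathfrak t}\cdot\tfrac{[\pi^\flat]([\varepsilon]-1)\cdot(-1)}{E([\pi^\flat])}$.

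The last step is the identity $\tfrac{1}{p\mathfrak t}\cdot\tfrac{[\pi^\flat]([\varepsilon]-1)}{E([\pi^\flat])}=\lambda_1 u$, i.e. $\tfrac{[\varepsilon]-1}{p\mathfrak t}=\lambda$ after dividing by $\tfrac{u}{E(u)}=\tfrac{[\pi^\flat]}{E([\pi^\flat])}$ and recalling $\lambda_1=\lambda/E(u)$. This is exactly the defining relation of $\mathfrak t=\tfrac{\log[\varepsilon]}{p\lambda}$ combined with the fact that $\tfrac{[\varepsilon]-1}{\log[\varepsilon]}$ is a unit in $B_{\dR}^+$ congruent to $1$ modulo $t$ — more precisely, one should be careful: the cleanest route is to recall (as in \cite{GP21}, used in \cref{Kummer Sen operator}) that $N_\nabla(u)=\lambda u$ as computed in the proof of \cref{Kummer Sen operator}, which pins down $\tfrac{1}{p\mathfrak t}\log[\varepsilon]=\lambda$, and then note $\tfrac{[\varepsilon]-1}{\log[\varepsilon]}\equiv 1\pmod t$ is not literally what's needed; instead one uses directly that $\nabla_\tau(u)=\lim_n\tfrac{\tau^{p^n}(u)-u}{p^n}=\lim_n\tfrac{([\varepsilon]^{p^n}-1)u}{p^n}$ and $\lim_n\tfrac{[\varepsilon]^{p^n}-1}{p^n}=\lim_n\tfrac{\log([\varepsilon]^{p^n})}{p^n}=\log[\varepsilon]$ so that $N_\nabla(u)=\tfrac{\log[\varepsilon]}{p\mathfrak t}u=\lambda u$, and similarly $\nabla_\tau([\pi^\flat])=\log[\varepsilon]\cdot[\pi^\flat]$, which gives $\tfrac{[\varepsilon]-1}{p\mathfrak t}=\tfrac{\log[\varepsilon]}{p\mathfrak t}\cdot\tfrac{[\varepsilon]-1}{\log[\varepsilon]}$ — so I should instead carry $\tfrac{[\varepsilon]-1}{\log[\varepsilon]}$ through the limit from the start by replacing $[\varepsilon]-1$ with $\log[\varepsilon]$ wherever the $i=1$ term appears, since $\lim_n\tfrac{([\varepsilon]-1)^{p^n}}{p^n}$ is not the relevant thing but $\lim_n\tfrac{[\varepsilon]^{p^n}-1}{p^n}=\log[\varepsilon]$ is. The main obstacle, then, is purely bookkeeping: tracking which power of $(1-[\varepsilon])$ versus $\log[\varepsilon]$ shows up, justifying term-by-term passage to the limit modulo $t^k$ (here \cref{check convergence after modulo} and the $p$-adic valuation estimates $v_p((1-\xi_p)^s)=\tfrac{s}{p-1}>v_p(s!)$ from the proof of \cref{analytic of unit} control convergence), and confirming $A_{m,0}=0$ for $m>0$ and $A_{0,0}=I$ from \cref{Main theorem I} so that the $n=0$ terms of $U(g)$ contribute exactly $I$ and drop out of the difference quotient. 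Once the limit is computed modulo every $t^k$, compatibility of the $N_{\nabla,k}$ and $t$-adic completeness of $D_{\mathrm{Sen},K_\infty[[t]]}(V(\mathcal M))=M\otimes_{K[[t]]}K_\infty[[t]]$ (\cref{decompletion for de Rham crystals}) yield the stated formula $N_\nabla(\underline e)=-\underline e\,\lambda_1 u\,(\sum_{m\geq 0}A_{m,1}t^m)$.
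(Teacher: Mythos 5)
Your overall strategy is the same as the paper's: work modulo $t^k$, use $c(\tau^{p^n})=p^n$ together with \cref{image of X under theta}, compute $\lim_n(U(\tau^{p^n})-I)/p^n$ term by term (the $s=0$ terms contributing $I$ by \cref{Main theorem I}, the $s\geq 2$ divided powers vanishing), and divide by $p\mathfrak{t}$. However, there is a genuine error in your key limit in the second paragraph, which is exactly the point your third paragraph is struggling with. You claim that only the $i=1$ term of $\theta_k(X_1(\tau^{p^n}))$ survives after dividing by $p^n$, so that $\theta_k(X_1(\tau^{p^n}))/p^n\to\theta_k\bigl(\tfrac{[\pi^{\flat}](1-[\varepsilon])}{E([\pi^{\flat}])}\bigr)$. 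This is false: the $i$-th term is $\tfrac{(-1)^{i-1}}{i!}\,p^n(p^n-1)\cdots(p^n-i+1)\,(1-[\varepsilon])^{i-1}$, and since $(p^n-1)\cdots(p^n-i+1)\to(-1)^{i-1}(i-1)!$ $p$-adically, each term contributes $\tfrac{(1-[\varepsilon])^{i-1}}{i}$ in the limit rather than $0$. The correct limit is therefore
\begin{equation*}
\lim_{n\to\infty}\frac{\theta_k(X_1(\tau^{p^n}))}{p^n}
=\theta_k\Bigl(\tfrac{[\pi^{\flat}](1-[\varepsilon])}{E([\pi^{\flat}])}\Bigr)\sum_{i=1}^{k}\frac{(1-[\varepsilon])^{i-1}}{i}
=-\,\theta_k\Bigl(\tfrac{[\pi^{\flat}]\log([\varepsilon])}{E([\pi^{\flat}])}\Bigr),
\end{equation*}
using $\sum_{i\geq 1}(1-[\varepsilon])^{i}/i=-\log([\varepsilon])$ and $1-[\varepsilon]\in(t)$. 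This is precisely the paper's computation, and it is not optional bookkeeping: the factor $\tfrac{-\log([\varepsilon])}{1-[\varepsilon]}$ supplied by the $i\geq 2$ terms is what replaces $[\varepsilon]-1$ by $\log([\varepsilon])$, so that dividing by $p\mathfrak{t}=\log([\varepsilon])/\lambda$ yields exactly $-\lambda[\pi^{\flat}]/E([\pi^{\flat}])=-\lambda_1 u$. With your version of the limit you would instead get $-\lambda_1 u\cdot\tfrac{[\varepsilon]-1}{\log([\varepsilon])}$, which differs from the stated answer by a non-trivial unit (congruent to $1$ only mod $t$). Your last paragraph correctly senses this mismatch and even names the right substitution, but the proposed fix (``replace $[\varepsilon]-1$ by $\log[\varepsilon]$ wherever the $i=1$ term appears'') is not justified by your own computation; the justification is the summation over all $i$ above. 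Once that is corrected, the rest of your argument (convergence control via \cref{check convergence after modulo} and the valuation estimates, compatibility of the $N_{\nabla,k}$, and $t$-adic completeness) goes through and matches the paper's proof.
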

\begin{remark}
By construction, our $N_{\nabla}$ is just the monodromy operator on $(D_{\mathrm{Sen},K_{\infty}}(V(\mathbb{M}))$  after modulo $t$, where $V(\mathbb{M})=\mathbb{M}(\mathfrak{S}\otimes_{K} \mathbb{C}_p)\in \Rep_{\mathbb{C}_p}(G_K)$ is associated to the rational Hodge Tate crystal $\mathbb{M}=\mathcal{M}/\mathcal{I}$. Then one can check that this coincides with the calculation in \cite[Theorem 4.3.3]{Gao22}.
\end{remark}
\begin{proof}[Proof of \cref{operator and stratification}]
By \cref{the image of t}, $\theta(\mathfrak{t})=\theta(\frac{\log([\varepsilon])}{p\lambda})$ is a unit, hence $\lambda_1$ is a unit.

It suffices to show that for $\forall k\in \mathbb{N}$
\[N_{\nabla,k}(\underline e)=\underline e \theta_{k}(-\lambda_1 u)(\sum_{m=0}^{k-1}A_{m,1}t^m).\]
For this, by \cref{image of X under theta}
$$\theta_k(X(g))=\theta_{k}(\frac{[\pi^{\flat}](1-[\varepsilon])}{E([\pi^{\flat}])})(\sum_{i=1}^{k}\frac{(-1)^{i-1}c(g)\cdots (c(g)-i+1)}{i!}(1-[\varepsilon])^{i-1}).$$
Hence utilizing that $c(\tau^{p^n})=p^n$ and by calculating the coefficient of $c(g)$ in $\theta_k(X(g))$, we have that
\begin{equation*}
    \begin{split}
        \nabla_{\tau,k}(\underline e)&=\lim_{n\rightarrow \infty}\frac{\tau^{p^n}(\underline e)-\underline e}{p^n}=\lim_{n\rightarrow \infty}\frac{\underline e U(\tau^{p^n})-\underline e}{p^n}
        \\&=\lim_{n\rightarrow \infty}\frac{\underline e (\sum_{m=0}^{k-1}\sum_{n=0}^{\infty}(A_{m,n}\theta_k(X_1(\tau^{p^n}))^{[n]})t^m)-\underline e}{p^n}
        \\&=\underline e (\sum_{m=0}^{k-1}A_{m,1}t^m)(\sum_{i=1}^{k}\frac{(1-[\varepsilon])^{i-1}}{i}) \theta_{k}(\frac{[\pi^{\flat}](1-[\varepsilon])}{E([\pi^{\flat}])}).
    \end{split}
\end{equation*}
The second to last equality follows as $A_{0,1}=I$ and $A_{m,0}=0$ for $m>0$ by \cref{Main theorem I}.

Hence 
\begin{equation*}
    \begin{split}
        N_{\nabla,k}(\underline e)&=\frac{1}{p\theta_k(\mathfrak{t})}\nabla_{\tau,k}(\underline e)=\frac{\theta_k(E(u)\lambda_1)}{\theta_k(\log([\varepsilon]))}\nabla_{\tau,k}(\underline e)
        \\&=\underline e (\sum_{m=0}^{k-1}A_{m,1}t^m) \frac{\theta_k(u\lambda_1)}{\theta_k(\log([\varepsilon]))}(\sum_{i=1}^{k}\frac{(1-[\varepsilon])^{i}}{i})
        \\&=\underline e (\sum_{m=0}^{k-1}A_{m,1}t^m)\theta_k(-u\lambda_1).
    \end{split}
\end{equation*}
This is precisely the desired result.
\end{proof}
Motivated by this calculation, we see that $V(\mathcal{M})$ needs to be strong nearly de Rham (see \cref{strong}). Actually, we have the following result:

\begin{prop}\label{weakly essential surjectivity}
Consider the composition of the de Rham realization functor and the restriction functor, which we still denoted as $V$ by abuse of notation, then we get a fully faithful functor 
\begin{equation*}
     V: \operatorname{Vect}((\mathcal{O}_{K})_{\Prism}, (\mathcal{O}_{\Prism}[\frac{1}{p}])_{\mathcal{I}}^{\wedge})\longrightarrow \Rep_{B_{\dR}^+}^{\fp,SndR}(G_K).
\end{equation*}
Moreover, assume that \cref{conjecture e=1} holds, then $ V$ is essentially surjective, in particular, it induces an equivalence of categories between $\operatorname{Vect}((\mathcal{O}_{K})_{\Prism}, (\mathcal{O}_{\Prism}[\frac{1}{p}])_{\mathcal{I}}^{\wedge})$ and $\Rep_{B_{\dR}^+}^{\fp,SndR}(G_K)$.
\end{prop}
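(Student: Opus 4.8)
The plan is to deduce both halves of the proposition from the machinery already set up. By \cref{fully faithful of the restriction functor} together with \cref{various representation equivalence}, the composite $V\colon \operatorname{Vect}((\mathcal{O}_{K})_{\Prism}, (\mathcal{O}_{\Prism}[\frac{1}{p}])_{\mathcal{I}}^{\wedge})\to \rb$ is already fully faithful, and \cref{decompletion for de Rham crystals}, \cref{operator and stratification} describe $D_{\mathrm{Sen},K_{\infty}[[t]]}(V(\mathcal{M}))$ and its Kummer--Sen operator, while \cref{detect isomorphic class} says the pair $(D_{\mathrm{Sen},K_{\infty}[[t]]}(\cdot),N_{\nabla})$ detects isomorphism classes in $\rb$. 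Granting these, the full faithfulness statement amounts to checking that the essential image of $V$ lies in the full subcategory $\Rep_{B_{\dR}^+}^{\fp,SndR}(G_K)$, and essential surjectivity (modulo \cref{conjecture e=1}) amounts to inverting the recipe of \cref{operator and stratification}.

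For full faithfulness I would argue as follows. Nearness is immediate: $\mathcal{M}/\mathcal{I}$ is a rational Hodge--Tate crystal by \cref{specializing Hodge Tate}, so $V(\mathcal{M})/tV(\mathcal{M})$ is nearly Hodge--Tate by \cref{Gao theorem}, i.e. $V(\mathcal{M})$ is nearly de Rham. For the strong condition I would take the candidate lattice $M:=\mathcal{M}(\mathfrak{S},(E))\subseteq D_{\mathrm{Sen},K_{\infty}[[t]]}(V(\mathcal{M}))$ furnished by \cref{decompletion for de Rham crystals}; it satisfies $M\otimes_{K[[t]]}K_{\infty}[[t]]=D_{\mathrm{Sen},K_{\infty}[[t]]}(V(\mathcal{M}))$ by the same theorem, and by \cref{operator and stratification} one has $N_{\nabla}(\underline e)=-\underline e\,\lambda_1 u(\sum_{m\geq 0}A_{m,1}t^m)$ with $A_{m,1}\in M_l(K)$ and $\lambda_1 u\in K[[t]]$, so $N_{\nabla}(\underline e)\in M$; together with $N_{\nabla}(t)=E'(u)\lambda u\in K[[t]]$ and the Leibniz rule this forces $N_{\nabla}(M)\subseteq M$, so $M$ verifies \cref{strong}. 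Since $\Rep_{B_{\dR}^+}^{\fp,SndR}(G_K)$ is a full subcategory of $\rb$, full faithfulness of $V$ into it follows.

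For essential surjectivity, granting \cref{conjecture e=1}, I would start from $W\in \Rep_{B_{\dR}^+}^{\fp,SndR}(G_K)$ and take the $N_{\nabla}$-stable $K[[t]]$-lattice $M_0\subseteq D_{\mathrm{Sen},K_{\infty}[[t]]}(W)$ from \cref{strong}; it is finite free, since $K[[t]]$ is a complete discrete valuation ring and finite generation descends along the faithfully flat map $K[[t]]\to K_{\infty}[[t]]$. Fixing a basis $\underline e$ and writing $N_{\nabla}(\underline e)=\underline e\,C$ with $C\in M_l(K[[t]])$, I would use that $\lambda_1 u$ is a unit of $K[[t]]$ (as $\lambda_1$ is a unit and $u\equiv\pi\not\equiv 0\bmod t$) to define $A_{m,1}\in M_l(K)$ by $-(\lambda_1 u)^{-1}C=\sum_{m\geq 0}A_{m,1}t^m$, then set $A_{0,0}=I$, $A_{i,0}=0$ for $i>0$, and define $\{A_{m,n}\}_{n\geq 1}$ by the recursion of \cref{Main theorem I}; convergence $\lim_n A_{m,n}=0$ is then automatic by \cref{remark with MW relation}. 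The one hypothesis of \cref{conjecture e=1} still needing input is $\lim_{n\to+\infty}\prod_{i=0}^n(iE'(\pi)+A_{0,1})=0$, which follows because reducing \cref{operator and stratification} modulo $t$ identifies $-A_{0,1}/\beta$ with the Sen operator of the nearly Hodge--Tate representation $W/tW$, and that Sen-weight condition is equivalent to the stated convergence (cf. \cite{MW21,Gao22}). Then \cref{conjecture e=1} produces a stratification $\varepsilon$ on $M_0$, hence via \cref{algebraic description of crystal} a de Rham crystal $\mathcal{M}$ with $\mathcal{M}(\mathfrak{S},(E))=M_0$; by \cref{decompletion for de Rham crystals} and \cref{operator and stratification} the pair $(D_{\mathrm{Sen},K_{\infty}[[t]]}(V(\mathcal{M})),N_{\nabla})$ is, by construction, literally $(D_{\mathrm{Sen},K_{\infty}[[t]]}(W),N_{\nabla})$, so \cref{detect isomorphic class} gives $V(\mathcal{M})\cong W$ in $\rb$, an isomorphism in $\Rep_{B_{\dR}^+}^{\fp,SndR}(G_K)$ since both objects lie there. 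With full faithfulness this yields the equivalence.

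The genuine obstacle is \cref{conjecture e=1}: without it the construction still produces the candidate crystal $\mathcal{M}$ but one cannot close the cocycle condition, so essential surjectivity is necessarily conditional; everything else is bookkeeping around \cref{operator and stratification}, \cref{decompletion for de Rham crystals} and \cref{detect isomorphic class}. A secondary point to watch is that \cref{decompletion for de Rham crystals} must be applied to the reconstructed $\mathcal{M}$ in a form not requiring commutativity of the matrices $A_{m,1}$, and one must check $\lambda_1 u\in K[[t]]^{\times}$ so that the reconstructed $A_{m,1}$ genuinely have entries in $K$.
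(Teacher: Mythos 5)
Your proposal is correct and follows essentially the same route as the paper: full faithfulness by combining \cref{fully faithful of the restriction functor} with the observation (via \cref{specializing Hodge Tate}, \cref{Gao theorem}, \cref{decompletion for de Rham crystals} and \cref{operator and stratification}) that $M=\mathcal{M}(\mathfrak{S},(E))$ is an $N_{\nabla}$-stable $K[[t]]$-lattice, and essential surjectivity by reading off $\{A_{m,1}\}$ from $N_{\nabla}$ on the given lattice, checking the convergence condition on $A_{0,1}$ through the Sen operator of $W/tW$, invoking \cref{conjecture e=1}, and concluding with \cref{detect isomorphic class}. Your side remarks (freeness of the lattice $M_0$, the unit $\lambda_1 u$, and the commutativity hypothesis in the stated form of \cref{decompletion for de Rham crystals}) are legitimate points that the paper glosses over but do not change the argument.
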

\begin{proof}
A priori is that $ V$ defines a fully faithful functor $\operatorname{Vect}((\mathcal{O}_{K})_{\Prism}, (\mathcal{O}_{\Prism}[\frac{1}{p}])_{\mathcal{I}}^{\wedge})\longrightarrow \Rep_{B_{\dR}^+}^{\fp}(G_K)$ by \cref{de Rham realization} and \cref{fully faithful of the restriction functor}. Utilizing \cref{specializing Hodge Tate} and \cref{Gao theorem}, we see the image of $ V$ lands into $\Rep_{B_{\dR}^+}^{\fp,ndR}(G_K)$. Further thanks to \cref{decompletion for de Rham crystals} and \cref{operator and stratification}, 
we see the target of $ V$ is in $\Rep_{B_{\dR}^+}^{\fp,SndR}(G_K)$.

Next we show that $ V$ is essentially surjective assuming that \cref{conjecture e=1} is true. Given $W\in \Rep_{B_{\dR}^+}^{\fp,ndR}(G_K)$ of rank $l$, by definition there exists a $N_{\nabla}$ stable $K[[t]]$-module $\tilde{M}$ inside $D_{\mathrm{Sen},K_{\infty}[[t]]}(W)$, fix a basis $\underline{\tilde{e}}$ of $\tilde{M}$, as $-\lambda_1 u$ is a unit in $K[[t]]$, we can find $\{B_{m,1}\} \in M_{l\times l}$ such that
\begin{equation}\label{weakly essential surjectivity I}
    N_{\nabla}(\underline{\tilde{e}})=-\underline{\tilde{e}} \lambda_1 u(\sum_{m=0}^{\infty}B_{m,1}t^m).
\end{equation}
 As a result, $N_{\nabla}$ on $$D_{\mathrm{Sen},K_{\infty}}(W/tW)=D_{\mathrm{Sen},K_{\infty}[[t]]}(W)/t=\tilde{M}/t\tilde{M}\otimes_K K_{\infty}$$ is given by $$N_{\nabla}(\underline{\tilde{e}})=-\underline{\tilde{e}} \theta(\lambda_1 u)B_{0,1}.$$
By \cite[Theorem 4.3.3]{Gao22}, the (normalized) Kummer-Sen operator on $D_{\mathrm{Sen},K_{\infty}}(W/tW)$ is just $\frac{1}{\theta(u\lambda^{\prime})}N_{\nabla}$, where $\lambda^{\prime}$ is the derivative of $\lambda$ with respect to $u$. Hence the Kummer-Sen operator on $D_{\mathrm{Sen},K_{\infty}}(W/tW)$ acts on $\underline{\tilde{e}}$ via
\begin{equation*}
   - \frac{\theta(\lambda_1 u)}{\theta(u\lambda^{\prime})}B_{0,1}=- \frac{1}{\theta(\lambda^{\prime})/\theta(\lambda_1)}B_{0,1}=-\frac{B_{0,1}}{\beta}.
\end{equation*}
The last equality holds as $\lambda=\lambda_1 E(u)$, hence $\lambda^{\prime}=\lambda_1 E^{\prime}(u)+\lambda_1^{\prime}E(u)$, hence $\theta(\lambda^{\prime})=\theta(\lambda_1)E^{\prime}(\pi)=\theta(\lambda_1)\beta$.

On the other hand, as $W$ is nearly de Rham, hence $W/tW$ is nearly Hodge-Tate, i.e. the eigenvalues of the Sen operator are all in the subset $\mathbb{Z}+\beta^{-1}\mathfrak{m}_{\mathcal{O}_{\overline{K}}}$, from which we see all the eigenvalues of $B_{0,1}$ are in the set $\beta\mathbb{Z}+\mathfrak{m}_{\mathcal{O}_{\overline{K}}}$, which implies that $\lim_{n\to+\infty}\prod_{i=0}^n(iE'(\pi)+B_{0,1}) = 0$ by the proof of \cite[Corollary 4.1.6]{Gao22}. 

Now we have that $\{B_{m,1}\}$ satisfies the assumption in \cref{conjecture e=1}. As we assume that \cref{conjecture e=1} is true, hence we can construct a de Rham crystal $\mathcal{M}$ whose corresponding stratification $\varepsilon$ is determined by $\{B_{m,1}\}$ as in \cref{conjecture e=1}. Now let $M=\mathcal{M}(\mathfrak{S}, E)$ and $\underline e$ be a basis of $M$ on which $\varepsilon$ acts as in \cref{conjecture e=1}.
Then by \cref{operator and stratification},
\begin{equation}\label{weakly essential surjectivity II}
    N_{\nabla}(\underline e)=-\underline e \lambda_1 u(\sum_{m=0}^{\infty}B_{m,1}t^m).
\end{equation}
In this way, we can construct a $K[[t]]$-linear isomorphism $f: \tilde{M} \rightarrow M$ by sending $\tilde{e}_i$ to $e_i$, which is compatible with $N_{\nabla}$-structure by \cref{weakly essential surjectivity I} and \cref{weakly essential surjectivity II}. This can be further $K_{\infty}[[t]]$-linearly extended to $f\otimes \Id: \tilde{M}\otimes_{K[[t]]} K_{\infty}[[t]]\rightarrow M\otimes_{K[[t]]} K_{\infty}[[t]]$. Then notice that
\begin{equation*}
    \begin{split}
        \tilde{M}\otimes_{K[[t]]}K_{\infty}[[t]]&=D_{\mathrm{Sen},K_{\infty}[[t]]}(W),
        \\M\otimes_{K[[t]]} K_{\infty}[[t]]&=D_{\mathrm{Sen},K_{\infty}[[t]]}(V(\mathcal{M})),
    \end{split}
\end{equation*}
where the first equality follows from the definition of strong nearly de Rham crystals , while the second identity holds thanks to \cref{decompletion for de Rham crystals}.
In this way, $f\otimes \Id$ defines an isomorphism in
\begin{equation*}
    \Hom_{\Mod_{K_{\infty}[[t]]}^{N_{\nabla}}}((D_{\mathrm{Sen},K_{\infty}[[t]]}(W_1), N_{\nabla}),(D_{\mathrm{Sen},K_{\infty}[[t]]}(W_2), N_{\nabla})).
\end{equation*}
This implies $W$ is isomorphic to $V(\mathcal{M})$ in $\rb$ thanks to \cref{detect isomorphic class}. Since $W$ is arbitrarily chosen, we see that $V$ is essentially surjective.
\end{proof}

\bibliographystyle{amsalpha}
\bibliography{template}

\begin{thebibliography}{{Sta}20}

\bibitem[BC09]{BC09}
Olivier Brinon and Brian Conrad, \emph{CMI Summer School Notes on $p$-adic Hodge theory}, available at \url{https://math.stanford.edu/~conrad/papers/notes.pdf}.
\bibitem[BC16]{BC16}
Laurent Berger and Pierre Colmez, \emph{Théorie de Sen et vecteurs localement analytiques} Ann. Sci. Éc. Norm. Supér.(4) 49, no. 4 (2016): 947-970.

\bibitem[Ber16]{Ber16}
Laurent Berger, \emph{Multivariable $(\varphi,\Gamma)$-modules and locally analytic vectors} Duke Mathematical Journal 165, no. 18 (2016): 3567-3595.


\bibitem[BS15]{BS15}
Bhargav Bhatt and Peter Scholze, \emph{The pro-\'etale topology for schemes} arXiv preprint arXiv:1309.1198 (2013). available at \url{https://arxiv.org/pdf/1309.1198.pdf}.



\bibitem[BS19]{BS19}
Bhargav Bhatt, Peter Scholze \emph{Prisms and prismatic cohomology}, 2019, arXiv:1905.08229,
  available at \url{https://arxiv.org/abs/1905.08229}.
  
\bibitem[BS21]{BS21}
Bhargav Bhatt and Peter Scholze, \emph{Prismatic $ F $-crystals and crystalline Galois representations} arXiv preprint arXiv:2106.14735 (2021). available at \url{https://arxiv.org/pdf/2106.14735.pdf}.

\bibitem[DL21]{DL21}
Heng Du and Tong Liu, \emph{A prismatic approach to $(\varphi,\hat G)$-modules and $F $-crystals} arXiv preprint arXiv:2107.12240 (2021). available at \url{https://arxiv.org/pdf/2107.12240.pdf}.

\bibitem[DLMS22]{DLMS22}
Heng Du, Tong Liu, Yong Suk Moon, and Koji Shimizu. \emph{Completed prismatic $F $-crystals and crystalline $\mathbf{Z}_p $-local systems} arXiv preprint arXiv:2203.03444 (2022). available at \url{https://arxiv.org/pdf/2203.03444.pdf}.

\bibitem[Fon04]{Fon04}
Fontaine, Jean-Marc. \emph{Arithmétique des représentations galoisiennes p-adiques} Astérisque 295 (2004): 1-115.

\bibitem[Gao22]{Gao22}
Hui Gao, \emph{Hodge-Tate prismatic crystals and Sen theory} arXiv preprint arXiv:2201.10136 (2022). available at \url{https://arxiv.org/pdf/2201.10136.pdf}.


\bibitem[GP21]{GP21}
Gao, Hui, and Léo Poyeton, \emph{Locally analytic vectors and overconvergent $(\varphi,\tau)$-modules}, Journal of the Institute of Mathematics of Jussieu 20.1 (2021): 137-185.
\bibitem[GR22]{GR22}
Haoyang Guo and Emanuel Reinecke, \emph{Prismatic $ F $-crystals and crystalline local systems} arXiv preprint arXiv:2203.09490 (2022). available at \url{https://arxiv.org/pdf/2203.09490.pdf}.


\bibitem[KL16]{KL16}
Kiran S. Kedlaya and Ruochuan Liu. \emph{Relative p-adic Hodge theory, II: Imperfect period rings} arXiv preprint arXiv:1602.06899 (2016). available at \url{https://arxiv.org/pdf/1602.06899.pdf}.

\bibitem[KP21]{KP21}
Dmitry Kubrak, Artem Prikhodko, \emph{p-adic Hodge theory for Artin stacks}, 2021, arXiv:2105.05319 (2021).,
  available at \url{https://arxiv.org/abs/2105.05319}.
 
 
 
\bibitem[MW21a]{MW21a}
Yu Min and Yupeng Wang, \emph{Relative ($\phi$, $\Gamma$)-modules and prismatic F-crystals} arXiv preprint arXiv:2110.06076 (2021). available at \url{https://arxiv.org/pdf/2110.06076.pdf}.

\bibitem[MW21]{MW21}
Yu Min and Yupeng Wang, \emph{On the Hodge--Tate crystals over $\mathcal{O}_K$} arXiv preprint arXiv:2112.10140 (2021). available at \url{https://arxiv.org/pdf/2112.10140.pdf}.

\bibitem[Sch13]{Sch13}
Peter Scholze, \emph{$p$-adic Hodge theory for rigid-analytic varieties} In Forum of Mathematics, Pi, vol. 1. Cambridge University Press, 2013.

\bibitem[SP22]{SP22}
The Stacks Project Authors, Stacks Project, \url{https://stacks.math.columbia.edu}, 2022.



\bibitem[Tian21]{Tian21}
Yichao Tian, \emph{Finiteness and duality for the cohomology of prismatic crystals} arXiv preprint arXiv:2109.00801 (2021). available at \url{https://arxiv.org/pdf/2109.00801.pdf}.
\bibitem[Wu21]{Wu21}
Zhiyou Wu, \emph{Galois representations, ($\phi$, $\Gamma$) modules and prismatic F-crystals} arXiv preprint arXiv:2104.12105 (2021),
  available at \url{https://arxiv.org/pdf/2104.12105.pdf}.

\end{thebibliography}

\end{document}